\theoremstyle{plain}
\newtheorem{theorem*}{Theorem}
\newtheorem*{lemma*}{Lemma}
\newtheorem{corollary*}{Corollary}
\newtheorem*{proposition*}{Proposition}
\newtheorem{conjecture*}{Conjecture}
\newtheorem{theorem}{Theorem}[section]
\newtheorem{lemma}[theorem]{Lemma}
\newtheorem{corollary}[theorem]{Corollary}
\newtheorem{proposition}[theorem]{Proposition}
\newtheorem{conjecture}[theorem]{Conjecture}
\theoremstyle{remark}
\newtheorem*{remark}{Remark}
\newtheorem*{definition}{Definition}
\newtheorem*{claim}{Claim}
\newtheorem*{facta}{Fact 1}
\newtheorem*{factb}{Fact 2}
\newtheorem*{factc}{Fact 3}
\theoremstyle{definition}
\newtheorem{defn}[theorem]{Definition}
\def\stateh{\mathcal{H}}
\def\statefs{\mathcal{S}}
\def \CC{\mathcal{C}}
\def\fs{\mathcal{FS}}
\def\mm{\mathfrak{m}}
\def\zpp{\Z[\pi/\tipi]}
\def\gl{\mbox{GL}} \def\Q{\Bbb{Q}} \def\F{\Bbb{F}} \def\Z{\Bbb{Z}} \def\R{\Bbb{R}}  \def\C{\Bbb{C}}
\def\N{\Bbb{N}}   \def\ll{\langle} \def\rr{\rangle}
 \def\a{\alpha} \def\g{\gamma}  \def\bp{\begin{pmatrix}}
\def\sm{\setminus} \def\ep{\end{pmatrix}} \def\bn{\begin{enumerate}} 
 \def\rank{\mbox{rank}} \def\div{\mbox{div}} \def\en{\end{enumerate}}
\def\ba{\begin{array}} \def\ea{\end{array}}  
\def\intt{\mbox{int}} \def\S{\Sigma}  \def\a{\alpha} \def\b{\beta} \def\ti{\tilde}
\def\id{\mbox{id}}  \def\im{\mbox{Im}} 
  \def\ker{\mbox{Ker}}
\def\ker{\mbox{Ker}}\def\be{\begin{equation}} \def\ee{\end{equation}} 
 \def\hom{\mbox{Hom}}  
 \def\aut{\mbox{Aut}}  
 \def\dim{\mbox{dim}}
\def\zt{\Z[t^{\pm 1}]} \def\qt{\Q[t^{\pm 1}]}   
\def\w{\omega}   
   \def\rt{R[t^{\pm
1}]} \def\vt{V[t^{\pm
1}]} \def\fr12{\frac{1}{2}} \def\z12{\Z[\fr12]}
\def\fpt{\F_p[t^{\pm 1}]}
\def\fp{\F_p}
\def\tpm {[t^{\pm 1}]}
\def\i{\iota}
\def\ol{\overline}
\def\tipi{\ti{\pi}}
\def\hatpi{\hat{\pi}}
\def\G{\Gamma}
\def\deg{\mbox{deg}}
\begin{document}

\title{Twisted Alexander polynomials detect fibered 3--manifolds}
\author{Stefan Friedl}
\address{University of Warwick, Coventry, UK}
\email{s.k.friedl@warwick.ac.uk}
\author{Stefano Vidussi}
\address{Department of Mathematics, University of California,
Riverside, CA 92521, USA} \email{svidussi@math.ucr.edu}
\thanks{S. Friedl was  supported by a CRM--ISM Fellowship and by CIRGET}
\thanks{S. Vidussi was partially supported by a University of California Regents' Faculty Fellowships and by  NSF grant \#0906281.}

\date{\today}
\begin{abstract}
A classical result in knot theory says that for a fibered knot the Alexander polynomial  is monic and that the degree equals twice the genus of the knot.
This result has been generalized by various authors to twisted Alexander polynomials and fibered 3--manifolds.
In this paper we show that the conditions on twisted Alexander polynomials are not only necessary but also sufficient for a 3--manifold to be fibered.
By previous work of the authors this result implies that if a manifold of the form $S^1 \times N^3$
 admits a symplectic structure, then $N$ fibers over $S^1$. In fact we will completely determine  the
symplectic cone of $S^1\times N$ in terms of the fibered faces of the Thurston norm ball of $N$.
\end{abstract}
\maketitle

\section{Introduction}

\subsection{Twisted Alexander polynomials and fibered 3--manifolds}
Let $N$ be a compact, connected, oriented $3$--manifold with empty or toroidal boundary.
Given a nontrivial class $\phi \in H^1(N;\Z)=\hom(\pi_1(N),\Z)$
 we say that $(N,\phi)$ \textit{fibers over $S^{1}$} if there exists
 a fibration $f:N\to S^1$ such that the induced map $f_*:\pi_1(N)\to \pi_1(S^1)=\Z$ agrees with $\phi$.
 Stated otherwise, the homotopy class in $[N,S^1] = H^1(N;\Z)$ identified by $\phi$ can be represented by a fibration.

It is a classical result in knot theory that if a knot $K\subset S^3$ is fibered, then the Alexander polynomial is monic
 (i.e. the top coefficient equals $\pm 1$), and the degree of the Alexander polynomial equals twice the genus of the knot.
This result has been generalized in various directions by several authors
(e.g. \cite{McM02,Ch03,GKM05,FK06,Ki07}) to show that twisted Alexander polynomials
give necessary conditions for $(N,\phi)$ to fiber.

To formulate this kind of result more precisely
we have to introduce some definitions. Let $N$ be a 3--manifold with empty or toroidal boundary
  and let $\phi \in H^1(N;\Z)$.
 Given $(N,\phi)$ the \emph{Thurston norm} of $\phi$ (cf. \cite{Th86}) is defined as
 \[
||\phi||_{T}=\min \{ \chi_-(S)\, | \, S \subset N \mbox{ properly embedded surface dual to }\phi\}.
\]
Here, given a surface $S$ with connected components $S_1\cup\dots \cup S_k$, we define
$\chi_-(S)=\sum_{i=1}^k \max\{-\chi(S_i),0\}$.

In the following we assume that $\phi \in H^1(N;\Z)$ is non--trivial. Let $\a:\pi_1(N)\to G$ be a
homomorphism to a finite group.
We have the permutation representation $\pi_1(N) \to \aut(\Z[G])$ given by left multiplication, which we also denote by $\a$. We can therefore consider the twisted Alexander polynomial $\Delta_{N,\phi}^{\a}\in \zt$, whose definition is detailed in Section \ref{sectionufd}.
We denote by $\phi_{\a}$  the restriction of $\phi\in H^1(N;\Z)=\hom(\pi_1(N),\Z)$ to $\ker(\a)$.
Note that $\phi_{\a}$ is necessarily non--trivial.
We denote by  $\div \phi_{\a} \in \N$ the divisibility of $\phi_{\a}$, i.e.
\[ \div \phi_{\a} =\max\{ n\in \N \, |\, \phi_{\a}=n\psi \mbox{ for some }\psi:\ker(\a)\to \Z\}.\]
We can now formulate the following theorem which appears as  \cite[Theorem~1.3 and Remark~p.~938]{FK06}.

\begin{theorem} \label{friedlkim} Let  $N \neq S^1 \times S^2, S^1 \times D^2$ be a $3$--manifold with empty or toroidal boundary.
 Let $\phi \in H^{1}(N;\Z)$ a nontrivial class. If $(N,\phi)$ fibers over $S^1$, then for
any homomorphism $\a:\pi_1(N)\to G$ to a finite group
the twisted Alexander polynomial $\Delta_{N,\phi}^{\a}\in \zt$ is monic
and \[  \deg(\Delta_{N,\phi}^{\a})= |G| \, \|\phi\|_{T} + (1+b_3(N)) \div \phi_{\a}.\]
 \end{theorem}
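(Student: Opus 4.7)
My strategy is to reduce the $\a$--twisted statement on $N$ to the untwisted, primitive--class statement on the regular $|G|$--fold cover $p\colon\tilde N\to N$ corresponding to $\tipi=\ker(\a)$. Since $(N,\phi)$ is fibered, pulling back the fibration to $\tilde N$ shows that $(\tilde N,\phi_\a)$ is fibered, where $\phi_\a=\phi\circ p_*$. A Shapiro--type identification of the $\a$--twisted $\Z[G]\tpm$--chain complex of $N$ with the $\zt$--chain complex of $\tilde N$ coming from $\phi_\a$ then produces an equality
\[ \Delta^{\a}_{N,\phi}(t) \;\doteq\; \Delta_{\tilde N,\phi_\a}(t) \]
in $\zt$, up to a unit $\pm t^k$; in particular monicity and degree transfer from one side to the other.

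Next, write $\phi_\a=d\,\psi$ with $\psi\in H^1(\tilde N;\Z)$ primitive and $d=\div\phi_\a$. Then $\psi$ is itself a primitive fibered class, and the $\phi_\a$--fibration is the $\psi$--fibration $\tilde N\to S^1$ followed by the degree--$d$ self-cover $z\mapsto z^d$. Because the Alexander polynomial is defined by the substitution $g\mapsto t^{\phi_\a(g)}=(t^d)^{\psi(g)}$, direct inspection of the chain complex yields
\[ \Delta_{\tilde N,\phi_\a}(t)\;\doteq\;\Delta_{\tilde N,\psi}(t^d), \]
so degrees get multiplied by $d$ and monicity is again preserved.

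Now I would invoke the classical result for a primitive fibered class applied to $(\tilde N,\psi)$: realizing $\tilde N$ as the mapping torus of a monodromy $h\colon F\to F$, the infinite cyclic cover associated to $\psi$ is homotopy equivalent to $F$, so $\Delta_{\tilde N,\psi}(t)$ is assembled from the characteristic polynomials $\det(tI-h_*)$ acting on $H_*(F;\Q)$, which are plainly monic. A short Betti--number computation on $F$, using $\chi_-(F)=\|\psi\|_T$ and $b_2(F)=b_3(\tilde N)$, then gives
\[ \deg\Delta_{\tilde N,\psi}\;=\;\|\psi\|_T+1+b_3(\tilde N); \]
the excluded cases $N=S^1\times S^2,S^1\times D^2$ are precisely those where $\chi(F)>0$ would invalidate this identity. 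Chaining the three displays yields $\deg\Delta^{\a}_{N,\phi}=d\bigl(\|\psi\|_T+1+b_3(\tilde N)\bigr)$.

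To finish, two general facts translate this into the claimed form: $b_3(\tilde N)=b_3(N)$, since a finite cover of a closed manifold is closed and conversely; and Gabai's multiplicativity of the Thurston norm under finite covers, which yields $d\,\|\psi\|_T=\|\phi_\a\|_T=|G|\,\|\phi\|_T$. Substitution produces the asserted value $|G|\,\|\phi\|_T+(1+b_3(N))\,\div\phi_\a$, and monicity propagates through every step. The main obstacle I foresee is the very first step: establishing the Shapiro--type equality $\Delta^{\a}_{N,\phi}\doteq\Delta_{\tilde N,\phi_\a}$ precisely enough that it holds up to a unit of $\zt$ rather than merely of $\Q(t)$, so that both monicity and top--coefficient data are preserved. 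This requires careful bookkeeping of orders of torsion modules over the UFD $\zt$, which is presumably set up in Section \ref{sectionufd}.
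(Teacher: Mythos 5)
The paper does not prove Theorem \ref{friedlkim}; it is quoted from \cite[Theorem~1.3 and Remark~p.~938]{FK06}. A proof is nevertheless embedded in the paper, as the ``only if'' direction of Proposition \ref{prop:sameh0h1}: for a fibered $(N,\phi)$ Stallings' theorem makes $\i_\pm\colon\pi_1(\S)\to\pi_1(M)$ isomorphisms, hence the induced maps on $H_0$ and $H_1$ with $\Z[\pi/\tipi]$-coefficients are isomorphisms, and the Mayer--Vietoris sequence of $(N,\S)$ with $\Z[\pi/\tipi]\tpm$-coefficients then yields monicity and the degree formula. Your proposal follows a genuinely different route --- the one the paper itself flags as an alternative in the remark after Theorem \ref{mainthm}: descend to the finite cover $\tilde N$ and work with ordinary Alexander polynomials. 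Each of your reductions is sound. The Shapiro identification $\Delta^{\a}_{N,\phi}\doteq\Delta_{\tilde N,\phi_\a}$ does hold over $\zt$, not merely up to $\Q(t)$-units, since $H_*(N;\Z[G]\tpm)\cong H_*(\tilde N;\zt)$ as $\zt$-modules (this is \cite[Lemma~3.3]{FV08a}, the reference the paper gives for exactly this point, so the obstacle you anticipated is already handled in the literature). The factorisation $\Delta_{\tilde N,\phi_\a}(t)\doteq\Delta_{\tilde N,\psi}(t^d)$ follows because $\zt$ is free (hence flat) of rank $d$ over $\Z[t^{\pm d}]$, so zeroth Fitting ideals are preserved under base change. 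The Milnor--Wang computation gives $\Delta_{\tilde N,\psi}(t)\doteq\det(tI-h_*\,|\,H_1(F))$, which is monic of degree $b_1(F)=1-\chi(F)+b_2(F)=\|\psi\|_T+1+b_3(\tilde N)$. Combined with Gabai's multiplicativity this closes the argument. Your route is more conceptual (twisted to untwisted), whereas the paper's Mayer--Vietoris route is preferred there because it simultaneously yields the converse implication, which is the engine of Theorem \ref{mainthm}.

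One gap shared by your proof and by the theorem as literally stated: both tacitly assume $\a$ is surjective, so that $\tilde N$ is a $|G|$-fold cover. If $C=\a(\pi_1(N))\subsetneq G$ then $\Z[G]\cong\bigoplus_{[G:C]}\Z[C]$ as $\Z[\pi_1(N)]$-modules, hence $\Delta^{\a}_{N,\phi}=\bigl(\Delta^{\pi/\ker\a}_{N,\phi}\bigr)^{[G:C]}$ and the correct degree is $|G|\,\|\phi\|_T+[G:C]\,(1+b_3(N))\div\phi_\a$. Read literally, the displayed formula already fails for $N=T^3$, $\phi$ primitive, and the trivial homomorphism to $\Z/2$: there $\Delta^\a_{N,\phi}=(t-1)^4$ has degree $4$, while the formula predicts $2$. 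This is plainly a harmless misstatement in the source (the intended hypothesis is that $\a$ be an epimorphism), but your write-up should state the surjectivity hypothesis explicitly before invoking $[\pi_1(N):\ker\a]=|G|$.
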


 It is well known that in general the constraint of monicness and degree for the ordinary Alexander polynomial falls short
 from characterizing fibered 3--manifolds.
The main result of this paper is to show that on the other hand the collection of all twisted Alexander polynomials does detect fiberedness, i.e. the converse of Theorem \ref{friedlkim} holds true:

\begin{theorem} \label{mainthm} Let $N$ be a $3$--manifold with empty or toroidal boundary.
 Let $\phi \in H^{1}(N;\Z)$ a nontrivial class. If for
 any homomorphism $\a:\pi_1(N)\to G$ to a finite group
the twisted Alexander polynomial $\Delta_{N,\phi}^{\a}\in \zt$ is monic
and \[ \deg(\Delta_{N,\phi}^{\a})= |G| \, \|\phi\|_{T} + (1+b_3(N)) \div \phi_{\a}\] holds, then $(N,\phi)$ fibers over $S^1$. \end{theorem}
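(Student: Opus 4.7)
The plan is to prove the contrapositive: assuming $(N,\phi)$ does not fiber over $S^1$, I will construct a finite group $G$ and a homomorphism $\a:\pi_1(N)\to G$ such that $\Delta_{N,\phi}^{\a}$ either fails to be monic or has degree strictly smaller than $|G|\,\|\phi\|_T + (1+b_3(N))\div \phi_{\a}$. The overall strategy splits into three steps: reduce to an untwisted statement about finite covers, extract the obstruction to fiberedness from a sutured manifold decomposition, and realize that obstruction algebraically in a carefully chosen cover.

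First I would reinterpret the hypothesis in terms of finite covers. Given $\a:\pi_1(N)\to G$, let $p:\ti N \to N$ be the finite regular cover corresponding to $\ker(\a)$, and set $\ti\phi = p^*\phi$. A standard Shapiro-type identification shows that $\Delta_{N,\phi}^{\a}(t)$ agrees, up to units and a known factor involving $\div \phi_{\a}$, with the ordinary Alexander polynomial $\Delta_{\ti N,\ti\phi}(t)$ of the pair $(\ti N,\ti\phi)$. The theorem's hypothesis, ranging over all $\a$, is therefore equivalent to the following: for every finite cover $\ti N \to N$, the ordinary Alexander polynomial $\Delta_{\ti N,\ti\phi}$ is monic and its degree realizes $\|\ti\phi\|_T$ plus the prescribed boundary correction.

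Now assume $(N,\phi)$ is not fibered. Choose a Thurston-norm-minimizing embedded surface $\Sigma$ dual to $\phi$, without sphere or disk components, and cut $N$ along $\Sigma$ to produce a sutured manifold $M$ with $R_{\pm}(M)\cong \Sigma$. By Stallings' theorem $(N,\phi)$ fibers if and only if $M$ is the product sutured manifold $\Sigma \times [0,1]$; by Gabai's sutured manifold theory, $M$ admits a taut hierarchy, and the associated characteristic submanifold decomposition splits $M$ into an $I$--bundle ``window'' and a non-$I$--bundle ``guts'' piece $\G = \G(M,\Sigma)$. Non-fiberedness is precisely equivalent to $\G \neq \varnothing$; moreover the horizontal boundary of $\G$ has strictly negative Euler characteristic and $\pi_1(\G)$ injects into $\pi_1(N)$. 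The remaining task is to convert this topological defect into an algebraic failure visible in some finite cover.

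The heart of the argument, and its main obstacle, is the construction of the right cover. Using residual finiteness of $\pi_1(N)$ (Hempel, via geometrization) together with subgroup separability properties of the surface and guts subgroups, I would produce a finite cover $\ti N \to N$ with three features: the lift $\ti\Sigma$ remains Thurston-norm-minimizing dual to $\ti\phi$; the lifted guts $\ti\G$ is disconnected enough that its homological contribution is no longer absorbed into the product part; and as a consequence $H_1$ of the infinite cyclic cover of $\ti N$ determined by $\ti\phi$ acquires either $\Z$--torsion or excess rank over what a product sutured structure would allow. Torsion forces a non-monic leading coefficient of $\Delta_{\ti N,\ti\phi}$, while excess rank forces $\deg \Delta_{\ti N,\ti\phi}$ to drop strictly below $\|\ti\phi\|_T$. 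Via the Shapiro-type identification of the second paragraph, either outcome translates into the desired violation of monicness or degree for $\Delta_{N,\phi}^{\a}$, contradicting the hypothesis. The delicate and technically heaviest step is this last cover construction: controlling the behaviour of the characteristic submanifold of $M$ under finite coverings, and guaranteeing that the non-product part of the guts persists and becomes visible in the abelianized homology, is precisely where the separability properties of $3$--manifold groups enter essentially.
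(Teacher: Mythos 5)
Your plan correctly identifies the general shape of the problem — reducing to Alexander polynomials of finite covers, cutting $N$ along a norm-minimizing surface to get a sutured manifold $M$, and recognizing that non-fiberedness means $M$ is not a product — but the third step, where you propose to ``produce a finite cover $\ti N\to N$'' in which the non-product guts becomes homologically visible, contains the entire difficulty of the theorem and is not addressed beyond a gesture at residual finiteness and subgroup separability. Two concrete problems with that step. First, Condition~($*$) (the hypothesis on all twisted Alexander polynomials) only controls $H_0$ and $H_1$ of $\pi_1(\S)\to\pi_1(M)$ with coefficients in $\Z[G]$, and an inductive unwinding of that information via the derived series yields at best an isomorphism of \emph{prosolvable} completions of $\pi_1(\S)$ and $\pi_1(M)$ (Proposition~\ref{thm:finitesolvable}); it does not give you control over all finite quotients, so arguments that rely on profinite subgroup separability (such as the Long--Niblo approach to peripheral separability) do not apply — this is exactly the distinction flagged in the Remark after Theorem~\ref{thm:no}. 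Second, even granting an isomorphism of completions, passing from that to an honest isomorphism $\pi_1(\S)\to\pi_1(M)$ is a Grothendieck-type rigidity problem that is false for general finitely generated groups; the paper's resolution of this step is Agol's RFRS virtual fibering theorem combined with McMullen's Alexander norm and a delicate comparison of fibered cones of the double $DM_\g$, none of which appears in your outline.

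Beyond this, your appeal to ``subgroup separability properties of the surface and guts subgroups'' presupposes LERF-type results that were not available for general $3$--manifold groups at the time (and are not needed in the paper's argument), and you would still need $\pi_1(M)$ to be residually finite solvable before any of this could run — in the paper this itself is nontrivial and is handled by passing to a finite cover where all JSJ pieces are residually $p$ (Theorem~\ref{thm:virtpintro}) and then working one JSJ piece at a time via Theorem~\ref{thm:miiso}. So while your outline is consistent with the contrapositive of what the paper proves, the engine that actually constructs the obstructing cover — prosolvable completions plus Agol's theorem plus the Alexander norm plus JSJ localization — is missing, and the mechanism you substitute for it (profinite separability of the guts) does not follow from the hypothesis.
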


Note that alternatively it is possible to rephrase this statement in terms of Alexander polynomials of the finite regular covers of $N$, using the fact that $\Delta_{N,\phi}^{\a} = \Delta_{{\tilde N},p^*(\phi)}$
(cf. \cite{FV08a}), where $p: {\tilde N} \to N$ is the cover of $N$ determined by $\ker(\a)$.

Note that this theorem asserts that twisted Alexander polynomials detect whether $(N,\phi)$ fibers under the assumption that $||\phi||_T$ is known;
while it is known that twisted Alexander polynomials give lower bounds (cf. \cite[Theorem~1.1]{FK06}),
it is still an open question whether twisted Alexander polynomials determine the Thurston norm.\


In the case where $\phi$ has trivial Thurston norm, this result is proven in \cite{FV08b}, using subgroup separability.
Here, following a different route (see Section \ref{strat} for a summary of the proof),
we prove the general case.

\subsection{Symplectic $4$--manifolds and twisted Alexander polynomials}
In 1976 Thurston  \cite{Th76} showed that if a closed $3$--manifold $N$ admits a
fibration over $S^1$, then $S^1 \times N$ admits a symplectic structure, i.e. a closed, nondegenerate $2$--form $\omega$.
It is natural to ask whether the converse to this statement holds true. In its simplest form, we can state this problem in the following way:
\begin{conjecture} \label{taubes} Let $N$ be a closed $3$--manifold. If $S^1 \times N$ is symplectic, then there exists
a $\phi \in H^1(N;\Z)$ such that $(N,\phi)$ fibers over $S^1$.\end{conjecture}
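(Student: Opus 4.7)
The plan is to reduce the conjecture to Theorem \ref{mainthm}. Given a symplectic form $\omega$ on $S^1\times N$, the goal is to extract a candidate class $\phi\in H^1(N;\Z)$ and then verify, for every homomorphism $\a:\pi_1(N)\to G$ to a finite group, that $\Delta_{N,\phi}^{\a}$ is monic and satisfies the degree equality required by Theorem \ref{mainthm}. Applying the main theorem will then yield the desired fibration.

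To produce $\phi$, I use the Künneth decomposition
\[
H^2(S^1\times N;\R)\;\cong\; H^2(N;\R)\,\oplus\, H^1(N;\R),
\]
which extracts from $[\omega]$ a class $\phi_{\R}\in H^1(N;\R)$ (morally, the integration of $\omega$ along the $S^1$ factor). Perturbing $\omega$ slightly, we may arrange $\phi_{\R}$ to be rational and nonzero, hence a positive multiple of an integral class $\phi\in H^1(N;\Z)$. This $\phi$ is the only candidate for the direction of fibration, as it is the class detected by Taubes' theorem.

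Next, for every finite regular cover $p:\ti N\to N$ corresponding to $\ker(\a)$, the $4$--manifold $S^1\times \ti N$ covers the symplectic $S^1\times N$ and so is itself symplectic. By Taubes' $SW=Gr$ theorem, the Seiberg--Witten basic classes of $S^1\times \ti N$ are tightly constrained by the canonical class. Combining this with the Meng--Taubes formula, which identifies SW invariants of $S^1\times \ti N$ with the Alexander polynomial of $\ti N$ in the direction $p^{*}\phi$, one deduces that $\Delta_{\ti N, p^{*}\phi}$ is monic and computes its degree in terms of $\|p^{*}\phi\|_{T}$ with the standard torsion correction. The adjunction inequality for symplectic $4$--manifolds (Kronheimer) then identifies $\|p^{*}\phi\|_{T}$ with $|G|\,\|\phi\|_{T}$. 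Passing via the identity $\Delta_{N,\phi}^{\a}=\Delta_{\ti N, p^{*}\phi}$ recorded after Theorem \ref{mainthm}, and tracking how $\div\phi_{\a}$ enters through the Reidemeister torsion comparison, one arrives at precisely the monicness statement and the degree equality
\[
\deg(\Delta_{N,\phi}^{\a}) \;=\; |G|\,\|\phi\|_{T} \,+\, (1+b_3(N))\,\div \phi_{\a}.
\]
These ingredients are all present in the authors' earlier work \cite{FV08a}. With the hypotheses of Theorem \ref{mainthm} verified, we conclude that $(N,\phi)$ fibers over $S^1$.

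The main obstacle in this plan is the precise degree identification, particularly the appearance of the $(1+b_3(N))\,\div\phi_{\a}$ correction. Monicness of $\Delta_{N,\phi}^{\a}$ is essentially immediate from $SW=Gr$, but matching degrees requires the \emph{twisted} Meng--Taubes formula on the cover $\ti N$ and careful bookkeeping of the Reidemeister torsion contributions, a step made more subtle in the closed case ($b_3(N)=1$) where the correction term is $2\,\div\phi_{\a}$. Once this bookkeeping is in place, Theorem \ref{mainthm} does all the remaining work.
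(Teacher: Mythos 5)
Your plan is exactly the paper's: extract the K\"unneth component $\phi$ of an integral perturbation of $[\omega]$, show via Seiberg--Witten theory on all finite covers that Condition $(*)$ holds, and then invoke Theorem \ref{mainthm}. The paper packages the SW-theoretic step you sketch (Taubes, Meng--Taubes, Kronheimer adjunction) as Theorem \ref{thm:fv06}, citing \cite{FV08a}, and also records that McCarthy plus geometrization force $N$ to be prime so that Theorem \ref{thm:fv06} applies in the irreducible case (with $S^1\times S^2$ fibering trivially) --- a small case distinction worth making explicit, but otherwise your argument matches theirs.
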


Interest in this question was motivated by Taubes' results in the study of Seiberg-Witten
invariants of symplectic $4$--manifolds (see \cite{Ta94,Ta95}), that gave initial evidence to an
affirmative solution of this conjecture. In the special case where $N$ is obtained via $0$--surgery
along a knot in $S^3$, this question appears also in \cite[Question~7.11]{Kr98}.
Over the last ten years evidence for this conjecture was given by various authors
\cite{Kr98,CM00,Et01,McC01,Vi03}.

In \cite{FV08a} the authors initiated a project relating Conjecture \ref{taubes} to the  study of twisted Alexander polynomials. The outcome of that
investigation is that if $S^1 \times N$ is symplectic, then the twisted Alexander polynomials
of $N$ behave like twisted Alexander polynomials of a fibered $3$--manifold. More precisely, the following holds (cf. \cite[Theorem~4.4]{FV08a}):

\begin{theorem} \label{thm:fv06}
Let $N$ be an irreducible closed $3$--manifold and $\w$ a symplectic structure on $S^1\times N$ such that $\w$ represents an integral cohomology class.
Let $\phi \in H^{1}(N;\Z)$ be the K\"unneth component of $[\w]\in H^2(S^1\times N;\Z)$.
Then for any homomorphism $\a:\pi_1(N)\to G$ to a finite group
the twisted Alexander polynomial $\Delta_{N,\phi}^{\a}\in \zt$ is monic
and \[  \deg(\Delta_{N,\phi}^{\a})= |G| \, \|\phi\|_{T} + 2\div \phi_{\a}.\]
 \end{theorem}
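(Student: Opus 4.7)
The plan is to exploit the dictionary between twisted Alexander polynomials of $N$ and Seiberg--Witten invariants of $S^1\times N$, and then invoke Taubes' constraints on the Seiberg--Witten invariants of a symplectic $4$--manifold. As a first step I would reduce the twisted statement to an untwisted one on the finite regular cover $p:\ti{N}\to N$ determined by $\ker(\a)$: by the identity $\Delta_{N,\phi}^{\a}=\Delta_{\ti{N},p^*\phi}$ recalled immediately after Theorem \ref{mainthm}, monicness and degree of the twisted polynomial translate to monicness and degree of the ordinary Alexander polynomial of $(\ti{N},p^*\phi)$. Gabai's multiplicativity of the Thurston norm under finite covers gives $\|p^*\phi\|_T=|G|\cdot\|\phi\|_T$, and under the identification $\pi_1(\ti{N})=\ker(\a)$ one has $p^*\phi=\phi_{\a}$, so the numerical target on $\ti{N}$ becomes $\deg(\Delta_{\ti{N},p^*\phi})=\|p^*\phi\|_T+2\div(p^*\phi)$, matching the formula in the theorem.

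Next I would pull $\w$ back to a symplectic form $\ti{\w}=(\id\times p)^*\w$ on $S^1\times\ti{N}$, whose K\"unneth component on $\ti{N}$ is $p^*\phi$. By Taubes \cite{Ta94,Ta95} the canonical class $K$ of $(S^1\times\ti{N},\ti{\w})$ is a Seiberg--Witten basic class with $SW(K)=\pm 1$, and every basic class $L$ satisfies Taubes' constraint $|\ti{\w}\cdot L|\le\ti{\w}\cdot K$, with equality attained only at $\pm K$. The Meng--Taubes theorem then identifies the Seiberg--Witten polynomial of $S^1\times\ti{N}$ with the symmetrised Alexander polynomial of $\ti{N}$, up to the correction factor $(t^{1/2}-t^{-1/2})^{1+b_3(\ti{N})}$. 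Reading this identification in the $p^*\phi$--direction, monicness of $\Delta_{\ti{N},p^*\phi}$ becomes the assertion $SW(K)=\pm 1$, and the degree is exactly $2(\ti{\w}\cdot K)$ plus the $2\div(p^*\phi)$ contribution coming from the correction factor.

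It then remains to identify $\ti{\w}\cdot K$ with $\tfrac12\|p^*\phi\|_T$. The upper bound is the adjunction inequality of Kronheimer--Mrowka applied to the symplectic submanifold $S^1\times\S$, with $\S$ a Thurston--norm--minimising surface dual to $p^*\phi$; the matching lower bound follows by combining Gabai's theorem that the Thurston norm is realised by a taut foliation with McMullen's lower bound on the degree of the Alexander polynomial by the Thurston norm. Putting these ingredients together yields the stated formula. The main obstacle in executing this plan is bookkeeping rather than geometry: one must align the $b_3$ correction in Meng--Taubes, the symmetrisation conventions for the Alexander and Seiberg--Witten polynomials, and the behaviour of $\div$ and $\|\cdot\|_T$ under the cover $p$, so that the final constant is precisely $|G|\|\phi\|_T+2\div\phi_{\a}$ and not a shifted or rescaled variant. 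Once these conventions are aligned, the symplectic input from Taubes does essentially all the geometric work.
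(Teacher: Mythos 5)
The paper does not actually prove Theorem \ref{thm:fv06} here: it is quoted from \cite[Theorem~4.4]{FV08a}, and the proof there follows exactly the architecture you describe --- pass to the cover $\ti{N}$ determined by $\ker(\a)$, pull back the symplectic form, and combine Taubes' constraints with the Meng--Taubes identification of $SW_{S^1\times \ti{N}}$ with the Turaev torsion of $\ti{N}$. Your reduction steps (the identity $\Delta_{N,\phi}^{\a}=\Delta_{\ti{N},p^*\phi}$, Gabai's multiplicativity of the Thurston norm under finite covers, and the bookkeeping of the $(1-t^{\div})^{2}$ correction between torsion and Alexander polynomial) are all sound.

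There are, however, two genuine gaps in the way you propose to close the argument. First, the lower bound $\deg(\Delta_{\ti{N},p^*\phi})\geq \|p^*\phi\|_T+2\div(p^*\phi)$ cannot come from ``Gabai plus McMullen'': McMullen's inequality runs the opposite way (it bounds the degree of the Alexander polynomial \emph{above} by the Thurston norm, $\|\phi\|_A\leq\|\phi\|_T$), and Gabai's taut foliations produce monopole classes, not basic classes, so they give no nonvanishing of $SW$ or of coefficients of $\Delta$. Indeed the inequality you want is false for a general $3$--manifold (a non--fibered knot with trivial Alexander polynomial is a counterexample), so it must use the symplectic hypothesis; the correct source is Taubes' nonvanishing $SW(K)=\pm 1$ together with the Kronheimer--Vidussi computation $K\cdot\phi=\|\phi\|_T$ for symplectic $S^1\times N$ \cite{Kr98,Kr99,Vi03}, which is the hard analytic input. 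Second, monicness is not equivalent to $SW(K)=\pm 1$: the top coefficient of $\Delta_{\ti{N},p^*\phi}$ is the sum of $SW$ over \emph{all} basic classes maximizing the pairing with $p^*\phi$, while Taubes' strict inequality only identifies $\pm K$ as the unique extremizers of the pairing with $[\ti{\w}]$. Converting extremality in the $\ti{\w}$--direction into extremality in the K\"unneth direction $p^*\phi$ (and handling the chamber issues when $b_1=1$, i.e.\ $b_2^+(S^1\times\ti{N})=1$, cf.\ \cite{Vi99,Vi03}) is precisely the content of \cite{Kr98,Vi03,FV08a} that your outline elides. A minor point in the same vein: the surface to which the adjunction inequality applies is $\{pt\}\times \S\subset S^1\times\ti{N}$, which is dual to the K\"unneth image of $p^*\phi$; $S^1\times\S$ is a hypersurface, not an embedded surface, and need not be symplectic.
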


Note that it follows from McCarthy's work \cite{McC01} (see also Lemma \ref{lem:prime}) and Perelman's proof of the geometrization conjecture (cf. e.g. \cite{MT07})
that   if $S^1 \times N$ is symplectic, then $N$ is prime,
i.e. either irreducible or $S^1 \times S^2$.
The proof of Theorem \ref{thm:fv06} relies heavily on the results of \cite{Kr98} and \cite{Vi03},
which in turn build on results of Taubes \cite{Ta94,Ta95} and Donaldson \cite{Do96}.

As the symplectic condition is open, the assumption that a symplectic manifold admits an integral symplectic form is not restrictive.
Therefore, combining Theorem \ref{mainthm} with Theorem \ref{thm:fv06}, we deduce that Conjecture \ref{taubes} holds true.
In fact, in light of  \cite[Theorems~7.1~and~7.2]{FV07}, we have the following more refined statement:

\begin{theorem}\label{thm:mainresult}
Let $N$ be a closed oriented 3--manifold. Then given $\Omega \in H^2(S^1\times N;\R)$ the following are equivalent:
\bn
\item $\Omega$ can be represented by a symplectic structure;
\item $\Omega$ can be represented by a symplectic structure which is $S^1$--invariant;
\item $\Omega^2>0$ and the K\"unneth component $\phi \in H^1(N;\R)$ of $\Omega$ lies in the open cone on a fibered face of the Thurston norm ball of $N$.
\en
\end{theorem}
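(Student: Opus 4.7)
My plan is to establish the cyclic implications $(3) \Rightarrow (2) \Rightarrow (1) \Rightarrow (3)$. The implication $(2) \Rightarrow (1)$ is immediate. For $(3) \Rightarrow (2)$, given that the K\"unneth component $\phi$ lies in the open cone on a fibered face of the Thurston norm ball of $N$ and that $\Omega^2 > 0$, I would write $\Omega = \beta + dt \wedge \phi$ in the K\"unneth decomposition and then use Thurston's mapping torus construction, extended to real cohomology classes by convexity of the fibered cone, to write down an explicit $S^1$-invariant closed $2$-form on $S^1 \times N$ representing $\Omega$ and having positive square; this construction is carried out in \cite[Theorem~7.1]{FV07}, using precisely the hypotheses $\Omega^2>0$ and $\phi$ in the open fibered cone.

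The real work is the implication $(1) \Rightarrow (3)$. The condition $\Omega^2 > 0$ is automatic since $\omega \wedge \omega$ is a volume form for any symplectic $\omega$. To establish the cone condition, I would proceed by reduction to integer classes. Since the symplectic condition on closed $2$--forms is $C^0$--open and integer classes are dense in $H^2(S^1\times N;\R)$, one can find a sequence of integer cohomology classes $\Omega_n \to \Omega$ each represented by a symplectic form, with K\"unneth components $\phi_n \in H^1(N;\Z)$ converging to $\phi$. Note also that $N$ must be prime by Lemma~\ref{lem:prime} combined with Perelman's work, as discussed after Theorem~\ref{thm:fv06}. Applying Theorem~\ref{thm:fv06} to each $\Omega_n$, one sees that for every homomorphism $\alpha$ onto a finite group the twisted Alexander polynomial of $(N,\phi_n)$ is monic with degree matching the fibered formula. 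Invoking the main result Theorem~\ref{mainthm} then gives that each $(N,\phi_n)$ fibers over $S^1$.

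Having produced integer fibered classes $\phi_n \to \phi$, I would apply Thurston's theorem \cite{Th86}: fibered integer classes lie in the union of open cones on the fibered faces of the Thurston norm ball, so after passing to a subsequence all $\phi_n$ lie in a single such open fibered cone, and hence $\phi$ lies in its closure. The final and most delicate step is to promote this to $\phi$ lying in the \emph{open} cone; this is where the quantitative constraint $\Omega^2 > 0$ enters and rules out $\phi$ sitting on a boundary face, and is precisely the content of \cite[Theorem~7.2]{FV07}. The main obstacle that this paper removes is the step from the twisted Alexander data to the fiberedness of $(N,\phi_n)$, i.e.\ Theorem~\ref{mainthm}; once that is in hand, Theorem~\ref{thm:mainresult} follows by combining it with Theorem~\ref{thm:fv06} and the earlier results of \cite{FV07}.
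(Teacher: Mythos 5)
Your proof takes essentially the same route as the paper: the paper states Theorem \ref{thm:mainresult} as a direct consequence of combining Theorem \ref{mainthm} with Theorem \ref{thm:fv06} and with \cite[Theorems~7.1~and~7.2]{FV07}, which is exactly the decomposition you spell out, with (3)$\Rightarrow$(2) coming from \cite[Theorem~7.1]{FV07}, (2)$\Rightarrow$(1) trivial, and (1)$\Rightarrow$(3) via openness of the symplectic condition, Theorem \ref{thm:fv06}, Theorem \ref{mainthm}, and \cite[Theorem~7.2]{FV07}. Your elaboration of (1)$\Rightarrow$(3) (approximation by rational/integral classes, finiteness of fibered faces, and the use of $\Omega^2>0$ to land in the open rather than the closed cone) fills in details the paper leaves implicit, and is consistent with the intended argument.
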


Note that the theorem allows us in particular to completely determine the symplectic cone of a manifold of the form $S^1\times N$ in terms of the fibered cones of $N$.

Combined with the results of \cite{FV07, FV08a}, Theorem \ref{mainthm} shows in particular that the collection of the
Seiberg-Witten invariants of all finite covers of $S^1\times N$ determines whether
$S^1\times N$ is symplectic or not.
In particular, we have the following corollary (we refer to \cite{Vi99,Vi03} for the notation and the formulation in the case that $b_1(N)=1$).

\begin{corollary}
Let $N$ be a closed 3--manifold with $b_1(N)>1$. Then given a spin$^c$ structure $K \in H^2(S^1 \times N;\Z)$ there
exists a symplectic structure representing a cohomology class $\Omega \in H^2(S^1\times N; \R)$ with canonical class $K$
if and only if the following conditions hold:
\bn
\item $K\cdot \phi = \|\phi \|_{T}$, where $\phi \in H^1(N;\R)$ is the K\"unneth component of $\Omega$,
\en
and  for any regular finite cover $p:{\tilde N}\to N$
\bn
\item[(2)] $SW_{S^1\times {\tilde N}}(p^*(K))=1$,
\item[(3)] for any Seiberg--Witten basic class $\kappa \in H^2(S^1\times {\tilde N};\Z)$ we have
\[ |p_*(\kappa) \cdot \phi| \leq \mbox{deg}(p)~ K\cdot \phi,  \]
(where $p_*$ is the transfer map)
and the latter equality holds if and only if $\kappa=\pm p^{*}K$.
\en
\end{corollary}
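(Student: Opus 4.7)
The plan is to deduce the Corollary by combining Theorem \ref{mainthm} with the full strength of Theorem \ref{thm:mainresult} (item (3) in particular) and with Taubes-type constraints on the Seiberg--Witten invariants of a symplectic $4$--manifold. The conceptual bridge in both directions is the Meng--Taubes/Turaev identification of $SW_{S^{1}\times N'}$ with the Alexander polynomial of $N'$: for a closed 3--manifold $N'$ with $b_1(N')>0$, the basic classes $\kappa\in H^2(S^1\times N';\Z)$ correspond, via K\"unneth, to the exponents in $\Delta_{N',p^{*}\phi}(t)$, and the coefficients of this polynomial are the values $SW_{S^{1}\times N'}(\kappa)$. Coupled with the covering identity $\Delta^{\a}_{N,\phi}=\Delta_{\tilde N,p^{*}\phi}$ recalled in the introduction, this lets us rewrite every Seiberg--Witten statement about covers of $S^1\times N$ as a statement about twisted Alexander polynomials of $N$.

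For the forward direction (existence of a symplectic $\w$ with $[\w]=\Omega$ and canonical class $K$ implies (1)--(3)): condition (1) is the adjunction--type equality obtained by pairing $K$ with a Thurston--norm minimizing representative of $\phi$, which is the fiber once fiberedness is known (Theorem \ref{mainthm} combined with Theorem \ref{thm:fv06}); condition (2) is Taubes' result that $SW(K)=\pm 1$, applied to the pulled-back symplectic form $p^{*}\w$ on $S^1\times\tilde N$, whose canonical class is $p^{*}K$; and condition (3) encodes Taubes' inequality $|\kappa\cdot[p^{*}\w]|\le K_{\tilde N}\cdot[p^{*}\w]$ for basic classes of a symplectic $4$--manifold, together with the sharp characterization of equality, translated through the projection formula $p_{*}(\kappa)\cdot\phi=\kappa\cdot p^{*}\phi$ and the equality $p^{*}(K\cdot\phi)\cdot\text{(degree factor)}=\deg(p)\,K\cdot\phi$.

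For the reverse direction, which is where Theorem \ref{mainthm} enters essentially, we translate (2) and (3) into the hypotheses of Theorem \ref{mainthm}. Fix a homomorphism $\a:\pi_1(N)\to G$ to a finite group and let $p:\tilde N\to N$ be the associated regular cover. Hypothesis (2) says the top coefficient of $\Delta_{\tilde N,p^{*}\phi}=\Delta^{\a}_{N,\phi}$ equals $\pm 1$, i.e.\ monicness. Hypothesis (3), together with the Meng--Taubes formula, bounds the degree of $\Delta^{\a}_{N,\phi}$ by $|G|\,\|\phi\|_{T}+2\,\mathrm{div}\,\phi_{\a}$ and forces equality by the sharpness clause in (3) (using that $\pm p^{*}K$ corresponds to the leading and trailing monomials, whose exponents differ by exactly this quantity once $b_3(N)=1$ is used in the $(1+b_3(N))\mathrm{div}\,\phi_{\a}=2\,\mathrm{div}\,\phi_{\a}$ normalization). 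Thus the hypotheses of Theorem \ref{mainthm} are satisfied for every $\a$, so $(N,\phi)$ fibers over $S^{1}$. Condition (1), combined with the fact that a Thurston--norm minimizing surface representing $\phi$ is now a fiber, identifies $K$ as the canonical class (up to the $S^1$-factor) of the Thurston symplectic form of \cite{Th76}. Finally, Theorem \ref{thm:mainresult} upgrades this existence to the prescribed cohomology class $\Omega$, using that $\phi$ lies in the open cone on a fibered face of the Thurston norm ball.

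The main obstacle is bookkeeping rather than deep input: one must carefully match the exponents appearing in the Meng--Taubes formula with the term $(1+b_3(N))\mathrm{div}\,\phi_{\a}=2\,\mathrm{div}\,\phi_{\a}$ appearing in Theorem \ref{mainthm}, and verify that the extremal condition in (3) corresponds precisely to monicness of $\Delta^{\a}_{N,\phi}$ under the projection formula for the transfer. A minor additional point is to ensure that the hypothesis $b_1(N)>1$ (rather than $b_1(N)\ge 1$) is used only where necessary, namely to avoid the well-known wall--crossing ambiguities of Seiberg--Witten invariants, as indicated in the statement by the reference to \cite{Vi99,Vi03} for the $b_1(N)=1$ case.
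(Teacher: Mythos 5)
Your proposal is correct and takes essentially the same approach as the paper intends: the paper states the Corollary without a detailed argument, deriving it from Theorem \ref{mainthm} together with the Meng--Taubes/Turaev identification and the results of \cite{FV07, FV08a}, and your reconstruction of that chain (Taubes' constraints in the forward direction; Meng--Taubes plus the covering identity $\Delta^\a_{N,\phi}=\Delta_{\tilde N,p^*\phi}$ to feed conditions (2)--(3) into Theorem \ref{mainthm} in the reverse direction; Theorem \ref{thm:mainresult} to realize the symplectic form) is precisely this route. One small point worth tightening in the last step: condition (1) alone does not single out $K$, since it is one linear equation --- the identification $K=\pm K_{\rm Thurston}$ really comes from conditions (2)--(3) for the trivial cover (uniqueness of the extremal basic class), with (1) then fixing the sign.
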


(Note that, under the hypotheses of the Corollary, all basic classes of $S^1 \times {\tilde N}$ are the pull--back of elements of $H^2({\tilde N};\Z)$.)\\

\noindent \textbf{Remark.}
A different approach to Conjecture \ref{taubes} involves a deeper investigation of
the consequence of the symplectic condition on $S^1 \times N$, that goes beyond the information
 encoded in Theorem \ref{thm:fv06}. A major breakthrough in this direction has recently been obtained
 by Kutluhan and Taubes (\cite{KT09}). They show that
  if $N$ is a 3--manifold  such that $S^1\times N$ is symplectic, under some cohomological assumption on the symplectic form,  then the Monopole Floer homology of $N$ behaves like the
 Monopole Floer homology of a fibered 3--manifold.
On the other hand it is known, due to the work of Ghiggini, Kronheimer and Mrowka, and Ni
that Monopole Floer homology detects fibered 3--manifolds
(\cite{Gh08,Ni09,KM08,Ni08}).
 The combination of  the above results proves in particular Conjecture \ref{taubes}
in the case that $b_1(N)=1$.


\subsection{Fibered 3--manifolds and finite solvable groups: outline of the proof}\label{strat}

In this subsection we will outline the strategy of the proof of Theorem \ref{mainthm}.
It is useful to introduce the following definition.
\begin{definition} Let $N$ be a $3$--manifold
with empty or toroidal boundary, and let $\phi \in H^1(N;\Z)$ be a nontrivial class. We say that $(N,\phi)$ satisfies Condition ($*$) if for
any homomorphism $\a:\pi_1(N)\to G$ to a finite group
the twisted Alexander polynomial $\Delta_{N,\phi}^{\a}\in \zt$ is monic
and \[  \deg(\Delta_{N,\phi}^{\a})= |G| \, \|\phi\|_{T} + (1+b_3(N)) \div \phi_{\a}.\]
\end{definition}

It is well--known (see \cite{McC01} for the closed case, and \ref{lem:prime} for the general case) that Condition ($*$) implies, using geometrization, that $N$ is prime, so we can restrict ourself to the case where $N$ is irreducible.

Note that McMullen \cite{McM02} showed that, when the class $\phi$ is primitive, the condition $\Delta_{N,\phi}\ne 0$ implies that there
exists a connected Thurston norm minimizing surface
$\S$ dual to $\phi$. It is well--known that to prove Theorem \ref{mainthm} it is sufficient to consider a primitive $\phi$,
 and we will assume that in the following. Denote $M = N\sm \nu \S$; the boundary of $M$ contains two copies $\S^{\pm}$ of $\S$
and throughout the paper we denote the inclusion induced maps $\S \to \S^\pm \to M$ by $\i_{\pm}$.

By Stallings' theorem \cite{St62} the surface $\S$ is a fiber of a fibration
$N\to S^1$ if and only if $\i_{\pm}:\pi_1(\S) \to \pi_1(M)$ are isomorphisms.
Hence to prove Theorem \ref{mainthm} we need to show that if $(N,\phi)$ satisfies Condition ($*$), then the
monomorphisms  $\i_{\pm}:\pi_1(\S) \to \pi_1(M)$ are in fact isomorphisms.
Using purely group theoretic arguments we are not able to show directly that Condition  ($*$)
implies the desired isomorphism; however, we have  the following result:

\begin{proposition}\label{thm:finitesolvable}
Assume that $(N,\phi)$ satisfies Condition  $(*)$ and that $\phi$ is primitive.
Let $\S\subset N$ be a connected Thurston norm minimizing surface dual to $\phi$ and let $\i$ be either of
the two inclusion maps of $\S$ into $M = N\sm \nu \S$. Then
$\i:\pi_1(\S)\to \pi_1(M)$ induces an isomorphism of the prosolvable completions.
\end{proposition}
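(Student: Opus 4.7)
The plan is to reduce the prosolvable statement to a collection of homological isomorphisms in finite solvable covers, and then apply a Stallings--style theorem for prosolvable completions.

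First, I would use that for any homomorphism $\a:\pi_1(N)\to G$ to a finite solvable group, if $p:\ti N\to N$ denotes the associated regular cover, then $\Delta_{N,\phi}^{\a}(t)=\Delta_{\ti N,p^{*}\phi}(t)$ and (by Gabai's theorem) the Thurston norm behaves multiplicatively under $p$. Consequently, Condition $(*)$ for $(N,\phi)$ forces the ordinary Alexander polynomial of $(\ti N,p^{*}\phi)$ to be monic, with top degree precisely matching the rank predicted by a norm--minimizing surface $\ti\S = p^{-1}(\S)$. In this sense, Condition $(*)$ is inherited by every finite solvable cover.

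Second, I would analyze the Mayer--Vietoris sequence for the infinite cyclic cover $\widehat{\ti N}$, built from $\Z$ copies of $\ti M = p^{-1}(M)$ glued along $\ti\S$:
\[
H_1(\ti\S;\Z)\otimes \zt \xrightarrow{\,t\i_+ - \i_-\,} H_1(\ti M;\Z)\otimes \zt \to H_1(\widehat{\ti N};\Z) \to H_0(\ti\S;\Z)\otimes \zt \to H_0(\ti M;\Z)\otimes \zt.
\]
Since the Alexander polynomial of $(\ti N,p^{*}\phi)$ is (up to a unit) the order of the Alexander module presented by $t\i_+-\i_-$, monicness of its leading and trailing coefficients, combined with the matched degree, forces $\i_\pm:H_1(\ti\S;\Z)\to H_1(\ti M;\Z)$ to be isomorphisms. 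Equivalently, $\i_\pm:\pi_1(\S)\to \pi_1(M)$ induces an isomorphism on $H_1$ with coefficients in $\Z[G]$ for every finite solvable quotient $G$ of $\pi_1(N)$.

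Third, I would invoke a Stallings--type theorem for prosolvable completions, proved by iterating Stallings' original $H_1$--iso/$H_2$--surjection criterion along the derived series. Essentiality of $\S$ in the Haken manifold $N$ gives $\pi_1$--injectivity of $\i_\pm$; meanwhile the relevant $H_2$--comparison is automatic because $\S$ is a surface and $M$ retracts onto a $2$--complex. Together with the $H_1$--isomorphisms above, these imply that $\i_\pm$ induces an isomorphism of the prosolvable completions of $\pi_1(\S)$ and $\pi_1(M)$.

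The main obstacle lies in Step~2: one must extract from the abstract monic--and--degree conditions an \emph{integral} (not merely rational) isomorphism $\i_\pm:H_1(\ti\S;\Z)\to H_1(\ti M;\Z)$, which requires a careful analysis of the Alexander module's torsion and of how the $H_0$ terms absorb part of the degree count. A secondary subtlety in Step~3 is the bookkeeping needed to ensure that finite solvable quotients of $\pi_1(N)$ control the prosolvable completion of the pair $(\pi_1(\S),\pi_1(M))$, and not only that of $\pi_1(N)$; this may require an auxiliary argument that every finite solvable quotient of $\pi_1(M)$ can be embedded in one arising from $\pi_1(N)$.
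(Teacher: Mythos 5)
Your outline captures the same high–level skeleton the paper uses: translate Condition $(*)$ via a Mayer–Vietoris argument into statements about $\i_\pm$ inducing isomorphisms on twisted $H_0$ and $H_1$ (this is Proposition~\ref{prop:sameh0h1}), and then iterate along the derived series to get an isomorphism of prosolvable completions. However, you have misidentified where the real difficulty lies. You flag as a ``secondary subtlety'' the fact that Condition~$(*)$ only controls twisted homology with coefficients in $\Z[\pi_1(N)/\tipi]$, i.e.\ coming from quotients of $\pi_1(N)$, whereas the prosolvable completion of the pair $(\pi_1(\S),\pi_1(M))$ is governed by quotients of $B=\pi_1(M)$. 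This is not a bookkeeping issue; it is the central obstruction, and there is no cheap ``embedding'' of an arbitrary finite solvable quotient of $B$ into a quotient of $\pi_1(N)$. For a fibered manifold, say, $B$ is a surface group with a wealth of solvable quotients that have nothing to do with $\pi_1(N)$.

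The paper's resolution (Propositions~\ref{prop:ind1} and \ref{prop:ind2}) is a genuine induction on derived length, with the following essential ingredient you do not supply. At stage $n$, given $\beta:B\to S$ with $\ell(S)\le n$, Lemma~\ref{lem:extend} builds an auxiliary epimorphism from $\pi_1(N)=\ll B,t \mid \i_-(A)=t\i_+(A)t^{-1}\rr$ onto a semidirect product $\Z/k\ltimes B/B(S)$. The existence of this extension is \emph{not} automatic: it relies on the inductive hypothesis $\statefs(n)$ to show that $\i_+$ and $\i_-$ both induce isomorphisms $A/A(S)\to B/B(S)$, so that the composition $\gamma:=\i_{+*}\circ(\i_{-*})^{-1}$ is an automorphism of the finite group $B/B(S)$, of some finite order $k$; only then can the HNN relator be satisfied in $\Z/k\ltimes B/B(S)$. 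One then applies Condition~$(*)$ to this specific quotient of $\pi_1(N)$, uses Proposition~\ref{prop:sameh0h1} and Corollary~\ref{cor:metabany} to get the homological isomorphisms for $\hat B=\ker(B\to B/B(S))$, and finally uses Lemma~\ref{lem:alsoiso} to transfer these to the (a priori larger) subgroup $\ti B=\ker\beta$. Without this construction your Step~3 has no way to reach twisted coefficients $\Z[S]$ for $S$ a quotient of $B$ that does not come from $\pi_1(N)$, and the proposed ``Stallings-type'' induction along the derived series cannot get off the ground past level zero. A secondary point: what you call ``Stallings' $H_1$-iso/$H_2$-surjection criterion'' is stated for the lower central series; adapting it to the derived series and to twisted coefficients is exactly the nontrivial content of Propositions~\ref{prop:ind1}--\ref{prop:ind2} and Corollaries~\ref{cor:group1}, \ref{cor:metabany}, so invoking it as a black box is circular here.
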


We refer to Section \ref{section:pro} for information regarding group completions.
Proposition \ref{thm:finitesolvable} translates the information from Condition ($*$) into information regarding the maps $\i_\pm:\pi_1(\S)\to \pi_1(M)$.
From a purely group theoretic point of view it is a difficult problem to decide whether a homomorphism which gives
rise to an isomorphism of prosolvable completions has to be an isomorphism itself
(cf. \cite{Gr70}, \cite{BG04}, \cite{AHKS07} and also Lemma \ref{lem:isometab}). But in our  3--dimensional setting we can use a recent result of Agol \cite{Ag08} to prove the following theorem.

\begin{theorem} \label{thm:agolintro}
Let $N$ be an irreducible 3--manifold with empty or toroidal boundary.
Let $\S\subset N$ be a connected Thurston norm minimizing surface. We write $M=N\sm \nu \S$.
Assume the following hold:
\bn
\item  the inclusion induced maps $\i_\pm : \pi_1(\S)\to \pi_1(M)$
give rise to isomorphisms of the respective
prosolvable completions, and
\item  $\pi_1(M)$ is residually finite solvable,
\en
then $\i_\pm:\pi_1(\S)\to \pi_1(M)$ are isomorphisms, hence $M = \Sigma \times I$.
\end{theorem}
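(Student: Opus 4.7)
The plan is to invoke Agol's virtual fibering criterion \cite{Ag08} on $N$ and then descend the resulting fibered structure of a cover back down to a product structure on $M$, using the prosolvable isomorphism in an essential way. First, I would reformulate hypothesis (1) concretely: it implies that for every finite solvable cover $p \colon \ti M \to M$, the preimages $\ti\S^\pm = p^{-1}(\S^\pm)$ are connected, since the prosolvable isomorphism forces the composition $\pi_1(\S) \to \pi_1(M) \to Q$ to be surjective for every finite solvable quotient $Q$ of $\pi_1(M)$.

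The next step is to transfer residual information from $M$ to $N$. Since $\pi_1(N)$ is an HNN extension of $\pi_1(M)$ along $\i_\pm(\pi_1(\S))$ with stable letter dual to $\phi$, and surface groups are RFRS, hypotheses (1) and (2) should combine to yield a finite index subgroup of $\pi_1(N)$ satisfying Agol's RFRS condition. Applying Agol's theorem to $(N,\phi)$ then produces a finite solvable cover $q \colon \ti N \to N$ in which the pullback $q^* \phi$ lies in the closure of a fibered face of the Thurston norm ball of $\ti N$. The preimage $\ti\S = q^{-1}(\S)$ is Thurston norm minimizing for $q^* \phi$ by Gabai's multiplicativity theorem, and by the first step it is connected; since the fiber representing a class in the closure of a fibered cone is unique up to isotopy in its homology class, $\ti\S$ is isotopic to the fiber of a fibration $\ti N \to S^1$.

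Cutting $\ti N$ open along $\ti\S$ then gives $\ti M = \ti\S \times I$, and the deck group $G$ of $\ti M \to M$ preserves $\partial \ti M = \ti\S^+ \sqcup \ti\S^-$. Because $\ti\S^+$ and $\ti\S^-$ descend to the distinct boundary components $\S^\pm$ of $M$, no element of $G$ swaps them, so the $G$--action is of the form $g\cdot (x,t) = (gx,t)$ up to isotopy, and we conclude $M = \ti M/G = (\ti\S/G) \times I = \S \times I$, i.e.\ $\i_\pm$ are isomorphisms. The main obstacle I anticipate is the middle step: extracting a finite--index RFRS subgroup of $\pi_1(N)$ in the precise sense required by \cite{Ag08} from residual finite solvability of $\pi_1(M)$ together with the prosolvable isomorphism, since Agol's RFRS condition demands a specific nested tower with $H_1$--surjection conditions that is a priori stronger than mere residual finite solvability.
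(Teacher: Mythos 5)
Your proposal diverges from the paper's route in a structurally significant way, and the divergence introduces a genuine gap. The paper never applies Agol's theorem to $N$: instead, it works with the taut sutured manifold $(M,\gamma)$ and its double $W = DM_\gamma$ (this is Theorem \ref{thm:no} in the text). It shows that $\pi_1(M)$ itself is RFRS — not merely residually finite solvable — by observing that the prosolvable isomorphism forces $H_1$ of each finite solvable cover of $M$ to agree with $H_1$ of a cover of the surface, hence be torsion--free. Then Agol's sutured--manifold theorem (Theorem \ref{thm:agol2}) applies directly to $(M,\gamma)$, yielding a finite solvable cover of $W$ in which the pullback of $[R_-(\gamma)]$ lies in the \emph{closure} of a fibered cone. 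This cleanly sidesteps your worry about extracting an RFRS structure on $\pi_1(N)$, which is a legitimate concern: RFRS is not obviously inherited by the HNN extension $\pi_1(N)$ of $\pi_1(M)$, and the paper never tries to establish it.

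The more serious gap is the step where you conclude from ``$q^*\phi$ lies in the closure of a fibered cone'' that $\ti\S$ is a fiber. This is simply false in general: classes on the \emph{boundary} of a fibered cone are typically not fibered, and Agol's theorem only guarantees the closure. The entire point of the paper's Lemma \ref{lem:no} is to promote ``closure of the fibered cone'' to ``interior of the fibered cone'' for the specific class $[\S^-]$, and the argument is genuinely delicate: the double $W$ carries a canonical involution $\tau$ fixing $R(\gamma)$; Lemma \ref{lem:alexnorm} (proved via Corollary \ref{cor:isometab}, which in turn rests on Lemma \ref{lem:isometab} on metabelian groups) shows that $W$ has the same Alexander module as $\S\times S^1$, so $PD([\S^-])$ lies in the cone $D$ on the interior of a top--dimensional face of the Alexander norm ball; McMullen's properties (d) and (e) of the Alexander norm then force the fibered cone $C$ to satisfy $\tau(C) = C$; and since $\tau(PD([\S^-])) = PD([\S^-])$ while $PD([\S^-])$ is squeezed between two $\tau$--conjugate fibered classes in $C$, convexity of $C$ gives $PD([\S^-]) \in C$. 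Your proposal has nothing playing the role of this symmetry argument, and without it there is no way to cross from the boundary of the fibered cone into its interior. The rest of your outline — connectivity of $\ti\S^\pm$ from surjectivity onto solvable quotients, and the descent from the cover to $M$ — is consistent with what the paper does, but those steps are downstream of the missing one.
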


In light of Proposition \ref{thm:finitesolvable}, the remaining obstacle
for the proof of Theorem \ref{mainthm}
 is the condition in Theorem \ref{thm:agolintro}  that $\pi_1(M)$
has to be residually finite solvable.
It is well--known  that linear groups (and hence in particular hyperbolic 3--manifolds groups) are virtually residually $p$
for all but finitely many primes $p$ (cf. e.g. \cite[Theorem~4.7]{We73} or \cite[Window~7,~Proposition~9]{LS03}),
in particular they are residually finite solvable. Thurston conjectured that 3--manifold groups in general are
linear (cf. \cite[Problem~3.33]{Ki}), but this is still an open problem.
Using the recent proof of the geometrization conjecture (cf. e.g. \cite{MT07}) we will prove the following result, which will be enough for our purposes.

\begin{theorem}\label{thm:jsjresp}\label{thm:virtpintro}
Let $N$ be a closed
prime 3--manifold. Then for all but finitely many primes $p$ there
exists a finite cover $N'$ of $N$ such that the fundamental group of any component of the JSJ decomposition of
$N'$ is residually a $p$--group.
\end{theorem}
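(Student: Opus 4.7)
The plan is to exploit geometrization: each JSJ piece of a prime 3--manifold is either hyperbolic or Seifert fibered, and I treat each type separately before combining the resulting covers. The first step is to pass to a finite regular cover $N_0\to N$ in which the JSJ tori are two-sided, and each Seifert fibered JSJ piece is a genuine $S^1$--bundle over a closed orientable surface rather than an orbifold. Such a cover is obtained by combining a common finite manifold cover of the base orbifolds (which exists since every $2$--orbifold has a finite manifold cover) with a cover trivializing the action on the JSJ graph and on fiber orientations. From now on it suffices to work with the JSJ pieces of $N_0$.

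For a hyperbolic JSJ piece $M_i$ of $N_0$, the fundamental group is a finitely generated subgroup of $\mbox{PSL}(2,\C)$, and hence linear over a number field. The classical theorem of Mal'cev recalled in \cite[Theorem~4.7]{We73} then gives that every finitely generated linear group is virtually residually $p$ for all but finitely many primes $p$. Taking the union of the (finite) exceptional sets over the finitely many hyperbolic pieces of $N_0$ produces a finite set of ``bad'' primes $S$.

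For a Seifert fibered JSJ piece $M_i$ of $N_0$, the fundamental group fits into a central extension
\[
1 \to \Z \to \pi_1(M_i) \to \pi_1(B_i) \to 1,
\]
where $B_i$ is the closed orientable base surface. Surface groups are residually $p$ for every prime $p$ by a classical theorem of Baumslag. Given any $p$, I pass to a further finite cover $M_i^{(n)}\to M_i$ in the fiber direction so that the Euler number of the bundle is divisible by $p^n$; the extension class in $H^2(\pi_1(B_i);\Z/p^n)$ then vanishes, so the quotient $\pi_1(M_i^{(n)})/\langle t^{p^n}\rangle$ splits as $\pi_1(B_i)\times \Z/p^n$. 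An elementary check now shows that $\pi_1(M_i^{(n)})$ is itself residually $p$: elements with nontrivial image in $\pi_1(B_i)$ are separated in a $p$--quotient of the base, while central elements $t^k$ with $0<|k|<p^n$ are separated by projection to $\Z/p^n$. Hence Seifert pieces add no new primes to $S$.

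The main obstacle is the gluing step: for each $p\notin S$, the piece-wise covers above must be assembled into a \emph{single} finite cover $N'\to N_0$ whose restriction to each JSJ piece factors through the prescribed one. The strategy is to arrange all piece covers to be regular with a finite $p$--group as deck transformation group, and then to reconcile them on the JSJ tori by taking a common finite abelian refinement of the $\Z^2$--covers they induce there: any two finite abelian covers of $T^2$ admit a common further abelian cover, and such a refinement can be pulled back into the adjacent piece by passing to a sufficiently fine piece cover of $p$--power degree. Because the dual graph of the JSJ decomposition of $N_0$ is finite, iterating this matching around the graph terminates in finitely many steps and yields the desired cover $N'$ of $N_0$, and hence of $N$, whose JSJ pieces all have residually $p$ fundamental group.
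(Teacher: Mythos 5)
Your treatment of the individual JSJ pieces is in the right spirit and roughly parallels the paper (hyperbolic pieces via linearity and Mal'cev/Wehrfritz, Seifert pieces via an explicit cover), although your reduction to $S^1$--bundles over \emph{closed} orientable surfaces is off: in a nontrivial JSJ decomposition the Seifert pieces have boundary tori, so the base is a surface with boundary, and the paper instead passes to a cover of the form $S^1\times F$ with $F$ free, invoking residual $p$--ness of free groups. That is a minor point. The real gap is in the gluing step.

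Your proposal to reconcile the piece covers by ``taking a common finite abelian refinement of the $\Z^2$--covers they induce'' on each JSJ torus, pulling that refinement back into the adjacent piece, and iterating around the dual graph, does not obviously terminate: the JSJ graph can contain cycles, and adjusting a piece cover to match a prescribed lattice on one boundary torus may change the lattice it induces on its other boundary tori, undoing earlier matchings. Moreover the step ``such a refinement can be pulled back into the adjacent piece by passing to a sufficiently fine piece cover of $p$--power degree'' is precisely the kind of peripheral control that needs an argument — it is not automatic from residual $p$--ness of the piece group. The paper sidesteps both issues by building into the construction of the piece covers a sharp peripheral condition, called Property $(p)$: the chosen finite-index subgroup $\G_i\subset\pi_1(N_i)$ is not only residually $p$ but meets every boundary torus subgroup $\pi_1(T)\cong\Z^2$ in \emph{the unique characteristic subgroup of index $p^2$}, namely $p\Z\oplus p\Z$. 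Since this subgroup is canonical, the piece covers automatically match on every JSJ torus without any iteration, and one may then apply Hempel's gluing theorem \cite[Theorem~2.2]{He87} directly to produce the global cover $N'$. Achieving Property $(p)$ for hyperbolic pieces requires rerunning Hempel's congruence-subgroup argument with the maximal ideal $\mm$ chosen so that the boundary cusps reduce correctly, and for Seifert pieces it is arranged in the choice of the cover $S^1\times F$. Without this uniqueness the gluing is genuinely problematic, so your proof as written has a gap at exactly the step you flagged as ``the main obstacle.''
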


 We can now deduce Theorem \ref{mainthm} as follows: We first show in Lemmas
 \ref{lem:prime} and \ref{lem:closed} that it suffices to show Theorem \ref{mainthm} for closed prime 3--manifolds.
 Theorem \ref{mainthm} in that situation
 now follows from combining Theorems \ref{thm:finitesolvable}, \ref{thm:agolintro} and \ref{thm:jsjresp} with a more technical theorem which allows us to treat the various JSJ pieces separately
 (cf. Theorem \ref{thm:miiso}).
\\

Added in proof: In a very recent  paper (\cite{AF10}) Matthias Aschenbrenner and the first author showed that
any 3--manifold group is  virtually residually $p$. This simplifies the proof of Theorem \ref{mainthm} as outlined in \cite{FV10}. 
\\

This paper is structured as follows. In Section \ref{section:basics} we recall the definition of twisted Alexander polynomials and some basics regarding
completions of groups. In Section \ref{section:monic} we will prove Proposition \ref{thm:finitesolvable} and in Section  \ref{section:product} we give the proof of Theorem \ref{thm:agolintro}.
In Section \ref{section:virtp} we prove Theorem \ref{thm:jsjresp}
and in Section \ref{section:jsj} we provide the proof for Theorem \ref{thm:miiso}.
Finally in Section \ref{section:mainthm} we complete the proof of Theorem \ref{mainthm}.
\\

\noindent \textbf{Conventions and notations.}
 Throughout the paper, unless otherwise stated, we will assume that all
manifolds are oriented and connected, and all homology and cohomology groups have integer coefficients.
Furthermore all surfaces are assumed to be properly embedded and all spaces are compact and connected,
unless it says explicitly otherwise.
 The derived series of a group
$G$ is defined inductively by $G^{(0)}=G$ and
 $G^{(n+1)}=[G^{(n)},G^{(n)}]$.
\\

\noindent \textbf{Acknowledgments.}
We would like to thank
Ian Agol,
Matthias Aschenbrenner,
Steve Boyer,
Paolo Ghiggini,
Taehee Kim,
Marc Lackenby,
Alexander Lubotzky,
Kent Orr,
Saul Schleimer,
Jeremy van Horn--Morris and
Genevieve Walsh
for many helpful comments and conversations.
We also would like to thank the referee for suggesting several improvements to the paper and pointing out various inaccuracies.
\\

\section{Preliminaries: Twisted invariants and completions of groups}\label{section:basics}

\subsection{Twisted homology}

Let $X$ be  a CW--complex  with base point $x_0$.  Let $R$ be a commutative ring, $V$ a module over $R$ and
$\a:\pi_1(X,x_0)\to \aut_R(V)$  a representation.
Let $\ti{X}$ be the universal cover of $X$. Note that $\pi_{1}(X,x_0)$ acts on the left on $\ti{X}$ as
group of deck transformations. The cellular chain groups $C_*(\ti{X})$ are in a natural way right
$\pi_1(X)$--modules, with the right action on $C_{*}(\ti{X})$ defined via $\sigma \cdot g :=
g^{-1}\sigma$, for $\sigma \in C_{*}(\ti{X})$. We can form by tensoring the chain complex
$C_*(\ti{X})\otimes_{\Z[\pi_1(X,x_0)]}V$,
which is  a complex of  $R$--modules.
Now define $H_{i}(X;V):=
H_i(C_*(\ti{X})\otimes_{\Z[\pi_1(X,x_0)]}V)$.
The isomorphism type of the $R$--module $H_{i}(X;V)$  does not depend on the choice of the base point,
in fact it only  depends  on the homotopy type of  $X$ and the isomorphism type of the representation.

In this paper we will also frequently consider twisted homology for a finitely generated group $\Gamma$; its definition can be reduced to the one above by looking at the twisted homology of the Eilenberg-Maclane space $K(\Gamma,1)$.

The most common type of presentation we consider in this paper is as follows:
Let $X$ be a topological space,  $\a:\pi_1(X)\to G$ a homomorphism to a group $G$
and  $H\subset G$  a subgroup of finite index.
Then we get a natural action of $\pi_1(X)$
on $\aut_{\Z}(\Z[G/H])$ by left--multiplication, which gives rise to the homology groups $H_i(X;\Z[G/H])$.

We will now study the $\Z[\pi_1(X)]$--module $\Z[G/H]$ in more detail.
We write $C:=\a(\pi_1(X))$.
  Consider the set of double cosets $C\backslash G /H$. By definition  $g,g'\in G$ represent the same equivalence class if and only if there exist $c,c'\in C$ and $h,h'\in H$ such that
$c gh=c'g'h'$.
Note that $g_1,\dots,g_k\in G$ are a complete set of representatives of $C\backslash G /H$ if and only if $G$ is the disjoint union of $C g_1H,\dots,C g_kH$.
The first part of the following lemma is an immediate consequence of \cite[II.5.2.]{Br94}, the second part follows either from Shapiro's lemma
or a straightforward calculation.

\begin{lemma}\label{lem:group2}
Let  $g_1,\dots,g_k\in G$ be a  set of representatives for the equivalence classes $C\backslash G /H$.
For $i=1,\dots,k$ write  ${\tilde C}_i = C \cap g_i H g_i^{-1}$.
We then have the following isomorphisms of left $\Z[C]$--modules:
\[ \Z[G/H] \cong  \bigoplus_{i=1}^k \Z[C/{\tilde C}_i].\]
In particular $H_0(X;\Z[G/H])$ is a free abelian group of rank $k=|C\backslash G/H|$.
\end{lemma}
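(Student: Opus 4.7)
The plan is to decompose the left $C$-set $G/H$ into $C$-orbits, identify each orbit with a transitive $C$-set $C/\tilde{C}_i$, and then pass to $\Z$-coefficients. Finally $H_0$ will be computed as $C$-coinvariants.

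First I would observe that $C$ acts on $G/H$ on the left in the obvious way, and that two cosets $gH,g'H$ lie in the same $C$-orbit if and only if $g'\in CgH$, i.e.\ if and only if $g$ and $g'$ represent the same double coset in $C\backslash G/H$. Hence the chosen representatives $g_1,\dots,g_k$ yield a decomposition of $G/H$ into disjoint $C$-orbits
\[ G/H \;=\; \bigsqcup_{i=1}^k C\cdot g_iH. \]
For each $i$, the stabilizer of $g_iH$ under the left $C$-action is the set of $c\in C$ with $g_i^{-1}cg_i\in H$, i.e.\ $\tilde{C}_i=C\cap g_iHg_i^{-1}$. Consequently the orbit map $c\mapsto c\,g_iH$ induces an isomorphism of left $C$-sets $C/\tilde{C}_i\xrightarrow{\cong} C\cdot g_iH$. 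Applying the free abelian group functor and taking the direct sum over the orbits gives the desired isomorphism of $\Z[C]$-modules
\[ \Z[G/H] \;\cong\; \bigoplus_{i=1}^k \Z[C/\tilde{C}_i]. \]
This is exactly the content of \cite[II.5.2]{Br94}, applied to the permutation module on $G/H$.

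For the statement on $H_0$, I would use that $X$ is connected, so $\pi_1(X)$ acts on any $\Z[C]$-module $M$ through the surjection $\pi_1(X)\twoheadrightarrow C$, giving $H_0(X;M)=M_{\pi_1(X)}=M_C$. Since $C$ acts transitively on $C/\tilde{C}_i$, the coinvariants $\Z[C/\tilde{C}_i]_C$ are the free abelian group on a single generator, hence $\cong \Z$. Summing over $i=1,\dots,k$ yields $H_0(X;\Z[G/H])\cong \Z^k$, a free abelian group of rank $k=|C\backslash G/H|$.

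There is no real obstacle here: the only step requiring care is the stabilizer computation, which is routine. The whole argument is essentially a bookkeeping exercise in the elementary theory of permutation modules, so I would expect the entire proof to be only a few lines.
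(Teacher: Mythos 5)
Your proof is correct and follows the same route the paper intends: the paper simply cites \cite[II.5.2]{Br94} for the module decomposition and notes that the $H_0$ statement follows from Shapiro's lemma or a direct calculation, and your orbit--stabilizer argument together with the identification $H_0(X;M)=M_C$ is exactly that direct calculation spelled out. No gaps.
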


\subsection{Induced maps on low dimensional homology groups}
In this section we will give criteria when maps between groups give rise to isomorphisms between low dimensional twisted homology groups.
We start out with a study of the induced maps on 0--th twisted homology groups.

\begin{lemma} \label{lem:group1}
Let $\varphi :A\to B$ be a monomorphism of finitely generated groups.
Suppose that $B$ is a subgroup of a group $\pi$ and let $\tipi\subset \pi$ be a
 subgroup of finite index.
 Let  $g_1,\dots,g_k\in \pi$ be a  set of representatives for the equivalence classes $B\backslash \pi /\tipi$.
For $i=1,\dots,k$ we write $\ti{B}_i=B\cap g_i\ti{\pi}g_i^{-1}$ and $\ti{A}_i=\varphi^{-1}(\ti{B}_i)$.
Then
 \[ \varphi_{*}:H_0(A;\Z[{\pi/\ti{\pi}}])\to H_0(B;\Z[{\pi/\ti{\pi}}])\]
 is an epimorphism of free abelian groups and it is an isomorphism if and only if
$\varphi: A/\ti{A}_i \to B/\ti{B}_i$ is a bijection  for any $i$.
\end{lemma}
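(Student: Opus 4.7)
The plan is to reduce the question to a summand-by-summand calculation of coinvariants for a natural $\Z[B]$-module decomposition of $\Z[\pi/\ti\pi]$.

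First I note that for any group $G$ and left $\Z[G]$-module $M$ one has $H_0(G;M)=M_G$, the module of coinvariants, and the map $\varphi_{*}$ is the one induced on coinvariants by the identity on $\Z[\pi/\ti\pi]$ together with $\varphi:A\hookrightarrow B$. Applying Lemma \ref{lem:group2} with $G=\pi$, $H=\ti\pi$ and $C=B$ yields a natural isomorphism of left $\Z[B]$-modules
\[ \Z[\pi/\ti\pi]\ \cong\ \bigoplus_{i=1}^k \Z[B/\ti{B}_i]. \]
Since $B$ acts transitively on $B/\ti{B}_i$, the $B$-coinvariants of each summand are $\Z$, so $H_0(B;\Z[\pi/\ti\pi])\cong \Z^k$ is free abelian of rank $k$. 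Viewing the summand $\Z[B/\ti{B}_i]$ as a left $\Z[A]$-module through $\varphi$, its $A$-coinvariants form the free abelian group $\Z[\varphi(A)\backslash B/\ti{B}_i]$ on the set of $\varphi(A)$-orbits in $B/\ti{B}_i$, whence
\[ H_0(A;\Z[\pi/\ti\pi])\ \cong\ \bigoplus_{i=1}^k \Z[\varphi(A)\backslash B/\ti{B}_i], \]
and under these identifications $\varphi_{*}$ is the direct sum of the surjections $\Z[\varphi(A)\backslash B/\ti{B}_i]\twoheadrightarrow \Z$ sending each basis element to $1$.

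In particular $\varphi_{*}$ is always surjective, which proves the epimorphism claim; and it is an isomorphism if and only if $|\varphi(A)\backslash B/\ti{B}_i|=1$ for every $i$, equivalently $\varphi(A)\cdot \ti{B}_i = B$ for every $i$. To finish, I identify this with the stated bijectivity condition. The induced map $\varphi:A/\ti{A}_i\to B/\ti{B}_i$ on left coset spaces is well-defined, and its surjectivity is precisely $\varphi(A)\ti{B}_i=B$; its injectivity is automatic from $\ti{A}_i=\varphi^{-1}(\ti{B}_i)$, since $\varphi(a)\ti{B}_i=\varphi(a')\ti{B}_i$ gives $\varphi(a^{-1}a')\in \ti{B}_i$, hence $a^{-1}a'\in \ti{A}_i$ and $a\ti{A}_i=a'\ti{A}_i$. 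Thus bijectivity of $\varphi:A/\ti{A}_i\to B/\ti{B}_i$ for every $i$ is equivalent to $\varphi_{*}$ being an isomorphism.

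The argument is essentially bookkeeping once Lemma \ref{lem:group2} is available; the only point requiring a small amount of care is to verify that the $\Z[B]$-module decomposition of $\Z[\pi/\ti\pi]$ pulls back along $\varphi$ to the corresponding $\Z[A]$-module decomposition, which then lets the computation of $H_0(A;-)$ proceed summand-wise. No genuine obstacle is anticipated, and the monomorphism hypothesis on $\varphi$ is never used in an essential way.
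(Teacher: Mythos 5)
Your proof is correct and follows essentially the same approach as the paper: apply Lemma \ref{lem:group2} to decompose $\Z[\pi/\tipi]$ as a direct sum of $\Z[B/\ti{B}_i]$, compute the $0$-th homology summand-wise as double-coset counts, and reduce the isomorphism condition to $|\varphi(A)\backslash B/\ti{B}_i|=1$ for all $i$, which is the stated bijectivity. The only cosmetic difference is that you establish surjectivity of $\varphi_*$ directly from the explicit coinvariants description rather than citing the general fact that induced maps on $H_0$ are surjective, and you avoid reducing to the case $A\subset B$; both proofs are otherwise the same.
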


\begin{proof}
It is well--known that the induced map on 0--th twisted homology groups is always
surjective (cf. e.g. \cite[Section~6]{HS97}) and by Lemma \ref{lem:group2} both groups are free abelian groups.
Now note that without loss of generality we can assume that $A\subset B$ and that $\varphi$ is the inclusion map.
It follows from Lemma \ref{lem:group2} that $H_0(B;\zpp)$ is a free abelian group of rank $k=|B\backslash \pi/\tipi|$.
By the same Lemma we also have
\[ \Z[\pi/\tipi] \cong  \bigoplus_{i=1}^k \Z[B/{\tilde B}_i]\]
as left $\Z[B]$--modules  and hence also as left $\Z[A]$--modules.
By applying Lemma \ref{lem:group2} to the $\Z[A]$--modules $\Z[B/{\tilde B}_i]$
we see that $H_0(A;\zpp)$ is a free abelian group of rank $k$ if and only if
$|A\backslash B/\ti{B}_i|=1$  for any $i$.
It is straightforward to see that this is equivalent to $A/\ti{A}_i \to B/\ti{B}_i$ being a bijection  for any $i$.
\end{proof}

We will several times make use of the following corollary.

\begin{corollary}\label{cor:group1}
Let $\varphi :A\to B$ be a monomorphism of finitely generated groups.
Let $\b:B\to G$  be a homomorphism to a finite group.
Then
 \[ \varphi_{*}:H_0(A;\Z[G])\to H_0(B;\Z[G]) \] is an epimorphism of free abelian groups
 and it is an isomorphism
 if and only if
 \[ \im\{A\to B\to G\}=\im\{B\to G\}.\]
\end{corollary}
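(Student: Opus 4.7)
The plan is to deduce the corollary from Lemma \ref{lem:group2} applied in the simplest possible situation, namely with the finite subgroup $H \subset G$ being trivial. The left $\Z[\pi_1(X)]$-module structure on $\Z[G/H] = \Z[G]$ is inherited by $\Z[B]$ (via $\b$) and by $\Z[A]$ (via $\b\circ\varphi$), and since the action factors through finite subgroups of $G$, this reduces the statement to an elementary calculation of coinvariants.

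First I would set $C = \b(B) \subset G$ and $C_0 = \b(\varphi(A)) \subset C$, both finite groups. Choose a set of representatives $g_1,\dots,g_k$ for $C\backslash G$, so $k = |G|/|C|$. By Lemma \ref{lem:group2} (applied with $H$ trivial), there is an isomorphism of left $\Z[C]$-modules
\[
\Z[G] \;\cong\; \bigoplus_{i=1}^{k} \Z[C],
\]
and this isomorphism is compatible with the $\Z[B]$- and $\Z[A]$-module structures via $\b$ and $\b\circ\varphi$ respectively. Taking $B$-coinvariants of $\Z[C]$ yields $\Z$ (because $B$ acts transitively on $C$ through the surjection $\b\colon B\to C$), so
\[
H_0(B;\Z[G]) \;\cong\; \Z^{k}, \qquad k = |C\backslash G|.
\]
On the other hand, $A$ acts on each summand $\Z[C]$ through $C_0 \subset C$, and a further application of Lemma \ref{lem:group2} (or a direct calculation) gives $H_0(A;\Z[C]) \cong \Z[C_0\backslash C]$, a free abelian group of rank $|C|/|C_0|$. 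Summing over the $k$ summands,
\[
H_0(A;\Z[G]) \;\cong\; \Z^{|G|/|C_0|} \;\cong\; \Z[C_0\backslash G].
\]

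Finally, I would identify the induced map. Under the identifications above the map $\varphi_*$ is induced by sending the class of $g \in G$ in $C_0\backslash G$ to its class in $C\backslash G$; equivalently, it is the $\Z$-linearisation of the natural projection $C_0\backslash G \twoheadrightarrow C\backslash G$. This projection is always surjective, which gives that $\varphi_*$ is an epimorphism between free abelian groups of ranks $|G|/|C_0|$ and $|G|/|C|$. It is a bijection precisely when $C_0 = C$, that is, when $\im(\b\circ\varphi) = \im(\b)$. Since this equality is the content of $\im\{A\to B\to G\} = \im\{B\to G\}$, the corollary follows. There is no essential obstacle here; the only point requiring slight care is bookkeeping the compatibility of the $\Z[C]$-module decomposition with the $\Z[A]$-action, which is immediate from the fact that $\b\circ\varphi$ factors through $C_0 \hookrightarrow C$.
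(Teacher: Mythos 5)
Your proof is correct, and it takes a genuinely different route from the paper's. The paper's proof is a one-line reduction to Lemma \ref{lem:group1}: set $\pi'=B\times G$, $\tipi'=B$, embed $B$ diagonally in $\pi'$ via $g\mapsto(g,\beta(g))$, and apply that lemma directly; the hypotheses match because $B\backslash \pi'/\tipi'\cong \beta(B)\backslash G$ and the $\ti B_i$, $\ti A_i$ become precisely the preimages of the point stabilizers. Your approach instead computes the two $H_0$'s explicitly as $\Z$-linearisations of coset spaces, using Lemma \ref{lem:group2} (with $H$ trivial) to get the decompositions $H_0(B;\Z[G])\cong\Z[C\backslash G]$ and $H_0(A;\Z[G])\cong\Z[C_0\backslash G]$, and then observes that $\varphi_*$ is the natural projection $\Z[C_0\backslash G]\twoheadrightarrow\Z[C\backslash G]$. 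Your route is more transparent and essentially self-contained — indeed $H_0(X;\Z[G])=\Z[G]_{\pi_1(X)}=\Z[\im(\alpha)\backslash G]$ can be seen by hand, so even Lemma \ref{lem:group2} is dispensable here. The paper's route is shorter on the page and reuses already-established machinery, but it obscures what the isomorphism actually looks like; yours makes the rank count $|G|/|C_0|$ versus $|G|/|C|$ and the ``isomorphism iff $C_0=C$'' criterion visibly concrete. Both are valid; yours is arguably the better proof for a reader wanting to see what $\varphi_*$ does.
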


\begin{proof}
Let $\pi'=B\times G$ and $\tipi'=B$. We can then apply Lemma \ref{lem:group1}
to $A'=A, B'=\{ (g,\b(g)) \, |\, g\in B\}\subset \pi'$ and $\varphi'(a)=(\varphi(a),\b(\varphi(a)), a\in A'$.
It is straightforward to verify that the desired equivalence of statements follows.
\end{proof}

We now turn to the question when group homomorphisms induce isomorphisms of the 0--th and the first twisted homology groups at the same time.

\begin{lemma}\label{lem:metabany}
Let $\varphi :A\to B$ be a monomorphism of finitely generated groups.
Suppose that $B$ is a subgroup of a group $\pi$ and let $\tipi\subset \pi$ be a
 subgroup of finite index.
 Let  $g_1,\dots,g_k\in \pi$ be a  set of representatives for the equivalence classes $B\backslash \pi /\tipi$.
For $i=1,\dots,k$ we write $\ti{B}_i=B\cap g_i\ti{\pi}g_i^{-1}$ and $\ti{A}_i=\varphi^{-1}(\ti{B}_i)$.
Then
 \[ \varphi_{*}:H_i(A;\Z[{\pi/\ti{\pi}}])\to H_i(B;\Z[{\pi/\ti{\pi}}]) \] is an isomorphism for $i=0$ and $i=1$
 if and only if the following two conditions are satisfied:
 \bn
\item $\varphi: A/\ti{A}_i \to B/\ti{B}_i$ is a bijection for any $i$,
\item $\varphi:A/ [ \ti{A}_i,\ti{A}_i] \to B/[\ti{B}_i,\ti{B}_i]$ is a bijection for any $i$.
\en
\end{lemma}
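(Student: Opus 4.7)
My plan is to use Lemma \ref{lem:group2} to decompose the coefficient module into simpler pieces, and then apply Shapiro's lemma on each summand to reduce the question to the abelianizations of $\ti{A}_i$ and $\ti{B}_i$. Specifically, by Lemma \ref{lem:group2} we have
$$\Z[\pi/\tipi] \cong \bigoplus_{i=1}^k \Z[B/\ti{B}_i]$$
as $\Z[B]$-modules, hence also as $\Z[A]$-modules via $\varphi$; the induced map on $H_j(-;\Z[\pi/\tipi])$ then splits as a direct sum of the maps $H_j(A; \Z[B/\ti{B}_i]) \to H_j(B; \Z[B/\ti{B}_i])$ for $i = 1, \dots, k$.

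At the $H_0$ level, Lemma \ref{lem:group1} directly shows that the induced map is an isomorphism if and only if condition (1) holds. I would then assume condition (1) is satisfied, and observe that for each $i$ the action of $A$ on $B/\ti{B}_i$ via $\varphi$ is transitive, with stabilizer of the trivial coset equal to $\varphi^{-1}(\ti{B}_i) = \ti{A}_i$; hence $\Z[B/\ti{B}_i] \cong \Z[A/\ti{A}_i]$ as $\Z[A]$-modules. Shapiro's lemma then supplies natural isomorphisms
$$H_1(A; \Z[B/\ti{B}_i]) \cong H_1(\ti{A}_i; \Z) = \ti{A}_i^{ab} \quad \text{and} \quad H_1(B; \Z[B/\ti{B}_i]) \cong H_1(\ti{B}_i; \Z) = \ti{B}_i^{ab},$$
under which $\varphi_*$ becomes the abelianization of $\varphi|_{\ti{A}_i}\colon \ti{A}_i \to \ti{B}_i$. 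Thus, granted condition (1), the map on $H_1$ is an isomorphism if and only if $\ti{A}_i^{ab} \to \ti{B}_i^{ab}$ is an isomorphism for every $i$.

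Finally, I would match this latter criterion with condition (2) via a simple fiber analysis. The projection of left coset sets $A/[\ti{A}_i, \ti{A}_i] \to A/\ti{A}_i$ has fiber over $a \ti{A}_i$ canonically in bijection with $\ti{A}_i^{ab}$ via $a\ti{a}[\ti{A}_i, \ti{A}_i] \leftrightarrow \ti{a}[\ti{A}_i, \ti{A}_i]$, and analogously for $B$; the map induced by $\varphi$ covers the base map $A/\ti{A}_i \to B/\ti{B}_i$ and restricts on each fiber to the canonical homomorphism $\ti{A}_i^{ab} \to \ti{B}_i^{ab}$. Under condition (1) the base map is a bijection, so condition (2) holds if and only if this fiber map is a bijection for every $i$, which precisely matches the criterion obtained above. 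The only subtlety requiring some care is the naturality of Shapiro's lemma with respect to $\varphi$, but this follows from its standard functorial formulation applied to the pair $(A, \ti{A}_i) \to (B, \ti{B}_i)$.
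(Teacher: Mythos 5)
Your proposal is correct and follows essentially the same route as the paper: decompose $\Z[\pi/\tipi]\cong\bigoplus_i\Z[B/\ti{B}_i]$ via Lemma \ref{lem:group2}, dispose of $H_0$ via Lemma \ref{lem:group1}, and under condition (1) use Shapiro's lemma to identify the $H_1$ map with $\ti{A}_i/[\ti{A}_i,\ti{A}_i]\to\ti{B}_i/[\ti{B}_i,\ti{B}_i]$. Your explicit fiber analysis of $A/[\ti{A}_i,\ti{A}_i]\to A/\ti{A}_i$ is just an unpacking of the paper's appeal to the five lemma for the exact sequences of pointed sets $1\to\ti{A}_i/[\ti{A}_i,\ti{A}_i]\to A/[\ti{A}_i,\ti{A}_i]\to A/\ti{A}_i\to 1$.
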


\begin{proof}
Without loss of generality we can assume that $A\subset B$ and that $\varphi$ is the inclusion map.
By Lemmas \ref{lem:group2} and  \ref{lem:group1}
 it suffices to show for any $i$ the following:
 If $A/\ti{A}_i\to B/\ti{B}_i$ is a bijection,
 then the map
$ H_1(A;\Z[{B/\ti{B}_i}])\to H_1(B;\Z[{B/\ti{B}_i}])$ is an isomorphism if and only if $ \varphi:A/ [ \ti{A}_i,\ti{A}_i] \to B/[\ti{B}_i,\ti{B}_i]$ is a bijection.

Using the above and using Shapiro's Lemma we can identify
\[ \ba{rcl}H_1(A;\Z[B/\ti{B}_i])=H_1(A;\Z[A/\ti{A}_i])&=&\ti{A}_i/[\ti{A}_i,\ti{A}_i]\hspace{1cm} \mbox{and}\\
H_1(B;\Z[{B/\ti{B}_i}])&=&\ti{B}_i/[\ti{B}_i,\ti{B}_i].\ea \]
Note that $A/\ti{A}_i, B/\ti{B}_i, A/ [ \ti{A}_i,\ti{A}_i]$ and $B/[\ti{B}_i,\ti{B}_i]$ are in general not groups, but we can view them as pointed sets.
We now consider the following commutative diagram of  exact sequences of pointed sets:
\[ \xymatrix{
0\ar[r]&
H_1(A;\Z[{B/\ti{B}_i}])\ar[r]\ar[d]^{\varphi}&
{A}/[\ti{A}_i,\ti{A}_i]\ar[d]^\varphi\ar[r]& {A/\ti{A}_i} \ar[d]^{\varphi}\ar[r]&1\\
 0\ar[r]& H_1(B;\Z[{B/\ti{B}_i}])
 \ar[r]& {B}/[\ti{B}_i,\ti{B}_i]\ar[r]&
{B/\ti{B}_i} \ar[r]&1.} \]
Recall that the map on the right is a bijection. It now follows from the 5--lemma for exact sequences of pointed sets that the middle map is a bijection if and only if the left hand map is a bijection.
\end{proof}

We will several times make use of the following corollary which can be deduced from Lemma \ref{lem:metabany} the same way
as Corollary \ref{cor:group1} is deduced from Lemma \ref{lem:group1}.

\begin{corollary}\label{cor:metabany}\label{cor:metab}
Let $\varphi :A\to B$ be a monomorphism of finitely generated groups,
and assume we are given a homomorphism $\beta:B \to G$ to a finite group $G$.
Then
 \[ \varphi_{*}:H_i(A;\Z[G])\to H_i(B;\Z[G]), \ \ i = 0,1 \] is an isomorphism
 if and only if the following two conditions hold:
\bn
\item $ \im\{A\to B\to G\}=\im\{B\to G\}$,
\item $\varphi$ induces an
 isomorphism
\[ A/ [ \ker(\beta\circ \varphi),\ker(\beta\circ \varphi)]\to B/[\ker(\beta),\ker(\beta)].\]
\en
\end{corollary}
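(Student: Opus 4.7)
The plan is to mimic the derivation of Corollary \ref{cor:group1} from Lemma \ref{lem:group1}, but now applying Lemma \ref{lem:metabany} in place of Lemma \ref{lem:group1}. Following the same construction, I would set $\pi' := B\times G$ and $\tipi' := B\times\{1\}$, so that $\tipi'$ is a normal subgroup of $\pi'$ of finite index $|G|$; set $B' := \{(b,\b(b))\,|\, b\in B\}\subset \pi'$, the graph of $\b$, canonically isomorphic to $B$ via projection onto the first factor; and define the monomorphism $\varphi':A\to \pi'$, $\varphi'(a) := (\varphi(a),\b(\varphi(a)))$, whose image lies in $B'$. Under the identifications $B'\cong B$ and $A'=A$, one checks that the left $\Z[B']$--module $\Z[\pi'/\tipi']$ is canonically the module $\Z[G]$ with $B$ acting through $\b$, and that the resulting $\Z[A]$--structure acts through $\b\circ\varphi$. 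Consequently the map $\varphi'_{*}:H_i(A;\Z[\pi'/\tipi'])\to H_i(B';\Z[\pi'/\tipi'])$ is canonically identified with the map $\varphi_{*}:H_i(A;\Z[G])\to H_i(B;\Z[G])$ of the corollary.

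Next I would compute the combinatorial data required by Lemma \ref{lem:metabany}. A short calculation shows that $B'\backslash \pi'/\tipi'$ is naturally in bijection with $\b(B)\backslash G$, so one may take representatives $g_i := (1,g_i')$, with $g_i'$ running through a set of representatives of $\b(B)\backslash G$. Crucially, because $\tipi'$ is normal in $\pi'$, one has $g_i\tipi'g_i^{-1} = \tipi'$ for every $i$, and therefore
\[ \ti{B}_i \,=\, B'\cap\tipi' \,\cong\, \ker(\b), \qquad \ti{A}_i \,=\, (\varphi')^{-1}(\ti{B}_i) \,=\, \ker(\b\circ\varphi), \]
independently of $i$.

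Finally I would read off the corollary's criteria from those of Lemma \ref{lem:metabany}. Its condition (1) becomes the assertion that $\varphi: A/\ker(\b\circ\varphi)\to B/\ker(\b)$ is a bijection; this map is automatically injective because $\varphi$ is, so surjectivity is equivalent to $\im\{A\to B\to G\} = \im\{B\to G\}$, namely condition (1) of the corollary. Its condition (2) becomes the assertion that $\varphi: A/[\ker(\b\circ\varphi),\ker(\b\circ\varphi)]\to B/[\ker(\b),\ker(\b)]$ is a bijection; since commutator subgroups of normal subgroups are themselves normal in the ambient groups $A$ and $B$, both sides are genuine groups and the induced $\varphi$ is a group homomorphism, so a bijection here is the same as the isomorphism claimed in condition (2). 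I do not expect any serious obstacle: the argument is essentially bookkeeping, and the only point that requires mild care is the identification of the $\Z[B']$-- and $\Z[A]$--module structures on $\Z[\pi'/\tipi']$ with $\Z[G]$ equipped with the actions through $\b$ and $\b\circ\varphi$ respectively, so that Lemma \ref{lem:metabany} really produces the homology map named in the corollary.
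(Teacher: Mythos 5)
Your proposal is correct and takes exactly the route the paper prescribes: the paper declines to write out the argument, saying only that the corollary ``can be deduced from Lemma \ref{lem:metabany} the same way as Corollary \ref{cor:group1} is deduced from Lemma \ref{lem:group1},'' and your construction ($\pi' = B\times G$, $\tipi' = B\times\{1\}$, $B'$ the graph of $\beta$, $\varphi'(a)=(\varphi(a),\beta(\varphi(a)))$) is verbatim the construction used there. Your bookkeeping — the identification of $\Z[\pi'/\tipi']$ with $\Z[G]$ carrying the $\beta$- and $\beta\circ\varphi$-actions, the normality of $\tipi'$ forcing $\ti B_i = \ker\beta$ and $\ti A_i = \ker(\beta\circ\varphi)$ for every $i$, and the translation of Lemma \ref{lem:metabany}'s two conditions into the corollary's — is all accurate.
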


Under extra conditions we can also give a criterion for a map between groups to induce an isomorphism of second homology groups.

\begin{lemma}\label{lem:twih2}
Let $\varphi:A\to B$ be a homomorphism between two groups such that $X=K(A,1)$ and $Y=K(B,1)$
are finite 2--complexes with vanishing Euler characteristic.
Let $\b:B\to G$ be a homomorphism to a finite group
 such that
 \[ \varphi_{*}:H_i(A;\Z[G])\to H_i(B;\Z[G]), \ \ i = 0,1 \] is an isomorphism,
 then
 \[ \varphi_{*}:H_2(A;\Z[G])\to H_2(B;\Z[G]) \] is also an isomorphism.
\end{lemma}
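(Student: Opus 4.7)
The strategy is a chain-level argument via the algebraic mapping cone of $\varphi$.  I would lift a map $f\colon X\to Y$ realising $\varphi$ on fundamental groups to a $\varphi$-equivariant cellular chain map $\ti f\colon C_{*}(\ti X)\to C_{*}(\ti Y)$ and then form $f_{\#}\colon C_{*}(X;\Z[G])\to C_{*}(Y;\Z[G])$ by tensoring over $\Z[A]$ with $\Z[G]$ (where $A$ acts via $\beta\circ\varphi$).  These are bounded complexes of finitely generated free $\Z[G]$-modules concentrated in degrees $0,1,2$, and the induced maps on homology are $\varphi_{*}$.

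Next I would examine the algebraic mapping cone $\CC(f_{\#})$, a bounded complex of finitely generated free $\Z[G]$-modules in degrees $0$ through $3$ whose Euler characteristic over $\Z$ equals $|G|(\chi(Y)-\chi(X))=0$.  The iso hypotheses on $H_{0}$ and $H_{1}$ force $H_{0}(\CC(f_{\#}))=H_{1}(\CC(f_{\#}))=0$, and the long exact sequence, together with the vanishing of $H_{3}(X;\Z[G])$ and $H_{3}(Y;\Z[G])$ (because $X$ and $Y$ are $2$-complexes), identifies $H_{3}(\CC(f_{\#}))$ with $\ker\bigl(\varphi_{*}\colon H_{2}(X;\Z[G])\to H_{2}(Y;\Z[G])\bigr)$ and $H_{2}(\CC(f_{\#}))$ with $\mathrm{coker}\,\varphi_{*}$.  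Both $H_{2}(X;\Z[G])$ and $H_{2}(Y;\Z[G])$ are free abelian as subgroups of the free abelian chain groups $C_{2}(-;\Z[G])$, so $\ker\varphi_{*}$ is free abelian too, and the vanishing Euler characteristic of the cone yields $\rank_{\Z}\ker\varphi_{*}=\rank_{\Z}\mathrm{coker}\,\varphi_{*}$.

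To upgrade this rank equality to actual vanishing I would rationalise.  By Maschke's theorem $\Q[G]$ is semisimple, so classes in $K_{0}(\Q[G])$ determine isomorphism type, and flat base change gives $H_{i}(X;\Q[G])=H_{i}(X;\Z[G])\otimes\Q$, so the integral iso on $H_{0},H_{1}$ passes to $\Q[G]$-iso.  Applying the Euler characteristic identity in $K_{0}(\Q[G])$ to $C_{*}(X;\Q[G])$ and $C_{*}(Y;\Q[G])$ yields $H_{2}(X;\Q[G])\cong H_{2}(Y;\Q[G])$ as $\Q[G]$-modules; the same identity applied to $\CC(f_{\#})\otimes\Q$ gives $\ker(\varphi_{*}\otimes\Q)\cong\mathrm{coker}(\varphi_{*}\otimes\Q)$ as $\Q[G]$-modules.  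Since $H_{2}(Y;\Z[G])$ is torsion-free, integral injectivity of $\varphi_{*}$ follows once the $\Q$-rank of $\ker\varphi_{*}$ vanishes, and the remaining finite torsion cokernel is then eliminated prime by prime using semisimplicity of $\F_{p}[G]$ for $p\nmid|G|$ together with the naturality of the cone long exact sequence and universal coefficients (to transfer the integral iso on $H_{1}$ into control on $H_{2}(X;\F_{p}[G])\to H_{2}(Y;\F_{p}[G])$) for the primes dividing $|G|$.

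The main obstacle I expect is precisely this last step: the $K_{0}(\Q[G])$ argument produces only abstract $\Q[G]$-isomorphisms of the source, target, kernel and cokernel of $\varphi_{*}\otimes\Q$, rather than an isomorphism of this specific map, so one must rule out the possibility that $\ker\varphi_{*}$ and $\mathrm{coker}\,\varphi_{*}$ are simultaneously nontrivial but abstractly isomorphic $\Q[G]$-modules.  Forcing their vanishing requires exploiting the chain-level structure inherited from the topological realisation (so that the chain map is determined up to homotopy by $\varphi$) along with a careful integrality analysis at each prime, where the Euler characteristic vanishing and the $2$-dimensional constraint provide the needed rigidity.
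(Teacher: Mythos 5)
There is a genuine gap here, and you have correctly located it yourself: your argument never gets past the equality $\rank\, \ker\varphi_* = \rank\,\mbox{coker}\,\varphi_*$ on $H_2$, and that equality is vacuous. Once you know that $H_2(X;\Z[G])$ and $H_2(Y;\Z[G])$ are free abelian of the same finite rank (which you already establish), \emph{any} homomorphism between them automatically has kernel and cokernel of equal rank, so the Euler characteristic of the mapping cone contributes no information; the rationalised $K_0(\Q[G])$ refinement inherits exactly the same defect, as you note. The structural source of the trouble is the degree--$3$ chain group of your cone, namely a copy of $C_2(X;\Z[G])$: it is what permits $H_3$ of the cone — the kernel of $\varphi_*$ on $H_2$ — to be nonzero at all, and no prime--by--prime analysis of that same complex will remove it. The "chain--level rigidity" you appeal to in your final paragraph is not available in the cone formulation.

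The paper's proof eliminates the degree--$3$ part geometrically rather than algebraically: it views $X$ as a subcomplex of the $2$--complex $Y$ and works with the relative complex $C_*(Y,X;\Z[G])$, which is concentrated in degrees $\leq 2$. The long exact sequence of the pair converts the hypotheses into $H_0(Y,X;\Z[G])=H_1(Y,X;\Z[G])=0$ and reduces the conclusion to the vanishing of $H_2(Y,X;\Z[G])$ alone, injectivity of $\varphi_*$ on $H_2$ being free since $H_3(Y,X;\Z[G])=0$ for dimension reasons. Now the entire obstruction sits in a \emph{single} group which is a submodule of the free abelian group $C_2(Y,X;\Z[G])$ (there are no relative $3$--cells), hence torsion--free, and whose rank equals $|G|\big(\chi(Y)-\chi(X)\big)=0$ by the Euler characteristic identity together with the vanishing of $H_0$ and $H_1$ of the pair; a torsion--free group of rank zero is zero. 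So the missing idea is not a finer integrality analysis but the reduction to a subcomplex inclusion, which packages kernel and cokernel into one torsion--free group of computable rank. Note that this step does require realising $\varphi$ by an actual inclusion of $2$--complexes (the mapping cylinder will not do, being $3$--dimensional); that realisation is precisely what your cone construction skips and what your approach would need to recover.
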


\begin{proof}
We can and will view $X$ as a subcomplex of $Y$.
It suffices to show that $H_2(Y,X;\Z[G])=0$.  Note that our assumption implies that $H_i(Y,X;\Z[G])=0$ for $i=0,1$.
 Now note that $H_2(Y,X;\Z[G])$ is a submodule of $C_2(Y,X;\Z[G])$, in particular $H_2(Y,X;\Z[G])$ is a free $\Z$--module. We therefore only have to show that $\rank H_2(Y,X;\Z[G])=0$.
Now note that
\[\ba{rcl} \rank H_2(Y,X;\Z[G])&=&\rank H_2(Y,X;\Z[G])-\rank H_1(Y,X;\Z[G])+\rank H_0(Y,X;\Z[G])\\
&=& |G| \chi(Y,X)\\
&=&|G|( \chi(Y)-\chi(X))\\
&=&0.\ea \]
\end{proof}

We conclude this section with the following lemma.

\begin{lemma}\label{lem:alsoiso}
Let $\varphi :A\to B$ be a homomorphism.
Let $\hat{B}\subset \ti{B}\subset B$ be two  subgroups.
Suppose that $\hat{B}\subset B$ is normal.
We write $\hat{A}:=\varphi^{-1}(\hat{B})$ and $\ti{A}:=\varphi^{-1}(\ti{B})$.
Assume that
\[ \varphi:A/ \hat{A} \to B/\hat{B} \mbox{ and } \varphi:A/ [ \hat{A},\hat{A}  ]\to B/[\hat{B},\hat{B}]\]
are bijections, then
\[  \varphi:A/ \ti{A} \to B/\ti{B} \mbox{ and } \varphi:A/ [ \ti{A},\ti{A}  ]\to B/[\ti{B},\ti{B}]\]
are also bijections.
\end{lemma}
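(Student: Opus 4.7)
The plan is to first reduce the problem to an assertion about isomorphisms of groups between the ``$\sim$''~quotients of $\tilde A$ and $\tilde B$, and then extract the statement about cosets in $A$ and $B$ from this.

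First I would observe that since $\hat B$ is normal in $B$, the preimage $\hat A = \varphi^{-1}(\hat B)$ is normal in $A$, so $A/\hat A$, $B/\hat B$, $A/[\hat A,\hat A]$ and $B/[\hat B,\hat B]$ are genuine groups, and the two hypothesised bijections are in fact isomorphisms of groups.  The first conclusion, that $\varphi : A/\tilde A \to B/\tilde B$ is a bijection of pointed sets, is essentially automatic: injectivity follows from the very definition $\tilde A = \varphi^{-1}(\tilde B)$, and surjectivity follows from the surjectivity of $\varphi : A/\hat A \to B/\hat B$ together with the inclusion $\hat B \subset \tilde B$.

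Next I would pass to $\tilde A$ and $\tilde B$.  Since $\hat A \subset \tilde A$ and $\hat B \subset \tilde B$ are normal, a short diagram chase shows that the restriction of the first hypothesised isomorphism to $\tilde A/\hat A$ identifies it with $\tilde B/\hat B$; surjectivity onto $\tilde B/\hat B$ uses that any lift of an element of $\tilde B$ under $A/\hat A \cong B/\hat B$ must lie in $\tilde A$, by the definition of $\tilde A$.  The same argument, applied to the second hypothesis, gives an isomorphism
\[
\varphi : \tilde A/[\hat A,\hat A]\;\longrightarrow\;\tilde B/[\hat B,\hat B],
\]
again using that the lift of any element of $\tilde B$ automatically lies in $\tilde A$ modulo $[\hat B,\hat B]\subset \hat B$.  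Under this isomorphism the commutator subgroup $[\tilde A,\tilde A]/[\hat A,\hat A]$ of the left-hand side is mapped onto the commutator subgroup $[\tilde B,\tilde B]/[\hat B,\hat B]$ of the right-hand side.  Taking the quotient by these commutator subgroups therefore yields an induced isomorphism $\varphi : \tilde A/[\tilde A,\tilde A] \to \tilde B/[\tilde B,\tilde B]$.

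Finally I would deduce the second claim of the lemma.  Surjectivity of $\varphi : A/[\tilde A,\tilde A] \to B/[\tilde B,\tilde B]$ as pointed sets follows from the surjectivity of $\varphi : A/[\hat A,\hat A] \to B/[\hat B,\hat B]$ together with $[\hat B,\hat B] \subset [\tilde B,\tilde B]$.  For injectivity, if $\varphi(a)[\tilde B,\tilde B] = \varphi(a')[\tilde B,\tilde B]$ then $c := a^{-1}a'$ satisfies $\varphi(c) \in [\tilde B,\tilde B]\subset \tilde B$, so $c \in \tilde A$ by definition of $\tilde A$, and then the isomorphism $\tilde A/[\tilde A,\tilde A] \cong \tilde B/[\tilde B,\tilde B]$ from the previous step forces $c \in [\tilde A,\tilde A]$, as required.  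The only mild subtlety—and the place one has to be careful—is that $\tilde A$ and $[\tilde A,\tilde A]$ are not in general normal in $A$, so the statements really are statements about pointed sets of left cosets rather than about quotient groups; this is what makes passing through the intermediate groups $\tilde A/[\hat A,\hat A]$ and $\tilde B/[\hat B,\hat B]$ the essential move.
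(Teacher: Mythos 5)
Your proof is correct and takes essentially the same route as the paper: both establish that the isomorphism $A/[\hat A,\hat A]\to B/[\hat B,\hat B]$ restricts to an isomorphism $\ti A/[\hat A,\hat A]\to\ti B/[\hat B,\hat B]$ (using $\varphi^{-1}(\ti B)=\ti A$ and $[\hat B,\hat B]\subset\ti B$), then exploit the fact that this carries commutator subgroups to commutator subgroups. The only difference is cosmetic: the paper treats the $n=0$ and $n=1$ cases uniformly via the derived series and concludes by the five-lemma for pointed sets applied to $1\to\ti A^{(n)}/\hat A^{(n)}\to A/\hat A^{(n)}\to A/\ti A^{(n)}\to 1$, whereas you first pass to the induced isomorphism $\ti A/[\ti A,\ti A]\to\ti B/[\ti B,\ti B]$ and then verify the bijection of left cosets by a direct chase.
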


\begin{proof}
In the following let $n=0$ or $n=1$. Suppose that  $\varphi:A/ \hat{A}^{(n)} \to B/\hat{B}^{(n)}$ is a bijection.
Note that $\hat{A}^{(n)}\subset A$ and $\hat{B}^{(n)}\subset B$ are normal, in particular  $\varphi:A/ \hat{A}^{(n)} \to B/\hat{B}^{(n)}$ is in fact an isomorphism.
We have to show that $ \varphi:A/ \ti{A}^{(n)}  \to B/\ti{B}^{(n)}$
is  a bijection.

\begin{claim}
The map $\varphi$ induces a bijection $\ti{A}^{(n)}/\hat{A}^{(n)}\to \ti{B}^{(n)}/\hat{B}^{(n)}$.
\end{claim}

We write $\ol{A}:=A/\hat{A}^{(n)}$, $\ol{B}:=B/\hat{B}^{(n)}$ and we denote by $\ol{\varphi}:\ol{A}\to \ol{B}$ the induced map which by assumption is an isomorphism.
We denote by $\ol{H}$ the subgroup $\ti{B}/\hat{B}^{(n)}\subset \ol{B}$.
Note that $\ol{\varphi}$ restricts to isomorphisms
$\ol{\varphi}^{-1}(\ol{H})\to \ol{H}$ and $\ol{\varphi}^{-1}(\ol{H}^{(n)})\to \ol{H}^{(n)}$.
Since $\ol{\varphi}^{-1}$ is an isomorphism it follows  that $\left(\ol{\varphi}^{-1}(\ol{H})\right)^{(n)}=\ol{\varphi}^{-1}\big(\ol{H}^{(n)}\big)$.
Now recall that $\ol{H}=\ti{B}/\hat{B}^{(n)}$, hence
$\ol{H}^{(n)}=\ti{B}^{(n)}/\hat{B}^{(n)}$.
We clearly have $\ol{\varphi}^{-1}(\ol{H})=\ti{A}/\hat{A}^{(n)}$ and therefore
$\ol{\varphi}^{-1}(\ol{H})^{(n)}=\ti{A}^{(n)}/\hat{A}^{(n)}$. This shows that the isomorphism $\ol{\varphi}:\ol{\varphi}^{-1}\big(\ol{H}^{(n)}\big)\to \ol{H}^{(n)}$
is precisely the desired isomorphism $\ti{A}^{(n)}/\hat{A}^{(n)}\to \ti{B}^{(n)}/\hat{B}^{(n)}$. This concludes the proof of the claim.

Now consider the following commutative diagram of short exact sequences of pointed sets:
\[
\xymatrix{ 1 \ar[r] & \ti{A}^{(n)}/\hat{A}^{(n)}\ar[d]^{\varphi}\ar[r] &A/\hat{A}^{(n)}\ar[d]^{\varphi}\ar[r]&A/\ti{A}^{(n)}\ar[d]^{\varphi}\ar[r]&1 \\
1 \ar[r] & \ti{B}^{(n)}/\hat{B}^{(n)}\ar[r] &B/\hat{B}^{(n)}\ar[r]&B/\ti{B}^{(n)}\ar[r]&1.}\]
The middle vertical map is a bijection by assumption and we just verified that the vertical map on the left is a bijection.
It now follows from the 5--Lemma for exact sequences of pointed sets that the vertical map on the right is also a bijection.
\end{proof}

\subsection{Twisted Alexander polynomials} \label{sectionufd}

In this section we are going to recall the definition of twisted Alexander polynomials. These were introduced,
for the case of knots, by Xiao-Song Lin in 1990 (published in \cite{L01}), and his definition
was later generalized to $3$--manifolds by Wada \cite{Wa94}, Kirk--Livingston \cite{KL99} and
Cha \cite{Ch03}.

Let $N$ be a compact manifold.
Let $R$ be a commutative, Noetherian unique  factorization domain
(in our applications $R=\Z$ or $R=\F_p$, the finite field with $p$ elements)
and $V$ a finite free $R$--module
Let $\a:\pi_1(N)\to \aut_{R}(V)$
a representation and let $\phi\in H^1(N;\Z)=\hom(\pi_1(N),\Z)$ a nontrivial element.
We write $V\otimes_R \rt=:\vt$.
Then $\a$ and $\phi$ give rise to
a representation $\a\otimes \phi:\pi_1(N)\to \aut_{\rt}(\vt)$
as follows:
\[ \big((\a\otimes \phi)(g)\big) (v\otimes p):= (\a(g)\cdot v)\otimes (\phi(g)\cdot p) = (\a(g) \cdot v)\otimes (t^{\phi(g)}p), \]
where $g\in \pi_1(N), v\otimes p \in V\otimes_R \rt = \vt$.

Note that $N$ is homotopy equivalent to a finite CW--complex, which, by abuse of notation, we also denote by $N$.
Then we consider
$C_*(\ti{N})\otimes_{\Z[\pi_1(N)]}\vt$
which is a complex of  finitely generated $\rt$--modules.
Since $\rt$ is Noetherian it follows that for any $i$ the $\rt$--module $H_i(N;\vt)$ is
a finitely
presented $\rt$--module. This means
$H_i(N;\vt)$ has a free $\rt$--resolution
\[ \rt^{r_i} \xrightarrow{S_i} \rt^{s_i} \to H_i(N;\vt) \to 0. \]
 Without loss of generality we can assume that $r_i\geq s_i$.

\begin{definition} \label{def:alex} The \emph{$i$--th twisted Alexander polynomial} of $(N,\a,\phi)$ is defined
to be  the order of the $\rt$--module $H_i(N;\vt)$, i.e. the greatest common divisor
(which exists since $\rt$ is a UFD as well) of the
$s_i\times s_i$ minors of the $s_i\times r_i$--matrix $S_i$. It is denoted by $\Delta_{N,\phi,i}^{\a}\in
\rt$.
\end{definition}

Note that $\Delta_{N,\phi,i}^{\a}\in
\rt$ is well--defined up to a unit in $\rt$, i.e. up to an element of the form $rt^i$ where $r$ is a unit
in $R$ and $i\in \Z$. We say that $f\in\rt$ is \textit{monic} if its top coefficient is a unit in $R$.
Given a nontrivial $f=\sum_{i=r}^s a_st^i$ with $a_r\ne 0, a_s\ne 0$ we write
$\deg f=s-r$. For $f=0$ we write $\deg(f)=-\infty$. Note that $\deg \Delta_{N,\phi,i}^{\a}$ is well--defined.

We now write $\pi=\pi_1(N)$. If we are given a homomorphism $\a:\pi\to G$ to a finite group, then this gives rise to a
finite dimensional representation of $\pi$, that we will denote by $\a:\pi\to \aut_{R}(R[G])$ as well.
In the case that we have a finite index subgroup $\ti{\pi}\subset \pi$
we get a finite dimensional representation $\pi\to \aut_{R}(R[\pi/\ti{\pi}])$ given by left--multiplication.
When $R = \Z$, the resulting twisted Alexander polynomials will be denoted by $\Delta_{N,\phi,i}^{\pi/\ti{\pi}}\in \zt$, while for $R = \F_p$ we will use the notation $\Delta_{N,\phi,i}^{\pi/\ti{\pi},p}\in \F_p\tpm$. See \cite{FV08a} for the relation between these polynomials.

Finally, in the case that $\a:\pi\to \gl(1,\Z)$ is the trivial representation we drop the $\a$ from the notation,
and in the case that $i=1$ we drop the subscript ``$,1$'' from the notation.

We summarize some of the main properties of twisted Alexander polynomials in the following lemma.
It is a consequence of \cite[Lemma~3.3~and~3.4]{FV08a}
and   \cite[Proposition~2.5]{FK06}.

\begin{lemma} \label{lem:twiprop}
Let $N$ be a 3--manifold with empty or toroidal boundary. Let $\phi \in H^1(N;\Z)$ nontrivial
and $\ti{\pi}\subset \pi:=\pi_1(N)$ a finite index subgroup.
Denote by $\phi_{\ti{\pi}}$ the restriction of $\phi$ to $\ti{\pi}$,
 then the following hold:
 \bn
 \item $ \Delta_{N,\phi,0}^{\pi/\ti{\pi}} =(1-t^{div\, {\phi}_{\ti{\pi}}})$,
 \item if $\Delta_{N,\phi,1}^{\pi/\ti{\pi}}\ne 0$,
then $\Delta_{N,\phi,2}^{\pi/\ti{\pi}} =(1-t^{div \, \phi_{\ti{\pi}}})^{b_3(N)}$,
\item $ \Delta_{N,\phi,i}^{\pi/\ti{\pi}}=1$ for any $i\geq 3$.
\en
Assume we also have a subgroup $\pi'$ with  $\ti{\pi}\subset \pi' \subset \pi$.
Denote the covering of $N$ corresponding to $\pi'$ by $N'$ and
denote by $\phi'$ the restriction of $\phi$ to $\pi'$,
then
\[ \Delta_{N,\phi,i}^{\pi/\ti{\pi}}= \Delta_{N',\phi',i}^{\pi'/\ti{\pi}}\]
for any $i$. Finally note that the statements of the lemma also hold for the polynomial $\Delta_{N,\phi,i}^{\pi/\ti{\pi},p}\in \F_p\tpm$.
\end{lemma}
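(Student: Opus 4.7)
The plan is to reduce all four statements to computations about $H_*(Y;\Z)$, where $Y$ is an infinite cyclic cover of a finite cover of $N$. Let $\ti N \to N$ be the finite cover with $\pi_1(\ti N) = \ti\pi$. Since the representation $\a \otimes \phi$ on $\Z[\pi/\ti\pi] \otimes_\Z \zt$ is induced from the $\ti\pi$--representation on $\zt$ given by $g \mapsto t^{\phi_{\ti\pi}(g)}$, Shapiro's lemma yields an isomorphism of $\zt$--modules
\[ H_i(N;\, \Z[\pi/\ti\pi] \otimes_\Z \zt) \;\cong\; H_i(\ti N;\, \zt_{\phi_{\ti\pi}}). \]
Writing $n := \div \phi_{\ti\pi}$ and $\phi_{\ti\pi} = n\psi$ with $\psi$ primitive, and letting $Y \to \ti N$ be the infinite cyclic cover associated to $\ker \psi$, flat base change along $\Z[s^{\pm 1}] \hookrightarrow \zt$, $s \mapsto t^n$, gives a further isomorphism $H_i(\ti N; \zt_{\phi_{\ti\pi}}) \cong H_i(Y;\Z) \otimes_{\Z[s^{\pm 1}]} \zt$.

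Parts (1) and (3) are now immediate. Since $\psi$ is surjective, $Y$ is connected and $H_0(Y;\Z) = \Z$ carries the trivial deck action, so $H_0 \cong \zt/(t^n-1)$ and $\Delta_{0} \sim 1 - t^{n}$. Since $Y$ is a non--compact 3--manifold, $H_i(Y;\Z) = 0$ for $i \geq 3$, so $\Delta_{i} = 1$ in those degrees. For (2), the hypothesis $\Delta_1 \ne 0$ says that $H_1$ is $\zt$--torsion; combined with $H_0$ torsion, $H_{\geq 3} = 0$ and $\chi(\ti N) = 0$, the Euler--characteristic identity $\sum_i (-1)^i \rank_{\zt} H_i = 0$ forces $H_2$ to be $\zt$--torsion as well. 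When $N$ has non--empty (toroidal) boundary, $\ti N$ deformation retracts onto a 2--complex, so $H_2 = \ker(\partial_2\colon C_2 \to C_1)$ sits inside the free $\zt$--module $C_2$ and is therefore torsion--free. Being both torsion and torsion--free, $H_2$ vanishes, so $\Delta_{2} = 1$, matching $b_3(N) = 0$.

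The closed case of (2) is the most delicate step. Here Poincar\'e duality gives $H_2(\ti N; V) \cong H^1(\ti N; V)$ with $V = \zt_{\phi_{\ti\pi}}$, and one applies the universal coefficient spectral sequence $E_2^{p,q} = \Ext^p_{\zt}(H_q, \zt) \Rightarrow H^{p+q}$ (working up to units in $\zt$ to absorb the $t \mapsto t^{-1}$ ambiguity arising from dualising the representation). The $E_2^{0,1}$ term vanishes because $H_1$ is torsion, and $H_0 \cong \zt/(1-t^n)$ admits the length--one free resolution $0 \to \zt \xrightarrow{1-t^n} \zt \to H_0 \to 0$, so $\Ext^1_{\zt}(H_0, \zt) \cong \zt/(1-t^n)$ and $\Ext^p_{\zt}(H_0, \zt) = 0$ for $p \geq 2$; the latter also kills the potentially troublesome $d_2$ differential, so $H^1 \cong \zt/(1-t^n)$ and $\Delta_{2} \sim 1 - t^n = (1-t^n)^{b_3(N)}$. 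The covering--invariance identity $\Delta_{N,\phi,i}^{\pi/\ti\pi} = \Delta_{N',\phi',i}^{\pi'/\ti\pi}$ follows because both sides compute $H_i(\ti N; \zt_{\phi_{\ti\pi}})$ via Shapiro's lemma applied at the appropriate level. Finally, the entire argument goes through verbatim over $\F_p\tpm$, which is even simpler since that ring is a PID and the universal coefficient theorem reduces to its classical form. The main obstacle is carrying out the spectral--sequence argument over the non--PID ring $\zt$ rigorously in the closed case.
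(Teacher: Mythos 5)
The paper itself does not prove Lemma~\ref{lem:twiprop}; it simply cites \cite[Lemma~3.3~and~3.4]{FV08a} and \cite[Proposition~2.5]{FK06}. Your proof fills this in from scratch, and it is correct. The route you take --- Shapiro's lemma to pass from $N$ with $\Z[\pi/\ti\pi]\tpm$--coefficients to $\ti N$ with $\zt_{\phi_{\ti\pi}}$--coefficients, followed by the flat base change $\Z[s^{\pm 1}]\hookrightarrow\zt$, $s\mapsto t^n$, to the infinite cyclic cover $Y$, then an Euler characteristic count and Poincar\'e duality --- is precisely the standard method and is in the spirit of the cited references (indeed the paper already records $\Delta^{\a}_{N,\phi}=\Delta_{\tilde N,p^*\phi}$ as coming from \cite{FV08a}, which is the covering--invariance statement you derive from two applications of Shapiro). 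The identification of the induced module $\Z[\pi]\otimes_{\Z[\ti\pi]}\zt_{\phi_{\ti\pi}}$ with $\Z[\pi/\ti\pi]\otimes\zt_\phi$ carrying the diagonal action (needed for Shapiro to apply) is a short but worthwhile check; it works because the twist by $t^{\phi(g_i)}$ on coset representatives intertwines the two $\Z[\pi]$--actions.

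Your closing caveat about the universal coefficient spectral sequence over the non--PID ring $\zt$ is not actually an obstacle. The ring $\zt$ is regular of global dimension $2$, so the hyper--$\Ext$ spectral sequence $E_2^{p,q}=\Ext^p_{\zt}(H_q,\zt)\Rightarrow H^{p+q}$ is a convergent first--quadrant spectral sequence with $E_2^{p,q}=0$ for $p\geq 3$; for total degree $1$ the only contributing term is $E_2^{1,0}=\Ext^1_{\zt}(\zt/(1-t^n),\zt)\cong\zt/(1-t^n)$, there are no differentials into or out of it, and $E_2^{0,1}=\Hom_{\zt}(H_1,\zt)=0$ since $H_1$ is torsion. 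Hence $H^1\cong\zt/(1-t^n)$ on the nose, and the $t\mapsto t^{-1}$ conjugation coming from duality only changes this by a unit, exactly as you say. So the argument is fully rigorous as written; you could strengthen the exposition by replacing the hedged final sentence with this observation.

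Two minor remarks: (i) in the non--closed case, it is worth stating explicitly that $\ti N$ again has non--empty boundary (boundary components lift to boundary components), which is why it retracts onto a $2$--complex; (ii) your claim $H_0(Y;\Z)=\Z$ with trivial deck action uses that $\psi$ is surjective onto $\Z$, which holds by definition of $\div\phi_{\ti\pi}$ --- spelling this out would make the reduction watertight. Neither affects correctness.
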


We also recall  the following well--known result (cf. e.g. \cite{Tu01}).

\begin{lemma} \label{lem:twiprop2}
Let $N$ be a 3--manifold with empty or toroidal boundary. Let $\phi \in H^1(N;\Z)$ nontrivial
and $\ti{\pi}\subset \pi:=\pi_1(N)$ a finite index subgroup.
Then given $i$ the following are equivalent:
\bn
\item $\Delta_{N,\phi,i}^{\pi/\ti{\pi}} \ne 0$,
\item $H_i(N;\Z[\pi/\ti{\pi}]\tpm)$ is $\zt$--torsion,
\item $H_i(N;\Q[\pi/\ti{\pi}]\tpm)$ is $\Q\tpm$--torsion,
\item the rank of the abelian group $H_i(N;\Z[\pi/\ti{\pi}]\tpm)$ is finite.
\en
In fact if any of the four conditions holds, then
\[  \deg\Delta_{N,\phi,i}^{\pi/\ti{\pi}} =\mbox{rank}\, H_i(N;\Z[\pi/\ti{\pi}]\tpm)=\dim \, H_i(N;\Q[\pi/\ti{\pi}]\tpm).\]
\end{lemma}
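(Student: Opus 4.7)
The plan is to reduce everything to module theory over the PID $\qt$ via extension of scalars, where the structure theorem provides clean control. Write $M := H_i(N;\Z[\pi/\tipi]\tpm)$; from the discussion preceding Definition \ref{def:alex}, $M$ admits a finite free presentation $\zt^{r}\xrightarrow{S_i}\zt^{s}\to M\to 0$ with $r\geq s$, and $\Delta_{N,\phi,i}^{\pi/\tipi}$ is, by definition, the gcd of the $s\times s$ minors of $S_i$.

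First, for (1) $\Leftrightarrow$ (2): since $\zt$ is a Noetherian UFD with quotient field $\Q(t)$, the module $M$ is $\zt$-torsion if and only if $S_i\otimes \Q(t)$ has rank $s$, if and only if some $s\times s$ minor is nonzero in $\zt$, if and only if their gcd $\Delta_{N,\phi,i}^{\pi/\tipi}$ is nonzero. For (2) $\Leftrightarrow$ (3), I would use that $\qt=\zt\otimes_\Z\Q$ is a flat $\zt$-algebra sharing the quotient field $\Q(t)$ with $\zt$; consequently
\[ M\otimes_{\zt}\Q(t)\;\cong\;(M\otimes_\Z\Q)\otimes_{\qt}\Q(t), \]
and one side vanishes iff the other does. (A direct clearing-of-denominators argument also works: if $m\otimes 1$ is killed by $p(t)/n\in\qt^{\times}$, then some integer multiple $Np(t)\in\zt\setminus\{0\}$ kills $m$ in $M$.)

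For (3) $\Leftrightarrow$ (4), I would apply the structure theorem over the PID $\qt$ to obtain a decomposition $M\otimes_\Z\Q \cong \qt^{k}\oplus\bigoplus_j \qt/(p_j)$, with $p_j\in\qt$ nonzero non-units. This module is $\qt$-torsion precisely when $k=0$; and since $\qt$ itself has infinite $\Q$-dimension while each cyclic summand $\qt/(p_j)$ has $\Q$-dimension $\deg p_j$, the condition $k=0$ is also equivalent to $\dim_\Q(M\otimes_\Z\Q)$ being finite, which by definition is the rank of the abelian group $M$. In this torsion case one reads off $\rank M = \sum_j \deg p_j$.

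Finally, for the degree formula, I would observe that the same matrix $S_i$ presents $M\otimes_\Z\Q$ over $\qt$, so the $s\times s$ minors are literally the same polynomials; their gcd in $\qt$ agrees with the gcd in $\zt$ up to a unit of $\Q^{\times}$ and in particular has the same degree. Over the PID $\qt$ this gcd is (up to a unit) the elementary-divisor product $\prod_j p_j$, i.e.\ the order of the torsion module $M\otimes_\Z \Q$. Chaining the identifications yields
\[ \deg \Delta_{N,\phi,i}^{\pi/\tipi}\;=\;\sum_j \deg p_j\;=\;\dim_\Q(M\otimes_\Z\Q)\;=\;\rank M. \]
The only subtle step is this last comparison of gcds across the extension $\zt\to\qt$, which is routine since $\zt$ is a UFD and passing to $\qt$ only discards a rational scalar; everything else is a standard unwinding.
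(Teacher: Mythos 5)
Your argument is correct. The paper does not prove this lemma but simply cites it as well known (referring to Turaev's book), and your reduction to module theory over the PID $\Q[t^{\pm 1}]$ — identifying $H_i(N;\Q[\pi/\tipi][t^{\pm 1}])$ with $H_i(N;\Z[\pi/\tipi][t^{\pm 1}])\otimes_\Z\Q$ by flatness, applying the structure theorem, and comparing orders/gcds across the localization $\Z[t^{\pm 1}]\to\Q[t^{\pm 1}]$ (which only discards degree-zero prime factors) — is exactly the standard argument being invoked there.
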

\subsection{Completions of groups}\label{section:pro}

Throughout the paper it is convenient to use the language of completions of groups.
 Although the proof of Theorem \ref{mainthm} does not explicitly require this terminology,
  group completions are the natural framework for these results.
We recall here the definitions and some basic facts, we refer to \cite[Window~4]{LS03} and \cite{Wi98,RZ00} for  proofs and for more information.

Let $\mathcal{C}$ be a variety of groups (cf. \cite[p.~20]{RZ00} for the definition).
Examples of varieties of pertinence to this paper are given by any one of the following:
\bn
\item finite groups;
\item $p$--groups for a prime $p$;
\item the variety $\fs(n)$ of finite solvable groups of derived length at most $n$;
\item the variety $\fs$ of finite solvable groups.
\en
In the following we equip a finitely generated group $A$ with its \emph{pro--$\CC$ topology}, this topology is
the translation invariant topology uniquely defined by taking as a fundamental system
of neighborhoods of the identity the collection of all normal subgroups of $A$ such that the quotient lies in $\CC$.
Note that in particular  all groups in $\mathcal{C}$ are endowed with the discrete topology.

Given a group $A$ denote by $\hat{A}_\mathcal{C}$ its pro--$\mathcal{C}$ completion, i.e. the inverse limit
\[ \hat{A}_\mathcal{C}=\underleftarrow{\lim}\, A \, /A_i \]
where $A_i$ runs through the inverse system determined by the collection
 of all normal subgroups of $A$ such that $A/A_i\in \mathcal{C}$.
 Then $\hat{A}_\mathcal{C}$, which we can view as a subgroup of the direct
product of all $A/A_i$, inherits a natural topology. Henceforth  by homomorphisms between groups we will mean a homomorphism which is continuous with respect to the above topologies.
Using the standard convention we refer to the pro--$\fs$ completion of a group as the prosolvable completion.

Note that by the assumption that $\mathcal{C}$ is a variety, the pro--$\mathcal{C}$ completion is
a covariant functor, i.e. given $\varphi:A\to B$ we get an induced homomorphism $\hat{\varphi}:\hat{A}_\mathcal{C}\to \hat{B}_\mathcal{C}$.

A group $A$ is called \textit{residually $\mathcal{C}$} if for any nontrivial $g\in A$ there exists a
homomorphism $\a:A\to G$ where $G\in \mathcal{C}$ such that $\a(g)\ne e$. It is easily seen that $A$ is
residually $\mathcal{C}$ if and only if the  map $A\to \hat{A}_\mathcal{C}$ is injective.
In particular, if we are given a homomorphism $\varphi:A\to B$ between residually ${\CC}$ groups $A,B$
such that $\hat{\varphi}:\hat{A}_{\CC}\to \hat{B}_{\CC}$
is an injection, then it follows from the following commutative diagram
\[ \xymatrix{ A\ar[d] \ar[r]^\varphi & B \ar[d] \\
\hat{A}_{\CC}\ar[r]^{\hat{\varphi}}&\hat{B}_{\CC} }\]
that $\varphi$ is injective as well.

The following well--known lemma gives sufficient and necessary
conditions for a homomorphism $\varphi:A\to B$ to induce  an isomorphism of pro--$\CC$ completions.

\begin{lemma} \label{lem:pro} Let $\mathcal{C}$ be a variety of groups and assume that there is a homomorphism $\varphi : A \to B$.
 Then the following are equivalent:
\bn
\item $\hat{\varphi}:\hat{A}_\mathcal{C}\to \hat{B}_\mathcal{C}$ is an isomorphism,
\item for any  $G\in \mathcal{C}$ the induced map
\[ \varphi^*:\hom(B,G)\to \hom(A,G) \]
is a bijection.
\en
\end{lemma}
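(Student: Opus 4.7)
The plan is to use the universal property of the pro-$\CC$ completion: for any $G\in\CC$, pre-composition with the canonical map $A\to\hat{A}_\CC$ induces a bijection between $\hom(A,G)$ and the set of continuous homomorphisms $\hat{A}_\CC\to G$ (since $G$ is discrete and lies in $\CC$), and likewise for $B$. So under these identifications, the map $\varphi^{*}:\hom(B,G)\to\hom(A,G)$ corresponds precisely to pre-composition with $\hat\varphi$.

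For $(1)\Rightarrow(2)$, if $\hat\varphi$ is an isomorphism then pre-composition with it is a bijection on continuous homomorphisms into any $G\in\CC$, and the above identification immediately yields that $\varphi^{*}$ is a bijection.

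For $(2)\Rightarrow(1)$ the plan is to construct a two-sided inverse $\hat\psi:\hat{B}_\CC\to\hat{A}_\CC$. For each normal subgroup $M\trianglelefteq A$ with $A/M\in\CC$, apply (2) to the group $G=A/M$: the projection $A\to A/M$ has a unique preimage $\psi_M:B\to A/M$ under $\varphi^{*}$. Using the uniqueness clause of (2) one checks that the family $\{\psi_M\}$ is compatible with the inverse system defining $\hat{A}_\CC$, so it assembles into a homomorphism $\psi:B\to\hat{A}_\CC$ that extends continuously to $\hat\psi:\hat{B}_\CC\to\hat{A}_\CC$.

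It remains to verify $\hat\psi\circ\hat\varphi=\id$ and $\hat\varphi\circ\hat\psi=\id$. Since both $\hat{A}_\CC$ and $\hat{B}_\CC$ are inverse limits of their quotients in $\CC$, it suffices to check each identity after projection to each such quotient and restriction along the dense images of $A$ and $B$. In each case the required equality reduces to an instance of the uniqueness statement in (2), applied with $G=A/M$ or $G=B/N$ (here one uses that $\CC$ is a variety, so that $A/\varphi^{-1}(N)$, which injects into $B/N$, lies in $\CC$). The main conceptual point is therefore to package the family $\{\psi_M\}$ correctly; the checks that the resulting $\hat\psi$ is a genuine inverse are routine once one is careful about which instance of (2) to invoke.
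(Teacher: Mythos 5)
Your argument is correct: the identification $\hom(A,G)\cong\hom(\hat{A}_{\CC},G)$ via the universal property of the pro--$\CC$ completion (using that $\CC$ is closed under subgroups so images lie in $\CC$, and density of $A$ in $\hat{A}_{\CC}$ for uniqueness), together with the assembly of the compatible family $\{\psi_M\}$ into an inverse for $\hat\varphi$, is the standard proof of this statement. The paper itself gives no proof, citing the lemma as well known (with references to \cite{LS03,Wi98,RZ00}), and your route is exactly the one those sources take.
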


We also note the following well--known lemma.

\begin{lemma}\label{lem:restrictiso}
Let $\CC$ be an extension--closed variety  and
let $\varphi:A\to B$ a homomorphism of finitely generated groups
which induces an isomorphism of pro--$\mathcal{C}$ completions. Then for any homomorphism $\beta:B\to G$ to a $\mathcal{C}$--group the restriction of $\varphi$ to $\ker(\beta \circ \varphi)\to \ker(\beta)$  induces an isomorphism of pro--$\mathcal{C}$ completions.
\end{lemma}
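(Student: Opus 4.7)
My plan is to verify the criterion of Lemma \ref{lem:pro} for the restriction $\varphi|_{\tilde A}:\tilde A\to\tilde B$, where I write $\tilde A:=\ker(\beta\circ\varphi)$ and $\tilde B:=\ker\beta$. Rather than producing $\hom$-bijections directly, I will work inside the pro-$\CC$ completions: the goal is to show that the given isomorphism $\widehat{\varphi}:\widehat{A}_\CC\to\widehat{B}_\CC$ restricts to an isomorphism of certain closed subgroups, and then to identify those subgroups with $\widehat{\tilde A}_\CC$ and $\widehat{\tilde B}_\CC$. Since $G\in\CC$ carries the discrete topology, $\beta$ extends to a continuous homomorphism $\widehat{\beta}:\widehat{B}_\CC\to G$, and similarly $\beta\circ\varphi$ extends to $\widehat{\beta\circ\varphi}:\widehat{A}_\CC\to G$. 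Functoriality of the pro-$\CC$ completion gives $\widehat{\beta}\circ\widehat{\varphi}=\widehat{\beta\circ\varphi}$, so $\widehat{\varphi}$ restricts to an isomorphism $\ker(\widehat{\beta\circ\varphi})\to\ker(\widehat{\beta})$ between the corresponding kernels.

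The main technical step will be to identify $\ker(\widehat{\beta})$ with $\widehat{\tilde B}_\CC$ (and analogously on the $A$-side). The inclusion $\tilde B\hookrightarrow B$ induces a continuous homomorphism $\widehat{\tilde B}_\CC\to\widehat{B}_\CC$, and its image is the closure of $\tilde B$ in $\widehat{B}_\CC$, which is precisely $\ker(\widehat{\beta})$. To see this map is injective, I need to show that every normal subgroup $K$ of $\tilde B$ with $\tilde B/K\in\CC$ is of the form $K^\flat\cap\tilde B$ for some normal subgroup $K^\flat$ of $B$ with $B/K^\flat\in\CC$. I take $K^\flat:=\bigcap_{b\in B}bKb^{-1}$, the normal core of $K$ in $B$, which has finite index in $\tilde B$ because $[B:\tilde B]<\infty$. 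The diagonal embedding
\[ \tilde B/K^\flat\ \hookrightarrow\ \prod_{b\tilde B\in B/\tilde B}\tilde B/bKb^{-1} \]
exhibits $\tilde B/K^\flat$ as a subgroup of a finite product of conjugate quotients, each isomorphic to $\tilde B/K\in\CC$; since varieties are closed under finite products and subgroups, $\tilde B/K^\flat\in\CC$. The group $B/K^\flat$ then sits in a short exact sequence $1\to\tilde B/K^\flat\to B/K^\flat\to B/\tilde B\to 1$ with $B/\tilde B\hookrightarrow G\in\CC$, and extension-closedness of $\CC$ yields $B/K^\flat\in\CC$, as required.

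Combining the two paragraphs, $\widehat{\varphi}$ restricts to an isomorphism $\widehat{\tilde A}_\CC\to\widehat{\tilde B}_\CC$, which by Lemma \ref{lem:pro} is the desired conclusion. The hard part is the core construction in the second paragraph: without the hypothesis that $\CC$ is extension-closed, the pro-$\CC$ completion of a finite-index subgroup need not embed into that of the ambient group, and the whole scheme would collapse. Everything else is formal functoriality of the pro-$\CC$ completion together with the observation that $G$ carries the discrete topology, so homomorphisms into $G$ automatically extend continuously to the completions.
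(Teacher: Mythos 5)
The paper states this lemma without proof (``We also note the following well-known lemma''), so there is no argument of the authors to compare against; your proof is correct in substance and is the natural one. Two small points worth flagging. First, you announce that you will show every normal $K$ of $\tilde B$ with $\tilde B/K\in\CC$ \emph{is} of the form $K^\flat\cap\tilde B$, but what you actually establish (and what injectivity of $\widehat{\tilde B}_\CC\to\widehat{B}_\CC$ really requires) is only the containment $K^\flat\cap\tilde B\subseteq K$: with your choice $K^\flat=\bigcap_{b\in B}bKb^{-1}\subseteq K\subseteq\tilde B$ one has $K^\flat\cap\tilde B=K^\flat$, which is in general a proper subgroup of $K$. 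That weaker containment is exactly what is needed, since it makes the family $\{L\cap\tilde B: L\ \text{normal in}\ B,\ B/L\in\CC\}$ cofinal in the family of all open normal subgroups of $\tilde B$, so the statement should be rephrased. Second, the assertion that the image of $\widehat{\tilde B}_\CC\to\widehat{B}_\CC$ is the closure of the image of $\tilde B$ and that this closure equals $\ker\widehat{\beta}$ deserves a sentence: the image is compact, hence closed, and contains the dense image of $\tilde B$, so it equals $\overline{\tilde B}$; and since $\ker\widehat{\beta}$ is an open subgroup and the image of $B$ is dense, after replacing $G$ by $\beta(B)$ every element of $\ker\widehat{\beta}$ is approximated by images of elements of $B$ lying in $\ker\widehat{\beta}$, i.e.\ of elements of $\tilde B$, giving $\overline{\tilde B}=\ker\widehat{\beta}$. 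With these two points tidied, the scheme --- restrict $\widehat{\varphi}$ to the open kernels of $\widehat{\beta\circ\varphi}$ and $\widehat{\beta}$, and identify those with $\widehat{\tilde A}_\CC$ and $\widehat{\tilde B}_\CC$ via the normal-core construction, which is precisely where extension-closedness of $\CC$ is used --- is complete and correct.
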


When a homomorphism $\varphi : A \to B$ of finitely generated groups induces an isomorphism of their pro--$\mathcal{C}$ completions, then we have a relation of the twisted homology with coefficients determined by ${\mathcal C}$--groups. More precisely, we have the following.

\begin{lemma} \label{lem:im}
\label{lem:twih1}
Let $\mathcal{C}$ be a variety of groups and let $\varphi : A \to B$ be a homomorphism of finitely generated groups
which induces an isomorphism of pro--$\mathcal{C}$ completions. Then for any homomorphism $\beta:B\to G$ to a $\mathcal{C}$--group the map $\varphi_{*} : H_0(A;\Z[G]) \to H_0(B;\Z[G])$ is an isomorphism. Furthermore, if
$\CC$ is an extension--closed variety containing all finite abelian groups, the map $\varphi_{*} : H_1(A;\Z[G]) \to H_1(B;\Z[G])$ is an isomorphism. \end{lemma}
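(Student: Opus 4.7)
The plan is to treat the $H_0$ and $H_1$ assertions separately. For $H_0$, a direct coinvariants computation gives $H_0(A;\Z[G]) = \Z[\im(\beta\circ\varphi)\backslash G]$ and $H_0(B;\Z[G]) = \Z[\im(\beta)\backslash G]$ with $\varphi_*$ the natural projection, so the statement reduces to the image equality $\im(\beta\circ\varphi) = \im(\beta)$. I would prove this by exploiting the functoriality and density properties of the pro--$\CC$ completion: since a variety of groups is closed under subgroups, $H := \im(\beta)$ lies in $\CC$, hence $\beta$ factors as a surjection $B \twoheadrightarrow H$ which extends to a continuous surjection $\hat\beta : \hat B_\mathcal{C} \twoheadrightarrow H$ (using $\hat H_\mathcal{C} = H$ since $H\in\CC$ is already pro--$\CC$ complete). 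Precomposing with the isomorphism $\hat\varphi$ yields a surjection $\widehat{\beta\circ\varphi} = \hat\beta\circ\hat\varphi : \hat A_\mathcal{C} \twoheadrightarrow H$, and the density of $A$ in $\hat A_\mathcal{C}$ combined with the discreteness of $H$ then forces $(\beta\circ\varphi)(A) = H$, as required.

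For $H_1$ I would restrict to finite $G$, which is the setting of all applications in this paper. Using the $H_0$ conclusion we may assume $\beta$ (and hence $\beta\circ\varphi$) is surjective, and put $K = \ker(\beta)$ and $L = \varphi^{-1}(K) = \ker(\beta\circ\varphi)$, both finite--index, hence finitely generated, subgroups. Shapiro's lemma then identifies $H_1(B;\Z[G]) = K/[K,K]$ and $H_1(A;\Z[G]) = L/[L,L]$, under which $\varphi_*$ is induced by the restriction $\varphi|_L : L \to K$. By Lemma \ref{lem:restrictiso}---this is where the extension--closed hypothesis enters---this restriction induces an isomorphism $\hat L_\mathcal{C} \to \hat K_\mathcal{C}$ of pro--$\CC$ completions, and applying Lemma \ref{lem:pro} with $T$ ranging over all finite abelian groups (which lie in $\CC$ by hypothesis) yields bijections $\hom(K/[K,K], T) \to \hom(L/[L,L], T)$ for every such $T$, the key point being that $T$ abelian lets these homomorphisms factor through the abelianizations.

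The only delicate remaining step---what I would flag as the main obstacle---is to promote this family of Hom--bijections into an actual isomorphism $\varphi_*:L/[L,L] \to K/[K,K]$, rather than the a priori weaker matching of profinite completions. Surjectivity follows by applying the bijection to the cokernel $C$ of $\varphi_*$: injectivity of $\hom(K/[K,K], T) \to \hom(L/[L,L], T)$ together with right exactness gives $\hom(C, T) = 0$ for every finite abelian $T$, forcing $C = 0$ since any nonzero finitely generated abelian group admits a nontrivial finite quotient. For injectivity, surjectivity of the Hom--map means every $f:L/[L,L]\to T$ factors as $f=g\circ\varphi_*$ and hence vanishes on $\ker(\varphi_*)$; as $L/[L,L]$ is finitely generated abelian, residual finiteness then makes $\ker(\varphi_*) = 0$.
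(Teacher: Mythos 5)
Your proposal is correct, and essentially parallels the paper for $H_1$ while taking a genuinely different route for $H_0$. For $H_1$ you follow the paper's strategy exactly — reduce to surjective $\beta$, use Shapiro's Lemma to identify $\varphi_*$ with the induced map on abelianizations of $L=\ker(\beta\circ\varphi)$ and $K=\ker(\beta)$, invoke Lemma \ref{lem:restrictiso} (which is indeed precisely where the extension--closed hypothesis is used) and Lemma \ref{lem:pro} to get bijections on $\hom(-,T)$ for finite abelian $T$ — and then you usefully flesh out the step the paper dismisses with ``the desired isomorphism easily follows'': your cokernel/kernel argument via $\hom(C,T)=0 \Rightarrow C=0$ for finitely generated abelian $C$, together with residual finiteness of finitely generated abelian groups, is exactly the standard way to promote a family of Hom--bijections against all finite abelian test objects into an honest isomorphism. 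For $H_0$ the target is the same image equality $\im(\beta\circ\varphi)=\im(\beta)$, but your argument is different in flavor: the paper runs a counting/injectivity contradiction inside $\hom(-,G)$ (producing two distinct elements $\beta,\beta'\in\hom(B,G)$ with the same pullback to $A$ if the image were proper), whereas you pass to the completions themselves and use continuity, density of $A$ in $\hat A_\CC$, and discreteness of the target. Both are clean; yours trades the combinatorial contradiction for a topological one, at the cost of invoking the subgroup--closure of $\CC$ and the identity $\hat H_\CC = H$ for $H\in\CC$ (both valid in this setting). Finally, your caveat about restricting to finite $G$ is harmless: every $\CC$--group in the paper's examples is finite, and the paper's own proof of the $H_0$ step passes through Corollary \ref{cor:group1}, which is stated for finite $G$.
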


\begin{proof}
Observe that, by Corollary \ref{cor:group1}, the first part of the statement is equivalent to the assertion that, for any element $\beta \in \hom(B,G)$, \[ \im\{\beta \circ \varphi : A \to G\} = \im\{\beta : B \to G\}. \]
Without loss of generality, we can reduce the proof of this isomorphism to the case
where $\beta$ is surjective. Denote $\alpha = \beta \circ \varphi \in \hom(A,G)$. Assume to the contrary
 that $\a(A) \subsetneq G$; then $\alpha \in \hom(A,\alpha(A)) \subset \hom(A,G)$ and as $\a(A) \in \mathcal{C}$ there exists by hypothesis a map
$\beta' \in \hom(B,\a(A)) \subset \hom(B,G)$ such that
$\alpha =  \beta' \circ \varphi$. Now the two maps $\beta, \beta' \in \hom(B,G)$ (that must differ as they have different image)
induce the same map $\a \in \hom(A,G)$, contradicting the bijectivity of $\hom(B,G)$ and $\hom(A,G)$.

We now turn to the proof of  the second part of the statement. Let $\b:B\to G$ a homomorphism to a $\CC$--group. Again, without loss of generality, we can assume that $\b:B\to G$ is surjective.
Note that by the above the homomorphism $\beta\circ\varphi:A\to G$ is surjective as well.
We now write $B' = \mbox{Ker}( \beta)$ and $A' = \mbox{Ker}( \beta \circ \varphi)$.
 By Shapiro's Lemma, we have the commutative diagram
 \[ \xymatrix{ H_1(A';\Z) \ar[r]^-\cong \ar[d] &H_1(A;\Z[G])\ar[d]\\
  H_1(B';\Z)\ar[r]^-\cong& H_1(B;\Z[G]).}\]
The claim amounts therefore to showing that  the map
$\varphi_{*} : H_1(A';\Z) \to H_1(B';\Z)$ is an isomorphism.
As $A$ and $B$ are finitely generated, $A'$ and $B'$ are finitely generated as well. When $\CC$ is extension closed, and contains all finite abelian groups, Lemma \ref{lem:restrictiso} asserts that the map $\varphi$ induces
a bijection between $\hom(B',\Gamma)$ and $\hom(A',\Gamma)$ for any finite abelian group $\Gamma$; the desired isomorphism easily follows.\end{proof}

\section{Monic twisted Alexander polynomials and solvable groups}\label{section:monic}

The aim of this section is to prove Proposition \ref{thm:finitesolvable}.

\subsection{Preliminary results}

We will often make use of the following proposition (cf. \cite[Section~4~and~Proposition~6.1]{McM02}).

\begin{proposition} \label{prop:mcm}
Let $N$ be a $3$--manifold  with empty or toroidal boundary and let
$\phi \in H^{1}(N;\Z)$ a primitive class.
If  $\Delta_{N,\phi}\ne 0$, then there exists a connected Thurston norm minimizing surface
$\S$ dual to $\phi$.
\end{proposition}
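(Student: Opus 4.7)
The result is essentially due to McMullen, and the natural strategy is to translate the non-vanishing of $\Delta_{N,\phi}$ into geometric information, then use it to reduce any norm-minimizing representative of $\phi$ to a connected one. First I would invoke Thurston's theorem to produce \emph{some} properly embedded oriented surface $S \subset N$ dual to $\phi$ with $\chi_-(S)=\|\phi\|_T$; standard cut-and-paste (removing sphere/disk components, compressing along compressing disks) lets us assume $S$ is incompressible and has no sphere or disk components without increasing $\chi_-$. The entire content of the proposition is then to show that among such representatives we can choose one that is connected.

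The key input is Lemma \ref{lem:twiprop2}: the condition $\Delta_{N,\phi} \neq 0$ is equivalent to $H_1(N;\Q[t^{\pm 1}])$ being $\Q[t^{\pm 1}]$-torsion, i.e., $H_1$ of the infinite cyclic cover $\widetilde{N}\to N$ associated to $\ker \phi$ is finite-dimensional over $\Q$. Write $M = N \setminus \nu S$ and $S = S_1 \sqcup \cdots \sqcup S_k$; then $\widetilde{N}$ is obtained by gluing $\Z$-many copies of $\widetilde{M}$ along copies of $\widetilde{S}$, with the gluing patterns recorded by the dual graph $\Gamma$ whose vertices are components of $M$ and whose edges are the $S_i$. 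The corresponding Mayer--Vietoris sequence relates the twisted $H_*$ of $N$ to those of $M$ and of the $S_i$, and the torsion condition on $H_1(N;\Q[t^{\pm 1}])$ forces the $S_i$ to satisfy nontrivial homological relations inside the complementary pieces. Since $\phi$ is primitive, the induced map $\Gamma \to S^1$ is surjective on fundamental groups, which, combined with the relations above, produces two distinct components $S_i$, $S_j$ lying in the boundary of a single complementary piece $M_r$ with compatible coorientations.

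Once such a pair is identified, I would tube $S_i$ to $S_j$ along an arc in $M_r$ by attaching a $1$-handle, obtaining a new surface $S'$ dual to $\phi$ with $k-1$ components. The naive cost of such a tubing is $\chi_-(S') = \chi_-(S)+2$, so the delicate step is to choose the arc carefully: the homological relation provided by the torsion of the Alexander module translates into the existence of an arc whose dual tube can be traded against a compressible piece of $S$, keeping $\chi_-(S') \le \|\phi\|_T$. Iterating the procedure produces the desired connected norm-minimizing $\Sigma$.

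\textbf{Main obstacle.} The tubing/exchange step is the heart of the proof: a generic handle addition strictly raises $\chi_-$, and one must use the full strength of the non-vanishing hypothesis to guarantee that the tube can be chosen to cancel an existing handle or compressing arc. McMullen's original argument bypasses explicit tubings entirely by introducing the Alexander norm $\|\phi\|_A$ (directly defined from $\Delta_{N,\phi}$), establishing $\|\phi\|_A \le \|\phi\|_T$, and then extracting a connected representative by a foliation-theoretic/topological argument in Section~4 of \cite{McM02}; I would ultimately appeal to that machinery if the direct tubing approach proves intractable.
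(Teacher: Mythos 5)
The paper gives no proof of this proposition: it is a direct citation to McMullen (\cite[Section~4~and~Proposition~6.1]{McM02}), which coincides with your fallback. So the ultimate conclusion of your write-up is correct in exactly the same sense that the paper's is. What should be flagged is that the tubing argument you sketch as a ``direct'' approach has a genuine gap and, as written, cannot work. Tubing two components of an embedded surface along an arc adds a handle and raises $\chi_-$ by exactly $2$. You propose to offset this cost against ``a compressible piece of $S$'', but a Thurston norm minimizing representative of $\phi$ can (and in the first step of your own plan you arrange that it does) have all components incompressible with no sphere or disk pieces; there is then no compression available to cancel the tube. The hypothesis $\Delta_{N,\phi}\neq 0$ is an algebraic statement --- $H_1(N;\Q[t^{\pm1}])$ is $\Q[t^{\pm1}]$-torsion --- and your sketch offers no mechanism that converts this into the geometric data (a compressing disk, or an essential arc that can be cut) that a cost-free tube would require. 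You do recognize this is the crux; the honest reading is that this is not a technicality left to the reader but the entire content of the proposition, and the plan does not supply it.

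For context, McMullen's argument does not proceed by tubing. He encodes the decomposition of $N$ along $S$ in the dual graph $\Gamma$ (one vertex per component of $N\setminus S$, one edge per component $S_i$, with $\phi$ factoring through a map $\Gamma\to S^1$); there is a surjection $H_1(\widetilde N;\Q)\to H_1(\widetilde\Gamma;\Q)$ of the corresponding infinite cyclic covers, so $\Delta_{N,\phi}\ne 0$ forces $b_1(\widetilde\Gamma;\Q)$ to be finite and hence sharply constrains the combinatorics of $\Gamma$. One then \emph{discards} parallel or redundant components --- an operation that never raises $\chi_-$ --- rather than tubing them together. The two strategies are not interchangeable: discarding is free, tubing always costs $2$, and it is the non-vanishing of $\Delta$ that licenses the discards, not the tubes.
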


Given a connected oriented surface $\S\subset N$ we will adopt the following conventions for the rest of the paper.
We choose a  neighborhood $\S\times [-1,1]\subset N$ and write $\nu \S=\S\times (-1,1)$.
Let $M := N \setminus \nu \Sigma$; we will write $\S^\pm=\S\times \{\pm 1\}\subset \partial M$, and we will denote the inclusion induced maps $\S \to \S^\pm\subset M$ by $\i_\pm$.

We pick a base point in $M$ and endow $N$ with the same base point. Also, we
pick a base point for $\S$ and endow $\S^\pm$ with the corresponding base points.  With these choices made, we will write $A = \pi_1(\S)$ and $B = \pi_1(M)$.
We also  pick paths in $M$ connecting the base point of $M$ with the base points
of $\S^-$ and $\S^+$.
We now have inclusion induced maps $\i_\pm : A \to B$ for either inclusion of $\S$ in $M$ and, using the constant path, a map $\pi_1(M) \to \pi_1(N)$.
Under the assumption that $\S$ is incompressible (in particular, whenever $\S$ is Thurston norm minimizing)
these maps are injective. Since $M$ and $N$ have the same base point we can view $B$ canonically as a subgroup of $\pi_1(N)$.

Before we state the first proposition we have to introduce a few more definitions.
Let $N$ be a $3$--manifold with empty or toroidal boundary and let $\phi \in H^{1}(N;\Z)$ a nontrivial class.
Let $\tipi\subset \pi$ be a finite index subgroup.
We denote by $\phi_{\tipi}$  the restriction of $\phi\in H^1(N;\Z)=\hom(\pi,\Z)$ to $\tipi$.
Note that $\phi_{\tipi}$ is necessarily non--trivial.
We say that  $\tipi\subset \pi$ has Property (M) if
the twisted Alexander polynomial $\Delta_{N,\phi}^{\pi/\tipi}\in \zt$ is monic
and if \[ \deg(\Delta_{N,\phi}^{\pi/\tipi})= [\pi:\tipi] \, \|\phi\|_{T} + (1+b_3(N)) \div  \phi_{\tipi}\] holds. Note that a pair $(N,\phi)$ satisfies Condition ($*$) if and only if Property (M) is satisfied by all normal subgroups of $\pi_1(N)$.

The following proposition  is the key tool for translating information on twisted Alexander polynomials
into information on the maps $\i_\pm:A\to B$.
The proposition is well known in the classical case.
In the case of normal subgroups a proof for the `only if' direction of the proposition is given by combining  \cite[Section~8]{FV08a} with \cite[Section~4]{FV08b}.

\begin{proposition} \label{prop:sameh0h1}
Let $N$ be a 3--manifold with empty or toroidal boundary with $N\ne S^1\times D^2, N\ne S^1\times S^2$. Let $\phi  \in
H^{1}(N;\Z)$ a primitive class which is dual to a
connected Thurston norm
minimizing surface $\S$.
Let $\tipi\subset \pi$ be a finite index subgroup. Then $\tipi$ has Property (M) if and only if
the maps
$\i_{\pm}:H_i(A;\Z[{\pi/\tipi}])\to H_i(B;\Z[{\pi/\tipi}])$
are isomorphisms for $i=0,1$.
\end{proposition}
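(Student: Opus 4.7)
The plan is to prove both directions via the Mayer--Vietoris long exact sequence obtained by decomposing the infinite cyclic cover $N_\phi \to N$ as an infinite chain of copies of $M=N\sm\nu\S$ glued along copies of $\S$. Since $\S$ is Poincar\'e dual to $\phi$, the class $\phi$ vanishes on both $A=\pi_1(\S)$ and $B=\pi_1(M)$, so the representation $\a\otimes\phi$ restricts on $A$ and $B$ to $\a|_A$, $\a|_B$ with trivial $t$--action. Writing $V=\Z[\pi/\tipi]$, this yields a long exact sequence of $\Z[t^{\pm 1}]$--modules
\[
\cdots \to H_i(\S;V)[t^{\pm 1}] \xrightarrow{t\i_+-\i_-} H_i(M;V)[t^{\pm 1}] \to H_i(N;V[t^{\pm 1}]) \to H_{i-1}(\S;V)[t^{\pm 1}] \to \cdots
\]
in which $H_i(N;V[t^{\pm 1}])$ has order $\Delta_{N,\phi,i}^{\pi/\tipi}$. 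Lemma~\ref{lem:twiprop} pins down $\Delta_{N,\phi,0}^{\pi/\tipi}$ and $\Delta_{N,\phi,2}^{\pi/\tipi}$ as powers of $(1-t^{\div\phi_{\tipi}})$, so the non--trivial content is concentrated in the map in degree $1$ together with its boundary contributions.

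For the forward direction I would assume that $(\i_\pm)_*$ is an isomorphism on $H_0$ and $H_1$ with coefficients in $V$. Lemma~\ref{lem:twih2} upgrades this to $H_2$, since $\S$ and $M$ have the homotopy type of finite $2$--complexes and $\chi(\S)=\chi(M)$ (using $\chi(N)=0$). The connecting map $t\i_+-\i_-$ then assumes the form $tI-J$ for some invertible $J\in\endom(H_i(\S;V))$, whose characteristic polynomial is manifestly monic of degree $\rank_\Z H_i(\S;V)$. Using $\|\phi\|_T=-\chi(\S)$ (valid because $\S$ is norm--minimizing and $N\ne S^1\times S^2, S^1\times D^2$) and multiplicativity of the Euler characteristic for finite--dimensional coefficient systems, a rank count across $i$, combined with the known contributions from $\Delta_{N,\phi,0}^{\pi/\tipi}$ and $\Delta_{N,\phi,2}^{\pi/\tipi}$, yields monicness of $\Delta_{N,\phi}^{\pi/\tipi}$ together with the exact degree required by Property~(M).

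The backward direction, which is the harder part, requires extracting the isomorphism statements from the polynomial data. From the long exact sequence and the known orders of $\Delta_{N,\phi,j}^{\pi/\tipi}$ for $j=0,2$, one obtains an \emph{a priori} inequality $\deg\Delta_{N,\phi}^{\pi/\tipi}\le[\pi:\tipi]\|\phi\|_T+(1+b_3(N))\div\phi_{\tipi}$; matching this upper bound with equality, as supplied by Property~(M), forces the maps $(\i_\pm)_*$ to be rational isomorphisms on $H_0$ and $H_1$. The monicness hypothesis then upgrades rational to integral: the top coefficient of $\det(t\i_+-\i_-)$ equals, up to sign, the determinant of $(\i_+)_*$ on the free part, so monicness compels $(\i_+)_*$ to carry a $\Z$--basis to a $\Z$--basis; a symmetric argument handles $\i_-$, and Corollary~\ref{cor:group1} translates the $H_0$ isomorphism into the combinatorial equality $|A\backslash\pi/\tipi|=|B\backslash\pi/\tipi|$.

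The main obstacle I anticipate is torsion bookkeeping in the backward direction: one must ensure that no torsion in $H_i(\S;V)$ or $H_i(M;V)$ is concealed in the kernels or cokernels of the connecting maps, and that the leading--coefficient argument genuinely identifies $(\i_\pm)_*$ as an \emph{integral} isomorphism rather than only one over $\Q$. Careful use of multiplicativity of orders over $\Z[t^{\pm 1}]$ along the long exact sequence, together with Lemmas~\ref{lem:twiprop},~\ref{lem:twiprop2}, and~\ref{lem:twih2}, should keep this manageable.
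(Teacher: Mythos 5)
Your proposal is built around the same Mayer--Vietoris sequence for $(N,\S)$ that the paper uses, and your treatment of the easier implication (isomorphisms on $H_0,H_1$ imply Property (M)) is essentially sound, though the invocation of Lemma \ref{lem:twih2} is not necessary for that direction; once $\i_\pm$ are isomorphisms of free abelian groups, the degree-one portion of the Mayer--Vietoris sequence already gives $\Delta_{N,\phi}^{\pi/\tipi}=\det(t\i_+-\i_-)$ directly.

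The hard implication has a genuine gap, and it is exactly the ``torsion bookkeeping'' you flag at the end but do not resolve. Your argument ``the top coefficient of $\det(t\i_+-\i_-)$ equals $\det(\i_+)$'' presupposes that $t\i_+-\i_-$ is a square matrix between free $\Z[t^{\pm 1}]$--modules, i.e. that $H_1(B;\Z[\pi/\tipi])$ is a free abelian group of the same rank as $H_1(A;\Z[\pi/\tipi])$. Rational rank equality (obtained by tensoring with $\Q(t)$, which follows from the non-vanishing of the twisted Alexander polynomials rather than from the degree equality) does not rule out torsion in $H_1(B;\Z[\pi/\tipi])$, and your proposed remedy via ``multiplicativity of orders over $\Z[t^{\pm 1}]$'' does not close this: $H_i(A;\Z[\pi/\tipi])\otimes\zt$ and $H_i(B;\Z[\pi/\tipi])\otimes\zt$ are not $\zt$--torsion, so the order-multiplicativity that one has for short exact sequences of torsion modules does not apply to the long exact Mayer--Vietoris sequence without first breaking it into pieces, and such a breakdown again requires knowing where the $\Z$--torsion sits. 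The paper's fix is a separate and essential idea: run the Mayer--Vietoris sequence with $\F_p[\pi/\tipi]\tpm$--coefficients for every prime $p$. Monicness of $\Delta_{N,\phi}^{\pi/\tipi}$ (together with \cite[Proposition~6.1]{FV08a}) guarantees $\Delta_{N,\phi}^{\pi/\tipi,p}\ne 0$ for all $p$, from which one deduces $\dim_{\F_p} H_1(A;\F_p[\pi/\tipi])=\dim_{\F_p} H_1(B;\F_p[\pi/\tipi])$; the universal coefficient theorem (applicable because $H_0(A;\Z[\pi/\tipi])$ and $H_0(B;\Z[\pi/\tipi])$ are torsion-free) then identifies these with $H_1(\cdot;\Z[\pi/\tipi])\otimes\F_p$, and since $H_1(A;\Z[\pi/\tipi])$ is free abelian (it is the homology of a cover of a surface), agreement for all $p$ forces $H_1(B;\Z[\pi/\tipi])$ to be free abelian of the same rank. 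Without this step, the identification $\Delta_{N,\phi}^{\pi/\tipi}=\det(t\i_+-\i_-)$, the degree count, and the leading-coefficient argument all fail to get off the ground. This is the missing ingredient in your proposal, and it is where the monicness hypothesis is actually exploited.
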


\begin{proof}
Let $R=\Z$ or $R=\F_p$ with $p$ a prime. We have canonical isomorphisms  $H_i(\S;R[\pi/\tipi])\cong H_i(A;R[\pi/\tipi])$ and
$H_i(M;R[\pi/\tipi])\cong H_i(B;R[\pi/\tipi])$  for $i=0,1$.
It follows from \cite[Proposition~3.2]{FK06}  that splitting $N$ along $\S$ gives rise to
 the following Mayer--Vietoris type exact sequence \[ \ba{cccccccccccccc} \hspace{-0.1cm}&\hspace{-0.1cm}&\hspace{-0.1cm}&\hspace{-0.1cm}\dots \hspace{-0.1cm}&\hspace{-0.1cm}\to\hspace{-0.1cm}&\hspace{-0.1cm} H_2(N;R[\pi/\tipi]\tpm)\\[0.1cm]
 \to \hspace{-0.1cm}&\hspace{-0.1cm}H_1(A;R[\pi/\tipi])\otimes \rt \hspace{-0.1cm}&\hspace{-0.1cm}\xrightarrow{t\i_+-\i_-}
\hspace{-0.1cm}&\hspace{-0.1cm}H_1(B;R[\pi/\tipi])\otimes \rt \hspace{-0.1cm}&\hspace{-0.1cm}\to\hspace{-0.1cm}&\hspace{-0.1cm}
H_1(N;R[\pi/\tipi]\tpm)\hspace{-0.1cm}&\hspace{-0.1cm}\to\hspace{-0.1cm}&\hspace{-0.1cm}\\[0.1cm]
\to \hspace{-0.1cm}&\hspace{-0.1cm}H_0(A;R[\pi/\tipi])\otimes \rt \hspace{-0.1cm}&\hspace{-0.1cm}\xrightarrow{t\i_+-\i_-}
\hspace{-0.1cm}&\hspace{-0.1cm}H_0(B;R[\pi/\tipi])\otimes \rt \hspace{-0.1cm}&\hspace{-0.1cm}\to\hspace{-0.1cm}&\hspace{-0.1cm}
H_0(N;R[\pi/\tipi]\tpm)\hspace{-0.1cm}&\hspace{-0.1cm}\to\hspace{-0.1cm}&\hspace{-0.1cm} 0.
\ea \]
which we refer to as the Mayer--Vietoris sequence of $(N,\S)$ with $R[\pi/\tipi]\tpm$--coefficients.
First note that by Shapiro's lemma the groups $H_i(A;R[\pi/\tipi])$ are the $i$--th  homology with $R$--coefficients of a (possibly) disconnected surface.
It follows that  $H_i(A;R[\pi/\tipi])$ is a free $R$--module, in particular
 the $\rt$--modules $H_i(A;R[\pi/\tipi])\otimes \rt$ are free $\rt$--modules. We will several times make use of the observation that if $H_i(N;R[\pi/\tipi]\tpm)$ is $\rt$--torsion, then
the map $H_i(N;R[\pi/\tipi]\tpm)\to H_{i-1}(A;R[\pi/\tipi])\otimes \rt$ is necessarily zero.

We first assume that $\tipi$ has Property (M).
 Since  $\Delta_{N,\phi}^{\pi/\tipi}\ne 0$ we have  that the module $H_1(N;\Z[\pi/\tipi]\tpm)\otimes_{\zt} \Q(t)$ is trivial.
Note that by Lemma  \ref{lem:twiprop2} we have that $H_0(N;\Z[\pi/\tipi]\tpm)$  is also $\zt$--torsion.
  We now consider the Mayer--Vietoris sequence of $(N,\S)$ with $\Z[\pi/\tipi]\tpm$--coefficients.
 Tensoring the
 exact sequence with $\Q(t)$ we  see  that
\[ \ba{rcl}\rank_{\Z}(H_0(A;\Z[\pi/\tipi]))&=&\rank_{\Q(t)}(H_0(A;\Z[\pi/\tipi])\otimes_{\Z}\Q(t))\\
&=& \rank_{\Q(t)}(H_0(B;\Z[\pi/\tipi])\otimes_\Z \Q(t))=
\rank_{\Z}(H_0(B;\Z[\pi/\tipi])).\ea \]
Using this observation and using Lemma \ref{lem:group1} we see that the maps $\i_\pm:H_0(A;\zpp)\to H_0(B;\zpp)$ are epimorphism
between free abelian groups of the same rank. Hence the maps are in fact isomorphisms.

In order to prove that the maps  $\i_\pm:H_1(A;\zpp)\to H_1(B;\zpp)$ are isomorphisms
we first consider  the following claim.

\begin{claim}
$H_1(A;\Z[\pi/\tipi])$ and $H_1(B;\Z[\pi/\tipi])$ are free abelian groups of the same rank.
\end{claim}

Let $p$ be a prime.
We consider the Mayer--Vietoris sequence of $(N,\S)$ with $\F_p[\pi/\tipi]\tpm$--coefficients.
Denote by $\Delta_{N,\phi}^{{\pi/\tipi,p}} \in \fpt$ the twisted Alexander polynomial with coefficients in $\F_p$. It follows from $\Delta^{\pi/\tipi}_{N,\phi}$ monic  and from \cite[Proposition~6.1]{FV08a} that
$\Delta_{N,\phi}^{{\pi/\tipi,p}}\ne 0\in \fpt$. Furthermore by Lemma
\ref{lem:twiprop} we have that $\Delta_{N,\phi,2}^{{\pi/\tipi,p}}\ne 0\in \fpt$. In particular  $H_i(N;\fp[\pi/\tipi][t^{\pm 1}])$ is $\fpt$--torsion for $i=1,2$.
It follows from the fact that $H_i(A;\fp[\pi/\tipi])\otimes_{\F_p} \fpt$ is a free $\fpt$--module and the above observation
that $H_i(N;\fp[\pi/\tipi][t^{\pm 1}])$ is $\fpt$--torsion for $i=1,2$ that the Mayer--Vietoris sequence gives rise to the following
 short exact sequence
\[ 0\to H_1(A;\fp[\pi/\tipi])\otimes \fpt \xrightarrow{t \i_+-\i_-}  H_1(B;\fp[\pi/\tipi])\otimes
\fpt \to  H_1(N;\fp[\pi/\tipi][t^{\pm 1}])\to 0.\] Tensoring with $\F_p(t)$ we see that in
particular $H_1(A;\fp[\pi/\tipi])\cong H_1(B;\fp[\pi/\tipi])$  as
$\F_p$--vector spaces. The homology group $H_0(A;\Z[\pi/\tipi])$ is $\Z$--torsion free. It
follows from the universal coefficient theorem applied to the complex of $\Z$--modules
$C_*(\ti{\S})\otimes_{\Z[A]}{\Z[\pi/\tipi]}$ that
\[ H_1(A;\Z[\pi/\tipi])\otimes_\Z \F_p \cong H_1(A;\F_p[\pi/\tipi])\]
for every prime $p$. The same statement holds for $B$. Combining our results we see that
$H_1(A;\Z[\pi/\tipi])\otimes_\Z \F_p$ and $H_1(B;\Z[\pi/\tipi])\otimes_\Z \F_p$ are isomorphic for any prime $p$.
Since  $H_1(A;\Z[\pi/\tipi])$ is free abelian it
follows that $H_1(A;\Z[\pi/\tipi])\cong H_1(B;\Z[\pi/\tipi])$.
This completes the proof of the claim.

In the following we equip the free $\Z$--modules $H_1(A;\Z[\pi/\tipi])$ and $H_1(B;\Z[\pi/\tipi])$ with a choice of basis.
We now study the Mayer--Vietoris sequence for $(N,\S)$ with $\Z[\pi/\tipi]\tpm$--coefficients. Using an argument similar to the above we see that it gives rise to  the following exact sequence
\[ H_1(A;\Z[\pi/\tipi])\otimes \zt \xrightarrow{t \i_+-\i_-} H_1(B;\Z[\pi/\tipi])\otimes \zt \to H_1(N;\Z[\pi/\tipi]\tpm)\to 0.\]
Since $H_1(A;\Z[\pi/\tipi])$ and $H_1(B;\Z[\pi/\tipi])$ are free abelian groups of the same rank it
follows that the above exact sequence is a resolution of $H_1(N;\Z[\pi/\tipi]\tpm)$ by free $\zt$--modules and  that $\Delta^{\pi/\tipi}_{N,\phi}=\det(t \i_+-\i_-)$.
Recall that Property (M) states in particular  that
\be \label{equ:deg} \deg\Delta_{N,\phi}^{{\pi/\tipi}} = |\pi/\tipi| \, \|\phi\|_{T} + (1+b_3(N)) \div \phi_{\tipi}.\ee
Recall that we assumed that $N\ne S^1\times D^2$ and $N\ne S^1\times S^2$, in particular $\chi(\S)\leq 0$ and therefore $-\chi(\S)=||\phi||_T$.
Writing $b_i=\rank_{\Z}(H_i(\S;\Z[\pi/\tipi]))=\rank_{\Z}(H_i(A;\Z[\pi/\tipi]))$ a standard Euler characteristic
argument now shows that
\[-b_0+b_1-b_2=-|\pi/\tipi|\chi(\S)=|\pi/\tipi|\cdot ||\phi||_T.\]
By
\cite[Lemma~2.2]{FK06} we have $b_i=\deg \Delta^{{\pi/\tipi}}_{N,\phi,i}$ for $i=0$ and $i=2$.
We also have $\deg \Delta^{{\pi/\tipi}}_{N,{\phi},0}=\div \phi_{\pi/\tipi}$ and
 $\deg \Delta^{{\pi/\tipi}}_{N,{\phi},2}=b_3(N) \div \phi_{\tipi}$ by Lemma \ref{lem:twiprop}.
  Combining these facts with (\ref{equ:deg}) we conclude that
  $\deg \Delta^{{\pi/\tipi}}_{N,\phi}=b_1$.
So we now have $\deg(\det(t \i_+-\i_-))=b_1$. Since
$\i_+$ and $\i_-$ are $b_1\times b_1$ matrices over $\Z$
it now follows  that $\det(\i_+)$ equals the top coefficient
of $\Delta^{\pi/\tipi}_{N,\phi}$, which by Property (M) equals $\pm 1$. By the symmetry of twisted Alexander polynomials we have that
the bottom coefficient of $\Delta^{\pi/\tipi}_{N,\phi}$ also equals $\pm 1$, we deduce that $\det(\i_-)=\pm 1$. This
shows that $\i_+,\i_-: H_1(A;\Z[\pi/\tipi])\to H_1(B;\Z[\pi/\tipi])$ are
isomorphisms. We thus showed that if $\tipi$ has Property (M), then
the maps
$\i:H_i(A;\Z[{\pi/\tipi}])\to H_i(B;\Z[{\pi/\tipi}])$
are isomorphisms for $i=0,1$.

Now assume that we are given a finite index subgroup $\tipi\subset \pi$ such that
the maps
$\i_\pm:H_i(A;\Z[{\pi/\tipi}])\to H_i(B;\Z[{\pi/\tipi}])$
are isomorphisms for $i=0,1$. It follows from the assumption that $\i_\pm:H_0(A;\Z[{\pi/\tipi}])\to H_0(B;\Z[{\pi/\tipi}])$
are isomorphisms that the map
\[ H_0(A;\Z[\pi/\tipi])\otimes \zt \xrightarrow{t\i_+-\i_-}
H_0(B;\Z[\pi/\tipi])\]
is injective.  In particular the  Mayer--Vietoris sequence of $(N,\S)$
with $\Z[\pi/\tipi]\tpm$--coefficients gives rise to  the following exact sequence
\[ H_1(A;\Z[\pi/\tipi])\otimes \zt \xrightarrow{t \i_+-\i_-} H_1(B;\Z[\pi/\tipi])\otimes \zt \to H_1(N;\Z[\pi/\tipi]\tpm)\to 0.\]
As above $H_1(A;\Z[\pi/\tipi])$ is a free abelian group and  by our assumption
$H_1(B;\Z[\pi/\tipi])\cong H_1(A;\Z[\pi/\tipi])$ is also free abelian. In particular the above exact sequence defines a presentation
for $H_1(N;\Z[\pi/\tipi]\tpm)$ and we deduce that
\[ \Delta_{N,\phi}^{\pi/\tipi}=\det(t\i_+-\i_-).\]
Since $\i_-$ and $\i_+$ are isomorphisms it follows that $\Delta_{N,\phi}^{\pi/\tipi}$ is monic of degree
$b_1$. An argument similar to the above now shows that
\[  \deg\Delta_{N,\phi}^{{\pi/\tipi}} = |\pi/\tipi| \, \|\phi\|_{T} + (1+b_3(N)) \div \phi_{\tipi}.\]
\end{proof}

\subsection{Finite solvable quotients}
Given a solvable group $S$
we denote by $\ell(S)$ its derived length,
i.e. the length of the shortest decomposition into abelian groups. Put differently, $\ell(S)$ is the minimal number such that $S^{(\ell(S))}=\{e\}$.
Note that $\ell(S)=0$ if and only if $S=\{e\}$.

For sake of comprehension, we briefly recall the notation. We are considering a $3$--manifold $N$ with empty or
toroidal boundary, and we fix a primitive class $\phi \in H^1(N;\Z)$. We denote by $\S$ a connected Thurston
norm minimizing surface dual to $\phi$, and write $A = \pi_1(\S)$ and $B = \pi_1(M)$ (with $M = N \setminus \nu \S$)
and we denote the two inclusion induced maps $A \to B$ with $\i_{\pm}$.
We also write $\pi=\pi_1(N)$. Note that $\pi=\ll B,t | \i_-(A)=t\i_+(A)t^{-1}\rr$.

Given $n\in \N\cup \{0\}$ we denote by $\statefs(n)$ the statement that for any finite solvable group $S$
with $\ell(S)\leq n$ we have that for $\i=\i_-,\i_+$ the map
\[ \i^*:\hom(B,S)\to \hom(A,S)\]
is a bijection. This is equivalent by Lemma \ref{lem:pro} to assert that $\i : A \to B$ induces an isomorphism of pro--$\mathcal{FS}(n)$ completions. Recall that by
Corollary \ref{cor:group1} and Lemma \ref{lem:im} statement $\statefs(n)$ implies then that for any homomorphism $\b:B\to S$ to a finite solvable group $S$ with $\ell(S)\leq n$ we have
$\im\{\b\circ \i:A\to B\to S\}=\im\{\b:B\to S\}$.

Our goal is to show that $\statefs(n)$ holds for all $n$. We will show this by induction on $n$.
For the induction argument we use the following  auxiliary statement:
Given $n\in \N\cup \{0\}$ we denote by $\stateh(n)$ the statement that for any homomorphism $\b:B\to S$ where $S$ is finite solvable with $\ell(S)\leq n$ we have that for $\i=\i_{-},\i_+$
the homomorphism \[ \i_{*} :H_1(A;\Z[S])\to H_1(B;\Z[S]) \]
is an isomorphism.

In the next two sections we will  prove the following two propositions:

\begin{proposition} \label{prop:ind1}
If $\stateh(n)$ and $\statefs(n)$ hold, then $\statefs(n+1)$ holds as well.
\end{proposition}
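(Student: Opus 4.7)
Let $S$ be a finite solvable group with $\ell(S) \leq n+1$ and $\alpha : A \to S$ a homomorphism; the plan is to construct a unique $\beta : B \to S$ satisfying $\beta \circ \iota = \alpha$. I would begin by invoking the short exact sequence $1 \to K \to S \to T \to 1$ with $K = S^{(n)}$ abelian and $\ell(T) \leq n$. Hypothesis $\mathcal{S}(n)$ applied to $\bar\alpha := p \circ \alpha : A \to T$ produces a unique $\bar\beta : B \to T$ with $\bar\beta\iota = \bar\alpha$, and any sought $\beta$ must satisfy $p\beta = \bar\beta$. Writing $\tilde A = \ker\bar\alpha$, $\tilde B = \ker\bar\beta$, and $G = \bar\alpha(A) = \bar\beta(B)$ (the latter equality coming from $\mathcal{S}(n)$ via Lemma \ref{lem:im}), the problem reduces to compatibly lifting $\bar\beta$ to $S$.

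The heart of the matter is extending the restriction $\alpha|_{\tilde A} : \tilde A \to K$, which is $A$-equivariant for the natural $T$-action on $K$, to a $B$-equivariant homomorphism $\beta_0 : \tilde B \to K$. Corollary \ref{cor:metabany} applied to $\bar\beta : B \to T$, fed by the $H_0$-isomorphism from $\mathcal{S}(n)$ (via Lemma \ref{lem:im}) together with the $H_1$-isomorphism furnished by $\mathcal{H}(n)$, produces a group isomorphism $A/[\tilde A, \tilde A] \xrightarrow{\cong} B/[\tilde B, \tilde B]$ induced by $\iota$. Since any group isomorphism intertwines conjugation, this restricts on the kernels of the common projection to $G$ to a $G$-equivariant isomorphism $\tilde A/[\tilde A, \tilde A] \xrightarrow{\cong} \tilde B/[\tilde B, \tilde B]$, which is exactly the data needed to extend $\alpha|_{\tilde A}$ uniquely to the sought $B$-equivariant $\beta_0$.

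To assemble $\bar\beta$ and $\beta_0$ into $\beta$, I would form the pullback extensions $P_A = A \times_T S$ and $P_B = B \times_T S$, each with kernel $K$. The $B$-equivariance of $\beta_0$ makes $\{(\tilde b, \beta_0(\tilde b)) : \tilde b \in \tilde B\}$ a normal subgroup of $P_B$, and analogously for $\alpha(\tilde A) \subset P_A$. Quotienting yields compatible extensions of the common group $G$ by $K/\alpha(\tilde A)$ and $K/\beta_0(\tilde B)$ respectively. Since $\alpha$ itself splits $P_A \to A$, the $A$-side extension has trivial class in $H^2(G; K/\alpha(\tilde A))$; naturality under the coefficient surjection $K/\alpha(\tilde A) \twoheadrightarrow K/\beta_0(\tilde B)$ then kills the class of the $B$-side extension in $H^2(G; K/\beta_0(\tilde B))$, producing a splitting and hence a homomorphism $\beta : B \to S$ with $p\beta = \bar\beta$ and $\beta|_{\tilde B} = \beta_0$. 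Notably this sidesteps any computation of $H^2(A; K)$ or $H^2(B; K)$ by transferring the obstruction to the common finite quotient $G$.

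Finally, $\beta\iota$ may differ from $\alpha$ outside $\tilde A$, but the ratio $c(a) = \beta(\iota(a))\alpha(a)^{-1}$ is a crossed homomorphism $A \to K$ that vanishes on $\tilde A$, hence descends to $\bar c : G \to K$; pulling $\bar c$ back to a crossed hom $B \to K$ via $B \twoheadrightarrow G$ and adjusting $\beta$ by it arranges $\beta\iota = \alpha$. Uniqueness is dual: any two such lifts agree on $\tilde B$ by uniqueness of $\beta_0$, and their ratio descends to a crossed hom $G \to K$ annihilated by the surjection $\iota(A) \twoheadrightarrow G$, forcing it to vanish. The main obstacle I expect is the $B$-equivariant extension in paragraph two; this ultimately hinges on the observation that Corollary \ref{cor:metabany}'s isomorphism is naturally equivariant for the conjugation action of $G$, after which the pullback/obstruction argument proceeds by standard homological bookkeeping.
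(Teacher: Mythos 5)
You correctly identify the key ingredient --- the isomorphism $A/[\tilde A,\tilde A]\xrightarrow{\ \i\ } B/[\tilde B,\tilde B]$ furnished by $\statefs(n)$, $\stateh(n)$ and Corollary~\ref{cor:metabany} --- but you miss the observation that finishes the proof at that point, and the obstruction-theoretic replacement you build instead is flawed.

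The shortcut: since $\a$ carries $\tilde A=\ker\bar\a$ into the abelian group $K$, it kills $[\tilde A,\tilde A]$, so $\a$ factors as $A\to A/[\tilde A,\tilde A]\xrightarrow{\psi}S$. The sought $\b$ is then simply
\[ \b\colon B\to B/[\tilde B,\tilde B]\xrightarrow{\ \i^{-1}\ }A/[\tilde A,\tilde A]\xrightarrow{\ \psi\ }S, \]
and $\b\circ\i=\a$, bijectivity of $\i^*$, etc., follow immediately. Everything in your third and fourth paragraphs is therefore unnecessary: $\b$ is already fully defined and already satisfies $\b\circ\i=\a$ on all of $A$ (not just on $\tilde A$), so there is no lifting problem and no crossed-homomorphism correction to make. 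This is precisely the paper's argument.

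The obstruction-theoretic detour you substitute is also incorrect as written. Quotienting $P_A=A\times_T S$ by the normal graph $\Gamma_A=\{(a,\a(a)):a\in\tilde A\}$ yields an extension of $G$ by $K$, not by $K/\a(\tilde A)$: the map $(a,s)\mapsto s\,\a(a)^{-1}$ identifies the kernel of $P_A/\Gamma_A\to G$ with all of $K$, since $\Gamma_A$ is a complement to $K$ inside the preimage of $\tilde A$. So the claimed coefficient surjection $K/\a(\tilde A)\twoheadrightarrow K/\beta_0(\tilde B)$ is not what appears. Worse, by your own construction $\beta_0\circ\i=\a|_{\tilde A}$, hence $\a(\tilde A)=\beta_0(\tilde B)$, so that map would be the identity and carry no content; and naturality in coefficients in any case compares classes of a \emph{single} extension in two coefficient modules, not classes of two \emph{different} extensions $P_A/\Gamma_A$ and $P_B/\Gamma_B$. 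To transfer the trivial class from the $A$-side to the $B$-side one must first produce an isomorphism of these extensions over $G$, and such an isomorphism is exactly the map $\i\times\mathrm{id}$ descended to the quotients --- i.e., the same data that directly defines $\b$. The cohomological machinery is thus circular where it is not simply in error.
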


\begin{proposition} \label{prop:ind2}
Assume that  $(N,\phi)$ satisfies Condition  $(*)$.
If $\statefs(n)$ holds, then $\stateh(n)$ holds as well.
\end{proposition}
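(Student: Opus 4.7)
Fix $\b:B\to S$ with $\ell(S)\le n$. The plan is to reduce $\stateh(n)$ for $\b$, via Corollary \ref{cor:metab}, to the two assertions that $\i$ induces bijections $A/\tilde A\to B/\tilde B$ and $A/[\tilde A,\tilde A]\to B/[\tilde B,\tilde B]$, where $\tilde B:=\ker(\b)$ and $\tilde A:=\i^{-1}(\tilde B)$. The first of these bijections is immediate from $\statefs(n)$ together with Lemma \ref{lem:im}, so the substantive issue is the second.

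The difficulty is that $\b$ need not extend to $\pi=\pi_1(N)$, whereas Condition $(*)$, through Proposition \ref{prop:sameh0h1}, only supplies information for coefficient modules arising from finite quotients of $\pi$. To overcome this I use a wreath-product-type construction. Since $\statefs(n)$ renders both $(\i_\pm)^*:\hom(B,S)\to\hom(A,S)$ bijections, $\Phi:=(\i_-^*)^{-1}\circ\i_+^*$ is a permutation of the finite set $\hom(B,S)$, and $\b$ has a finite $\Phi$-period $k\ge 1$. Define $\ti\b:B\to S^k$ by $\ti\b(b):=(\Phi^j(\b)(b))_{j=0}^{k-1}$. By construction $\Phi^j(\b)\circ\i_-=\Phi^{j-1}(\b)\circ\i_+$, so $\ti\b\circ\i_-$ is the cyclic shift of $\ti\b\circ\i_+$. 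This shift is realized by conjugation in the semidirect product $G:=S^k\rtimes\Z/k$ (with $\Z/k$ acting by cyclic permutation), so $\ti\b$ extends through the HNN presentation $\pi=\ll B,t\mid t\i_+(a)t^{-1}=\i_-(a)\rr$ to a homomorphism $\a:\pi\to G$ to a finite group.

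With $\a$ in hand, Condition $(*)$ and Proposition \ref{prop:sameh0h1} give that $\i_*:H_i(A;\Z[\pi/\ker\a])\to H_i(B;\Z[\pi/\ker\a])$ is an isomorphism for $i=0,1$. Applying Lemma \ref{lem:metabany} with $\tipi=\ker\a$ and the distinguished double-coset representative $g_1=e$ (for which $B\cap\ker\a=\ker\ti\b=:L$), one obtains bijections $A/L_A\to B/L$ and $A/[L_A,L_A]\to B/[L,L]$, where $L_A:=\i^{-1}(L)$. Since $L$ is normal in $B$ and $L\subset\tilde B$, Lemma \ref{lem:alsoiso} upgrades these to the corresponding bijections for the pair $(\tilde A,\tilde B)$, completing the reduction to Corollary \ref{cor:metab}.

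The principal obstacle is the construction of $\ti\b$ and its extension $\a:\pi\to G$: this is what translates the hypothesis $\statefs(n)$ on the subgroup $B$ into a homomorphism on all of $\pi$, thereby making Condition $(*)$ available. Once $\a$ is constructed, the remaining passages through Lemmas \ref{lem:metabany} and \ref{lem:alsoiso} and Corollary \ref{cor:metab} amount to a careful application of the group-theoretic machinery of Section \ref{section:basics}.
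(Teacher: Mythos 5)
Your proof is correct, and its overall architecture coincides with the paper's: reduce $\stateh(n)$ via Corollary \ref{cor:metab} to the two bijections for $(\ti{A},\ti{B})=(\i^{-1}(\ker\b),\ker\b)$, extend a homomorphism related to $\b$ from $B$ to all of $\pi$ through the HNN presentation so that Condition ($*$) and Proposition \ref{prop:sameh0h1} become available, and then descend from the kernel of the extended map to $\ker(\b)$ via Lemma \ref{lem:alsoiso}. The only genuine difference lies in how the extension is built. The paper (Lemma \ref{lem:extend}) passes to the characteristic quotient $B/B(S)$, where $B(S)=\bigcap_{\g\in\hom(B,S)}\ker(\g)$, shows via $\statefs(n)$ that $\i_{+*}\circ(\i_{-*})^{-1}$ induces a finite-order automorphism $\g$ of $B/B(S)$, and extends to $\Z/k\ltimes B/B(S)$. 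You instead track only the $\Phi$-orbit of $\b$ in the finite set $\hom(B,S)$ and extend the induced map $\ti\b:B\to S^k$ to $S^k\rtimes\Z/k$; your verification that $\ti\b\circ\i_-$ is the cyclic shift of $\ti\b\circ\i_+$ is exactly the compatibility needed for the HNN relation, and $\ker\ti\b=\bigcap_{j}\ker(\Phi^j(\b))$ is normal in $B$ and contained in $\ker\b$, so Lemma \ref{lem:alsoiso} applies just as in the paper. Your variant is slightly more economical (the target group is smaller and Lemma \ref{lem:chprop} is not needed), while the paper's version has the advantage that the single group $\Z/k\ltimes B/B(S)$ works uniformly for every $\b$ with values in $S$; both are equally valid realizations of the same key step.
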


We can now prove the following corollary, which amounts to  Proposition \ref{thm:finitesolvable}.

\begin{corollary}\label{cor:finitesolvable}
Assume that $(N,\phi)$ satisfies Condition  $(*)$ and that $\phi$ is primitive.
Let $\S\subset N$ be a connected Thurston norm minimizing surface dual to $\phi$ and let $\i: A \to B$ be one of the two injections. Then for any finite solvable  group $G$ the map
\[ \hom(B,G) \xrightarrow{\i^*} \hom(A,G)\]
is a bijection, i.e. $\i : A \to B$ induces an isomorphism of prosolvable completions.
\end{corollary}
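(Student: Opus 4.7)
The plan is to deduce the corollary by induction on the derived length, using Propositions \ref{prop:ind1} and \ref{prop:ind2} as the two halves of the inductive step. Concretely, I will show by induction on $n \in \N \cup \{0\}$ that $\statefs(n)$ holds under the hypotheses of the corollary. Since every finite solvable group $G$ has some finite derived length $\ell(G) = n$, establishing $\statefs(n)$ for all $n$ is equivalent to the bijection $\i^* : \hom(B,G) \to \hom(A,G)$ claimed in the corollary; the identification with an isomorphism of prosolvable completions is then immediate from Lemma \ref{lem:pro}.

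For the base case, observe that $\statefs(0)$ is the statement that $\i^*$ is a bijection for the trivial group, which holds trivially since both $\hom(B, \{e\})$ and $\hom(A, \{e\})$ are singletons. For the inductive step, assume $\statefs(n)$. Since we are assuming that $(N, \phi)$ satisfies Condition $(*)$ and $\phi$ is primitive, and since by Proposition \ref{prop:mcm} (applied with $\Delta_{N,\phi} \neq 0$, which follows from Condition $(*)$) $\phi$ is dual to a connected Thurston norm minimizing surface $\S$, the hypotheses of Proposition \ref{prop:ind2} are satisfied. Therefore $\stateh(n)$ holds. Combined with $\statefs(n)$, Proposition \ref{prop:ind1} then yields $\statefs(n+1)$, completing the induction.

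The main content of the argument thus lies entirely in Propositions \ref{prop:ind1} and \ref{prop:ind2}, which are proved separately in the next subsections. The essential mechanism is that $\statefs(n)$ controls homomorphisms to solvable groups of bounded length, $\stateh(n)$ controls the associated first homology with twisted coefficients, and these feed into each other: twisted Alexander polynomial information (via Proposition \ref{prop:sameh0h1} and Condition $(*)$) promotes bijectivity on homomorphism sets to an isomorphism on $H_1$ with twisted coefficients, while the resulting $H_1$--isomorphism allows one to extend bijectivity one step further up the derived series. The corollary itself is only the packaging of this bootstrap into a clean statement about prosolvable completions; the hard work is entirely in the two propositions.
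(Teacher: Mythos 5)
Your proof is correct and follows essentially the same route as the paper: the paper likewise observes that $\statefs(0)$ holds trivially and then iterates Propositions \ref{prop:ind1} and \ref{prop:ind2} to get $\statefs(n)$ for all $n$, concluding via Lemma \ref{lem:pro}. The only cosmetic difference is that the paper establishes $\stateh(0)$ directly from Proposition \ref{prop:sameh0h1} applied to the trivial group, whereas you obtain it from $\statefs(0)$ via Proposition \ref{prop:ind2} with $n=0$; both are valid.
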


\begin{proof} The condition $\statefs(0)$ holds by \textit{fiat}.
It follows  from Proposition \ref{prop:sameh0h1} applied to the trivial group that if $(N,\phi)$ satisfies Condition ($*$), then $\i_\pm:H_1(A;\Z)\to H_1(B;\Z)$ are isomorphisms, i.e.  $\stateh(0)$ holds.
The combination of Propositions \ref{prop:ind1} and \ref{prop:ind2}
then shows that $\stateh(n)$ and $\statefs(n)$ hold for all $n$.
The corollary is now immediate.
\end{proof}

\subsection{Proof of Proposition \ref{prop:ind1}}

In this section we will prove Proposition \ref{prop:ind1}.
Let $\i=\i_-$ or $\i=\i_+$.
Since $\statefs(n)$ holds we only have to consider the case of  $G$  a finite solvable group with $\ell(G)=n+1$. By definition $G$ fits into a short exact sequence
\[ 1\to I\to G\to S\to 1,\]
where $I=G^{(n)}$ is finite abelian and $S=G/G^{(n)}$ finite solvable with $\ell(S)=n$.

We will construct a map $\Phi:\hom(A,G)\to \hom(B,G)$ which is an inverse to $\i^*:\hom(B,G)\to \hom(A,G)$.
Let $\a:A\to G$ be a homomorphism. Without loss of generality we can assume that $\a$ is an epimorphism. Denote $A\xrightarrow{\a}G\to S$ by $\a'$ and denote
the map $A\to A/\ker(\a')^{(1)}$ by $\rho$.
Note that $\a$ sends $\ker(\a')$ to the abelian group $I$, hence $\a$  vanishes on $\ker(\a')^{(1)}$.
This shows that $\a$ factors through $\rho$, in particular $\a=\psi \circ \rho$ for some
$\psi : A/\ker(\a')^{(1)}  \to G$.

 Recall that $\ell(S)=n$, therefore by $\statefs(n)$ we have that $\a':A\to S$ equals $\i^*(\b')$ for some
$\b':B\to S$. By Lemma \ref{lem:im}, $\statefs(n)$ guarantees that $i_{*} : H_0(A;\Z[S]) \to H_0(B;\Z[S])$ is an isomorphism; on the other hand $\stateh(n)$ asserts that $i_{*} : H_1(A;\Z[S]) \to H_1(B;\Z[S])$ is an isomorphism as well. By Corollary \ref{cor:metab} this implies that $\i$ induces an isomorphism
\[ \i:A/\ker(\a')^{(1)}\xrightarrow{\cong} B/\ker(\b')^{(1)}.\]
The various homomorphisms can be summarized in the following commutative diagram:
\[
\xymatrix{
A \ar[ddd]^{\a} \ar[rrr]^{\i} \ar[dr]^{\a'} \ar[ddr]_\rho &&& B\ar[dl]_{\b'}\ar[ddl]\\
&S\ar[r]^= &S &\\
& A/\ker(\a')^{(1)}\ar[r]^{\cong}_{\i} \ar@{->>}[dl]^{\psi}\ar@{->>}[u]_{\a'}&B/\ker(\b')^{(1)}\ar@{->>}[u]^{\b'}&\\
G.&&&}\]

Now we define $\Phi(\a)\in \hom(B,G)$ to be the homomorphism
\[  B\to  B/\ker(\b')^{(1)} \xrightarrow{\i^{-1}} A/\ker(\a')^{(1)}\xrightarrow{\psi} G.\]
It is now straightforward  to check that $\Phi$ and $\i^*$ are inverses to each other.

\subsection{Proof of Proposition \ref{prop:ind2}} \label{section:defch}
In this section we will prove Proposition \ref{prop:ind2}.
So let $\b:B\to S$ be a homomorphism to a finite solvable group $S$
with $\ell(S)\leq n$. If $\b$ extends to $\pi_1(N)$, $\stateh(n)$ will follow immediately
from Proposition \ref{prop:sameh0h1}. In general $\b$ though will not extend; however using $\statefs(n)$ we will construct a homomorphism $\pi=\ll B,t | \i_-(A)=t\i_+(A)t^{-1}\rr \to G$ to a finite group $G$ `which contains $\b:B\to S$' to get the required isomorphism.

We first need some notation.
Given groups $C$ and $H$ we define
\[ C(H)=\bigcap\limits_{\g\in Hom(C,H)}\ker(\g).\]
We summarize a few properties of $C(H)\subset C$ in the following lemma.

\begin{lemma}\label{lem:chprop}
Let $C$ be a finitely generated group. Then the subgroup $C(H)\subset C$ has the following properties:
\bn
\item $C(H)\subset C$ is normal and characteristic.
\item If $H$ is finite and solvable, then $C/C(H)$ is finite and solvable with $\ell(C/C(H))\leq \ell(H)$.
\en
\end{lemma}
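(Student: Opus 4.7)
The plan is to handle the two items in turn, both via direct verification from the definition $C(H)=\bigcap_{\gamma\in\mathrm{Hom}(C,H)}\ker(\gamma)$.

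For (1), normality is automatic: each $\ker(\gamma)$ is normal in $C$, and an intersection of normal subgroups is normal. To see that $C(H)$ is characteristic, I would take any automorphism $\varphi\in\mathrm{Aut}(C)$ and any $\gamma\in\mathrm{Hom}(C,H)$, observe that $\gamma\circ\varphi\in\mathrm{Hom}(C,H)$ as well, so any $x\in C(H)$ satisfies $\gamma(\varphi(x))=e$; since this holds for every $\gamma$, we have $\varphi(x)\in C(H)$, giving $\varphi(C(H))\subset C(H)$, and the reverse inclusion follows by applying this to $\varphi^{-1}$.

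For (2), the first step is to note that because $C$ is finitely generated and $H$ is finite, the set $\mathrm{Hom}(C,H)$ is itself finite: a homomorphism is determined by its values on a fixed finite generating set, and there are only finitely many choices. Enumerate $\mathrm{Hom}(C,H)=\{\gamma_1,\dots,\gamma_k\}$ and consider the product homomorphism
\[ \Gamma=(\gamma_1,\dots,\gamma_k):C\longrightarrow H^k. \]
By construction $\ker(\Gamma)=\bigcap_i\ker(\gamma_i)=C(H)$, so $\Gamma$ induces an embedding $C/C(H)\hookrightarrow H^k$. Since $H^k$ is finite, $C/C(H)$ is finite. For the derived length, observe that $H^k$ is solvable of derived length $\ell(H)$ (the derived series of a product is the product of the derived series, so $(H^k)^{(m)}=(H^{(m)})^k$), and any subgroup of a group of derived length at most $\ell$ has derived length at most $\ell$. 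Hence $\ell(C/C(H))\leq\ell(H^k)=\ell(H)$.

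Neither part presents a real obstacle; the only point worth being careful about is that the finiteness of $\mathrm{Hom}(C,H)$ requires finite generation of $C$, which is why this hypothesis appears in the statement. All other steps are formal consequences of the definition and of standard facts about derived series and direct products.
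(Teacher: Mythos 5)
Your proof is correct and essentially matches the paper's. The only cosmetic difference in part (2) is that you map directly to $H^k$ via the tuple of all homomorphisms, while the paper uses the canonical injection $C/C(H)\hookrightarrow\prod_{\gamma}C/\ker(\gamma)$ and then bounds $\ell(C/\ker(\gamma))\leq\ell(H)$ for each factor; since each $C/\ker(\gamma)$ embeds in $H$, these are the same argument presented in a slightly different order.
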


\begin{proof}
Statement (1) is immediate. To prove the rest, consider the  injection
\[ C/C(H)=C/\hspace{-0.3cm} \bigcap\limits_{\g\in Hom(C,H)}\hspace{-0.3cm} \ker(\g)\to \prod\limits_{\g\in Hom(C,H)} \hspace{-0.3cm} C/\ker(\g).\]
If $H$ is finite, then $\hom(C,H)$ is a finite set (since $C$ is finitely generated), hence $C/C(H)$ is finite.
If $H$ is furthermore solvable, then for any $\g\in \hom(C,H)$ the groups $C/\ker(\g)$ are solvable, hence $C/C(H)$ is solvable as well.
Moreover for any  $\g\in \hom(C,H)$ we have $\ell(C/\ker(\g))\leq \ell(H)$. We therefore get
\[ \ell(C/C(H))\leq \max_{\g\in Hom(C,H)}\ell(C/\ker(\g))\leq \ell(H).\]
\end{proof}

We will also need the following group homomorphism extension lemma.

\begin{lemma}\label{lem:extend}
Assume that $\statefs(n)$ holds and that $S$ is a finite solvable group with $\ell(S)\leq n$.
Let $\b:B\to S$ be a homomorphism.

Then there exists a $k\in \N$, a semidirect product $\Z/k\ltimes B/B(S)$ and a homomorphism
\[ \pi=\ll B,t | \i_-(A)=t\i_+(A)t^{-1}\rr  \to \Z/k\ltimes B/B(S) \]
which extends  $B\to B/B(S)$, i.e. we have the following commutative diagram:
\[ \xymatrix{
1\ar[r] & B/B(S)  \ar[r] & \Z/{k} \ltimes B/B(S)
\ar[r] & \Z/{k} \ar[r]& 1  \\
 & B \ar[u] \ar[r] & \pi.
\ar[u]  & &
 }\]
\end{lemma}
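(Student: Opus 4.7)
The plan is to construct the extension by finding an automorphism $\varphi \in \aut(H)$ of $H := B/B(S)$ such that $\varphi \circ q \circ \i_+ = q \circ \i_-$, where $q : B \to H$ is the quotient. Given such a $\varphi$, let $k := \ord(\varphi)$, which is finite since $\aut(H)$ is finite (because $H$ is finite by Lemma \ref{lem:chprop}); form $G := \Z/k \ltimes_\varphi H$ with product $(m,h)(m',h') = (m+m', h\,\varphi^m(h'))$, and define $\rho : \pi \to G$ by $\rho|_B = q$ and $\rho(t) = (1, e_H)$. A direct computation in $G$ yields
\[ \rho(t)\,(0, q(\i_+(a)))\,\rho(t)^{-1} = (0, \varphi(q(\i_+(a)))) = (0, q(\i_-(a))) = \rho(\i_-(a)), \]
so $\rho$ respects the HNN relation $\i_-(a) = t\i_+(a)t^{-1}$ defining $\pi$, and the commutative diagram in the statement is immediate from $\rho|_B = q$.

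The construction of $\varphi$ proceeds as follows. By Lemma \ref{lem:chprop} we have $\ell(H) \leq \ell(S) \leq n$, so by $\statefs(n)$ and Lemma \ref{lem:pro} the map $\i_+^* : \hom(B, H) \to \hom(A, H)$ is a bijection; hence there is a unique $\gamma \in \hom(B, H)$ with $\gamma \circ \i_+ = q \circ \i_-$. The key claim is that $\gamma$ factors as $\tilde\varphi \circ q$ for some $\tilde\varphi \in \endom(H)$: the natural map $H \hookrightarrow S^{|\hom(B,S)|}$ defined by $[b] \mapsto (\delta(b))_{\delta \in \hom(B,S)}$ is injective by definition of $B(S)$, and composing $\gamma$ with this embedding expresses $\gamma$ as a tuple of homomorphisms $B \to S$, so $\ker(\gamma) \supseteq B(S) = \ker(q)$ and $\gamma$ descends through $q$. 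To upgrade $\tilde\varphi$ to an automorphism, apply Corollary \ref{cor:group1} together with Lemma \ref{lem:im}: under $\statefs(n)$ one has $\im(q \circ \i_-) = \im(q) = H$, so $\gamma \circ \i_+ = q \circ \i_-$ is surjective, hence $\gamma$ and $\tilde\varphi$ are surjective, and $\tilde\varphi \in \aut(H)$ since $H$ is finite. Set $\varphi := \tilde\varphi$.

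The main obstacle is the factorization $\gamma = \tilde\varphi \circ q$, which is exactly what singles out $B/B(S)$ (as opposed to some other characteristic quotient) as the correct target for the extension; it rests on the residual-$S$ property of $H$ that is built into the definition of $B(S)$. Once this is secured, both the integer $k$ and the semidirect product structure are canonically determined by $\varphi$, and the verification of the HNN relation amounts to a single computation in $\Z/k \ltimes_\varphi H$.
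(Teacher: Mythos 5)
Your proof is correct. The ultimate goal is the same as the paper's: construct an automorphism of $H=B/B(S)$ intertwining the two inclusion-induced maps, and then build the HNN-image as a cyclic extension of $H$. But the route to the automorphism is genuinely different. The paper works with $A(S)$: it shows, using $\statefs(n)$, that $\ker(q\circ\i_\pm)=A(S)$ and that both $\i_{\pm}$ induce isomorphisms $A/A(S)\to B/B(S)$, then sets $\gamma=\i_{+*}\circ(\i_{-*})^{-1}$. You avoid $A(S)$ altogether: you use the $\hom$-set bijection from $\statefs(n)$ to produce the unique $\gamma\in\hom(B,H)$ with $\gamma\circ\i_+=q\circ\i_-$, then observe that since $H$ embeds into a power of $S$ via $[b]\mapsto(\delta(b))_\delta$, every homomorphism $B\to H$ already kills $B(S)$ — so $\gamma$ automatically descends through $q$. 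This factorization argument is a nice shortcut that isolates precisely why $B(S)$ is the right quotient, and it replaces the paper's separate verification of $\ker(q\circ\i)=A(S)$. Surjectivity (hence bijectivity, by finiteness) of $\tilde\varphi$ you get from $\im(q\circ\i_-)=H$, exactly as the paper uses $\i_*$ being onto. The semidirect-product construction and the HNN-relation check at the end are the same in substance as the paper's, and your computation in $\Z/k\ltimes_\varphi H$ is correct (note your $\varphi$ is the inverse of the paper's $\gamma$, consistently with your chosen convention $(m,h)(m',h')=(m+m',h\,\varphi^m(h'))$).
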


\begin{proof}
Assume that $\statefs(n)$ holds and that $S$ is a finite solvable group with $\ell(S)\leq n$.
Let $\b:B\to S$ be a homomorphism. We denote the projection map $B\to B/B(S)$ by $\rho$.

\begin{claim}
There exists an automorphism   $\g:B/B(S)\to B/B(S)$ such that
$\rho(\i_+(a))=\g(\rho(\i_-(a)))$ for all $a\in A$.
\end{claim}

Let $\i=\i_-$ or $\i=\i_+$.
By Lemma \ref{lem:chprop} we know that   $B/B(S)$ is finite solvable with $\ell(B/B(S))\leq n$.
It follows from  $\statefs(n)$ that
\[ \i_*:A/\ker\{A\xrightarrow{\i} B\xrightarrow{\rho} B/B(S)\} \to B/B(S) \]
is an isomorphism.
On the other hand it is also  a straightforward consequence of $\statefs(n)$ that
\[ \ker\{A\xrightarrow{\i} B\xrightarrow{\rho} B/B(S)\} =A(S).\]
Combining these two observations we see that $\i$ gives rise to an isomorphism $\i_*:A/A(S)\to B/B(S)$.
We now take $\gamma:=\i_{+*} \circ (\i_{-*})^{-1}$.
 This concludes the proof of the claim.

We now write $H=B/B(S)$. It is now straightforward to verify that
\[ \ba{rcl} \pi=\ll B,t | \i_-(A)=t\i_+(A)t^{-1}\rr &\to &   \Z\ltimes H=\ll H,t | H=t\g(H)t^{-1} \rr \\
 b&\mapsto & \rho(b), \,\,\, b\in B, \\
 t&\mapsto& t\ea \]
defines a homomorphism.
Since $H=B/B(S)$ is a finite group it follows that the
automorphism $\g$ has finite order $k$, in particular the projection map $\Z \ltimes
B/B(S)\to  \Z/k \ltimes B/B(S)$ is a homomorphism. Clearly the resulting homomorphism $\pi\to \Z/k\ltimes B/B(S)$ has all the required properties.
\end{proof}

We are in position now to prove Proposition \ref{prop:ind2}.

\begin{proof}[Proof~of~Proposition~\ref{prop:ind2}]
Recall that we assume that  $(N,\phi)$ satisfies Condition  $(*)$ and that  $\statefs(n)$ holds. We have to show that  $\stateh(n)$ holds as well.
So let $\b:B\to S$ be a homomorphism to a finite solvable group  $S$  with $\ell(S)\leq n$. We have to show  that for $\i=\i_{-},\i_+$
the homomorphism \[ \i_{*} :H_1(A;\Z[S])\to H_1(B;\Z[S]) \]
is an isomorphism. Without loss of generality we can assume that $\b$ is surjective. Recall that $\statefs(n)$ implies that
$\b\circ \i:A\to S$ is surjective as well.

We now apply Lemma \ref{lem:extend}  to find a
homomorphism
\[ \pi=\ll B,t | \i_-(A)=t\i_+(A)t^{-1}\rr  \to \Z/k\ltimes B/B(S) \]
which extends  $B\to B/B(S)$.
Note that
\be \label{equ:sameker} \ba{rcl} \ker\{\g:B\to \pi\to  \Z/k\ltimes B/B(S)\}&=&\ker\{B\to B/B(S)\}\\
\ker\{\g\circ \i:A\to B\to \pi\to \Z/k\ltimes B/B(S)\}&=&\ker\{\i:A\to B\to B/B(S)\}.\ea \ee
We let
\[ \ba{rclcl} \hat{B}&=&\ker\{B\to B/B(S)\},\\
\ti{B}&=&\ker(\b).\ea \]
Clearly $\hat{B}\subset \ti{B}$ by the definition of $B/B(S)$.
We also write $\hat{A}=\i^{-1}(\hat{B})$ and $\ti{A}=\i^{-1}(\ti{B})$.
We now consider the epimorphism $\pi_1(N)=\pi\to \Z/k\ltimes B/B(S)$.
By Condition ($*$), Equation (\ref{equ:sameker}), Proposition \ref{prop:sameh0h1} and  Corollaries \ref{cor:metabany} and  \ref{cor:metab} it follows that the maps
\[ \i:A/ \hat{A} \to B/\hat{B} \mbox{ and } \i:A/ [ \hat{A},\hat{A}  ]\to B/[\hat{B},\hat{B}]\]
are isomorphisms. It now follows from Lemma \ref{lem:alsoiso} and Corollary \ref{cor:metabany}
that the maps
\[  \i:A/ \ti{A} \to B/\ti{B} \mbox{ and } \i:A/ [ \ti{A},\ti{A}  ]\to B/[\ti{B},\ti{B}]\]
are also isomorphisms.
\end{proof}

\section{A product criterion}\label{section:product}
In this section we will apply a theorem of Agol to prove a criterion for a manifold to be a product, which complements
Proposition \ref{thm:finitesolvable}.

In order to state our result, we first recall the definition of a sutured manifold (cf. \cite[Definition~2.6]{Ga83} or \cite[p.~364]{CC03}).
A \emph{sutured manifold} $(M,\g)$ is a compact oriented 3--manifold $M$
together with a set $\g\subset \partial M$ of pairwise disjoint
annuli $A(\g)$ and tori $T(\g)$.
Furthermore, the structure of a sutured manifold consists of the following choices of orientations:
\bn
\item For each $A\in A(\g)$ a choice
of a simple closed, oriented curve in $A$ (called \emph{suture}) such that $A$ is the tubular neighborhood of the curve, and
\item the choice of an orientation for each component of $\partial M\sm A(\g)$.
\en
The orientations must be compatible, i.e.  the orientation of the components of $\partial M\sm A(\g)$ must be coherent with
the orientations of the sutures.

Given a sutured manifold $(M,\g)$ we define $R_+(\g)$
as the components of $\ol{\partial M \setminus \g}$ where the orientation agrees with
the orientation induced by $M$ on $\partial M$, and $R_-(\g)$ as the components of $\ol{\partial M \setminus \g}$
where the two orientations disagree. We define also $R(\g) = R_+(\g) \cup R_-(\g)$.

A sutured manifold $(M,\g)$ is called \emph{taut} if $M$ is irreducible and if each component of $R(\g)$
is incompressible and
Thurston norm--minimizing in $H_2(M,\g;\Z)$
(we refer to \cite{Sc89} for information regarding the Thurston norm on sutured manifolds).

An example of a taut sutured manifold is given by taking an oriented surface $\S$ and
considering $\S\times I$ with sutures given by the annuli $\partial \S\times I$. The sutures are oriented
by the orientation of $\partial \S$.   We can pick orientations such that  $R_-(\g)=\S\times 0$ and
$R_+(\g)=\S \times 1$.
If a sutured manifold $(M,\g)$ is diffeomorphic (as a sutured manifold) to such a product then we say that $(M,\g)$ is a \emph{product sutured manifold}.

Another example of a taut sutured manifold comes from considering an oriented incompressible Thurston norm minimizing surface $\S\subset N$
in an irreducible 3--manifold with empty or toroidal boundary.
We let $(M,\gamma)=(N\sm \nu \S,\partial N\cap (N\sm \nu \S))$.
With appropriate orientations  $(M,\gamma)$ is a taut sutured manifold
 such  that $R_-(\g)=\S^-$ and
$R_+(\g)=\S^+$.

The following theorem immediately implies
Theorem \ref{thm:agolintro}.

\begin{theorem} \label{thm:no}
Assume we have a taut sutured manifold $(M,\g)$ which has the following properties:
\bn
\item $R_\pm(\g)$ consist of one component  $\S^\pm$ each, and  the inclusion induced maps $\pi_1(\S^\pm)\to \pi_1(M)$
give rise to isomorphisms of the respective
prosolvable completions,
\item  $\pi_1(M)$ is residually finite solvable,
\en
then $(M,\g)$ is a product sutured manifold.
\end{theorem}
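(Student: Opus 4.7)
The plan is to show that the inclusion induced maps $\iota_\pm : \pi_1(\Sigma^\pm) \to \pi_1(M)$ are isomorphisms of groups. Once this is established, the product structure $M \cong \Sigma \times I$ as a sutured manifold follows from classical 3--manifold topology (Waldhausen's theorem applied to the Haken manifold $M$ with incompressible boundary, or equivalently Stallings' fibration theorem applied to the closed--up manifold obtained by identifying $\Sigma^+$ with $\Sigma^-$). Injectivity of $\iota_\pm$ is automatic: it follows either from incompressibility of $\Sigma^\pm$, or alternatively from the prosolvable completion isomorphism combined with the injection $\pi_1(M) \hookrightarrow \widehat{\pi_1(M)}_{\mathcal{FS}}$ provided by residual finite solvability. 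So the whole burden is to show $\iota_\pm$ is surjective on $\pi_1$.

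The key input for this is Agol's virtual fibering theorem \cite{Ag08}: under an appropriate residual condition on $\pi_1(M)$ that is guaranteed, perhaps after passing to a further finite cover, by the residual finite solvability hypothesis, one obtains a finite solvable cover $p : (\tilde M, \tilde \gamma) \to (M, \gamma)$ in which the sutured manifold is a product, $\tilde M \cong \tilde \Sigma \times I$. By Lemma \ref{lem:restrictiso} the prosolvable isomorphism hypothesis transfers to the cover: the preimages $\tilde \Sigma^\pm$ of $\Sigma^\pm$ form single connected components of $R_\pm(\tilde \gamma)$ and the inclusions $\tilde\iota_\pm : \pi_1(\tilde \Sigma^\pm) \to \pi_1(\tilde M)$ also induce isomorphisms of prosolvable completions. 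Together with the product structure on $\tilde M$ this forces $\tilde \iota_\pm$ to be an honest group isomorphism.

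The remaining step --- descending the product structure from $\tilde M$ to $M$ --- is where I expect the main obstacle to lie. The natural strategy is a contradiction argument at the level of fundamental groups: if some $g \in \pi_1(M) \setminus \iota_\pm \pi_1(\Sigma^\pm)$ existed, then by residual finite solvability some finite solvable quotient $\pi_1(M) \to G$ would separate $g$ from $\iota_\pm \pi_1(\Sigma^\pm)$; choosing the Agol cover $\tilde M$ to factor through this quotient produces an element of $\pi_1(\tilde M)$ not lying in $\iota_\pm \pi_1(\tilde \Sigma^\pm)$, contradicting surjectivity of $\tilde \iota_\pm$. The technical subtlety I foresee is arranging Agol's theorem to produce specifically a \emph{finite solvable} cover that factors through any prescribed finite solvable quotient, rather than an arbitrary finite cover; this will require combining Agol's RFRS criterion with the solvable tower structure provided by the hypothesis, and verifying that the resulting cover is compatible both with the sutured manifold structure and with the deck group action needed for the descent.
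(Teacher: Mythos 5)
Your proposal contains a genuine gap that centers on a misstatement of what Agol's theorem actually delivers. Agol's result (Theorem \ref{thm:agol2} in the paper) is applied to the \emph{double} $W = DM_\gamma$ of the sutured manifold, and it produces a finite solvable cover $\widetilde{W} \to W$ in which the pullback of $[R_-(\gamma)]$ lies on the \emph{closure} of the cone over a fibered face of the Thurston norm ball of $\widetilde{W}$. This is strictly weaker than being a fibered class: a point on the boundary of a fibered cone need not fiber. In particular it does \emph{not} give you a cover $\widetilde M$ of the sutured manifold that is already a product, which is the starting point of your descent argument. The crucial missing ingredient --- and the main technical content of the paper's proof --- is Lemma \ref{lem:no}, which uses McMullen's Alexander norm together with the involution $\tau$ on the double: the prosolvable isomorphism hypothesis (via Lemma \ref{lem:alexnorm}, whose proof in turn requires the metabelian rigidity result Lemma \ref{lem:isometab}) forces $\Delta_W$ to agree with $\Delta_{\Sigma \times S^1}$, so $\phi = PD[\Sigma^-]$ lies in the \emph{interior} of a top-dimensional Alexander face; combining this with the $\tau$-symmetry of fibered cones and the fact that a fibered cone is contained in a single Alexander cone shows that the full Alexander face is actually one fibered cone, hence $\phi$ itself fibers. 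Without this step, "on the closure of a fibered cone" cannot be upgraded.

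You also misidentify where the difficulty lies. The descent from $\widetilde M$ to $M$ is not the obstacle: once $\widetilde M$ is a product, the fact that $M$ is a product over $\Sigma^-$ if and only if $\widetilde M$ is a product over $\widetilde\Sigma^-$ follows from standard 3-manifold topology (Hempel, Theorem 10.5). Moreover, the separation argument you sketch for the descent is not available from the hypotheses: residual finite solvability separates elements from the identity, not from the subgroup $\iota_\pm(\pi_1(\Sigma^\pm))$; separating an element from a subgroup is subgroup separability, which is precisely what this theorem is designed to avoid (cf.\ the paper's Remark (1) after Theorem \ref{thm:no}, noting the profinite version via \cite{LN91}). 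In short, your outline correctly identifies Agol's theorem and the transfer of the prosolvable-isomorphism hypothesis to the cover as ingredients, but skips the Alexander norm argument entirely, and the route you propose to fill that space does not close.
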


The key ingredient to the proof of Theorem \ref{thm:no} is a result of Agol's  \cite{Ag08}
 which we recall in  Section \ref{section:agol}.
We will then provide the proof for Theorem \ref{thm:no} in Sections \ref{section:proofno} and \ref{section:alexnorm}.

\begin{remark}
(1)  It is an immediate consequence of `peripheral subgroup separability' \cite{LN91} that the theorem holds
under the assumption that the inclusion induced maps $\pi_1(\S^\pm)\to \pi_1(M)$
give rise to isomorphisms of the respective
\emph{profinite} completions. It is not clear how the approach of \cite{LN91} can be adapted to prove
Theorem \ref{thm:no}.\\
(2) It is also interesting to compare Theorem \ref{thm:no} with a result of Grothendieck. In \cite[Section~3.1]{Gr70}
Grothendieck proves that if $\varphi:A\to B$ is a homomorphism between finitely presented, residually finite groups
which induces an isomorphism of the profinite completions, and if $A$ is arithmetic (e.g. a surface group), then $\varphi$
is an isomorphism. It is an interesting question whether Theorem \ref{thm:no} can be proved using purely group
theoretic arguments. We refer to \cite{AHKS07} for more information regarding this question.
\end{remark}

\subsection{Agol's theorem} \label{section:agol}
Before we can state Agol's result we have to introduce more definitions.
A group $G$ is called \emph{residually finite $\Q$--solvable} or \emph{RFRS} if there
exists a filtration  of groups $G=G_0\supset G_1 \supset G_2\dots $
such that the following hold:
\bn
\item $\cap_i G_i=\{1\}$,
\item   $G_i$ is a normal, finite index subgroup of  $G$ for any $i$,
\item for any $i$ the map $G_i\to G_i/G_{i+1}$ factors through $G_i\to H_1(G_i;\Z)/\mbox{torsion}$.
\en
Note that RFRS groups are in particular residually finite solvable, but the RFRS condition is considerably
stronger than being residually finite solvable. The notion of an RFRS group was introduced by Agol \cite{Ag08},
we refer to Agol's paper for more information on RFRS groups.

Given a sutured manifold $(M,\g)$ the double $DM_\g$ is defined to be the double of $M$ along $R(\g)$, i.e. $DM_\g=M\cup_{R(\g)}M$.
Note that the annuli $A(\g)$ give rise to toroidal boundary components of $DM_\g$.
We denote by $r:DM_\g\to M$ the retraction map given by `folding' the two copies of $M$ along $R(\g)$.

We are now in a position to state Agol's result.
The theorem as stated here is   clearly implicit in the proof of   \cite[Theorem~6.1]{Ag08}.

\begin{theorem} \label{thm:agol2}
Let $(M,\g)$ be a connected, taut sutured manifold
such that $\pi_1(M)$ satisfies property RFRS.
Write $W=DM_\g$.
Then there exists an epimorphism $\a:\pi_1(M)\to S$ to a finite solvable group,
such that in the covering $p:\widetilde{W}\to W$
corresponding to $\a\circ r_*:\pi_1(W)\to S$ the pull back of the class $[R_-(\g)]\in H_2(W,\partial W;\Z)$
lies on the closure of the cone over a fibered face
of  $\widetilde{W}$.
\end{theorem}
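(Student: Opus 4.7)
I would adapt Agol's proof of virtual fibering for 3--manifolds with RFRS fundamental group to this sutured setting. The starting observation is that since $(M,\g)$ is taut, the surface $R_-(\g)$ is Thurston norm minimizing in its homology class, and after doubling, the class $\phi \in H^1(W;\Z)$ dual to $[R_-(\g)] \in H_2(W,\partial W;\Z)$ has Thurston norm $-\chi(R_-(\g))$ on $W$. Recall that a class lies in the closure of the cone over a fibered face of the Thurston norm ball if and only if it is a non--negative linear combination of fibered classes (equivalently, a limit of fibered classes); the goal is therefore to produce a finite solvable cover of $W$ in which the pullback of $\phi$ is such a combination.

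The core of the argument would invoke Gabai's sutured manifold theory to produce a taut hierarchy
\[ (M,\g) = (M_0,\g_0) \rightsquigarrow (M_1,\g_1) \rightsquigarrow \cdots \rightsquigarrow (M_n,\g_n), \]
where each step decomposes along a Thurston norm minimizing surface $S_i \subset M_i$ and the terminal piece is a disjoint union of product sutured manifolds. To promote this hierarchy to an honest fibration of a cover, the classes $[S_i]$ must be seen homologically. This is exactly where the RFRS property enters: the filtration $\pi_1(M) = G_0 \supset G_1 \supset \cdots$ with each $G_i \to G_i/G_{i+1}$ factoring through $H_1(G_i;\Z)/\mathrm{torsion}$ provides, at each stage, a finite abelian cover in which a given properly embedded surface can be arranged to be dual to a nontrivial rational cohomology class, and moreover to lie on a face of the Thurston norm ball in that cover. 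Iterating finitely many times down the hierarchy and noting that a finite solvable cover of a finite solvable cover is again finite solvable, one assembles a single epimorphism $\a : \pi_1(M) \to S$ onto a finite solvable group such that, in the corresponding cover $\widetilde{M}$, the lifted hierarchy terminates in a product and the pullback of $[R_-(\g)]$ lies in the closure of the cone over a fibered face.

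To conclude, I would transfer the construction to $W = DM_\g$ via the folding retraction $r : W \to M$: the composition $\a \circ r_* : \pi_1(W) \to S$ defines a cover $\widetilde{W} \to W$, and because $r$ restricts to the identity on each of the two copies of $M$ forming $W$, the lifted structure on each half assembles along (the lifts of) $R(\g)$ to give a fibered (or limit of fibered) structure on $\widetilde{W}$ with the desired class as fiber. The main obstacle I anticipate is the RFRS descent step itself: one must show that at each level of the hierarchy the decomposing surface $S_i$ can be \emph{separated} in some further abelian cover, i.e.\ made homologically visible and positioned on a fibered face, without destroying the work done at previous levels. A secondary subtlety is the passage from norm--minimization to genuine fibering, which relies crucially on the terminal pieces of the hierarchy being products and on $[R_-(\g)]$ being allowed to land in the closure rather than the interior of a fibered cone.
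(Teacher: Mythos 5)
The paper does not actually prove this statement: it is quoted with the remark that it is ``clearly implicit in the proof of [Theorem~6.1]{Ag08}'', so the paper's own ``proof'' is the citation to Agol. Your plan is a recognizable outline of exactly that argument of Agol's --- doubling, Gabai's taut hierarchy, the RFRS filtration used to make the decomposing surfaces homologically visible in successive finite abelian covers, and the observation that the cover of $W=DM_\g$ induced by $\a\circ r_*$ is the double of the corresponding cover of $M$ --- so it takes essentially the same route as the source the paper relies on. The only caveat is that the load-bearing step (the RFRS descent: making each $S_i$ non-separating and adjusting the cohomology class without undoing earlier stages) is stated as a goal rather than carried out, but since the paper itself delegates the entire proof to \cite{Ag08}, there is nothing further in the paper to measure that step against.
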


 Note that   $[R_+(\g)]=\pm [R_-(\g)]$  in $H_2(W,\partial W;\Z)$, i.e. $[R_-(\g)]$ is a fibered class if and only if $[R_+(\g)]$ is a fibered class.
In case that $\widetilde{W}$ has vanishing Thurston norm, then  we adopt the usual convention that by the fibered face we actually mean $H^1(\widetilde{W},\R)\sm \{0\}$.

\subsection{Proof of Theorem \ref{thm:no}}\label{section:proofno}

From now on assume we have a taut sutured manifold $(M,\g)$  with the following properties:
\bn
\item $R_\pm(\g)$ consist of one component  $\S^\pm$ each  and  the inclusion induced maps $\pi_1(\S^\pm)\to \pi_1(M)$  give rise to isomorphisms of the respective
prosolvable completions.
\item  $\pi_1(M)$ is residually finite solvable.
\en
Since Theorem \ref{thm:no} is obvious in the case $M=S^2\times [0,1]$ we will henceforth assume that $M\ne S^2\times [0,1]$.

Our main tool in proving Theorem \ref{thm:no} is Theorem \ref{thm:agol2}. In order to apply it we need the following claim.

\begin{claim} \label{lem:brfrs}
The group $\pi_1(M)$ is  RFRS.
\end{claim}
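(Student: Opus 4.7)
The plan is to construct an RFRS filtration of $\pi_1(M)$ by transporting one from $\pi_1(\Sigma^+)$ via the isomorphism of prosolvable completions. Write $A = \pi_1(\Sigma^+)$, $B = \pi_1(M)$, and $\iota : A \to B$ for the inclusion-induced map; by hypothesis $\hat{\iota} : \hat{A}_{\mathcal{FS}} \to \hat{B}_{\mathcal{FS}}$ is an isomorphism. Since $\Sigma^+$ is a compact surface, $A$ is a free group or a closed surface group, hence RFRS by \cite{Ag08}. I first arrange an RFRS filtration $A = A_0 \supset A_1 \supset \cdots$ that is cofinal in the system of finite-index normal subgroups of $A$ with solvable quotient: enumerate such subgroups as $N_1, N_2, \dots$ and build $\{A_i\}$ in blocks, where the $j$-th block consists of $d_j$ consecutive RFRS steps aimed at $N_j$, with $d_j$ the derived length of $A/N_j$. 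Each targeted step replaces the current $A_k$ by the kernel of $A_k \to \overline{A_k}/[\overline{A_k}, \overline{A_k}]$, where $\overline{A_k}$ denotes the image of $A_k$ in $A/N_j$; this subgroup is normal in $A$, contains $[A_k, A_k]$, and factors through $H_1(A_k)/\mathrm{torsion}$ because $H_1$ of every finite-index subgroup of a free or closed surface group is torsion-free. After the $d_j$ steps of the block, the image of the resulting subgroup in $A/N_j$ is $\overline{A_k}^{(d_j)} = \{1\}$, so the subgroup lies inside $N_j$.

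Next, by Lemma \ref{lem:pro}, each homomorphism $A \to A/A_i$ corresponds under $\hat{\iota}$ to a unique homomorphism $B \to A/A_i$; I set $B_i$ to be its kernel. Then $\{B_i\}$ is a decreasing chain of finite-index normal subgroups of $B$ with $B/B_i \cong A/A_i$, $\iota^{-1}(B_i) = A_i$, and each quotient $B_i/B_{i+1} \cong A_i/A_{i+1}$ abelian. Since finite solvable groups form an extension-closed variety, Lemma \ref{lem:restrictiso} shows that $\iota|_{A_i} : A_i \to B_i$ induces an isomorphism of prosolvable completions, and Lemma \ref{lem:im} then gives an isomorphism $\iota_* : H_1(A_i;\Z) \to H_1(B_i;\Z)$ that respects the torsion subgroup. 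A diagram chase, using that the abelianization of $A_i \to A_i/A_{i+1}$ kills torsion in $H_1(A_i)$ (the RFRS factorization condition for $\{A_i\}$), then shows that the abelianization of $B_i \to B_i/B_{i+1}$ kills torsion in $H_1(B_i)$, so $B_i \to B_i/B_{i+1}$ factors through $H_1(B_i)/\mathrm{torsion}$.

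Finally, to conclude $\bigcap_i B_i = \{1\}$: given any finite solvable quotient $q_B : B \to Q$, its pullback $q_A = q_B \circ \iota$ factors through $A/A_i$ for some $i$ by cofinality of $\{A_i\}$; the isomorphism $A/A_i \cong B/B_i$ then yields a homomorphism $B \to B/B_i \to Q$ whose precomposition with $\iota$ equals $q_A$, and hence by the bijection $\iota^* : \hom(B,Q) \to \hom(A,Q)$ this new homomorphism equals $q_B$, forcing $B_i \subset \ker q_B$. Thus $\{B_i\}$ is cofinal in the finite solvable quotients of $B$, and $\varprojlim_i B/B_i = \hat{B}_{\mathcal{FS}}$; together with the hypothesis that $\pi_1(M)$ is residually finite solvable, this makes the canonical map $B \to \hat{B}_{\mathcal{FS}}$ injective, so $\bigcap_i B_i = \{1\}$. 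The main technical obstacle will be arranging the cofinal RFRS filtration on the surface side; once that is in place, the prosolvable completion machinery of Section \ref{section:pro} makes the transport essentially formal.
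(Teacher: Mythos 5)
Your proposal is correct, but it takes a noticeably more elaborate route than the paper's. The paper starts directly from a filtration of $B=\pi_1(M)$: residual finite solvability of $B$ gives a chain of normal finite-index subgroups $B_i$ with abelian successive quotients and $\bigcap_i B_i=\{1\}$; then the only thing left to check for RFRS is that $B_i\to B_i/B_{i+1}$ factors through $H_1(B_i;\Z)/\mathrm{torsion}$, and the paper observes this is automatic because $H_1(B_i;\Z)\cong H_1(M;\Z[B/B_i])\cong H_1(\S^-;\Z[B/B_i])$ by Lemma \ref{lem:twih1}, and the latter is the torsion-free $H_1$ of a finite cover of a surface. You instead build a cofinal RFRS filtration $\{A_i\}$ on the surface side, pull it back to $\{B_i\}$ via the prosolvable isomorphism and Lemma \ref{lem:pro}, and then verify the RFRS axioms for $\{B_i\}$. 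The underlying insight is the same in both arguments — the prosolvable isomorphism forces $H_1$ of the relevant finite-index subgroups of $B$ to be isomorphic to $H_1$ of finite covers of $\S$, hence torsion-free — but the paper applies it to an off-the-shelf filtration of $B$ and avoids your cofinality bookkeeping and the block construction of the $A_i$. Two small remarks on your version: (i) once you know $H_1(A_i;\Z)\cong H_1(B_i;\Z)$ is torsion-free, the ``kills torsion'' diagram chase is superfluous, since the RFRS factorization condition is then automatic; (ii) the construction of a \emph{cofinal} RFRS filtration of the surface group $A$ is more than Agol's theorem hands you directly and deserves the careful justification you sketch, though your block argument is sound. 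On balance both proofs work; the paper's is shorter because it filters $B$ rather than transporting a filtration from $A$.
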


\begin{proof}
By assumption the group  $\pi_1(M)$ is residually finite solvable. This means that we can  find
a sequence $\pi_1(M)=B_0 \supset B_1 \supset B_2\dots $ with the following properties:
\bn
\item $\cap_i B_i=\{1\}$,
\item   $B_i$ is a normal, finite index  subgroup of  $\pi_1(M)$ for any $i$,
\item for any $i$ the map $B_i\to B_i/B_{i+1}$ factors through $B_i\to H_1(B_i;\Z)$.
\en
It remains to show that $B_i\to B_i/B_{i+1}$ factors through $H_1(B_i;\Z)/\mbox{torsion}$.
In fact we claim  that $H_1(B_i;\Z)$ is torsion--free.
Indeed, first note that by Shapiro's lemma $H_1(B_i;\Z)\cong H_1(B;\Z[B/B_i])\cong H_1(M;\Z[B/B_i])$.
Furthermore, by Lemma \ref{lem:twih1} we have
\[ H_1(\S^-;\Z[B/B_i]) \xrightarrow{\cong} H_1(M;\Z[B/B_i]), \]
but the first group is clearly torsion--free as it is the homology of a finite cover of a surface.
\end{proof}

In the following we write $W=DM_\g$.
By the above claim  we can apply Theorem \ref{thm:agol2} which says that
 there exists an epimorphism $\a:\pi_1(M)\to S$ to a finite solvable group, such that in the covering $p:\widetilde{W}\to W$
corresponding to $\a\circ r_*:\pi_1(W)\to S$ the pull back of the class $[R_-(\g)]=[\S^-]
\in H_2(W,\partial W;\Z)$
lies on the closure of the cone over   of a fibered face
of $\widetilde{W}$.

Note that we can view $\widetilde{W}$ as the double of the cover $(\widetilde{M},\ti{\g})$ of $(M,\g)$
induced by $\a:\pi_1(M)\to S$. We summarize the main properties of $\ti{\S}^\pm$ and $\widetilde{W}$
in the following lemma.

\begin{lemma}
\bn
\item  $\ti{\S}^\pm:=p^{-1}(\S^\pm)$ are connected surfaces,
\item the inclusion induced maps $\pi_1(\ti{\S}^\pm)\to \pi_1(\widetilde{M})$
give  rise to isomorphisms of
prosolvable completions,
\item  if
 $\ti{\S}^-$ is  the fiber of a fibration $\widetilde{W} = D\widetilde{M}_{\ti{\g}}  \to S^1$, then $\widetilde{M}$ is a product over $\ti{\S}^-$,
\item  $M$ is a product over $\S^-$ if and only if $\widetilde{M}$ is a product over $\ti{\S}^-$.
 \en
\end{lemma}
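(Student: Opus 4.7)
I would treat the four items in order; (1) and (2) are short applications of the group-theoretic machinery from Section \ref{section:pro}, while (3) is the geometric heart of the lemma.

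For (1), the key observation is that the restriction of $p \colon \widetilde{W} \to W$ to $M \subset W$ is the cover $\widetilde{M} \to M$ classified by $\a$, because the folding retraction $r$ restricts to the identity on $M$. Hence $\ti{\S}^\pm$ is connected if and only if the composition $\pi_1(\S^\pm) \xrightarrow{\i_\pm} \pi_1(M) \xrightarrow{\a} S$ is surjective. Since $\i_\pm$ induces an isomorphism of prosolvable completions and $S \in \mathcal{FS}$, the first part of Lemma \ref{lem:im} gives $\mathrm{im}(\a \circ \i_\pm) = \mathrm{im}(\a) = S$. Statement (2) is then immediate from Lemma \ref{lem:restrictiso}: the variety $\mathcal{FS}$ is extension-closed, and after identifying $\pi_1(\widetilde{M}) = \ker(\a)$ and (by (1)) $\pi_1(\ti{\S}^\pm) = \ker(\a \circ \i_\pm)$, the restricted inclusion is exactly the map which Lemma \ref{lem:restrictiso} asserts to induce an isomorphism of prosolvable completions.

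For (3), suppose $\ti{\S}^-$ is a fiber of a fibration $\widetilde{W} \to S^1$ with class $\phi$. My plan is to show that $\ti{\S}^+$ is a parallel fiber of the same fibration. The surfaces $\S^\pm$ are Thurston norm minimizing in $N$ by hypothesis, hence $\ti{\S}^\pm$ are Thurston norm minimizing in $\widetilde{W}$ by multiplicativity of the Thurston norm in finite covers. The class $[\ti{\S}^+] = \pm[\ti{\S}^-]$ lies on the cone over the fibered face determined by $\phi$, so by Thurston's theorem any norm minimizing representative of this class is a fiber. After an ambient isotopy we may therefore take $\ti{\S}^+$ to be a fiber; by the isotopy extension theorem $\ti{\S}^-$ moves to a disjoint, still norm minimizing surface inside $\widetilde{W} \sm \ti{\S}^+ \cong \ti{\S}^+ \times (0,1)$, where it must be isotopic to $\ti{\S}^+ \times \{1/2\}$. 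So $\widetilde{W} \sm (\ti{\S}^- \cup \ti{\S}^+)$ is a disjoint union of two open cylinders, and taking closures produces two copies of $\ti{\S}^- \times I$ meeting along $\ti{\S}^- \cup \ti{\S}^+$. Since $\widetilde{W}$ is by construction the double of $\widetilde{M}$ along $R(\ti{\g}) = \ti{\S}^- \cup \ti{\S}^+$, one of these two copies must be $\widetilde{M}$.

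Finally, (4) is essentially formal. If $M \cong \S^- \times I$ as a sutured manifold, then any finite cover is again a product, so $\widetilde{M} \cong \ti{\S}^- \times I$. Conversely, assume $\widetilde{M} \cong \ti{\S}^- \times I$. The deck group $S$ acts freely on $\widetilde{M}$ and preserves $R(\ti{\g}) = \ti{\S}^- \cup \ti{\S}^+$ component-wise, since $\ti{\S}^\pm$ map to the distinct boundary pieces $\S^\pm$. Because the product structure of $\ti{\S}^- \times I$ is canonically recovered from $R_-(\ti{\g})$, the $S$-action has the form $(x,t) \mapsto (g\cdot x, t)$, and so $M = \widetilde{M}/S = (\ti{\S}^-/S) \times I = \S^- \times I$. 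The main obstacle is step (3): showing that two disjoint Thurston norm minimizing surfaces in a fibered class are parallel fibers, rather than merely individually isotopic to fibers, which requires combining Thurston's classification on the fibered face with the product structure of the complement of a fiber.
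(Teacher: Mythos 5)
Your treatment of items (1) and (2) coincides with the paper's: both use Lemma \ref{lem:im} / Corollary \ref{cor:group1} to get surjectivity onto $S$, and Lemma \ref{lem:restrictiso} for the restricted isomorphism of prosolvable completions.

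For item (3) you take a genuinely different and substantially heavier route. The paper's argument is a short algebraic one: let $Y := \widetilde{W}\sm\nu\ti{\S}^-$; since $\ti{\S}^-$ is a fiber, $\pi_1(\ti{\S}^-)\to\pi_1(Y)$ is an isomorphism, and it factors as $\pi_1(\ti{\S}^-)\to\pi_1(\widetilde{M})\to\pi_1(Y)$ with both factors injective (incompressibility and the decomposition $Y\cong \widetilde{M}\cup_{\ti{\S}^+}\widetilde{M}$), which forces $\pi_1(\ti{\S}^-)\to\pi_1(\widetilde{M})$ to be an isomorphism and Stallings' theorem finishes. Your argument instead invokes Thurston's classification of fibered faces, the isotopy extension theorem, and Waldhausen's theorem on incompressible surfaces in a product; it can be made to work, but there are loose ends you would need to tighten: after the ambient isotopy the two complementary pieces are not literally $\widetilde{M}$ and $\tau(\widetilde{M})$ (only homeomorphic to them via the isotopy), and you need to confirm that the resulting homeomorphism respects the sutured structure in order to assert that $\widetilde{M}$ is a product \emph{sutured} manifold. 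The paper's algebraic route avoids all of this, so if you want a complete proof I would recommend replacing your item (3) with the factoring argument.

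Item (4) is the one place with a real gap. Your forward direction is fine, but the converse rests on the assertion that ``the product structure of $\ti{\S}^-\times I$ is canonically recovered from $R_-(\ti{\g})$'' and hence that the deck action has the form $(x,t)\mapsto(g\cdot x,t)$. There is no canonical product structure on a manifold merely diffeomorphic to $\S\times I$, and a free finite group action preserving both ends need not literally be a product action; establishing that the quotient is nevertheless a product is precisely the nontrivial content that the paper delegates to \cite[Theorem~10.5]{He76}. A self-contained replacement for your argument would go through Stallings' fibration criterion: $M$ is a product over $\S^-$ iff $\i_-\colon\pi_1(\S^-)\to\pi_1(M)$ is an isomorphism, and likewise for $\widetilde{M}$. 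Since $p$ is a finite cover with $\ti{\S}^-$ connected, $\i_-$ restricts to $\pi_1(\ti{\S}^-)\to\pi_1(\widetilde{M})$ between finite-index subgroups of equal index; as the groups are torsion-free, $\i_-$ is an isomorphism exactly when the restriction is, giving the equivalence without any appeal to a canonical product structure.
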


\begin{proof}
First note that it follows from Lemma \ref{lem:im} and Corollary \ref{cor:group1} and the assumption that   $\pi_1(\S^\pm)\to \pi_1(M)$
give rise to isomorphisms of the respective
prosolvable completions that $\pi_1(\S^\pm)\to \pi_1(M)\to S$ is surjective, i.e. the preimages $\ti{\S}^\pm:=p^{-1}(\S^\pm)$ are connected.
The second claim follows from Lemma \ref{lem:restrictiso} since the maps $\pi_1({\S}^\pm)\to \pi_1({M})$
give rise to isomorphisms of  their
prosolvable completions.

For the third claim consider the following commutative diagram
\[ \xymatrix{ \pi_1(\ti{\S}^-)\ar[dr]\ar[rr]&& \pi_1(\widetilde{W} \sm \nu \tilde{\Sigma}^-)\\
&\pi_1(\widetilde{M}).\ar[ur]&}\]
If  $\ti{\S}^-$ is  the fiber of a fibration $D\widetilde{M}_{\ti{\g}}\to S^1$, then the
 top map in the above commutative diagram is an isomorphism.
We can think of $\widetilde{W} \sm \nu \tilde{\Sigma}^-$ as $\widetilde{M}\cup_{\ti{\S}^+}\widetilde{M}$.
It is now clear that the lower two  maps are injective.
But then the lower left map also has to be an isomorphism, i.e. $\widetilde{M}$ is a product over $\ti{\S}^-$.
 The last claim is well--known, it is for example a consequence of \cite[Theorem~10.5]{He76}.
\end{proof}

Using the above lemma it is now clear that  the following lemma implies Theorem \ref{thm:no}.

\begin{lemma} \label{lem:no}
Let  $(M,\g)$ be a taut sutured manifold
 such that  $R_\pm(\g)$ consist of one component  $\S^\pm$ each.
Assume the following hold:
\bn
\item[(A)]  The inclusion induced maps $\pi_1(\S^\pm)\to \pi_1(M)$  give rise to isomorphisms of the respective
prosolvable completions.
\item[(B)] The class in $H^1(DM_\g;\Z)$ represented by the surface $ \S^-$
 lies on the closure of the cone over  a fibered face
of $DM_\g$.

\en
Then  $\S^-$ is the fiber of a fibration $DM_\g\to S^1$.
\end{lemma}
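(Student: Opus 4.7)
Set $W = DM_\gamma$ and let $\phi = [\Sigma^-] \in H^1(W;\Z)$. The target statement amounts, by Stallings' fibration theorem applied to $\Sigma^-\subset W$, to the claim that the inclusion induced map $\iota:\pi_1(\Sigma^-)\to \pi_1(W\setminus\nu\Sigma^-)$ is an isomorphism; here $W\setminus\nu\Sigma^-\cong M\cup_{\Sigma^+}M$, so by van Kampen $\pi_1(W\setminus\nu\Sigma^-)\cong \pi_1(M)*_{\pi_1(\Sigma^+)}\pi_1(M)$. My first move would be to use condition (A) to show that $\iota$ induces an isomorphism of prosolvable completions: for any finite solvable $S$, a homomorphism $\pi_1(M)*_{\pi_1(\Sigma^+)}\pi_1(M)\to S$ is a pair $(\beta_1,\beta_2)$ with $\beta_1|_{\pi_1(\Sigma^+)}=\beta_2|_{\pi_1(\Sigma^+)}$, but $\pi_1(\Sigma^+)\to\pi_1(M)$ being a prosolvable iso gives $\hom(\pi_1(M),S)\xrightarrow{\cong}\hom(\pi_1(\Sigma^+),S)$ (Lemma \ref{lem:pro}), which forces $\beta_1=\beta_2$. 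Together with condition (A) for $\Sigma^-$ this yields the required bijection on $\hom(-,S)$.

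Next I would translate this prosolvable statement into twisted Alexander information on $(W,\phi)$. Using Lemma \ref{lem:im} together with the prosolvable iso from the previous step, the inclusion of $\Sigma^-$ into $W\setminus\nu\Sigma^-$ induces isomorphisms on $H_0$ and $H_1$ with coefficients in $\Z[G]$ for every finite solvable quotient $G$ of $\pi_1(W)$. Applying Proposition \ref{prop:sameh0h1} to the pair $(W,\phi)$ (with $\Sigma^-$ playing the role of the norm-minimizing cut surface) then gives Property (M) for every normal finite index subgroup of $\pi_1(W)$ with solvable quotient; equivalently, the twisted Alexander polynomial $\Delta_{W,\phi}^{\pi/\tipi}$ is monic of degree $[\pi:\tipi]\,\|\phi\|_T + (1+b_3(W))\div\phi_{\tipi}$ for all such $\tipi$. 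In particular the classical Alexander polynomial $\Delta_{W,\phi}$ is monic of degree $\|\phi\|_T + (1+b_3(W))\div\phi$.

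The third and decisive step is to upgrade this information to the statement that $\phi$ lies in the \emph{interior} of the fibered cone of condition (B), not merely on its closure. This is where one appeals to the Alexander norm machinery of McMullen: on any fibered face the Alexander norm agrees with the Thurston norm, so on the closure of the fibered face one has $\|\phi\|_A=\|\phi\|_T$, which means that $\phi$ is carried by a vertex of the Newton polytope of $\Delta_W$. The extra input that $\Delta_{W,\phi}$ is monic (and that the same holds in every finite solvable cover via the twisted polynomials above) pins the supporting vertex down to a single monomial with unit coefficient, excluding the degeneration that occurs on the relative boundary of a fibered face. Hence $\phi$ sits in the open cone over the face, and Thurston's fibration theorem provides a fibration $W\to S^1$ representing $\phi$; since $\Sigma^-$ is a connected Thurston norm minimizing surface dual to $\phi$, a standard argument identifies it up to isotopy with the fiber.

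The hard part is clearly this last step: the transition from ``closure of a fibered face + monicity across all finite solvable covers'' to ``interior of the fibered face.'' All of the bookkeeping in Sections \ref{section:basics} and \ref{section:monic} (especially Lemmas \ref{lem:im}, \ref{lem:twiprop} and \ref{lem:twiprop2}, and Proposition \ref{prop:sameh0h1}) is in place to feed the monic/degree data into the Newton polytope picture of the Alexander polynomial of $W$, but the delicate point is ruling out the possibility that the top coefficient of $\Delta_W$ is supported on a positive-dimensional subface hitting $\phi$ only at its boundary; this seems to require the full strength of the twisted polynomials rather than just the classical one, which is presumably why the argument is placed in a dedicated section (Section \ref{section:alexnorm}) on the Alexander norm.
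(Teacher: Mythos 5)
Your first two steps are a reasonable (though indirect) translation of the problem, but the third step — the one you yourself flag as decisive — is not actually a proof, and the mechanism you guess at is not the one the paper uses. Being on the boundary of a fibered cone is a genuine possibility for a non-fibered class, and monicity of $\Delta_W$ (or of twisted $\Delta$'s) alone cannot rule it out: these invariants only give lower bounds, and no amount of ``monicity across covers'' converts a boundary class into an interior one without an extra geometric ingredient. Indeed, the monicity data you would feed in is already forced by hypothesis (A), so it cannot be the ``extra input'' that breaks the tie.

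The ingredient you are missing is the canonical involution $\tau$ on $W=DM_\gamma$ that exchanges the two copies of $M$ and fixes $R(\g)$ pointwise. The paper's Lemma \ref{lem:alexnorm} provides exactly the structure needed to exploit it: (1) an isomorphism $H_2(W,\partial W;\R)\cong \R\oplus H_1(\S^-,\partial\S^-;\R)$ under which $\phi=PD([\S^-])$ corresponds to $(1,0)$ and $\tau$ acts by $(r,h)\mapsto(r,-h)$, and (2) the statement that $\phi$ lies in the cone $D$ on the \emph{interior} of a top--dimensional face of the Alexander norm ball. Point (2) is not deduced from the mere equality $\|\phi\|_A=\|\phi\|_T$; it comes from an explicit computation showing $\Delta_W=(t-1)^{\|\phi\|_T}$ in $\Z[H_1(W)]$, which pins down the Newton polytope completely. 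With this in hand the argument is purely combinatorial-convex: pick $h$ small so that $\Phi(1,\pm h)\in D$ with $\Phi(1,h)$ in the fibered cone $C$ given by (B); then $\Phi(1,-h)=\tau(\Phi(1,h))$ is fibered and lies in $\tau(C)\cap D$, so by McMullen's properties (d) and (e) one gets $\tau(C)\subset D$ and then $\tau(C)=C$; convexity of $C$ then forces $\phi=\tfrac12(\Phi(1,h)+\Phi(1,-h))\in C$. Note also that, contrary to what you suggest, the paper's proof of this lemma uses only the \emph{classical} Alexander polynomial of $W$; the twisted machinery is invoked elsewhere (Section \ref{section:monic}), not here. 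Your two claims in step 5 — that $\|\phi\|_A=\|\phi\|_T$ on the closure of a fibered face already places $\phi$ at a vertex of the Newton polytope, and that monicity then ``excludes the degeneration'' — are not correct inferences, and without the $\tau$-symmetry there is no way to complete the argument along your lines.
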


In the following we write $W=DM_\g$. Note that we have a canonical  involution $\tau$ on $W$ with fix point set $R(\g)$.
From now on we think of $W=DM_\g$ as $M\cup_{R(\g)} \tau(M)$.

Our main tool in the proof of Lemma \ref{lem:no} will be the interplay between the Thurston norm and
  McMullen's Alexander norm \cite{McM02}.
Recall that given a 3--manifold $V$ with $b_1(V)\geq 2$  the Alexander norm $||-||_A:H^1(V;\R)\to \R_{\geq 0}$ has the following properties:
\bn
\item[(a)] The Alexander norm ball is dual to the Newton polyhedron defined by the symmetrized Alexander polynomial $\Delta_V\in \Z[H_1(V;\Z)/\mbox{torsion}]$.
\item[(b)] The Alexander norm ball is a (possibly noncompact) polyhedron with finitely many faces.
\item[(c)] For any $\phi \in H^1(V;\R)$ we have $||\phi||_A\leq ||\phi||_T$, and equality holds for fibered classes.
\item[(d)] Let $C\subset H^1(V;\R)$ be a fibered cone, i.e. the cone on a fibered face of the Thurston norm ball, then $C$
is contained in the cone on the interior of a top--dimensional face of the Alexander norm ball.
\item[(e)] Let $C_1,C_2\subset H^1(V;\R)$ be fibered cones which are contained in the same cone  on the interior of a top--dimensional face of the Alexander norm ball,
then $C_1=C_2$.
\en

Our  assumption that the induced maps $\pi_1(\S^\pm)\to \pi_1(M)$  give rise to isomorphisms of the respective
prosolvable completions implies that $W=DM_\g$ `looks algebraically' the same as $\S^-\times S^1$.
More precisely, we have the following lemma which  we will prove in Section \ref{section:alexnorm}.

\begin{lemma} \label{lem:alexnorm}
Let  $(M,\g)$ be a taut sutured manifold
with the property that $R_\pm(\g)$ consist of one component  $\S^\pm$ each.
Assume that  (A) holds. Then the following hold:
\bn
\item There exists an isomorphism
\[ f:\R \oplus  H_{1}(\S^-,\partial \S^-;\R)\to H_2(W,\partial W;\R) \]
such that $f(1,0)=[\S^-]$ and such that $\tau(f(r,h))=f(r,-h)$.
\item The class $\phi=PD(\S^-)\in H^1(W;\Z)$  lies in the cone $D$ on the interior of a
top--dimensional face of the Alexander norm ball.
\en
\end{lemma}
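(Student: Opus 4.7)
The key input, extracted from assumption (A) via Lemma \ref{lem:im}, is that the inclusions $\i_\pm \colon \S^\pm \to M$ induce isomorphisms $H_1(\S^\pm;\Z) \to H_1(M;\Z)$ (and more generally on $H_1$ with coefficients in any finite solvable group). For part (1), the plan is to pass via Poincar\'e--Lefschetz duality to $H^1(W;\R) \cong H_2(W,\partial W;\R)$ and to compute $\dim H^1(W;\R) = 1 + b_1(\S^-)$ using the Mayer--Vietoris sequence for $W = M \cup_{R(\g)} \tau M$ together with $H^1(\S^\pm;\R) \cong H^1(M;\R)$. Letting $\rho\colon W \to M$ denote the retraction that folds the two copies of $M$ along $R(\g)$, I would define
\[
f(a,h) \;=\; a\,[\S^-] \;+\; PD\bigl(\rho^*\bigl((\i_-^*)^{-1}(PD\,h)\bigr)\bigr), \qquad a \in \R,\ h \in H_1(\S^-,\partial\S^-;\R).
\]
To verify bijectivity, observe that the image of the second summand is $PD(\rho^*H^1(M;\R))$, of dimension $b_1(\S^-)$, while $\phi = PD[\S^-]$ is not in it: the loop $\g' = \alpha \cdot \tau(\alpha)^{-1}$ in $W$, built from an arc $\alpha \subset M$ joining $\S^-$ to $\S^+$, satisfies $\phi(\g') = \pm 1$ but $\rho_*[\g'] = 0$. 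For the $\tau$-equivariance, $\rho\circ\tau = \rho$ shows that $\tau^*$ fixes $\rho^*H^1(M;\R)$, and since $\tau$ is orientation-reversing on $W$ while trivial on $\S^-$, a direct computation gives $\tau_*(f(0,h)) = f(0,-h)$ and $\tau_*(f(a,0)) = f(a,0)$.

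For part (2), the plan is to compute the Newton polytope of the Alexander polynomial $\Delta_W \in \L\tpm := \Z[H_1(W;\Z)/\mbox{torsion}]$, where $\L = \Z[H_1(M;\Z)/\mbox{torsion}]$ and $t$ corresponds to $\phi$ under the identification from part (1). Splitting $W$ along $\S^-$ presents $\pi_1(W)$ as an HNN extension of $\pi_1(W_0)$ over $\pi_1(\S^-)$, with $W_0 = M \cup_{\S^+} \tau M$ retracting onto $M$; Mayer--Vietoris together with the iso from part (1) gives $H_1(W_0;\L) \cong H_1(M;\L)$, and the two inclusions $\S^- \to W_0$ both fold onto $\i_-$. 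The Wang sequence with $\L\tpm$-coefficients, combined with an $\L$-coefficient upgrade of the $H_1$-iso $\i_-\colon H_1(\S^-;\L) \cong H_1(M;\L)$, then yields
\[
H_1(\S^-;\L)\tpm \;\xrightarrow{(t-1)\i_-}\; H_1(M;\L)\tpm \;\to\; H_1(W;\L\tpm) \;\to\; 0,
\]
so $H_1(W;\L\tpm) \cong H_1(M;\L)$ with trivial $t$-action. A direct Fitting-ideal calculation then gives $\Delta_W \doteq (t-1)^r$ up to units, with $r = \rank_\L H_1(M;\L)$. The Newton polytope of $(t-1)^r$ is the line segment $\{0,1,\dots,r\}\times\{0\}$ in $\Z \oplus H_1(M;\Z)/\mbox{torsion}$, whose maximum and minimum under $\phi$ are attained at unique vertices; hence $\phi$ lies in the cone on the interior of a top-dimensional face of the Alexander norm ball (in the degenerate case $r=0$, the Alexander norm vanishes identically and $\phi \in H^1(W;\R)\setminus\{0\}$ trivially by the convention adopted after Theorem \ref{thm:agol2}).

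The main technical obstacle is the $\L$-coefficient upgrade of the $H_1$-isomorphism $\i_\pm\colon H_1(\S^\pm;\Z[G]) \to H_1(M;\Z[G])$: Lemma \ref{lem:im} supplies it only for finite solvable coefficient groups $G$, whereas $\L$ is infinitely generated. I would address this either by passage to the limit over the finite abelian quotients of $\L$ (via a Milnor-type $\varprojlim^1$ argument), or more directly through a Stallings--5-lemma argument applied to the maximal free abelian covers of $\S^\pm$ and $M$, leveraging that $\i_\pm$ induces an isomorphism on $H_1$ with $\Z$-coefficients of those covers. Once this upgrade is in hand, the Mayer--Vietoris and Fitting-ideal computations of paragraph two are routine.
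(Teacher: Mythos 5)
Your proposal takes a genuinely different route from the paper, and while the scaffolding is sound, the step you flag as the ``main technical obstacle'' is indeed a real gap that neither of your suggested fixes plausibly closes. For Part (1), the paper does not work with Poincar\'e--Lefschetz duality at all; it first proves that $M$ has no toroidal sutures (a fact you implicitly use via $\partial W$ but do not verify), shows that $H_1(\S^\pm,\partial\S^\pm;\R)\to H_1(M,A(\g);\R)$ are isomorphisms, and then constructs the map $g$ explicitly at the chain level: given $c\in H_1(\S^-,\partial\S^-;\R)$, find a matching chain $c^+$ on $\S^+$, a 2-chain $d$ in $M$ with $\partial d=c^-\cup -c^+$, and set $g(c)=[d\cup -\tau(d)]$. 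Your duality-based formula and the orientation bookkeeping ($\tau$ orientation-reversing) appear correct as an alternative, but you should be careful that the displayed map has image in $H_2(W,\partial W;\R)$ with the stated transversality/intersection properties, and you should still prove the absence of toroidal sutures.

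For Part (2), the target $\Delta_W\doteq (t-1)^{\|\phi\|_T}$ is right, but the upgrade from $H_1$-isomorphisms with $\Z[G]$-coefficients ($G$ finite solvable) to $\L=\Z[H_1(M;\Z)/\mbox{torsion}]$-coefficients is exactly the hard part, and your two proposed remedies do not clearly work. The covers involved are infinite, so $H_1$ with $\L$-coefficients is not an inverse limit of $H_1$ with finite coefficients -- there is no reason for the Milnor $\varprojlim^1$ terms to vanish -- and Stallings' theorem is tailored to the lower central series (nilpotent quotients), whereas here one needs control of derived-series (metabelian) quotients. The paper handles precisely this point with a separate, substantive result, Lemma~\ref{lem:isometab}: a homomorphism of finitely generated metabelian groups that induces an isomorphism of prosolvable completions is itself an isomorphism, proved by an amalgamation/doubling trick modelled on Long--Niblo together with Hall's theorem that finitely generated metabelian groups are residually finite. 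Via Corollary~\ref{cor:isometab}, this gives $\pi_1(\S)/\pi_1(\S)^{(2)}\cong\pi_1(M)/\pi_1(M)^{(2)}$, and the paper then computes $\pi_1(W)/\pi_1(W)^{(2)}\cong\ll t\rr\times\pi_1(\S)/\pi_1(\S)^{(2)}$ directly (Claims A, B, C in the proof), obtaining the Alexander module isomorphism $H_1(W;\Z[H])\cong H_1(\S\times S^1;\Z[H])$ without a Wang sequence and without ever needing to know the $\L$-module structure of $H_1(M;\L)$ explicitly. A secondary point: even granting the upgrade, your conclusion $\Delta_W\doteq(t-1)^r$ from the Wang sequence also requires $H_1(\S^-;\L)$ to be $\L$-torsion-free; this is clear when $\S^-$ has boundary (it is a submodule of a free module) but requires an argument when $\S^-$ is closed, a case that does occur here. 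The paper sidesteps this by identifying $\Delta_W$ with the known $\Delta_{\S\times S^1}$.
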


Note that (1) implies in particular that $b_1(W)\geq 2$.
Assuming this lemma we are now in a position to prove Lemma \ref{lem:no}.

\begin{proof}[Proof of Lemma \ref{lem:no}]
Let  $(M,\g)$ be a taut sutured manifold
with the property that
$R_\pm(\g)$ consist of one component  $\S^\pm$ each.
Assume that (A) and (B) hold.

By Lemma \ref{lem:alexnorm} there exists a  cone $D\subset H^1(W;\R)$ on the interior of a
top--dimensional face of the Alexander norm ball which contains $\phi=PD([\S^-])$.
We denote the map
\[ \R \oplus  H_{1}(\S^-,\partial \S^-;\R)\xrightarrow{f} H_2(W,\partial W;\R) \xrightarrow{PD} H^1(W;\R) \]
by $\Phi$.

By (B) we can find  $h\in H_{1}(\S^-,\partial \S^-;\R)$ such that $\Phi(1,h)$ and $\Phi(1,-h)$ lie in $D$
and such that $\Phi(1,h)$ lies in the cone $C$ on a fibered face $F$ of the Thurston norm ball.
Note that the $\tau_*:H^1(W;\R)\to H^1(W;\R)$ sends fibered classes to fibered classes and preserves the Thurston norm. In particular
 $\tau(\Phi(1,h))=\Phi(1,-h)$ is fibered as well and it lies in  $\tau(C)$
which is the cone on the fibered face $\tau(F)$ of the Thurston norm ball.
Recall that $\tau(\Phi(1,h))=\Phi(1,-h)$ lies in $D$, it follows from Property (d) of the Alexander norm that $\tau(C)\subset D$.
We then use  (e) to conclude that $C=\tau(C)$. In particular $\Phi(1,h)$ and $\Phi(1,-h)$ lie in $C$.
Since $C$ is convex it follows that $\phi=\Phi(1,0)\in C$ i.e. $\phi$ is a fiber class.
\end{proof}

\subsection{Proof of Lemma  \ref{lem:alexnorm}}\label{section:alexnorm}

By Lemma \ref{lem:twih1} the following lemma is just the first statement of Lemma \ref{lem:alexnorm}.

\begin{lemma}
Let  $(M,\g)$ be a taut sutured manifold
with the property that $R_\pm(\g)$ consist of one component  $\S^\pm$ each.
Assume that  $\i_\pm: H_1(\S^\pm;\R)\to H_1(M;\R)$ are isomorphisms. Then there exists  an isomorphism
\[ f:\R \oplus  H_{1}(\S^-,\partial \S^-;\R)\to H_2(W,\partial W;\R) \]
such that $f(1,0)=[\S^-]$ and such that $\tau(f(b,c))=f(b,-c)$.
\end{lemma}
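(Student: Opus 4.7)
The plan is to construct $f$ explicitly via Poincar\'e--Lefschetz duality on $W$ and on $\S^-$, together with the retraction $r: W \to M$. I would introduce the auxiliary map
\[
g: H_1(\S^-, \partial \S^-; \R) \to H_2(W, \partial W; \R)
\]
as the composition
\[
H_1(\S^-, \partial \S^-; \R) \xrightarrow{D_{\S^-}^{-1}} H^1(\S^-; \R) \xrightarrow{(\i_-^*)^{-1}} H^1(M; \R) \xrightarrow{r^*} H^1(W; \R) \xrightarrow{D_W} H_2(W, \partial W; \R),
\]
where $D_X$ denotes Poincar\'e--Lefschetz duality and the inverse of $\i_-^*$ exists by the hypothesis. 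Then set $f(b, c) := b\,[\S^-] + g(c)$; the identity $f(1, 0) = [\S^-]$ is tautological.

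For the $\tau$-equivariance, I would observe that $\tau$ fixes $R(\gamma)$ pointwise and flips the normal direction, so $\tau$ is orientation-reversing on $W$; hence $\tau_*[W, \partial W] = -[W, \partial W]$, giving $\tau_* \circ D_W = -D_W \circ \tau^*$. Since $r \circ \tau = r$, every class $r^*\alpha$ is $\tau^*$-invariant, so $\tau_* g(c) = -g(c)$; on the other hand $\tau|_{\S^-} = \id$ preserves the orientation of $\S^-$, so $\tau_*[\S^-] = [\S^-]$. Combining these gives $\tau_* f(b, c) = b\,[\S^-] - g(c) = f(b, -c)$.

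To show that $f$ is an isomorphism, I would combine a dimension count with injectivity. The Mayer--Vietoris sequence for $W = M \cup_{R(\gamma)} \tau M$, under the hypothesis that $\i_\pm^*: H^1(M;\R) \to H^1(\S^\pm;\R)$ are isomorphisms, collapses (on the relevant piece) to a short exact sequence
\[
0 \to \R \to H^1(W; \R) \to H^1(M; \R) \to 0,
\]
where the $\R$-term is the cokernel of $H^0(M)^2 \to H^0(\S^+) \oplus H^0(\S^-)$ and the right-hand $H^1(M)$ is the diagonal kernel of $H^1(M)^2 \to H^1(\S^+) \oplus H^1(\S^-)$; combined with Poincar\'e--Lefschetz duality on $W$ and on $\S^-$ this yields $\dim H_2(W, \partial W; \R) = 1 + \dim H_1(\S^-, \partial \S^-; \R)$. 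For injectivity: if $f(b, c) = 0$, applying $\tau_*$ to this relation and adding and subtracting with the original gives $b[\S^-] = 0$ and $g(c) = 0$; the Poincar\'e dual of $[\S^-]$ in $H^1(W; \R)$ is the generator of the $\R$-summand above (coming from the MV connecting homomorphism), hence nonzero, so $b = 0$; and $g$ is a composition of isomorphisms with the injection $r^*$ (whose left inverse $s^*$ comes from any section $s: M \hookrightarrow W$ of $r$), so $c = 0$. The main technical point requiring care is the orientation bookkeeping entering both the MV computation and the identity $\tau_* \circ D_W = -D_W \circ \tau^*$; once signs are pinned down, the rest of the argument is a routine combination of standard dualities.
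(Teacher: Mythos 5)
Your proof is correct, but the construction of $g$ is genuinely different from the paper's. The paper builds $g$ at the chain level: it represents $c$ by a $1$--chain $c^-$ in $\S^-$, uses two preliminary claims (that $M$ has no toroidal sutures, and that $H_1(\S^\pm,\partial\S^\pm;\R)\to H_1(M,A(\g);\R)$ are isomorphisms, proved via the long exact sequences of pairs and the five lemma) to find a $2$--chain $d$ in $M$ with $\partial d=c^-\cup -c^+$ for some $c^+$ in $\S^+$, and sets $g(c)=[d\cup -\tau(d)]$; the $\tau$--antiequivariance is then visible from the formula, and the isomorphism claim is again checked by Mayer--Vietoris. Your route via $g=D_W\circ r^*\circ(\i_-^*)^{-1}\circ D_{\S^-}^{-1}$ bypasses both preliminary claims entirely: the antiequivariance falls out of $r\circ\tau=r$ together with $\tau_*\circ D_W=-D_W\circ\tau^*$ (valid since $\tau$ reverses the orientation of the double), injectivity of $r^*$ is immediate from the section $s:M\hookrightarrow W$, and you never need to identify $\partial W$ or the relative homology of the pieces. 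What the paper's chain-level description buys in exchange is an explicit geometric picture of $g(c)$ that is reused implicitly later; your version is cleaner but more formal. The only place you are terse is the assertion that $PD[\S^-]$ generates the $\R$--summand: the needed fact, $[\S^-]\ne 0$ in $H_2(W,\partial W;\R)$, is most quickly seen by exhibiting a loop passing once transversally through $\S^-$ (entering $\tau(M)$ there and returning through $\S^+$), and it is worth saying so; with that one line added, the argument is complete.
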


\begin{proof}
We start out with the following two claims.

\begin{claim} $M$ has no toroidal sutures.
\end{claim}

\begin{proof}
Denote the toroidal sutures by $T_1,\dots,T_n$.
Recall that for any compact 3--manifold $X$ we have $b_1(\partial X)\leq 2b_1(X)$.
In our case it is easy to see that we have $b_1(\partial M)=b_1(\S^-)+b_1(\S^+)+\sum_{i=1}^n b_1(T_i)=2b_1(\S)+2n$.
On the other hand, since  $H_1(\S^\pm;\R)\to H_1(M;\R)$ are isomorphisms we have $b_1(M)=b_1(\S)$.
It now follows from $b_1(\partial M)\leq 2b_1(M)$ that $n=0$.
\end{proof}

\begin{claim}
The inclusion induced maps $H_{1}(\S^\pm,\partial \S^\pm;\R)\to H_1(M,A(\g);\R)$
are isomorphisms.
\end{claim}

Now consider the following commutative diagram:
\[ \xymatrix{
H_1(\partial \S^-;\R)\ar[d]\ar[r] & H_1(\S^-;\R)\ar[d]^{\cong} \ar[r] & H_1(\S^-,\partial \S^-;\R)
\ar[d]\ar[r] & H_0(\partial \S^-;\R)\ar[r]\ar[d]& H_0(\S^-;\R)\ar[d]^\cong \\
H_1(A(\g);\R)\ar[r] & H_1(M;\R) \ar[r] & H_1(M,A(\g);\R)
\ar[r] & H_0(A(\g);\R)\ar[r]&H_0(M;\R).
 }\]
 Note that by the compatibility condition in the definition of sutured manifolds we have that
 for each component $A$ of $A(\g)$ the subset $\partial A\cap \S^-=A\cap \partial \S^-\subset \partial A$ consists of exactly
 one boundary component of $A$. This implies that  the maps $H_i(\partial \S^- \cap A;\R)\to H_i(A;\R)$ are isomorphisms.
The claim  now follows immediately from the above commutative diagram and from the assumption that  $H_1(\S^\pm;\R)\to H_1(M;\R)$ are isomorphisms.

 We now define
\[ g: H_{1}(\S^-,\partial \S^-;\R)\to H_{2}(W,\partial W;\R) \]
as follows: given an element $c\in H_{1}(\S^-,\partial \S^-;\R)$ represent it by a chain $c^-$,
since the maps $H_{1}(\S^\pm,\partial \S^\pm;\R)\to H_1(M,A(\g);\R)$ are isomorphisms
we can find a chain $c^+$ in $\S^+$ such that $[c^-]=[c^+]\in H_2(M,A(\g);\R)$.
Now let $d$ be a $2$--chain in $M$ such that $\partial d=c^-\cup -c^+$.
Then define $g(c)$ to be the element in $H_{2}(W,\partial W;\R)$ represented by the closed $2$--chain $d\cup -\tau(d)$.
It is easy to verify that $g$ is a well--defined homomorphism.
Note that $\partial W=A(\g)\cup \tau(A(\g))$ since $W$ has no toroidal sutures.
It is now  straightforward to check, using a Mayer--Vietoris sequence,
that the map
\[ \ba{rclcl} f:\R&\oplus& H_{1}(\S^-,\partial \S^-;\R)&\to& H_2(W,\partial W;\R) \\
(\, b&,& c\, )&\mapsto & b [\S^-]+g(c) \ea \]
is an isomorphism. Clearly $f(1,0)=[\S^-]$.
It is also easy to verify  that $\tau(f(b,c))=f(b,-c)$. This shows that  $f$ has all the required properties.
\end{proof}

The second statement of Lemma  \ref{lem:alexnorm} is more intricate.
We start with the following lemma which in light of \cite{Gr70}, \cite{BG04} and \cite{AHKS07} has perhaps some independent interest.

\begin{lemma} \label{lem:isometab}
Let $\varphi:A\to B$ be a homomorphism of finitely generated metabelian groups which induces an isomorphism of prosolvable completions.
Then $\varphi$ is also an isomorphism.
\end{lemma}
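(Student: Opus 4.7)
The plan is to reduce, via the five lemma, to the assertion that the induced map on commutator subgroups is an isomorphism of modules over the group ring of the abelianization, and then to extract this from Lemma \ref{lem:im}. First, finitely generated metabelian groups are residually finite by a classical theorem of P.~Hall, with metabelian (hence solvable) finite quotients; so $A$ and $B$ embed into their prosolvable completions, which forces $\varphi$ to be injective. Applying Lemma \ref{lem:pro} with $G$ ranging over finite abelian groups, $\varphi^{ab} : A^{ab} \to B^{ab}$ induces an isomorphism of profinite completions of finitely generated abelian groups, and is therefore itself an isomorphism. Writing $\Gamma = A^{ab}$, $R = \Z[\Gamma]$, $M = [A,A]$, $N = [B,B]$, the conjugation actions make $M$ and $N$ into finitely generated $R$-modules, and $\bar\varphi : M \to N$ is $R$-linear via $\varphi^{ab}$. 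The five lemma then reduces the lemma to showing that $\bar\varphi$ is an isomorphism.

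Since $\mathcal{FS}$ is extension-closed and contains all finite abelian groups, Lemma \ref{lem:im} gives, for every finite solvable $G$ and every $\beta : B \to G$, that $\varphi_* : H_1(A;\Z[G]) \to H_1(B;\Z[G])$ is an isomorphism; equivalently, by Corollary \ref{cor:metab}, $\varphi$ induces an isomorphism $A/[\ker(\beta\circ\varphi),\ker(\beta\circ\varphi)] \to B/[\ker\beta,\ker\beta]$. Applying this with $\beta = \pi_n : B \to \Gamma/n\Gamma$ and setting $K_n = \ker(\pi_n\circ\varphi)$, $L_n = \ker\pi_n$, a five lemma comparison of $1 \to M/[K_n,K_n] \to A/[K_n,K_n] \to \Gamma/n\Gamma \to 1$ with its analogue for $B$ yields, for every $n$, an isomorphism $M/[K_n,K_n] \to N/[L_n,L_n]$ of $R$-modules (using that the right-hand terms are identified by $\varphi^{ab}$).

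To conclude, a direct commutator computation in the extension $1 \to M \to A \to \Gamma \to 1$ identifies $[K_n,K_n]$ with the $R$-submodule $I_n M + C_n \subset M$, where $I_n = (g - 1 \mid g \in n\Gamma) \subset R$ and $C_n$ is generated by the antisymmetrizations $c(\gamma_1,\gamma_2) - c(\gamma_2,\gamma_1)$ (for $\gamma_i \in n\Gamma$) of the 2-cocycle defining the extension; the torsion-free contribution of $C_n$ lies in $n^2 M$. Now any finite-index ideal $J \subset R$ contains both the characteristic of the finite ring $R/J$ and some $I_n$, so for $n$ a sufficiently large common multiple one has $[K_n,K_n] \subset JM$, and symmetrically $[L_n,L_n] \subset JN$. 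Combining these inclusions with the isomorphism on quotients gives $\ker\bar\varphi \subset \bigcap_n [K_n,K_n] \subset \bigcap_J JM = 0$, where the last equality is Hall's theorem that finitely generated $R$-modules are residually finite; so $\bar\varphi$ is injective. The same inclusions give $J\cdot (N/\bar\varphi(M)) = N/\bar\varphi(M)$ for every finite-index $J$, and the same residual-finiteness argument applied to the cokernel forces $N = \bar\varphi(M)$. The hard part will be the explicit commutator computation together with the cofinality argument that every finite-index-ideal filtration of $M$ refines the $[K_n,K_n]$-filtration; in particular, controlling the cocycle contribution $C_n$ and handling torsion in $\Gamma$ requires some care.
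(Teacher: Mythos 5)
Your argument is correct in outline but takes a genuinely different route from the paper's. The paper proves injectivity exactly as you do (Hall's theorem that finitely generated metabelian groups are residually finite), but for surjectivity it avoids module theory entirely: given $g\in B\sm\varphi(A)$ with trivial image in $H_1(B;\Z)$, it forms the amalgam $C=B*_AB$ with its switching involution $s$, shows by a Mayer--Vietoris computation that $gs(g)^{-1}\neq e$ in the metabelian quotient $C/C^{(2)}$, and then uses residual finiteness of $C/C^{(2)}$ to produce a finite metabelian quotient of $B$ separating $g$ from $\varphi(A)$, contradicting (via Lemma \ref{lem:pro} and the image statement of Lemma \ref{lem:im}) the hypothesis. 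Your route instead reduces, by the five lemma, to the $R=\Z[\Gamma]$--module map $\bar\varphi\colon[A,A]\to[B,B]$, extracts the isomorphisms $M/[K_n,K_n]\to N/[L_n,L_n]$ from Corollary \ref{cor:metab} applied to $\beta=\pi_n$, and finishes with a cofinality argument plus Hall's residual finiteness of finitely generated modules over finitely generated commutative rings. Both work; the paper's is shorter and needs no commutator calculus, while yours isolates precisely which finite quotients of $B$ are actually used.

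Two points in your sketch need repair, though both are fixable. First, the sequence $1\to M/[K_n,K_n]\to A/[K_n,K_n]\to\Gamma/n\Gamma\to1$ is not exact: the kernel of $A/[K_n,K_n]\to\Gamma/n\Gamma$ is $K_n/[K_n,K_n]$, not $M/[K_n,K_n]$. You should compare $1\to M/[K_n,K_n]\to A/[K_n,K_n]\to\Gamma\to1$ with its analogue for $B$, using that $\varphi^{ab}$ is an isomorphism on the right. Second, the claim that ``the torsion-free contribution of $C_n$ lies in $n^2M$'' does not by itself yield $[K_n,K_n]\subset JM$, because the approximation $\bigl(\sum_{i<n}\alpha^i\bigr)\bigl(\sum_{j<n}\beta^j\bigr)\equiv n^2$ only holds modulo the full augmentation ideal $I_\Gamma$, and $I_\Gamma M$ is not small. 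The computation that actually closes the argument is: $[s(\alpha)^n,s(\beta)^n]=\bigl(\sum_{i<n}\alpha^i\bigr)\bigl(\sum_{j<n}\beta^j\bigr)[s(\alpha),s(\beta)]$, and if $I_{n_0}\subset J$ and $c$ is the characteristic of $R/J$, then for $n=cn_0$ one has $\sum_{i<n}\alpha^i\equiv(n/n_0)\sum_{i<n_0}\alpha^i\equiv0\pmod J$, whence $C_n\subset JM$ and likewise $I_nM\subset I_{n_0}M\subset JM$. With these corrections the proof goes through.
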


\begin{proof}

We first show that $A\to B$  is an injection.
We consider the following commutative diagram
\[ \xymatrix{ A\ar[d]\ar[r] & B\ar[d] \\  \widehat{A}_{\fs}\ar[r] & \widehat{B}_{\fs}.}\]
The vertical maps are injections since metabelian groups are residually finite (cf. \cite{Ha59}).
The bottom map is an isomorphism by assumption.
It now follows that the top map is an injection.

Now suppose that the homomorphism $A\to B$ is not surjective.
 We  identify $H_1(A;\Z)=H_1(A;\Z)\xrightarrow{\cong} H_1(B;\Z)=H_1(B;\Z)$ via $\varphi$ and refer to the group
as $H$. Let $g'\in B\sm \varphi(A)$.
We can pick an $a\in A$ such that $\varphi(a)$ and $g'$ represent the same element in $H$.
Let $g=\varphi(a)^{-1}g'$. Then $g$ represents the trivial element in $H$ but $g$ is also an element in
$B\sm \varphi(A)$.

We will show that there exists a  homomorphism
$\a:B\to G$ to a finite metabelian group such that $\a$ separates $g$ from $\varphi(A)$, i.e. such that $\a(g)\not\in \a(\varphi(A))$.
This  then immediately contradicts, via Lemma \ref{lem:pro}, our assumption that
$\varphi:A\to B$  induces an isomorphism of prosolvable completions.
Our construction of finding such $\a$ builds on some ideas of the proof of \cite[Theorem~1]{LN91}.

We write ${B_1}=B_2={B}$.  We denote the inclusion maps $A\to B_i=B$ by $\varphi_i$.
We let $C=B_1*_{A} B_2$.
It is straightforward to see that the homomorphisms $B_i \to C$ give rise
to an isomorphism $H_1(B_i;\Z)=H\to H_1(C;\Z)$.
Denote by
\[ s:B_1*_{A} B_2\to B_1*_{A} B_2\] the
switching map, i.e. the map induced by $s(b)=b\in B_2$ for $b\in B_1$ and $s(b)=b\in
B_1$ for $b\in B_2$. Note that $s$ acts as the identity on
$A\subset C$. Also note that $s$ descends to a map $C/C^{(2)}\to C/C^{(2)}$ which we also denote
by $s$.
We now view $g$ as an element in $B_1$ and hence as an element in $C$.
Note that the fact that $g$ represents the trivial element in $H$ implies that $g$ represents an element
in $C^{(1)}/C^{(2)}$.
 We will
first show that $s(g)\ne g \in C/C^{(2)}$. Consider the following commutative diagram of exact sequences
\[ \ba{ccccccccccccccc}
H_1(A;\Z[H])&\to&H_1(B_1;\Z[H])\oplus H_1(B_2;\Z[H])&\to&
H_1(C;\Z[H])&\to&0\\[2mm]
\,\,\, \downarrow \cong &&\, \,\,\downarrow \cong &&\,\,\downarrow \cong &&\downarrow &&\\[2mm]
 A^{(1)}&\to&B_1^{(1)}\times B_2^{(1)}&\to&C^{(1)}/C^{(2)}&\to&0.\\
 h&\mapsto & (\varphi_1(h),\varphi_2(h)^{-1})&\ea
 \]
 Since $g\in B_1^{(1)}\sm \varphi(A^{(1)})$ it follows
that $(g,g^{-1})$ does not lie in the image of $ A^{(1)}$ in $B_1^{(1)}\times
B_2^{(1)}$. It therefore follows from the above diagram that $gs(g)^{-1}\ne e \in
C^{(1)}/C^{(2)}$.

Note that  $C/C^{(2)}$ is metabelian, and hence by \cite{Ha59}
residually finite. We can therefore find an epimorphism $\a: C/C^{(2)} \to
G$ onto a finite group $G$ (which is necessarily metabelian) such that $\a(gs(g)^{-1})\ne e$. Now consider $\b:C/C^{(2)}\to G\times
G$ given by $\b(h)=(\a(h),\a(s(h)))$. Then clearly $\b(g)\not\in \b(A)\subset \{ (g,g) \,|\, g\in G\}$.
The restriction of $\b$ to $B=B_1$ now clearly separates $g$ from $A$.
\end{proof}

\begin{corollary} \label{cor:isometab}
Let $\varphi:A\to B$ be a homomorphism of finitely generated groups which induces an isomorphism of prosolvable completions.
Then the induced map $A/A^{(2)}\to B/B^{(2)}$ is an isomorphism.
\end{corollary}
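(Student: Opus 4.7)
The plan is to reduce to Lemma \ref{lem:isometab}, applied to the induced homomorphism $\bar\varphi : A/A^{(2)} \to B/B^{(2)}$. Functoriality of the derived series shows that $\bar\varphi$ is well--defined, and both source and target are finitely generated metabelian groups (being quotients of the finitely generated groups $A$ and $B$). By Lemma \ref{lem:isometab} it will suffice to show that $\bar\varphi$ induces an isomorphism of prosolvable completions; the assumption on $\varphi$ will feed into this step.

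Using the equivalence in Lemma \ref{lem:pro}, the task then reduces to checking that for every finite solvable group $G$ the map
\[ \bar\varphi^{*} : \hom(B/B^{(2)}, G) \to \hom(A/A^{(2)}, G) \]
is a bijection. The key identification I would set up is that a homomorphism $\beta : X \to G$ factors through $X/X^{(2)}$ precisely when $\beta(X)$ is metabelian (since $\beta(X^{(2)}) = \beta(X)^{(2)}$), so
\[ \hom(X/X^{(2)}, G) \cong \{\beta \in \hom(X,G) : \beta(X)\ \text{is metabelian}\}. \]
Under this identification $\bar\varphi^{*}$ is the restriction of $\varphi^{*} : \hom(B,G) \to \hom(A,G)$, and the latter is already a bijection by hypothesis (via Lemma \ref{lem:pro}).

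It then remains to verify that this bijection restricts to a bijection between the "metabelian image" subsets. This is where Lemma \ref{lem:im} is the crucial input: it tells us that for any finite solvable group $G$ and any $\beta \in \hom(B,G)$,
\[ \im\{\beta \circ \varphi : A \to G\} \;=\; \im\{\beta : B \to G\}, \]
so $\beta(B)$ is metabelian if and only if $(\beta\circ\varphi)(A)$ is. Hence $\bar\varphi^{*}$ is a bijection, $\bar\varphi$ induces an isomorphism of prosolvable completions, and Lemma \ref{lem:isometab} promotes this to the statement that $\bar\varphi$ is itself an isomorphism.

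The genuinely substantive ingredient is Lemma \ref{lem:isometab}, which is already established; granted that, the remaining argument is a matter of bookkeeping, and the only mildly subtle point is the coincidence $\beta(\varphi(A)) = \beta(B)$, without which one could not see that the bijection on Hom--sets respects the "metabelian image" condition. I do not anticipate any additional obstacle beyond organizing these identifications correctly.
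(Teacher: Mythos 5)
Your proposal is correct and follows exactly the paper's route: the paper's proof is the two-line observation that, by Lemma \ref{lem:pro}, $\varphi$ induces an isomorphism of the prosolvable completions of $A/A^{(2)}$ and $B/B^{(2)}$, and then Lemma \ref{lem:isometab} finishes. What you have done is spell out the step the paper calls "immediate", and your bookkeeping (identifying $\hom(X/X^{(2)},G)$ with the homomorphisms $X\to G$ having metabelian image, and invoking $\im(\beta\circ\varphi)=\im(\beta)$ to see that the bijection of Hom-sets respects this condition) is exactly the right verification.
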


\begin{proof}
It follows immediately from Lemma \ref{lem:pro} that  $\varphi$ induces an isomorphism of the prosolvable completions of the metabelian groups $A/A^{(2)}$ and $B/B^{(2)}$.
It now follows from Lemma \ref{lem:isometab} that the induced map $A/A^{(2)}\to B/B^{(2)}$ is an isomorphism.
\end{proof}

We now turn to the proof of the second claim of Lemma \ref{lem:alexnorm}.
For the remainder of this section let  $(M,\g)$ be a taut sutured manifold
with the property that $R_\pm(\g)$ consist of one component  $\S^\pm$ each.
We assume that  (A) holds, i.e. the inclusion induced maps $\pi_1(\S^\pm)\to \pi_1(M)$  give rise to isomorphisms of the respective
prosolvable completions. We have to show that the class $\phi=PD(\S^-)\in H^1(W;\Z)$  lies in the cone on the interior of a
top--dimensional face of the Alexander norm ball.

For the remainder of the section we pick a base point $x^-\in \S^-$ and a base point $x^+\in \S^+$.
We endow $W, M$ and $\tau(M)$ with the base point $x^-$. Furthermore we pick a path $\g$ in $M$ connecting $x^-\in \S^-$ to  $x^+\in \S^+$.

Our goal is to understand the Alexander norm ball of $W$.
In order to do this we first have to study $H=H_1(W;\Z)$.
Let $t$ denote the element in $H$ represented by the closed path $\g \cup -\tau(\g)$.
It follows from a straightforward Mayer--Vietoris sequence argument  that  we have an isomorphism
\[ \ba{rclcl} f:H_1(\S^-;\Z)&\oplus& \ll t\rr &\to& H_1(W;\Z) \\
(\, b&,& t^k\, )&\mapsto & \i(b)+kt. \ea \]
In particular $H$ is torsion--free.
We write $F=H_1(\S^-;\Z)$. We  use $f$ to identify $H$ with $F\times \ll t\rr$ and to identify $\Z[H]$ with $\Z[F]\tpm$.

We now consider the Alexander module $H_1(W;\Z[H])$. Recall that $H_1(W;\Z[H])$ is the homology of the
covering of $W$ corresponding to $\pi_1(W)\to H_1(W;\Z)=H$ together with the $\Z[H]$--module
structure given by deck transformations.

In the following claim we compare $W$ with $\S\times S^1$. We also write $F=H_1(\S;\Z)$ and we can identify
$H_1(\S\times S^1;\Z)$ with $H=F\times \ll t\rr$. In particular we identify $H_1(\S\times S^1;\Z)$ with $H_1(W;\Z)$.
With these identifications we can now state the following lemma.

\begin{lemma}
The $\Z[H]$--module $H_1(W;\Z[H])$ is isomorphic to the $\Z[H]$--module $H_1(\S\times S^1;\Z[H])$.
\end{lemma}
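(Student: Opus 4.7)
I plan to compute both $\Z[H]$-modules via Mayer--Vietoris sequences with $\Z[H]$-coefficients applied to the decomposition $W = M \cup_{\S^- \sqcup \S^+} \tau(M)$ and the analogous decomposition $\S \times S^1 = V_1 \cup_{\S \sqcup \S} V_2$ of $\S \times S^1$ as the double of $V_i = \S \times I$, and then to verify that the two sequences are isomorphic. Since $H = F \oplus \langle t\rangle$ with $t$ a loop that passes through both $M$ and $\tau(M)$, the composition $\pi_1(X) \to \pi_1(W) \to H$ factors through $F \hookrightarrow H$ for every $X \in \{\S^\pm, M, \tau(M)\}$; consequently $H_n(X;\Z[H]) \cong H_n(X;\Z[F]) \otimes_\Z \Z[t^{\pm 1}]$, and the analogous factorization holds for the pieces of $\S \times S^1$.

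The key input is Corollary~\ref{cor:isometab}, which combined with hypothesis~(A) yields that $\i_\pm : A \to B$ induce isomorphisms $A/A^{(2)} \xrightarrow{\cong} B/B^{(2)}$. Via Shapiro's lemma these upgrade to isomorphisms $\i_\pm : H_i(\S^\pm;\Z[F]) \xrightarrow{\cong} H_i(M;\Z[F])$ for $i=0,1$. Setting $U = H_1(\S;\Z[F])$, the Mayer--Vietoris sequence of $W$ will take the form
\[ U[t^{\pm 1}]^{2} \xrightarrow{\phi_1} U[t^{\pm 1}]^{2} \to H_1(W;\Z[H]) \to \Z[t^{\pm 1}]^{2} \xrightarrow{\phi_0} \Z[t^{\pm 1}]^{2}, \]
and the Mayer--Vietoris sequence of $\S \times S^1$ takes exactly the same shape with the same $U$.

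It then remains to show that $\phi_0$ and $\phi_1$ coincide in the two cases. A direct computation yields $(a,b) \mapsto (a+b,\, -a-tb)$ for both boundary maps in both decompositions: the factor of $t$ appears because the base-point path used to include the second component of the intersection (namely $\S^+$ in the $W$-case and $\S \times \{1/2\}$ in the product case) lies in one piece of the decomposition, so viewed as a path in the other piece it is homotopic to the natural in-piece path composed with the loop representing $t$. In the $W$-setting $\phi_1$ incorporates an additional automorphism $\i_-^{-1}\i_+$ of $U$, but this is merely a change of basis on the source and does not affect the image. Moreover $\ker(\phi_0) = 0$ since $1-t$ is a nonzerodivisor in $\Z[t^{\pm 1}]$, so $H_1(\,\cdot\,;\Z[H]) \cong \mathrm{coker}(\phi_1)$ in both cases, and the two cokernels are manifestly isomorphic. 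The main obstacle is the careful base-point bookkeeping needed to identify the $t$-factor as appearing in the same entry of $\phi_0$ and $\phi_1$ in both Mayer--Vietoris setups, and to confirm that the automorphism $\i_-^{-1}\i_+$ produces only a change of basis.
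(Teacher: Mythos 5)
Your proposal is correct in outline but follows a genuinely different route from the paper. The paper never writes down a Mayer--Vietoris sequence here; instead it works entirely group-theoretically with second derived quotients. It introduces the intermediate space $X = M \cup_{\S^-} \tau(M)$ and proves three claims: (A) $\pi_1(\S)/\pi_1(\S)^{(2)} \to \pi_1(X)/\pi_1(X)^{(2)}$ is an isomorphism (using Corollary~\ref{cor:isometab} and a van Kampen generation argument together with the folding retraction $X \to M$); (B) the two inclusions of $\S^+$ into $X$ coincide modulo $\pi_1(X)^{(2)}$ because the doubling involution $\tau$ fixes $\pi_1(X)/\pi_1(X)^{(2)}$; and (C) the HNN step $W = X *_{\S^+}$ then gives $\pi_1(W)/\pi_1(W)^{(2)} \cong \langle t\rangle \times \pi_1(X)/\pi_1(X)^{(2)} \cong \pi_1(\S\times S^1)/\pi_1(\S\times S^1)^{(2)}$, compatibly with abelianization, so the Alexander modules agree. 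Your single-step Mayer--Vietoris along $\S^- \sqcup \S^+$ uses the same key input (Corollary~\ref{cor:isometab}, translated via Shapiro's lemma into isomorphisms $H_i(\S^\pm;\Z[F]) \to H_i(M;\Z[F])$) and avoids the two-stage van Kampen argument, at the cost of the twisted-coefficient bookkeeping. That trade is reasonable: your approach is more directly homological and computes a concrete presentation of both modules, whereas the paper's approach avoids sign and $t$-factor conventions entirely by working with group presentations.

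Two points in your sketch would need to be made explicit to close the argument. First, the claim that the boundary matrices are $(a,b)\mapsto(a+b,-a-tb)$ in both decompositions is precisely where all the work lies, as you acknowledge; that computation should actually be carried out. Second, the assertion that the extra automorphism $\gamma = \i_-^{-1}\i_+$ is merely a change of basis on the source requires verifying that the \emph{same} $\gamma$ (as a $\Z[H]$-linear automorphism) appears in both the $M$-row and the $\tau(M)$-row of $\phi_1$. This follows from the involution $\tau$ restricting to the identity on $\S^\pm$ — which is exactly the content of the paper's Claim~B in a different language — together with the observation that $\gamma$ is $\Z[F]$-linear because all the inclusions are $\Z[H]$-module maps for the structure inherited from $W$. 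Neither of these is an obstruction, but both should be argued rather than asserted.
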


\begin{proof}
In the following we identify $\S$ with $\S^-\subset W$.
We denote by $X$ the result of gluing $M$ and $\tau(M)$ along $\S=\S^-$.
Note that we have two canonical maps $r:\S^+\to M\to X$ and $s:\S^+\to \tau(M)\to X$.
We furthermore denote the canonical inclusion maps $\S\to M, \S\to \tau(M)$ and $\S=\S^-\to X$  by $i$.
Throughout this proof we denote by $i,r,s$ the induced maps on solvable quotients as well.
\\

\noindent \emph{Claim A.}
The map $i:\pi_1(\S)\to \pi_1(X)$ gives rise to an isomorphism $\pi_1(\S)/\pi_1(\S)^{(2)}\to \pi_1(X)/\pi_1(X)^{(2)}$.
\\

In the following let $M'$ be either $M$ or $\tau(M)$.
Recall that we assume that $\pi_1(\S)\to \pi_1(M')$ gives rise to isomorphisms of the prosolvable completions.
It now  follows from Corollary \ref{cor:isometab} that $\pi_1(\S)/\pi_1(\S)^{(2)}\to \pi_1(M'/\pi_1(M')^{(2)}$ is an isomorphism.
Now let $g:\pi_1(X)=\pi_1(M\cup_{\S} \tau(M))\to \pi_1(M))$ be the `folding map'.
Note that
\[ \pi_1(\S)/\pi_1(\S)^{(2)}\xrightarrow{i}\pi_1(X)/\pi_1(X)^{(2)}\xrightarrow{g}\pi_1(M)/\pi_1(M)^{(2)}\xleftarrow{\cong}
\pi_1(\S)/\pi_1(\S)^{(2)}\]
is the identity map. In particular $\pi_1(\S)/\pi_1(\S)^{(2)}\xrightarrow{i}\pi_1(X)/\pi_1(X)^{(2)}$ is injective.
On the other hand it follows from the van Kampen theorem that
\[ \pi_1(X)=\pi_1(M)*_{\pi_1(\S)}\pi_1(M'), \]
in particular $\pi_1(X)/\pi_1(X)^{(2)}$ is generated by the images of $\pi_1(M)$ and $\pi_1(\tau(M))$ in $\pi_1(X)/\pi_1(X)^{(2)}$.
But it follows immediately from the following commutative diagram
\[ \xymatrix{ \pi_1(\S) \ar[rrrr] \ar[dr]\ar@/_3pc/[ddrr] &&&& \pi_1(M') \ar[dl] \ar@/^3pc/[ddll] \\
&\pi_1(\S)/\pi_1(\S)^{(2)}\ar[rr]^{\cong}\ar[dr] && \pi_1(M')/\pi_1(M')^{(2)} \ar[dl]& \\
&& \pi_1(X)/\pi_1(X)^{(2)} &&}\]
that  image of $\pi_1(\S)/\pi_1(\S)^{(2)}$ in $\pi_1(X)/\pi_1(X)^{(2)}$ also generates the group.
This concludes the proof of the claim A.\\

\noindent \emph{Claim B.}
For any $g\in \pi_1(\S^+)/\pi_1(\S^+)$ we have
\[ r(g)=s(g) \in \pi_1(X)/\pi_1(X)^{(2)}\]

Denote by $\tau:X=M\cup_\S \tau(M) \to X=M\cup_\S \tau(M)$ the map given by switching the two copies of $M$. Clearly $r(g)=\tau_*(s(g))$. But $\tau_*$ acts trivially on image of $\pi_1(\S)/\pi_1(\S)^{(2)}$
in $\pi_1(X)/\pi_1(X)^{(2)}$. By the above claim this means that $\tau_*$ acts trivially on  $\pi_1(X)/\pi_1(X)^{(2)}$. This concludes the proof of the claim.

We now view $W$ as the result of gluing the two  copies of $\S^+$ in $\partial X$ by the identity map.
First note that by the van Kampen theorem we have
\[ \pi_1(W)=\ll t,\pi_1(X) \, |\, ts(g)t^{-1}=r(g), g\in \pi_1(\S^+)\rr. \]
Note that by Claim B the obvious assignments give rise to a well--defined map
\[ \pi_1(W)=\ll t,\pi_1(X) \, |\, ts(g)t^{-1}=r(g), g\in \pi_1(\S^+)\rr \to  \ll t \rr \times  \pi_1(X)/\pi_1(X)^{(2)}. \]
Since $\pi_1(X)/\pi_1(X)^{(2)} $ is metabelian this map  descends to a map
\[ \Phi:  \ll t,\pi_1(X) \, |\, ts(g)t^{-1}=r(g), g\in \pi_1(\S^+)\rr/(\dots)^{(2)}\to \ll t \rr \times \pi_1(X)/\pi_1(X)^{(2)}. \]

\noindent \emph{Claim C.}
The map $\Phi: \pi_1(W)/\pi_1(W)^{(2)} \to \ll t \rr \times \pi_1(X)/\pi_1(X)^{(2)}$ is an isomorphism.
\\

We denote by $\Psi$ the following map:
\[ \ba{rcl} \ll t \rr  \times \pi_1(X)/\pi_1(X)^{(2)}
&\to &\ll t,\pi_1(X)/\pi_1(X)^{(2)} \, |\, ts(g)t^{-1}=r(g), g\in \pi_1(\S^+)\rr\\[1mm]
&=&\ll t,\pi_1(X) \, |\, ts(g)t^{-1}=r(g), g\in \pi_1(\S^+), \pi_1(X)^{(2)}\rr\\[1mm]
&\to &  \ll t,\pi_1(X) \, |\, ts(g)t^{-1}=r(g), g\in \pi_1(\S^+)\rr/(\dots)^{(2)}.\ea \]
Clearly $\Psi$ is surjective and we have $\Phi\circ \Psi=\id$. It follows that $\Phi$ is an isomorphism. This concludes the proof of the claim.

Finally note that we have a canonical isomorphism
\[ \pi_1(\S\times S^1)/\pi_1(\S\times S^1)^{(2)} = \ll t\rr \times \pi_1(\S)/\pi_1(\S)^{(2)}.\]
It now follows
\ from the above discussion that we have an isomorphism
\[ \ba{rcl} \pi_1(W)/\pi_1(W)^{(2)}&\xrightarrow{\Phi}& \ll t\rr \times \pi_1(X)/\pi_1(X)^{(2)} \\
&\cong& \ll t\rr \times \pi_1(\S)/\pi_1(\S)^{(2)} \\
&=&\pi_1(\S\times S^1)/\pi_1(\S\times S^1)^{(2)}\ea \]
which we again denote by $\Phi$. Note that under the abelianization the map $\Phi$ descends to the above identification $H_1(\S\times S^1;\Z) =H=H_1(W;\Z)$.
We now get the following commutative diagram of exact sequences
\[ \xymatrix{  0\ar[r] &H_1(W;\Z[H])\ar[d]\ar[r]&\pi_1(W)/\pi_1(W)^{(2)}
\ar[d]^{\Phi}\ar[r] &H:=H_1(W;\Z)\ar[d]^{=} \ar[r]&0 \\
0\ar[r] &H_1(\S\times S^1;\Z[H])\ar[r]&\pi_1(\S\times S^1)/\pi_1(\S\times S^1)^{(2)}
\ar[r] &H_1(\S\times S^1;\Z)  \ar[r]&0.}\]
The lemma is now immediate.
\end{proof}

We are now ready to  prove the second statement of Lemma \ref{lem:alexnorm}.
Note that the isomorphism of Alexander modules implies that the Alexander polynomials $\Delta_W$ and $\Delta_{\S\times S^1}$ agree in $\Z[H]$.
It is well--known that $\Delta_{\S\times S^1}=(t-1)^{||\phi||_T}\in \Z[H]=\Z[F]\tpm$.
Recall that we are interested in $\phi=PD([\S])$, and that $\phi$ as an element in
$\hom(H,\Z)=\hom(F\times \ll t\rr),\Z)$ is given by $\phi(t)=1, \phi|_F=0$. It is now obvious from $\Delta_W=\Delta_{\S\times S^1}=(t-1)^{||\phi||_T}$
that $\phi$ lies
in the interior of a top--dimensional face of the Alexander norm ball of $W$.
This concludes the proof of the second statement of Lemma \ref{lem:alexnorm}
modulo the proof of the claim.

\section{Residual properties of 3--manifold groups}\label{section:virtp}

Proposition  \ref{thm:finitesolvable} and Theorem \ref{thm:no} are almost enough to deduce Theorem \ref{mainthm},
but we still have to deal with the assumption in Theorem \ref{thm:no} that $\pi_1(W)$ has to be residually finite solvable.

Using well--known arguments (see Section \ref{section:mainthm} for details) one can easily see that Proposition  \ref{thm:finitesolvable} and Theorem \ref{thm:no}
imply Theorem \ref{mainthm} for 3--manifolds $N$ which have virtually residually finite solvable fundamental groups.
Here we say that a group $\pi$ has virtually a property if a finite index subgroup of $\pi$ has this property.
It seems reasonable to conjecture that all 3--manifold groups are virtually residually finite solvable.
For example linear groups (and hence
fundamental groups of hyperbolic 3--manifolds and Seifert fibered spaces) are virtually residually finite solvable and (virtually) fibered 3--manifold groups are easily seen to be (virtually) residually finite solvable.

It is not known though whether all 3--manifold groups are linear.
In the case of 3--manifolds with non--trivial JSJ decomposition
we therefore
use a slightly different route to deduce Theorem \ref{mainthm} from
Proposition  \ref{thm:finitesolvable} and Theorem \ref{thm:no}.
In Lemmas \ref{lem:prime} and \ref{lem:closed} we first show that it suffices in the proof of Theorem \ref{mainthm} to consider closed prime 3--manifolds. In this  section we will show that given
a closed prime  3--manifold $N$,  there exists a finite cover $N'$
 of $N$ such that  all pieces of the JSJ decomposition of $N'$
have residually finite solvable fundamental groups (Theorem \ref{thm:virtp}). Finally in Section
\ref{section:jsj} we will prove  a result which allows us in the proof of Theorem \ref{mainthm} to work with each
JSJ piece separately (Theorem \ref{thm:miiso}).

\subsection{Statement of the theorem}

We first recall some definitions.
Let $p$ be a prime. A $p$--group is a group such that the order of the group is a power of $p$.
Note that any $p$--group is  in particular finite solvable.
A group $\pi$ is called \emph{residually $p$} if for any nontrivial $g\in \pi$ there exists a homomorphism $\a:\pi\to P$ to a $p$--group such that $\a(g)\ne e$.
A residually $p$ group is evidently also residually finite solvable.

For the reader's convenience we recall the statement of Theorem \ref{thm:virtpintro} which we will prove in this section.

\begin{theorem} \label{thm:virtp}
Let $N$ be a closed irreducible 3--manifold.
Then for all but finitely many primes $p$ there exists a finite cover $N'$ of $N$
such that the fundamental group of any JSJ component of $N'$ is residually $p$.
\end{theorem}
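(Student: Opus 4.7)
The plan is to use geometrization to reduce to the JSJ decomposition of $N$, apply known linearity of each geometric piece to produce residually $p$ covers of the pieces, and then glue these covers into a single finite cover of $N$. Since $N$ is closed and irreducible, its JSJ decomposition writes $N$ as a graph of $3$-manifolds with vertex pieces $N_1,\dots,N_k$ (each of finite-volume hyperbolic or Seifert fibered type) glued along essential tori $T_1,\dots,T_m$.

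The piecewise statement is immediate from linearity. A hyperbolic piece has $\pi_1(N_i)<\mathrm{PSL}(2,\C)$, and each Seifert fibered piece likewise has a well-known linear fundamental group. By the Wehrfritz-type result cited as \cite[Theorem~4.7]{We73} in the introduction, a finitely generated linear group is virtually residually $p$ for all but finitely many primes $p$. Thus, excluding a finite set $S_i$ of primes for each $i$ and taking $p\notin S:=\bigcup_i S_i$, we obtain for every $i$ a finite-index normal subgroup $K_i\triangleleft\pi_1(N_i)$ which is residually $p$.

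The central task is to glue the $K_i$ into a single cover of $N$. Via the graph-of-groups presentation of $\pi_1(N)$ dual to the JSJ decomposition, a finite cover $N'\to N$ corresponds to a finite-index subgroup $H\le\pi_1(N)$ whose restrictions to the two adjacent vertex groups at each $T_j$ agree on the edge group $\pi_1(T_j)\cong\Z^2$. My plan is, for each $T_j$ between $N_i$ and $N_{i'}$, to set $L^{(j)}:=K_i\cap K_{i'}\cap\pi_1(T_j)$, then to shrink each $K_i$ to a finite-index residually $p$ subgroup $\widetilde K_i\le K_i$ satisfying $\widetilde K_i\cap \pi_1(T_j)=L^{(j)}$ for every boundary torus $T_j$ of $N_i$. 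By Bass--Serre theory the $\widetilde K_i$ then assemble into a finite-index subgroup of $\pi_1(N)$, yielding a cover $N'\to N$. Each JSJ piece of $N'$ lies inside the preimage of some $N_i$, its $\pi_1$ injects into a conjugate of $\widetilde K_i$, and residual $p$-ness passes to subgroups.

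The main obstacle is the shrinking step, which amounts to a $p$-separability property of the peripheral tori inside each $\pi_1(N_i)$: every finite $p$-power-index sublattice of $\pi_1(T_j)$ must arise as $\pi_1(T_j)\cap U$ for some $U\le K_i$ of $p$-power index. For cusped hyperbolic $N_i$ this can be obtained by working with a faithful representation into $\mathrm{PSL}(2,\C)$ and passing to principal congruence subgroups modulo $p$-power ideals in a suitable ring of integers, so that the peripheral parabolic subgroups reduce to prescribed $p$-power sublattices. For Seifert fibered $N_i$, the central $S^1$-fibration presents $\pi_1(N_i)$ as a central extension of an orbifold surface group by $\Z$, and explicit $p$-power quotients of both the base and the fibre factor realise any prescribed peripheral sublattice. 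Once this $p$-separability is established, Bass--Serre gluing formally produces the required cover.
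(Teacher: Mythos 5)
Your overall skeleton coincides with the paper's: decompose $N$ along the JSJ tori, use linearity (Wehrfritz) to make each geometric piece virtually residually $p$ for all but finitely many $p$, arrange that the chosen finite-index subgroups meet the peripheral tori in matching sublattices, and then invoke a gluing theorem (the paper uses Hempel's Theorem~2.2 rather than an explicit Bass--Serre argument, but this is the same mechanism) to assemble a finite cover $N'$ whose JSJ pieces are the covers of the $N_i$. The difficulty is concentrated exactly where you locate it, in the matching step --- but that is where your argument has a genuine gap.

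You reduce the matching to the claim that every finite $p$-power-index sublattice of a peripheral $\pi_1(T_j)\cong\Z\oplus\Z$ can be realized \emph{exactly} as $\pi_1(T_j)\cap U$ for some finite-index (residually $p$) subgroup $U$ of the piece. This exact-realization statement is much stronger than separability of the peripheral subgroups, and your sketch does not deliver it: principal congruence subgroups of level $\mm^i$ intersect a cusp lattice $\Lambda$ in the specific sublattice $\{\omega\in\Lambda:\omega\in\mm^i\}$ determined by the arithmetic of the trace field, not in an arbitrarily prescribed sublattice such as the index-$p$ sublattice generated by $(p,0)$ and $(0,1)$. (Note also that your $L^{(j)}=K_i\cap K_{i'}\cap\pi_1(T_j)$ need not have $p$-power index at all, that the identification of $\pi_1(T_j)$ on the two sides goes through the gluing homeomorphism, and that a single piece with several boundary tori, or glued to itself, must realize all its prescribed sublattices simultaneously.) The paper sidesteps every one of these issues with one observation: it only ever asks that each $\Gamma_i$ meet each boundary torus in the \emph{unique characteristic} subgroup of index $p^2$, namely $p(\Z\oplus\Z)$. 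Because this subgroup is characteristic, it is preserved by any gluing homeomorphism, so the intersections from the two sides of each JSJ torus coincide automatically and Hempel's gluing theorem applies; and this particular sublattice is exactly what the congruence-subgroup construction (Hempel's Lemma~4.1 combined with the Wehrfritz filtration, adding a factor $H/p^iH$ with $H=H_1(N_i;\Z)/\mbox{torsion}$) and the $S^1\times F$ cover in the Seifert case naturally produce. Replacing your ``arbitrary prescribed sublattice'' requirement by this single canonical normalization closes the gap and recovers the paper's proof.
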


\begin{remark}
\bn
\item Note that this theorem relies on the geometrization
results of Thurston and Perelman.
\item A slight modification of our proof shows that the statement of the theorem also holds for irreducible 3--manifolds with toroidal boundary.
\en
\end{remark}

\subsection{Proof of Theorem \ref{thm:virtp}}

The proof of the theorem combines in a straightforward way ideas from the proof that
finitely generated subgroups of $\gl(n,\C)$
are virtually residually $p$ for all but finitely many primes $p$
(cf. e.g. \cite[Theorem~4.7]{We73} or \cite[Window~7,~Proposition~9]{LS03})
with  ideas from  the proof that 3--manifold groups are residually finite (cf.  \cite{He87}).
Since all technical results can be found in either \cite{We73} or \cite{He87}, and in order to save space,
we only give an outline of the proof by referring heavily to \cite{We73} and \cite{He87}.

In the following recall that given a positive integer $n$ there exists a unique characteristic subgroup of $\Z\oplus \Z=\pi_1(\text{torus})$ of index $n^2$, namely $n(\Z\oplus \Z)$.

\begin{defn} \label{defn:filtration}
Let $N$ be a  $3$-manifold which is either closed or has  toroidal boundary.
Given a prime $p$ we say that a subgroup $\G\subset \pi_1(N)$ has Property $(p)$
if it satisfies the following two conditions:
\bn
\item $\G$ is residually $p$, and
\item for any torus $T\subset \partial N$ the group $\G\cap \pi_1(T)$ is the unique characteristic subgroup of $\pi_1(T)$ of index $p^{2}$.
\en
\end{defn}

\begin{proposition} \label{prop:virtphyp}
Let $N$ be a compact orientable $3$-manifold with empty or toroidal
 boundary such that the interior  has a complete hyperbolic structure of finite volume.  Then for all but finitely many primes $p$ there exists a finite index subgroup of  $\pi_1(N)$ which has Property $(p)$.
\end{proposition}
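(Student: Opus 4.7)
The plan is to apply the Weil virtual-residually-$p$ construction to a faithful discrete representation $\pi_1(N)\hookrightarrow \sl(2,\C)$, with the congruence reduction taken modulo the full ideal $pR$ -- a product of all primes of $R$ above $p$ -- rather than modulo a single maximal ideal, so that the Galois conjugates of each cusp modulus $\tau_i$ jointly pin down each peripheral intersection to $p\,\pi_1(T_i)$.

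First, Mostow--Prasad rigidity yields a faithful discrete representation $\pi_1(N)\to \mbox{PSL}(2,\C)$; by Culler's lifting theorem, possibly after passing to a subgroup of index at most $2$, I lift this to a faithful $\rho:\pi_1(N)\hookrightarrow \sl(2,\C)$. After conjugating the $i$-th cusp to fix $\infty$, the peripheral $P_i := \pi_1(T_i)\cong \Z^2$ is generated by commuting parabolics so that $\rho(m,n) = I + (m a_i + n b_i)N_0$ with $N_0 = \bigl(\begin{smallmatrix} 0 & 1 \\ 0 & 0 \end{smallmatrix}\bigr)$, where $\tau_i := b_i/a_i \in \C\setminus \R$ is the cusp modulus. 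By rigidity, the entries of $\rho$ and the $\tau_i$ generate a subring of a number field, so I may take $R\subset \C$ to be a finitely generated subring containing all these data together with each $a_i^{-1}$.

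For each rational prime $p$ outside a finite bad set $\mathcal{B}$, set $\Gamma := \rho^{-1}\bigl(\ker(\sl(2,R)\to\sl(2,R/pR))\bigr)$. Enlarging $\mathcal{B}$ to include the primes of bad reduction for $R$ (so that $R/pR$ is \'etale over $\F_p$, i.e.\ a product of separable extensions) and those dividing the discriminant of the minimal polynomial over $\Q$ of each $\tau_i$, I may assume that as $\mathfrak{p}$ ranges over the primes of $R$ above $p$, the reductions $\bar\tau_i^{(\mathfrak{p})}\in R/\mathfrak{p}$ are pairwise distinct in a fixed algebraic closure of $\F_p$. The group $\Gamma$ embeds into the product over $\mathfrak{p}\mid p$ of first principal congruence subgroups of $\sl(2,\hat R_\mathfrak{p})$, each of which is pro-$p$ by Weil's observation, so $\Gamma$ is residually $p$. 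The peripheral intersection $\Gamma\cap P_i$ consists of those $(m,n)\in \Z^2$ satisfying $\bar m+\bar n\,\bar\tau_i^{(\mathfrak{p})}=0$ in $R/\mathfrak{p}$ for every $\mathfrak{p}\mid p$; since $\tau_i\notin \Q$, its minimal polynomial over $\Q$ has degree at least $2$ and for $p\notin\mathcal{B}$ reduces mod $p$ into distinct irreducible factors, so either some $\bar\tau_i^{(\mathfrak{p})}$ lies in $R/\mathfrak{p}\setminus \F_p$ (in which case the single equation already forces $\bar m=\bar n=0$), or all $\bar\tau_i^{(\mathfrak{p})}$ lie in $\F_p$ but at least two of them differ (in which case two of the linear equations combine to force $\bar m=\bar n=0$). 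Either way $\Gamma\cap P_i = p P_i$, the unique characteristic subgroup of $\Z^2$ of index $p^2$.

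The main obstacle in this plan is the last dichotomy: completely split primes, for which each individual reduction $\bar\tau_i^{(\mathfrak{p})}$ lies in $\F_p$ and a single congruence modulo $\mathfrak{p}$ would only cut $P_i$ down to a sublattice of index $p$. This is precisely why the congruence reduction is taken modulo the full ideal $pR$ rather than modulo a single maximal ideal, and the hyperbolicity of the cusps -- encoded as $\tau_i\notin \R$, hence $\tau_i\notin \Q$, hence $\min_\Q(\tau_i)$ has degree at least $2$ -- is what makes the combined system of congruences force $(m,n)\in p\Z^2$. Finiteness of $\mathcal{B}$ and the pro-$p$ nature of the principal congruence subgroups are standard commutative algebra, implicit in the argument of \cite[Theorem~4.7]{We73}.
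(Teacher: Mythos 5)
Your proposal follows the same basic strategy as the paper's proof: realize $\pi_1(N)$ inside $\mbox{SL}(2,R)$ for a finitely generated subring $R$ of a number field via the holonomy representation, take a principal congruence subgroup at $p$, obtain residual $p$--ness from the pro--$p$ structure of the congruence kernel, and obtain condition (2) of Property $(p)$ from the arithmetic of the cusp shapes $\tau_i$. The paper implements this almost entirely by citation: it chooses a single maximal ideal $\mm\subset A$ ``as in Hempel'' and defers the peripheral computation to \cite[Lemma~4.1]{He87} and the residual $p$--ness to \cite{We73}. You instead reduce modulo the full ideal $pR$, which is a clean, self--contained way of handling the one genuine subtlety you correctly isolate, namely primes at which every reduction of $\tau_i$ lands in the prime field. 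One small imprecision there: the reductions $\bar\tau_i^{(\mathfrak{p})}$ need not be \emph{pairwise} distinct as $\mathfrak{p}$ ranges over the primes of $R$ above $p$ (two primes of $R$ lying over the same prime of the subfield $\Q(\tau_i)$ give equal reductions); what you actually need, and what separability of $\min_\Q(\tau_i)\bmod p$ delivers for all but finitely many $p$, is that the \emph{set} of reductions either meets $\ol{\F}_p\sm \F_p$ or contains two distinct elements of $\F_p$. Since that is exactly what your two cases use, this is cosmetic.

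The one step that fails as written is the normal form $\rho(m,n)=I+(ma_i+nb_i)N_0$ for the peripheral subgroups. A lift of the holonomy to $\mbox{SL}(2,\C)$ cannot in general be chosen with all peripheral traces equal to $+2$: the lifts form a torsor over $H^1(N;\Z/2)$, traces change by signs $(-1)^{\epsilon(\gamma)}$, and a null--homologous peripheral element has lift--independent trace (for the figure--eight knot complement the longitude is known to have trace $-2$ in every lift). If some $\gamma\in\pi_1(T_i)$ has $\rho(\gamma)=-(I+cN_0)$, then for odd $p$ one gets $\rho(\gamma^p)=-(I+pcN_0)\equiv -I\not\equiv I \pmod{pR}$, so $\gamma^p\notin\Gamma$ and condition (2) fails for your $\Gamma$. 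The repair is immediate and does not disturb anything else: impose the congruence condition projectively, i.e.\ define $\Gamma$ by $\rho(g)\equiv \pm I\bmod pR$ (equivalently work with the $\mbox{PSL}_2$ holonomy, dispensing with the lift altogether). For odd $p$ the image of the first congruence subgroup in $\mbox{PSL}_2(\hat R_{\mathfrak{p}})$ is still pro--$p$, since $-I$ does not lie in the first congruence subgroup, and the peripheral computation becomes exactly the one you wrote. With that change, and the finitely many excluded primes you list, your argument is correct and in effect supplies the details the paper outsources to Hempel and Wehrfritz.
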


\begin{proof}
This theorem is essentially a straightforward combination of \cite[Lemma~4.1]{He87} with the proof that finitely generated
linear groups are virtually residually $p$.  We will use throughout  the notation of the proof of \cite[Lemma~4.1]{He87}. First we pick a finitely generated subring $A\subset \C$ as in \cite[Proof~of~Lemma~4.1]{He87}.
In particular we can assume that  $\pi_1(N)\subset \mbox{SL}(2,A)$ where $A\subset \C$.
We pick a prime $p$  and a maximal ideal $\mm\subset A$ as in \cite[p.~391]{He87}. We then have in particular
that $\mbox{char}(A/\mm)=p$. For $i\geq 1$ we now let $\G_i=\ker\{\pi_1(N)\to \mbox{SL}(2,A/\mm^i)\times H/p^iH\}$ where $H=H_1(N;\Z)/\mbox{torsion}$. We claim that $\G_1\subset \pi_1(N)$ is a finite index subgroup which has Property $(p)$.
Clearly  $\G_1$ is of finite index in $\pi_1(N)$ and
by \cite[p.~391]{He87} the subgroup $\G_1$ also satisfies condition  (2). The proof
that finitely generated linear groups are virtually residually $p$ (cf. \cite[Theorem~4.7]{We73} or \cite[Window~7,~Proposition~9]{LS03}) then shows immediately that all the groups $\G_1/\G_i, i\geq 1$ are $p$--groups and that $\cap_{i=1}^{\infty} \G_i=\{1\}$. In particular $\G_1$ is  residually $p$.
\end{proof}

\begin{proposition} \label{prop:virtpseifert}
Let $N$ be a Seifert fibered space.
Then for all but finitely many primes $p$, there exists a finite index subgroup of  $\pi_1(N)$ which has Property $(p)$.
\end{proposition}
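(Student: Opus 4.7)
The plan is to reduce $N$ by a finite cover to an $S^1$--bundle over a surface, establish condition (1) by combining the classical residual $p$--ness of surface groups with the fact that central extensions by $\Z$ preserve this property, and handle the peripheral condition (2) via an auxiliary elementary abelian $p$--quotient.

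For the reduction, I would invoke the classical structure result that every Seifert fibered space admits a finite regular cover $N' \to N$ with $N'$ a genuine $S^1$--bundle over an orientable surface $\Sigma$. By intersecting with conjugates we may assume $\pi_1(N') \lhd \pi_1(N)$, and by a further abelian cover we may arrange the cover to be trivial on every boundary torus of $N$ (so that each $\pi_1(T)$ lifts into $\pi_1(N')$); then any subgroup of $\pi_1(N')$ with Property $(p)$ automatically has Property $(p)$ inside $\pi_1(N)$. In this situation $\pi_1(N')$ fits into the central extension
\[ 1 \to \ll h \rr \to \pi_1(N') \to \pi_1(\Sigma) \to 1, \]
with $h$ the regular fiber.

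For condition (1) I would use that $\pi_1(\Sigma)$ is residually $p$ for every prime $p$ (a classical consequence of the mod-$p$ lower central series for the free group, extended to surface groups via residual freeness) and that a finitely generated central extension of a residually $p$ group by $\Z$ is itself residually $p$: given $g\in \pi_1(N')$ nontrivial, either the image of $g$ in $\pi_1(\Sigma)$ is nontrivial and one separates it using a pullback $p$--quotient of $\pi_1(\Sigma)$, or $g=h^n$, in which case quotienting $\ll h\rr$ by $\ll h^{p^k}\rr$ for large enough $k$ produces a central extension of $\pi_1(\Sigma)$ by $\Z/p^k$, whose combination with a sufficiently fine $p$--quotient of $\pi_1(\Sigma)$ separates $g$.

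Condition (2) will be the main obstacle. Unlike the hyperbolic case of Proposition \ref{prop:virtphyp}, where parabolicity of peripheral elements ensures that reduction mod $\mm$ automatically produces unipotents of $p$--power order and hence the correct intersection with the kernel, in the Seifert setting each peripheral torus $\pi_1(T) = \ll h, c_T\rr$ consists of commuting elements with no analogous built--in control. To enforce $\G \cap \pi_1(T) = p\, \pi_1(T)$ I would construct an auxiliary surjection $\rho:\pi_1(N') \to Q$ onto a finite elementary abelian $p$--group whose restriction to each $\pi_1(T)$ is the canonical projection $\Z^2 \twoheadrightarrow (\Z/p)^2$, and take $\G$ to be the intersection of $\ker(\rho)$ with the residually--$p$ subgroup produced above. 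Such a $Q$ exists provided the fiber class $[h]$ and the section classes $\{[c_T]\}_T$ are linearly independent in $H_1(N';\F_p)$, which can be arranged after passing if necessary to a further abelian cover; it is precisely this step that forces the exclusion of the finitely many primes, namely those dividing the torsion of $H_1(N;\Z)$ and the Seifert invariants of $N$.
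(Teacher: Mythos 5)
Your broad strategy---pass to a finite cover that is an $S^1$-bundle over a surface, then leverage residual $p$-ness of surface groups and control the boundary by an auxiliary mod-$p$ quotient---is in the same spirit as the paper's, but the paper's route is substantially shorter and your sketch has a genuine gap.

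The paper splits into cases. If $N$ is closed, condition (2) is vacuous and it simply invokes the fact that $\pi_1(N)$ is linear, hence virtually residually $p$ for all but finitely many $p$. If $N$ has boundary, it cites a known separability result (Hamilton's Lemma~6) giving a finite cover $N' \to N$ with $N' = S^1 \times F$ (a genuine \emph{product}, since every orientable circle bundle over a surface with boundary is trivial) and with $\pi_1(N')\cap\pi_1(T)$ already equal to the index-$p^2$ characteristic subgroup for each boundary torus $T$. Then $\pi_1(N') \cong \Z \times (\text{free group})$ is residually $p$ for \emph{every} prime, and there is nothing left to check. You instead present the cover as a central extension $1\to\Z\to\pi_1(N')\to\pi_1(\Sigma)\to 1$ and attempt to propagate residual $p$-ness through it; this obscures the key simplification that for bounded $N$ the extension is split.

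The concrete gap is in your residual $p$-ness argument for the central extension. In the case $g=h^n$, after passing to $G'=\pi_1(N')/\langle h^{p^k}\rangle$, you propose to ``combine'' the quotient $\Z/p^k$ with ``a sufficiently fine $p$-quotient of $\pi_1(\Sigma)$.'' But any $p$-quotient of $\pi_1(\Sigma)$ pulled back to $G'$ has kernel containing the entire central fiber $\Z/p^k$, so the image of $g$ dies. To actually separate $g$ you need either the splitting of the extension (available exactly when $\Sigma$ has boundary, hence $\pi_1(\Sigma)$ free) or a more careful lower-central-series/Zassenhaus-type argument; the bare ``combination'' does not produce the required map. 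In the closed case, where $\Sigma$ is closed and the Euler class can be nonzero, the extension genuinely does not split and your sketch does not give residual $p$-ness; the paper sidesteps this entirely via linearity. Finally, your treatment of condition~(2) via an elementary abelian $p$-quotient $\rho$ is workable in principle, but what you need is that \emph{for each} torus $T$ the images of $h$ and $c_T$ under $\rho$ are $\F_p$-independent --- not a simultaneous linear independence of $[h]$ together with all the $[c_T]$ --- and the claim that this can always be arranged ``after a further abelian cover'' is asserted rather than proved; the paper avoids this by citing the cover existence result directly.
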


\begin{proof}
If $N$ is a closed Seifert fibered space, then it is well--known that $\pi_1(N)$ is linear,
and the proposition immediately follows from the fact that linear groups are virtually residually $p$
for almost all primes $p$.

Now consider the case that $N$ has boundary.
It is well--known (cf. for example \cite[Lemma~6]{Ha01} and see also \cite[p.~391]{He87}) that there exists a finite cover $q:N'\to N$ with the following two properties:
\bn
\item $N'=S^1\times F$ for some surface $F$,
\item for any torus $T\subset \partial N$ the group $\pi_1(N') \cap \pi_1(T)$ is the unique characteristic subgroup of $\pi_1(T)$ of index $p^{2}$.
\en
We now write
$\G:= \pi_1(N')\subset \pi_1(N)$.
The group $\G$ is residually $p$ since
 free groups are residually $p$. It now follows from (2) that $\G$ has the required properties.
\end{proof}

We are now in a position to prove Theorem \ref{thm:virtp}.

\begin{proof}[Proof of Theorem \ref{thm:virtp}]
Let $N$ be a closed irreducible 3--manifold.
Let $N_1,\dots,N_r$ be the JSJ components. For all but finitely many primes $p$
we can by Propositions  \ref{prop:virtphyp} and \ref{prop:virtpseifert} find finite index subgroups
$\G_i\subset \pi_1(N_i)$ for $i=1,\dots,r$ which have Property $(p)$. We denote by $N_i'$ the cover of $N_i$ corresponding
to $\G_i$.

By the second condition of Property $(p)$ the intersections of the subgroups $\G_i, i=1,\dots,r$ with the fundamental group of  any
torus of the JSJ decomposition coincide. We can therefore appeal to  \cite[Theorem~2.2]{He87} to
find a finite cover $N'$ of $N$ such that any component in the JSJ decomposition
of $N'$ is homeomorphic to some $N_i', i\in \{1,\dots,r\}$. Recall that $\pi_1(N_i')=\G_i$ is residually $p$
for any $i$, hence the cover
$N'$ of $N$ has the desired properties.
\end{proof}

\section{The JSJ decomposition and prosolvable completions} \label{section:jsj}

Let $N$ be a closed 3--manifold and let $\phi\in H^1(N;\Z)$ primitive with $||\phi||_T>0$.
If $(N,\phi)$ fibers, and if $\S\subset N$ is a  surface dual to $\phi$ which is the fiber of the fibration, then
it is well--known (cf. e.g. \cite{EN85}) that the JSJ tori of $N$ cut the product $N\sm \nu \S\cong \S\times [0,1]$ into smaller products.

If $(N,\phi)$ satisfies Condition ($*$), and if  $\S\subset N$ is a connected Thurston norm minimizing surface dual to $\phi$,
then we will see in Lemma \ref{lem:aiconnect} and Theorem \ref{thm:miiso}
that the JSJ tori of $N$ cut the manifold $N\sm \nu \S$ into smaller pieces which look like products
 `on the level of prosolvable completions'.
This result  will play an important role in the proof of Theorem \ref{mainthm} as it allows us to  work with each
JSJ piece separately.

\subsection{The statement of the theorem}\label{section:deti}

Throughout this section let  $N$ be a closed
irreducible 3--manifold.
Furthermore let  $\phi \in H^{1}(N;\Z)$ be a primitive class which is dual to a \emph{connected} Thurston norm minimizing surface. (Recall that by Proposition \ref{prop:mcm} this is in particular the case if $(N,\phi)$ satisfies Condition  $(*)$.) Finally we assume that $||\phi||_T>0$.

We now fix once and for all embedded tori  $T_1,\dots,T_r\subset N$  which give the JSJ decomposition of $N$.
(Recall that the $T_1,\dots,T_r$ are unique up to reordering and isotopy.)

We will make several times use of the following well--known observations:
\begin{lemma}\label{lem:nontrivialonsigma}
Let $\S\subset N$ be an incompressible surface in general position with the JSJ torus $T_i, i\in 1,\dots,r$.
Let $c$ be a component of $ \S\cap T_i$. Then $c$ represents a non--trivial element in $\pi_1(T_i)$ if and only
if $c$ represents a non--trivial element in $\pi_1(\S)$.
\end{lemma}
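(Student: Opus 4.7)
The plan is to deduce both directions from the standard fact that, since $\Sigma$ and $T_i$ are both incompressible surfaces in $N$, the inclusion induced maps
\[ \pi_1(\Sigma) \longrightarrow \pi_1(N) \qquad \text{and} \qquad \pi_1(T_i) \longrightarrow \pi_1(N) \]
are injective. (The surface $\Sigma$ is assumed incompressible by hypothesis, and the JSJ tori are incompressible by definition.) Since $N$ is closed and $\Sigma$, $T_i$ meet transversally, the component $c$ is a simple closed curve lying on both surfaces, and its homotopy class in $\pi_1(\Sigma)$, $\pi_1(T_i)$, and $\pi_1(N)$ all make sense after a suitable choice of base path (which does not affect the question of triviality).

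First I would observe that, regardless of which surface one views $c$ as lying on, the image of $[c]$ in $\pi_1(N)$ is the same element (up to conjugation). Hence if $c$ is trivial in $\pi_1(T_i)$, then $c$ is null-homotopic in $T_i$, therefore null-homotopic in $N$; by injectivity of $\pi_1(\Sigma) \to \pi_1(N)$, this forces $c$ to be trivial in $\pi_1(\Sigma)$. The converse direction is completely symmetric: triviality of $c$ in $\pi_1(\Sigma)$ implies triviality in $\pi_1(N)$, and then injectivity of $\pi_1(T_i) \to \pi_1(N)$ yields triviality in $\pi_1(T_i)$.

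There is no genuine obstacle here; the statement is essentially a tautology once one has the two injectivity statements. The only minor subtlety worth flagging is that the notion of ``trivial element in $\pi_1$'' for an unbased loop means freely null-homotopic, which is the property preserved by the inclusions and detected by the two injective $\pi_1$-maps. With this understood, the proof reduces to the single chain of implications
\[ c \simeq 0 \text{ in } T_i \ \Longleftrightarrow \ c \simeq 0 \text{ in } N \ \Longleftrightarrow \ c \simeq 0 \text{ in } \Sigma, \]
where each equivalence uses incompressibility of one of the two surfaces.
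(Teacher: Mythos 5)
Your argument is correct. The paper states this lemma as a ``well-known observation'' and gives no proof, so there is no proof to compare against; the argument you give---reducing both sides to triviality in $\pi_1(N)$ via $\pi_1$-injectivity of the two incompressible surfaces (which holds since $N$ is irreducible, as in the standing hypotheses of that section, so incompressibility of the two-sided surfaces $\S$ and $T_i$ gives injectivity by the Loop Theorem)---is exactly the standard one, and the remark about free versus based homotopy is the right thing to flag.
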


\begin{lemma}\label{lem:minimalsigma}
There exists an embedded Thurston norm minimizing surface $\S\subset N$ dual to $\phi$ with the following three properties:
\bn
\item $\S$ is connected,
\item the tori $T_i, i=1,\dots,r$ and the surface $\S$ are in general position,
\item any component of $\S\cap T_i, i=1,\dots,r$
represents a nontrivial element in $\pi_1(T_i)$.
\en
\end{lemma}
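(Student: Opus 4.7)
The plan is to start with a connected Thurston norm minimizing surface $\S_0$ dual to $\phi$, which exists by the standing hypothesis, and then, among all such connected minimizers placed in general position with $T_1\cup\dots\cup T_r$, choose a representative $\S$ minimizing the total number $|\S\cap (T_1\cup\dots\cup T_r)|$ of intersection components. By construction $\S$ satisfies (1) and (2); the task is to show that (3) is then forced. Suppose for contradiction that some component $c\subset \S\cap T_i$ is null--homotopic in $\pi_1(T_i)$; among all such trivial components on all of the JSJ tori, choose one whose bounding disk $D\subset T_i$ is innermost, so that $D\cap \S = c$.

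Since $\|\phi\|_T>0$ and $\S$ is connected, $\chi(\S)<0$, and a standard compression argument shows that a Thurston norm minimizing surface with no sphere components is incompressible. Hence $\pi_1(\S)\to\pi_1(N)$ is injective, and $c$ (being null--homotopic in $T_i\subset N$) is null--homotopic in $\S$, so it bounds a disk $D'\subset \S$. The union $D\cup D'$ is an embedded $2$--sphere in $N$, which by irreducibility of $N$ bounds a $3$--ball $B\subset N$. I would then perform the standard ``disk swap'': push the subdisk $D'$ across $B$ and infinitesimally through $T_i$, replacing it with a small pushoff $D_\epsilon$ of $D$ on the opposite side of $T_i$. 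This is realized by an ambient isotopy of $N$ supported near $B$, so the resulting surface $\S'$ is connected, dual to $\phi$, and still Thurston norm minimizing.

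It remains to check that $\S'$ has strictly fewer intersections with the JSJ tori. Since the $T_j$ are pairwise disjoint, a sufficiently thin collar of $T_i$ is disjoint from $\bigcup_{j\ne i}T_j$, so one can choose $D_\epsilon$ inside such a collar, and it contributes no new intersection circles with any $T_k$. On the other hand every circle of $\S\cap\bigcup_k T_k$ that sat in the interior of $D'$ is eliminated, including $c$ itself. Hence $|\S'\cap\bigcup_k T_k|<|\S\cap\bigcup_k T_k|$, contradicting the minimality of $\S$. The main obstacle is ensuring that the disk swap really is an ambient isotopy (so that connectedness and norm--minimality transfer to $\S'$) and that no new intersections with the other JSJ tori are created; both points reduce to working in a sufficiently thin neighborhood of $T_i$ and using the disjointness of the JSJ family.
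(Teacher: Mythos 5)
The paper treats this lemma as a "well--known observation" and gives no proof, so there is nothing to compare against directly; your task was essentially to reconstruct the standard argument, and your innermost--disk/disk--swap proof does so correctly. The logical skeleton (existence of a connected norm--minimizer, general position, minimize intersection count, innermost trivial disk $D\subset T_i$ so $D\cap\S=c$, incompressibility of $\S$ gives $D'\subset\S$ with $\partial D'=c$, irreducibility gives the ball $B$, disk swap reduces $\sum_i b_0(\S\cap T_i)$) is exactly the right strategy and matches the use made of the lemma in the paper (choice of $\S$ minimizing $\sum b_0(\S\cap T_i)$ immediately after the statement).

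Two small technical points in your writeup deserve comment, though neither is a genuine gap. First, the claim that the disk swap "is realized by an ambient isotopy of $N$ supported near $B$" is delicate: the ball $B$ may a priori contain further sheets of $\S$ or other JSJ tori in its interior, in which case a small ambient isotopy supported near $B$ would drag them along and the conclusion about the new intersection count would not be immediate. But you do not actually need the isotopy. Viewing the move as a cut--and--paste surgery $\S'=(\S\setminus\operatorname{int}D')\cup D_\epsilon$, one checks directly that $\S'$ is connected (removing an open disk from a closed connected surface keeps it connected, then re--capping keeps it connected), that $[\S']=[\S]$ because $D\cup D'=\partial B$ is null--homologous, and that $\chi(\S')=\chi(\S)$ since both $D'$ and $D_\epsilon$ are disks; hence $\S'$ is still a connected Thurston--norm--minimizing surface dual to $\phi$. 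After a small push of the seam circle $c$ off $T_i$ (possible since $\S\setminus D'$ and $D_\epsilon$ lie on the same side of $T_i$ near $c$), one has $\S'\cap T_i$ missing $c$ and all circles interior to $D'$ removed, while the thin--collar choice of $D_\epsilon$ creates no new intersections; this is the contradiction. Second, the assertion that "a Thurston norm minimizing surface with no sphere components is incompressible" is stated a bit loosely (one must also rule out disk and torus components); in your situation this is harmless because $\S$ is connected with $\chi(\S)=-\|\phi\|_T<0$, so it is in particular not a torus.
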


Now, among all surfaces dual to $\phi$ satisfying the properties of the lemma we pick a surface $\S$
which minimizes the number $\sum_{i=1}^r b_0(\S\cap T_i)$.

Given $\S$ we can and will fix a tubular neighborhood $\S\times [-1,1]\subset N$
such that the tori $T_i, i=1,\dots,r$ and the surface $\S\times t$ are in general position
for any $t\in [-1,1]$.
We from now on  write
$M=N\sm \S\times (-1,1)$ and  $\S^\pm=\S\times \pm 1$.

We denote the components of $N$ cut along
$T_1,\dots,T_r$ by $N_1,\dots,N_s$.
Let $\{A_1,\dots,A_m\}$ be the set of components of the intersection of the tori $T_1\cup \dots \cup T_r$ with $M$.
Note that the surfaces $A_i\subset M, i=1,\dots,m$ are properly embedded since we assumed that the tori $T_i$ and the surfaces
$\S^\pm=\S\times \pm 1$ are in general position.
We also  let $\{M_1,\dots,M_n\}$ be the set of components of the intersection of $N_i$ with $M$ for $i=1,\dots,s$.
Put differently, $M_1,\dots, M_n$ are the components of $M$ cut along $A_1,\dots,A_m$.
For $i=1,\dots,n$ we furthermore write $\S_i^\pm=M_i\cap \S^\pm$.

Let  $i\in \{1,\dots,m\}$.  If the surface $A_i$ is an annulus, then we say that $A_i$ \emph{connects $\S^-$ and $\S^+$} if
 one boundary component of $A_i$ lies on $\S^-$ and the other boundary component lies on $\S^+$.
 The following lemma will be proved in Section \ref{section:annuli}

\begin{lemma} \label{lem:aiconnect}
Assume that $(N,\phi)$ satisfies Condition  $(*)$, then for $i=1,\dots,m$ the surface $A_i$ is an annulus which connects $\S^-$ and $\S^+$.
\end{lemma}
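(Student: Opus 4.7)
The plan decomposes into two parts. First, I would show each $A_i$ is an annulus. By construction $A_i$ is a component of $T_j \cap M$ for some JSJ torus $T_j$, so $A_i$ is obtained by cutting $T_j$ along the family of circles $\Sigma \cap T_j$. By condition (3) of Lemma \ref{lem:minimalsigma}, every such circle is nontrivial in $\pi_1(T_j) \cong \Z^2$; disjoint essential simple closed curves on a torus are all mutually parallel, so cutting $T_j$ along them produces only annuli, and each $A_i$ is one of these.

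Next I would show each such annulus $A_i$ connects $\Sigma^-$ to $\Sigma^+$. Fix a JSJ torus $T_j$ and orient both $T_j$ and $\Sigma$, giving each circle $c \subset \Sigma \cap T_j$ a local intersection sign $\epsilon(c) = \pm 1$. List the components of $\Sigma \cap T_j$ cyclically around $T_j$ as $c_1, \dots, c_k$, so that $c_l, c_{l+1}$ cobound an annulus $A^l \subset T_j$ whose interior is disjoint from $\Sigma$. A direct local computation in a normal neighborhood of $c_l$ and of $c_{l+1}$ shows that $A^l$ has one boundary component on $\Sigma^-$ and the other on $\Sigma^+$ precisely when $\epsilon(c_l) = \epsilon(c_{l+1})$; otherwise both boundary components of $A^l$ lie on the same copy $\Sigma^+$ or $\Sigma^-$ inside $\partial M$.

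Now suppose, toward a contradiction, that some $A^l$ has both boundary components on $\Sigma^+$ (the case $\Sigma^-$ is symmetric), so $\epsilon(c_l) = -\epsilon(c_{l+1})$. The standard oriented cancellation move, supported in a small tubular neighborhood of $A^l$ in $N$, is an ambient isotopy of $\Sigma$ that pushes the strips of $\Sigma$ near $c_l$ and $c_{l+1}$ across $A^l$ and zips them together on the $\Sigma^+$-side of $T_j$; the sign condition guarantees that the local orientations match, so the resulting surface $\Sigma'$ remains embedded. Because the JSJ tori $T_1, \dots, T_r$ are pairwise disjoint, the support of the isotopy can be chosen disjoint from every $T_{j'}$ with $j' \ne j$, so $\Sigma' \cap T_{j'} = \Sigma \cap T_{j'}$ for those tori, while $b_0(\Sigma' \cap T_j) = b_0(\Sigma \cap T_j) - 2$ and every remaining intersection circle is still nontrivial in $\pi_1(T_j)$. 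After an arbitrarily small generic perturbation, $\Sigma'$ is in general position with all JSJ tori and, being ambient isotopic to $\Sigma$, remains connected, dual to $\phi$, and Thurston norm minimizing. Hence $\Sigma'$ satisfies the hypotheses of Lemma \ref{lem:minimalsigma} but strictly reduces $\sum_{i=1}^r b_0(\Sigma \cap T_i)$, contradicting the minimal choice of $\Sigma$.

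The main obstacle in implementing this plan is verifying carefully that the oriented cancellation isotopy can be realized as a genuine ambient isotopy of $\Sigma$ in $N$ yielding an embedded surface with exactly two fewer intersection circles and no new ones; this is classical normal surface theory, and the hypothesis $\epsilon(c_l) = -\epsilon(c_{l+1})$ is precisely what is needed so that the two strips of $\Sigma$ fit together coherently across $A^l$ rather than crossing transversally. Condition $(*)$ itself is not invoked directly in this lemma beyond providing, through Proposition \ref{prop:mcm}, the existence of the connected Thurston norm minimizing surface $\Sigma$ used in the setup of the section.
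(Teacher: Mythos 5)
Your proposal contains two substantive gaps and a factual misstatement about the role of Condition $(*)$.

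\textbf{First part (annuli).} Your argument that the $A_i$ are annuli is incomplete: cutting $T_j$ along $\Sigma \cap T_j$ produces annuli only if the intersection is nonempty. Nothing in Lemma \ref{lem:minimalsigma} or the setup forces $\Sigma$ to intersect every JSJ torus; a priori $T_j \cap M$ could be the whole torus $T_j$. The paper rules this out by applying Theorem \ref{thm:fv08torus}: Condition $(*)$ gives nonvanishing of $\Delta_{N,\phi}^\alpha$ for all finite $\alpha$, so either $\phi|_{T_j}\ne 0$ (forcing $\Sigma$ to meet $T_j$ in homologically essential circles) or $(N,\phi)$ fibers over $S^1$ with toroidal fiber, which contradicts $||\phi||_T>0$. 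This step is essential and uses Condition $(*)$ directly; your proposal omits it entirely.

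\textbf{Second part (connectivity).} The ``oriented cancellation move'' you describe is not in general an ambient isotopy of $\Sigma$: when $c_l$ and $c_{l+1}$ do not cobound an annulus in $\Sigma$ parallel to $A^l$ in $T_j$, the operation of cutting $\Sigma$ along $c_l\cup c_{l+1}$ and regluing with copies of $A^l$ is a genuine surgery, not an isotopy, and the result may be disconnected. All the properties you then claim to inherit ``being ambient isotopic to $\Sigma$'' --- connectedness, duality to $\phi$, Thurston norm minimality --- therefore do not follow for free. This is precisely what the paper has to establish by hand: using Proposition \ref{prop:sameh0h1} (so $H_1(\Sigma^-)\to H_1(M)$ is an isomorphism, hence $c$ and $d$ are homologous in $\Sigma^-$ and bound subsurfaces $\Sigma_1,\Sigma_2$), and using Lemma \ref{lem:twih2} (so $H_2(\Sigma^-)\to H_2(M)$ is an isomorphism, hence $H_2(M)$ is generated by $[\Sigma^-]$), together with the Euler characteristic estimate and norm minimality, the paper shows that one of the two surgered pieces $\overline{\Sigma}_j=\Sigma_j\cup\pm A$ is connected, homologous to $\Sigma$, and satisfies $\chi(\overline{\Sigma}_j)=\chi(\Sigma)$, producing a new norm-minimizing surface that contradicts minimality of $\sum_i b_0(\Sigma\cap T_i)$. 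Both homological inputs come from Condition $(*)$, so your closing assertion that Condition $(*)$ enters only through Proposition \ref{prop:mcm} is incorrect: it is invoked twice in a load-bearing way in the paper's proof.

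Your sign analysis ($\epsilon(c_l)=-\epsilon(c_{l+1})$ iff both ends of $A^l$ hit the same copy $\Sigma^\pm$) is a reasonable local description of the situation, but it does not by itself yield the global conclusion. To fix the proof you would need to (i) justify nonemptiness of $\Sigma\cap T_j$, and (ii) replace the isotopy claim with the surgery argument and verify, using the homology isomorphisms supplied by Condition $(*)$, that one of the surgered pieces is a connected norm-minimizing surface dual to $\phi$.
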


We can now formulate the main theorem of this section. The proof  will be given
in Sections \ref{section:annuli}, \ref{section:guts} and \ref{section:hyps}.

\begin{theorem}\label{thm:miiso}\label{thm:jsjsep}
Assume that for $i=1,\dots,m$ the surface $A_i$ is an annulus which connects $\S^-$ and $\S^+$.
Furthermore assume that the inclusion induced maps $\pi_1(\S^\pm)\to \pi_1(M)$ give rise to an isomorphism
of prosolvable completions.  Then for $i=1,\dots,n$ the following hold:
\bn
\item
The surfaces $\S_i^\pm$ are connected.
\item Given $j\in \{1,\dots,n\}$ with $M_i\subset N_j$ the inclusion induced map $\pi_1(M_i)\to \pi_1(N_j)$ is injective.
\item The inclusion induced maps $\pi_1(\S_i^\pm)\to \pi_1(M_i)$ give rise to  isomorphisms
of the respective prosolvable completions.
\en
\end{theorem}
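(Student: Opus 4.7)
I would prove the three claims of Theorem \ref{thm:miiso} in the order (2), (1), (3), with (3) being the heart of the matter.

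For (2), I would exploit minimality of $\Sigma$. Because $\Sigma$ was chosen to minimize $\sum_i b_0(\Sigma \cap T_i)$ among connected Thurston norm minimizing surfaces dual to $\phi$ meeting each $T_i$ essentially, the surface $\Sigma \cap N_j$ is incompressible in $N_j$: any compressing disk would, by a standard surgery argument, produce a Thurston norm minimizing surface with strictly smaller intersection number with some $T_i$, contradicting the choice of $\Sigma$. Incompressibility of $\Sigma \cap N_j$ then implies, via the loop theorem, that $M_i = N_j \setminus \nu(\Sigma \cap N_j)$ embeds $\pi_1$--injectively in $N_j$.

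For (1), I would argue via a comparison between the dual graphs of the two decompositions. Let $\mathcal{D}_\Sigma$ be the graph whose vertices are the components of $\Sigma^- \setminus \bigcup_j \alpha_j^-$ (with $\alpha_j^- := A_j \cap \Sigma^-$) and whose edges are the $\alpha_j^-$, and let $\mathcal{D}_M$ have vertices $M_1,\dots,M_n$ and edges $A_1,\dots,A_m$. By hypothesis each $A_j$ is an annulus connecting $\Sigma^-$ to $\Sigma^+$, so $A_j \cap \Sigma^-$ is a single circle $\alpha_j^-$, and the natural morphism $\mathcal{D}_\Sigma \to \mathcal{D}_M$ sending each component into the $M_i$ containing it is bijective on edges. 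Connectedness (and non--emptiness) of each $\Sigma_i^-$ is equivalent to this morphism being bijective on vertices. I would deduce this from the prosolvable isomorphism hypothesis by applying Lemma \ref{lem:im} to finite solvable quotients $G$ of $\pi_1(M)$: the isomorphisms $H_0(\pi_1(\Sigma^-);\Z[G]) \to H_0(\pi_1(M);\Z[G])$ count components of the corresponding covers of $\Sigma^-$ and $M$, and choosing $G$ so that the cover separates the pieces $M_i$ forces the dual graphs to match. The case of $\Sigma^+$ is symmetric.

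For (3), I would apply Lemma \ref{lem:pro}: it suffices to show that for every finite solvable group $G$ the restriction map $\hom(\pi_1(M_i),G) \to \hom(\pi_1(\Sigma_i^-),G)$ is a bijection (and similarly for $\Sigma_i^+$). Given $\alpha \in \hom(\pi_1(\Sigma_i^-),G)$, the plan is first to extend $\alpha$ to a homomorphism $\tilde{\alpha} : \pi_1(\Sigma^-) \to H$, where $H$ is a finite solvable group containing $G$: such an extension exists because $\pi_1(\Sigma^-)$ splits, by van Kampen over the dual graph of part (1), as a graph of groups with vertex groups $\pi_1(\Sigma_k^-)$ amalgamated along the cyclic edge groups $\pi_1(\alpha_j^-) \cong \Z$, and surface groups admit enough solvable quotients to realize such an extension (after possibly enlarging the target via a coinduction or wreath-product construction). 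The hypothesis then promotes $\tilde{\alpha}$ to some $\tilde{\beta} : \pi_1(M) \to H$, and the restriction $\tilde{\beta}|_{\pi_1(M_i)}$, followed by a suitable retraction back to $G$, supplies the sought extension of $\alpha$. Injectivity of the restriction map is obtained by running the same construction in reverse and invoking the uniqueness half of Lemma \ref{lem:pro} globally.

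The principal obstacle lies in Part (3): producing the extension $\tilde{\alpha}$ compatibly with all the amalgamations along the $\alpha_j^-$, and then verifying that $\tilde{\beta}|_{\pi_1(M_i)}$ really takes values in $G$ rather than in a larger subgroup of $H$. This is where the detailed geometry of the ambient JSJ piece $N_j$ must enter — which presumably accounts for the paper's decision to treat annular, Seifert-fibered (``guts'') and hyperbolic (``hyps'') pieces in separate subsections, each exploiting in its own way the cyclicity of the edge groups $\pi_1(A_j) \cong \Z$ and the corresponding simple structure of $\hat{\Z}_\fs$.
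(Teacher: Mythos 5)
Your high-level outline for (3) — reduce to $\hom$--bijectivity via Lemma \ref{lem:pro}, extend a homomorphism from $\pi_1(\S_i^-)$ to $\pi_1(\S^-)$ into a larger solvable group, promote along the prosolvable isomorphism $\pi_1(\S^\pm)\to\pi_1(M)$, and then restrict back — is exactly the paper's architecture. But there are two genuine gaps, and your final paragraph misidentifies how the paper fills them.

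First, the extension step is not filled in. You assert that ``surface groups admit enough solvable quotients to realize such an extension (after possibly enlarging the target via a coinduction or wreath-product construction),'' but this is the hard content, and the issue is real: a homomorphism $\pi_1(\S')\to S$ generally does not extend to $\pi_1(\S)\to S$, since $H_1(\S')\to H_1(\S)$ typically has kernel. The paper proves this as Proposition \ref{prop:extendhom}: one removes an annulus from each complementary component of $\S'$ in $\S$ to obtain a surface-with-boundary $\S''$ in which $\pi_1(\S')$ sits as a free factor; one then extends $\alpha$ to the characteristic quotient $\pi_1(\S'')/\pi_1(\S'')(S)$ and glues back the annuli using an order-$n$ automorphism swapping the boundary circles, yielding a map to a semidirect product $\Z/n\ltimes \pi_1(\S'')/\pi_1(\S'')(S)$. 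Nothing like a wreath product or coinduction appears, and — this is the key point you get wrong — the proof is uniform over JSJ pieces; there is no case split by Seifert-fibered versus hyperbolic geometry. The section labels in the source are vestigial and do not reflect the argument.

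Second, the step you flag as the ``principal obstacle'' — that $\tilde\beta|_{\pi_1(M_i)}$ lands in the subgroup where the retraction to $G$ is defined — requires the image equality $\im\{\pi_1(\S_i^\pm)\to\pi_1(M)\to T\}=\im\{\pi_1(M_i)\to\pi_1(M)\to T\}$, and the paper extracts exactly this from the twisted $H_0$-isomorphism $H_0(\S_i^\pm;\Z[S])\xrightarrow{\cong}H_0(M_i;\Z[S])$ established in Lemma \ref{lem:wi2}. That lemma proves (1) and the $H_0,H_1$-isomorphisms simultaneously by a five-lemma argument comparing the Mayer--Vietoris sequences of $M=\cup M_l$ and $\S^\pm=\cup \S_l^\pm$ split along the annuli $A_k$; your dual-graph counting for (1) only recovers the $\Z$-coefficient $H_0$ statement and would have to be redone with $\Z[S]$-coefficients to feed (3). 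Injectivity of the restriction map is also more delicate than ``running the construction in reverse'': the paper argues by contradiction using the largest $n$ for which $\a_1,\a_2$ agree modulo $S^{(n)}$ and then applies Corollary \ref{cor:metab} (the $H_1$-isomorphism) to push the disagreement down to $\pi_1(\S_i^\pm)$. Finally, for (2), the cleaner route (and the one the paper takes) is that essentiality of each component of $\S\cap T_i$ in $\pi_1(T_i)$, transported to $\pi_1(\S)$ via Lemma \ref{lem:nontrivialonsigma}, directly gives $\pi_1$-injectivity of $\S_k^\pm\hookrightarrow\S^\pm$; no surgery argument or appeal to minimality of $\sum b_0(\S\cap T_i)$ is needed for that step.
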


We would like to remind the reader that at the beginning of the section we made the assumption that $||\phi||_T>0$.

\subsection{Proof of Lemma \ref{lem:aiconnect}}\label{section:annuli}

We first recall  the following theorem from an earlier paper (cf. \cite[Theorem~5.2]{FV08b}).

\begin{theorem} \label{thm:fv08torus}Let $Y$ be a closed irreducible 3--manifold.
Let  $\psi \in H^1(Y;\Z)$ a primitive
class. Assume that for any epimorphism $\a:\pi_1(Y)\to G$ onto a finite group $G$
the twisted Alexander polynomial $\Delta_{Y,\psi}^\a\in \zt$  is nonzero. Let $T \subset Y$ be an
incompressible embedded torus. Then either $\psi|_T\in  H^1(T;\Z)$ is nonzero, or $(Y,\psi)$
fibers over $S^1$ with fiber $T$.
\end{theorem}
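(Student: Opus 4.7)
My plan is to analyze $Y$ cut along $T$ using Mayer--Vietoris with twisted coefficients, exploiting the fact that $\psi|_T = 0$ makes the $t$--action on the $T$--homology trivial, so that $H_n(T;\Z[G][t^{\pm 1}]) = H_n(T;\Z[G])\otimes_{\Z} \Z[t^{\pm 1}]$ is $\Z[t^{\pm 1}]$--free for any finite quotient $G$ of $\pi_1(Y)$. The strategy is first to rule out that $T$ separates $Y$, and then to show that $Y_0 := Y\setminus \nu T$ is a product $T\times I$, from which the fibration structure is immediate.

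Assume $\psi|_T = 0$ and suppose first that $T$ separates $Y$, so $Y = A\cup_T B$. Then $\psi$ decomposes via $(\psi_A,\psi_B)$ with both restrictions to $\pi_1(T)$ zero, and primitivity of $\psi$ forces at least one, say $\psi_A$, to be nontrivial. Applying the Mayer--Vietoris sequence
\begin{equation*}
\cdots \to H_1(T;\Z[G][t^{\pm 1}]) \to H_1(A;\Z[G][t^{\pm 1}])\oplus H_1(B;\Z[G][t^{\pm 1}]) \to H_1(Y;\Z[G][t^{\pm 1}]) \to \cdots,
\end{equation*}
I would argue that for a suitable finite quotient $\a:\pi_1(Y)\to G$, the noncompactness of the infinite cyclic cover of $A$ (forced by $\psi_A\neq 0$) prevents $H_1(Y;\Z[G][t^{\pm 1}])$ from being $\Z[t^{\pm 1}]$--torsion, so $\Delta_{Y,\psi}^{\a}=0$ by Lemma~\ref{lem:twiprop2}, contradicting the hypothesis. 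This is the main obstacle: constructing the finite quotient $G$ requires careful use of the amalgamated product structure of $\pi_1(Y)$ together with the nontriviality of $\psi_A$ or $\psi_B$, in order to produce a twisted complex whose $H_1$ over $\Z[t^{\pm 1}]$ has positive rank.

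In the non-separating case, $Y_0$ has boundary $T_+\sqcup T_-$ and $\pi_1(Y) = \langle \pi_1(Y_0), t \mid t\i_+(\gamma)t^{-1} = \i_-(\gamma),\ \gamma \in \pi_1(T)\rangle$. The HNN Mayer--Vietoris sequence, combined with the trivial $t$--action on the $T$--homology and the hypothesis that $H_i(Y;\Z[G][t^{\pm 1}])$ is $\Z[t^{\pm 1}]$--torsion for all $\a$, forces by a rank-counting argument analogous to the proof of Proposition~\ref{prop:sameh0h1} that the inclusions $\i_\pm : H_i(T;\Z[G])\to H_i(Y_0;\Z[G])$ are isomorphisms for $i=0,1$ and every finite quotient $G$ of $\pi_1(Y_0)$. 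By Corollary~\ref{cor:metab} and Lemma~\ref{lem:pro}, this yields an isomorphism of profinite completions of $\pi_1(T)$ and $\pi_1(Y_0)$. Peripheral subgroup separability of Haken 3--manifold groups (\cite{LN91}) then upgrades this to an actual isomorphism of fundamental groups, so Stallings' theorem yields $Y_0 \cong T\times I$. Thus $Y$ is a torus bundle over $S^1$; since $\psi$ is primitive with $\psi|_T = 0$, it must coincide (up to sign) with the fibration class $[T]^*$, so $(Y,\psi)$ fibers with fiber $T$.
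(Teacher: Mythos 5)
This theorem is not proved in the present paper at all: it is imported verbatim from \cite[Theorem~5.2]{FV08b}, so there is no internal argument to compare with. Judged on its own, your outline follows the same general route as the cited proof (Mayer--Vietoris with $\Z[G]\tpm$--coefficients exploiting $\psi|_T=0$, rank counting against the torsionness forced by $\Delta^\a_{Y,\psi}\neq 0$, peripheral separability, Stallings), but it has genuine gaps at exactly the two places where the real work happens. In the separating case you leave the construction of the finite quotient open, and this is not a technicality. If both restrictions $\psi_A,\psi_B$ are nonzero the \emph{trivial} group already suffices: $H_0(T;\qt)$ has $\Q(t)$--rank $1$ while $H_0(A;\qt)\oplus H_0(B;\qt)$ is torsion, so exactness forces $H_1(Y;\qt)$ to have positive rank and $\Delta_{Y,\psi}=0$ by Lemma \ref{lem:twiprop2}. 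The hard subcase is when $\psi$ vanishes on one side, say $\psi_B=0$; there the hypothesis only yields, for every finite quotient $\a$ of $\pi_1(Y)$, the equality of images $\a(\pi_1(T))=\a(\pi_1(B))$, and to get a contradiction one must combine separability of the peripheral subgroup $\pi_1(T)\subset\pi_1(B)$ with an argument extending a separating finite quotient of $\pi_1(B)$ across the amalgam to a quotient of $\pi_1(Y)$ (a Hempel--type gluing along the characteristic subgroups $n(\Z\oplus\Z)$, in the spirit of Section \ref{section:virtp} and Lemma \ref{lem:extend}). ``Noncompactness of the infinite cyclic cover of $A$'' does not by itself prevent $H_1(Y;\Z[G]\tpm)$ from being torsion in that subcase.

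In the non--separating case there are two problems. First, the hypothesis is only nonvanishing of $\Delta^\a_{Y,\psi}$ --- there is no monicness or degree condition --- so the Mayer--Vietoris rank count gives only $\Q(t)$--rank equalities; you cannot conclude that $\i_\pm\colon H_i(T;\Z[G])\to H_i(Y_0;\Z[G])$ are integral isomorphisms for $i=0,1$ (the integral conclusions of Proposition \ref{prop:sameh0h1} use Property (M), not mere nonvanishing), and hence Corollary \ref{cor:metab} and Lemma \ref{lem:pro} are not available to produce an isomorphism of completions. What the rank count does give, and what suffices, is the degree--zero statement $\a(\i_\pm(\pi_1(T)))=\a(\pi_1(Y_0))$ for every finite quotient $\a$ of $\pi_1(Y)$; combined with separability and incompressibility this yields $\pi_1(T)\cong\pi_1(Y_0)\cong\Z^2$ and then $Y_0\cong T\times I$. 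Second, and more seriously, that image statement is only available for quotients of $\pi_1(Y)$ restricted to $\pi_1(Y_0)$, whereas invoking \cite{LN91} requires a finite quotient of $\pi_1(Y_0)$ itself detecting a proper finite--index subgroup that contains $\i_-(\pi_1(T))$; bridging the two is again the extension--over--the--HNN--extension problem, which your outline assumes silently (this is the analogue of Lemma \ref{lem:extend} and is where \cite{FV08b} spends its effort). A minor further point: writing the $(t\i_+-\i_-)$ presentation presupposes $\psi|_{\pi_1(Y_0)}=0$, i.e.\ that $T$ is (up to sign) dual to $\psi$; this is true but must be derived first, e.g.\ from the untwisted degree--zero rank count. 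Your concluding identification of $\psi$ with the fibration class is fine.
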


With this theorem we are now able to prove Lemma \ref{lem:aiconnect}.
We use the notation from the previous section.
So assume that $(N,\phi)$ is a pair which satisfies Condition  $(*)$.
In particular we have that $\Delta_{N,\phi}^\a \ne 0$ for any epimorphism $\a:\pi_1(N)\to G$ onto a finite group $G$.
We can therefore apply Theorem \ref{thm:fv08torus} to the tori $T_1,\dots,T_r\subset N$
to conclude that either $(N,\phi)$
fibers over $S^1$ with toroidal fiber, or  $\phi|_{T_i}\in  H^1(T_i;\Z)$ is nonzero for $i=1,\dots,r$.
Recall that we assumed that $||\phi||_T>0$,  we therefore only have to deal with the
 latter case. From   $\phi|_{T_i}\in  H^1(T_i;\Z)$ nonzero we obtain that  $\S$ (which is dual to $\phi$)  necessarily intersects
$T_i$ in at least one curve which is homologically essential on $T_i$.
 In fact by our assumption on $\S$ and $T_1,\dots,T_r$ any intersection curve $\S\cap T_i\subset T_i$ is essential, in particular the components of $T_i$ cut along $\S$ are indeed annuli.

In order to prove Lemma  \ref{lem:aiconnect} it now remains to show that
each of the  annuli $A_i$  connects $\S^-$ and $\S^+$.
So assume there exists an $i\in \{1,\dots,m\}$ such that  the annulus $A_i$ does not connect $\S^-$ and $\S^+$.
Without loss of generality we can assume  that $\S^+\cap A_i=\emptyset$.
We equip $A_i$ with an orientation.
Denote the two oriented components
of $\partial A_i$ by $c$ and $-d$. By our assumption $c$ and $d$ lie in $\S^-$, and they cobound the annulus
$A_i\subset M$.

Now recall that by Proposition \ref{prop:sameh0h1} our assumption that
$(N,\phi)$ satisfies Condition  $(*)$ implies in particular that
 $H_1(\S^-;\Z)\to H_1(M;\Z)$ is an isomorphism.
Note that $c,d$ are homologous in $M$ via the annulus $A:=A_i$, and since $H_1(\S^-;\Z)\to H_1(M;\Z)$ is an isomorphism we deduce that
$c$ and $d$ are homologous in $\S^-$ as well.
Since $\S^-$ is closed we can now find two subsurfaces $\S_1,\S_2\subset \S^-$ such that
$\partial \S_1=-c\cup d$, and such that (with the orientations induced
from $\S^-$) the following hold: $\S_1\cup \S_2=\S$, $\partial \S_2=c\cup -d$ and $\S_1\cap \S_2=c\cup d$.
Note that possibly one of $\S_1$ or $\S_2$ is disconnected.

\begin{claim}
The surfaces $\ol{\S}_1=\S_1\cup A$ and $\ol{\S}_2=\S_2\cup -A$ are closed, orientable and connected.
 Furthermore, there exists a $j\in \{1,2\}$ such that  $\mbox{genus}(\ol{\S}_j)=\mbox{genus}(\S)$
  and such that $\ol{\S}_j$ is homologous to $ \S$ in $N$.
\end{claim}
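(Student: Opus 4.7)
I would start with the three easy assertions. Both $\ol{\S}_j$ are closed: the boundary identity $\partial(\S_1+A)=(-c+d)+(c-d)=0$ shows that $\ol{\S}_1$ is closed, and similarly $\partial \ol{\S}_2 = (c-d)-(c-d)=0$. For orientability, the key observation is that the orientations of $\S_j$ and $\pm A$ induce opposite boundary orientations on each of the curves $c, d$, which is precisely the compatibility needed to assemble a coherent global orientation on $\ol{\S}_j$. For connectedness, I would argue that $\S_j$ (a subsurface of $\S^-$ whose boundary consists of exactly two circles) has at most two connected components, and when it has two they are distinguished by containing $\pm c$ versus $\pm d$; the annulus $A$, which touches both boundary circles, then joins these components inside $\ol{\S}_j$, making it connected.

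Next I would verify the chain-level identity $\ol{\S}_1+\ol{\S}_2=(\S_1+A)+(\S_2-A)=\S$, which in $H_2(N;\Z)$ reads $[\ol{\S}_1]+[\ol{\S}_2]=[\S]$. Euler characteristic additivity, combined with $\chi(A)=\chi(\partial A)=0$, gives $\chi(\ol{\S}_j)=\chi(\S_j)$, whence $\chi(\ol{\S}_1)+\chi(\ol{\S}_2)=\chi(\S)=2-2g$ where $g=\genus(\S)$. I would then rule out either $\ol{\S}_j$ being a $2$--sphere: otherwise $\S_j$ would be a union of disks with boundary in $\{c,d\}$, forcing $c$ or $d$ to bound a disk in $\S^-$. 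This contradicts the fact that by Lemma \ref{lem:nontrivialonsigma} together with our choice of $\S$ (for which all components of $\S\cap T_i$ are essential on $T_i$), the curves $c, d$ are essential in $\S^-$. Setting $g_j:=\genus(\ol{\S}_j)\geq 1$ one obtains $g_1+g_2=g+1$.

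Once a $j$ with $[\ol{\S}_j]=[\S]$ is produced, the genus identity is automatic: Thurston norm minimality of $\S$ forces $\chi_-(\ol{\S}_j)\geq ||[\S]||_T=2g-2$, so $g_j\geq g$, and combined with $g_1+g_2=g+1$ and $g_{3-j}\geq 1$ this yields $g_j=g$, with the complementary surface $\ol{\S}_{3-j}$ being a torus. The main obstacle is hence the homological step: to show that one of $\ol{\S}_1,\ol{\S}_2$ is nullhomologous in $N$. My strategy would be to saturate the Thurston norm chain
\[ ||[\S]||_T = ||[\ol{\S}_1]+[\ol{\S}_2]||_T \leq ||[\ol{\S}_1]||_T+||[\ol{\S}_2]||_T \leq \chi_-(\ol{\S}_1)+\chi_-(\ol{\S}_2) = \chi_-(\S) = ||[\S]||_T, \]
forcing every inequality to become an equality. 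In particular the classes $[\ol{\S}_1]$ and $[\ol{\S}_2]$ must lie in the closure of a common face cone of the Thurston norm ball containing the primitive class $[\S]$, and each $\ol{\S}_j$ is itself Thurston norm minimizing for its own class. The delicate remaining point is to promote this to the lattice statement $\{[\ol{\S}_1],[\ol{\S}_2]\}=\{0,[\S]\}$: I anticipate that the cleanest way is a case analysis via the annulus theorem applied to the incompressible annulus $A \subset M$. If $A$ is boundary-parallel in $M$, then $c,d$ cobound an annulus in $\S^-$ which is one of the $\S_j$, so $\ol{\S}_{3-j}$ arises from $\S$ by an isotopy across the parallelism and is thus homologous to $\S$ while the other $\ol{\S}_j$ is a torus bounding the solid parallelism region. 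If $A$ is not boundary-parallel, I would exploit the fact that $\phi$ vanishes on $\pi_1(A)$ (the core of $A$ being parallel in $T_i$ to $c \subset \S$) to exhibit explicitly a $3$--chain bounded by $\ol{\S}_{3-j}$, whence $[\ol{\S}_{3-j}]=0$ and $[\ol{\S}_j]=[\S]$ as required.
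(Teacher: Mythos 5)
Your treatment of closedness, orientability, connectedness, and the Euler characteristic bookkeeping is fine — in fact more explicit than the paper's. The genuine divergence, and the gap, is in the homological step.

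The paper's proof does not proceed via Thurston norm saturation or the annulus theorem. Instead it observes that Condition~$(*)$ implies $H_j(\S^-;\Z)\to H_j(M;\Z)$ is an isomorphism for $j=0,1$, and then invokes Lemma \ref{lem:twih2} to conclude that $H_2(\S^-;\Z)\to H_2(M;\Z)$ is also an isomorphism. This is the decisive fact: it says $H_2(M;\Z)$ is infinite cyclic, generated by $[\S^-]$. Consequently $[\ol{\S}_k]=l_k[\S^-]$ for integers $l_k$ with $l_1+l_2=1$, and the two inequalities $-\chi(\ol{\S}_k)\leq -\chi(\S)$ (from $\chi(\S_{3-k})\leq 0$) and $-\chi(\ol{\S}_k)\geq -|l_k|\chi(\S)$ (from Thurston norm minimality of $\S$) force, using $-\chi(\S)>0$, that $|l_k|\leq 1$; hence $\{l_1,l_2\}=\{0,1\}$ and the claim follows at once. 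Your approach never uses that $H_2(M;\Z)$ is cyclic, and this is exactly why it stalls: the saturated triangle inequality only says that $[\ol{\S}_1]$, $[\ol{\S}_2]$ are each norm-minimized by their representatives and lie in a common cone, which does not force one of them to vanish. Your fallback, the case analysis via the annulus theorem, has a genuine hole in the non-boundary-parallel case: knowing that $\phi$ vanishes on $\pi_1(A)$ does not produce a $3$--chain bounded by $\ol{\S}_{3-j}$, and there is no purely geometric reason there why $[\ol{\S}_{3-j}]$ should be zero in $H_2(N;\Z)$. Without the information coming from Condition $(*)$ through Lemma~\ref{lem:twih2} (or some substitute that constrains $[\ol{\S}_k]$ to be an integer multiple of $[\S]$), the argument does not close.

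One smaller remark: you assert that $[\ol{\S}_1]+[\ol{\S}_2]=[\S]$ and $\chi(\ol{\S}_1)+\chi(\ol{\S}_2)=\chi(\S)$ hold and that both $\ol{\S}_j$ are not spheres, which is correct and matches the paper's use of Lemma~\ref{lem:nontrivialonsigma}. But the paper works these identities inside $H_2(M;\Z)$ rather than $H_2(N;\Z)$; keeping to $M$ is what makes the generator argument available and is the cleanest way to finish.
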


\begin{proof}
It is clear that  $\ol{\S}_1$ and $\ol{\S}_2$ are closed, orientable and  connected.
We give $\ol{\S}_k, k=1,2$ the orientation which restricts to the orientation of $\S_k$.
We therefore only have to show the second claim.

Recall that Condition  $(*)$ implies that the inclusion induced maps $H_j(\S^-;\Z)\to H_j(M;\Z), j=0,1$ are isomorphisms. It follows from Lemma \ref{lem:twih2} that  we also have an isomorphism
$H_2(\S^-;\Z)\to H_2(M;\Z)$, in particular  $H_2(M;\Z)$  is generated by $[\S^-]$.
Now note that $\ol{\S}_1$ and $\ol{\S}_2$  represent elements in $H_2(M;\Z)$. We can write $[\ol{\S}_k]=l_k[\S^-], k=1,2$
for some $l_k\in \Z$. Note that $[\ol{\S}_1]+[\ol{\S}_2]=[\S^-]$, i.e.  $l_1+l_2=1$.

Now let $k\in \{1,\dots,r\}$ such that $A_i\subset T_k$.
Recall that we assume that any component of $\S\cap T_k$ represents a nontrivial element in $\pi_1(T_k)$.
By Lemma \ref{lem:nontrivialonsigma} any component of $\S\cap T_k$ therefore also represents a nontrivial element in $\pi_1(\S)$.
In particular $c$ and $d$ do not bound disks on $\S$,
which in turn implies that $\chi(\S_k)\leq 0, k=1,2$. It follows that
\be \label{inequ1} -\chi(\ol{\S}_k)=-\chi((\S^-\sm \S_{3-k})\cup A)=-\chi(\S)+\chi(\S_{3-k})\leq -\chi(\S), \quad k=1,2.\ee
On the other hand, by the linearity of the Thurston norm and the genus minimality of $\S$
we have
\be \label{inequ2} -\chi(\ol{\S}_k)\geq -|l_k|\chi(\S), \quad k=1,2. \ee
 Now recall our assumption that $\chi(\S)=||\phi||_T>0$. It follows that $l_1+l_2=1$ and the inequalities
 (\ref{inequ1}) and (\ref{inequ2}) can only be satisfied if there exists a $j$ with $l_j=1$
and $\chi(\ol{\S}_j)=\chi(\S)$. (Note that necessarily $l_{3-j}=0$ and $\ol{\S}_{3-j}$ is a torus.)
\end{proof}

Note that there exists a proper isotopy of $A\subset M$ to an annulus  $A'\subset M$  such that $\partial A'$ lies entirely in $\S_j$ and such that $A'$ is disjoint from all the other $A_j, j=1,\dots,r$.
We then let $\S_j'\subset \S_j$ be the subsurface of $\S_j$ such that $\partial \S_j'=\partial A'$.
Clearly $\S':=\S_j'\cup -A'$ is isotopic to $\S_j\cup -A$, in particular by the claim $\S'$ is a closed connected surface homologous to $ \S$ in $N$ with $\mbox{genus}(\S')=\mbox{genus}(\S)$ which satisfies all the properties listed in Lemma \ref{lem:minimalsigma}.
On the other hand we evidently have $b_0 (\S'\cap T_j)\leq b_0 (\S)-2$.
Since we did not create any new intersections we in fact have
$\sum_{i=1}^r b_0 (\S'\cap T_i)< \sum_{i=1}^r b_0 (\S \cap T_i)$.
But this contradicts the minimality of $\sum_{i=1}^r b_0 (\S\cap T_i)$ in our choice of the surface $\S$.
We therefore showed that the assumption that $A_i$ does not connect $\S^-$ and $\S^+$ leads to a contradiction.
This concludes the proof of  Lemma \ref{lem:aiconnect}.

\subsection{Preliminaries on the components $M_1,\dots,M_n$}\label{section:guts}

We continue with the notation from the previous sections.
Using Lemma \ref{lem:aiconnect} we can now prove  the following lemma, which in particular implies the first statement of Theorem \ref{thm:miiso}.

\begin{lemma} \label{lem:wi2}
Assume that the inclusion induced maps $\pi_1(\S^\pm)\to \pi_1(M)$ give rise to isomorphisms of the prosolvable completions.
Let $i\in \{1,\dots,n\}$. Then the following hold:
\bn
\item The surfaces $\S_i^\pm$ are connected.
\item For any homomorphism $\a:\pi_1(M)\to S$ to a finite solvable group
the inclusion maps induce isomorphisms
\[ H_j(\S_i^\pm;\Z[S])\to H_j(M_i;\Z[S])\]
for $j=0,1$.
\en
\end{lemma}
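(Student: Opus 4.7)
The plan is to prove both parts through a Mayer--Vietoris comparison between the decompositions $\S^- = \mathcal{S}^- \cup \mathcal{C}^-$ and $M = \mathcal{M} \cup \mathcal{A}$, where $\mathcal{A} = \sqcup_k A_k$, $\mathcal{M} = \sqcup_j M_j$, $\mathcal{S}^- = \sqcup_j \S_j^-$, and $\mathcal{C}^- = \sqcup_k c_k^-$ with $c_k^- = \partial A_k \cap \S^-$; the argument for $\S^+$ will be symmetric. By Lemma \ref{lem:aiconnect} each $A_k$ is an annulus connecting $\S^-$ to $\S^+$, so each inclusion $c_k^- \hookrightarrow A_k$ is a homotopy equivalence and the comparison map $H_*(\mathcal{C}^-;\Z[S]) \to H_*(\mathcal{A};\Z[S])$ is an isomorphism in every degree. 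For any homomorphism $\a:\pi_1(M)\to S$ to a finite solvable group, the prosolvable completion assumption together with Lemma \ref{lem:twih1} gives that $H_*(\S^-;\Z[S]) \to H_*(M;\Z[S])$ is an isomorphism for $* = 0, 1$.

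The $j = 0$ part of (2) then follows from the five-lemma applied to the truncated comparison
\[
H_1(\S^-;\Z[S])\to H_0(\mathcal{C}^-;\Z[S])\to H_0(\mathcal{S}^-;\Z[S])\to H_0(\S^-;\Z[S])\to 0
\]
versus its analogue for $M$: the four outer vertical maps are isomorphisms, so $H_0(\mathcal{S}^-;\Z[S])\to H_0(\mathcal{M};\Z[S])$ is an isomorphism, and splitting over the direct-sum index $i$ gives the componentwise statement. Specializing to the trivial group, this yields $\sum_i b_0(\S_i^-) = n$; since each $M_i$ is adjacent to at least one annulus $A_k$ whose trace $c_k^-$ lies in $\ol{\S_i^-}$, we have $b_0(\S_i^-) \geq 1$ for each $i$, which forces $b_0(\S_i^-) = 1$ and thus establishes part (1).

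For the $j = 1$ part of (2) I would apply the five-lemma to the comparison extended one further term to the left:
\[
H_2(\S^-;\Z[S])\to H_1(\mathcal{C}^-;\Z[S])\to H_1(\mathcal{S}^-;\Z[S])\to H_1(\S^-;\Z[S])\to H_0(\mathcal{C}^-;\Z[S]).
\]
The hard part will be that this requires the further input that $H_2(\S^-;\Z[S])\to H_2(M;\Z[S])$ is surjective, which does not come directly from Lemma \ref{lem:twih1}. The plan is to establish this surjectivity via an Euler-characteristic and Poincar\'e--Lefschetz duality argument in the finite covers associated with $\ker\a$: the iso on $H_0$ and $H_1$, combined with $\chi(M) = \chi(\S^-) = 2-2\gen(\S)$ and the vanishing of $H_3$ of these covers (each has non-empty boundary), equates the second Betti numbers, and the map is visibly non-zero on each component because a component of the cover of $\S^-$ sits as a boundary component of the corresponding component of the cover of $M$ whose fundamental class generates $H_2$. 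Once this is in hand, the five-lemma yields $H_1(\mathcal{S}^-;\Z[S]) \cong H_1(\mathcal{M};\Z[S])$, and direct-sum splitting completes part (2) for $\S^-$; the $\S^+$ case is identical.
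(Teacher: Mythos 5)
Your proposal follows essentially the same route as the paper: the paper also compares the Mayer--Vietoris sequence of $\S^\pm$ cut along the curves $A_k\cap\S^\pm$ with that of $M$ cut along the annuli $A_k$, uses Lemma \ref{lem:aiconnect} to see that the left--hand vertical maps $H_j(A_k\cap\S^\pm;\Z[S])\to H_j(A_k;\Z[S])$ are isomorphisms, and concludes with the five--lemma; part (1) is then read off from the untwisted case. (The paper gets (1) slightly more directly by observing that the middle map is block--diagonal over $i$, so each block $H_0(\S_i^\pm;\Z)\to H_0(M_i;\Z)$ is itself an isomorphism; this avoids your counting step and the need to argue that each $\S_i^\pm$ is non--empty.) The one substantive divergence is how you supply the surjectivity of $H_2(\S^\pm;\Z[S])\to H_2(M;\Z[S])$ needed for the five--lemma in degree $1$. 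The paper has this ready--made in Lemma \ref{lem:twih2}, applied to aspherical finite $2$--complex models of $\S^\pm$ and $M$ with $\chi(\S^\pm)=\chi(M)$, and you could simply cite it. Your duality sketch is the right idea but, as written, only yields rational surjectivity: equating second Betti numbers and observing that the map is non--zero on each component does not give surjectivity over $\Z$, which is what the five--lemma with $\Z[S]$--coefficients requires. To close it integrally, note that the $H_1$--isomorphism implies that $H_1(\partial \widetilde{M}_j;\Z)\to H_1(\widetilde{M}_j;\Z)$ is onto for each component $\widetilde{M}_j$ of the induced cover, hence $H^1(\widetilde{M}_j;\Z)\to H^1(\partial\widetilde{M}_j;\Z)$ is injective, hence by Poincar\'e--Lefschetz duality $H_2(\partial\widetilde{M}_j;\Z)\to H_2(\widetilde{M}_j;\Z)$ is onto; since the $H_0$--isomorphism shows $\partial\widetilde{M}_j$ has exactly two components (one lift each of $\S^-$ and $\S^+$) whose fundamental classes agree up to sign in $H_2(\widetilde{M}_j;\Z)$, the class of the lift of $\S^-$ already generates the image. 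With that adjustment, or by invoking Lemma \ref{lem:twih2} directly, your argument is complete.
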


\begin{proof}

We first consider statement (1).  Recall that $M_1,\dots,M_n$ are the components of $M$ split along
$A_1,\dots,A_m$. We therefore get the following commutative diagram of Mayer--Vietoris sequences
\[ \xymatrix{
\dots \ar[r]&\bigoplus\limits_{k=1}^m H_j(A_k\cap \S^\pm;\Z) \ar[r]\ar[d] &\bigoplus\limits_{l=1}^n H_j(M_l \cap \S^\pm;\Z)\ar[d]\ar[r]&H_j(\S^\pm;\Z)\ar[d]\ar[r]&\dots \\
\dots \ar[r]&\bigoplus\limits_{k=1}^m H_j(A_k;\Z) \ar[r] & \bigoplus\limits_{l=1}^n H_j(M_l;\Z)\ar[r]&H_j(M;\Z)\ar[r]&\dots. }\]
Note that  the vertical homomorphisms on the left are isomorphisms
since by Lemma \ref{lem:aiconnect} for any $i=1,\dots,m$ the $A_i$ is an annulus which connects $\S^-$ and $\S^+$, i.e. $A_i$ is a product on
$A_i\cap \S^\pm$. Also note that the vertical homomorphisms on the right
are isomorphisms for $j=0,1$ by Proposition \ref{prop:sameh0h1}
and for $j=2$ by  Lemma \ref{lem:twih2}.
We can now appeal to the 5--lemma to deduce that the middle
homomorphisms are isomorphisms as well. But for any $j$ the middle homomorphism is a direct sum of homomorphisms,
it therefore follows in particular that the maps $H_j(\S_i^\pm;\Z)\to H_j(M_i;\Z), j=0,1$ are isomorphisms
for any $i\in \{1,\dots,n\}$.
In particular $b_0(\S_i^\pm)=b_0(M_i)=1$, i.e. the surfaces $\S_i^\pm$ are connected.

We now prove statement (2). Let $\a:\pi_1(M)\to S$ be a homomorphism to a finite solvable group.
Recall that by Lemmas
\ref{lem:im} and \ref{lem:twih2}
we have  that the inclusion induced maps $H_j(\S^\pm;\Z[S])\to H_j(M;\Z[S])$ are   isomorphisms for $j=0,1,2$.
It now follows from the  commutative diagram of Mayer--Vietoris sequences as above, but with $\Z[S]$--coefficients
(cf. \cite{FK06} for details)
that
\[ H_j(\S_i^\pm;\Z[S])\to H_j(M_i;\Z[S]) \]
is an isomorphism for any $i\in \{1,\dots,n\}$ and $j=0,1$.

\end{proof}

The following lemma in particular implies the second statement of Theorem \ref{thm:miiso}.

\begin{lemma} \label{lem:wi1}
For any pair $(i,j)$  such that $M_i\subset N_j$ we have a commutative diagram of injective maps as follows:
\[ \xymatrix{ \pi_1(\S_i^\pm) \ar[d]\ar[r] &\pi_1(M_i) \ar[d]\ar[r]&\pi_1(N_j)\ar[d]\\  \pi_1(\S^\pm) \ar[r] &\pi_1(M)\ar[r]&\pi_1(N).}\]
\end{lemma}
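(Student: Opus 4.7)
The plan is to observe first that every arrow in the diagram is induced by a set-theoretic inclusion (with base points and connecting paths chosen compatibly in each cover), so commutativity of the diagram is automatic; the content of the lemma is the six injectivity assertions. Since $\S$ is Thurston--norm minimizing in the irreducible manifold $N$ it is incompressible, which gives the injections in the bottom row $\pi_1(\S^\pm)\hookrightarrow\pi_1(M)\hookrightarrow\pi_1(N)$. For the right-hand vertical, the JSJ tori $T_1,\dots,T_r$ are incompressible, so $\pi_1(N)$ is the fundamental group of a graph of groups with vertex groups the $\pi_1(N_l)$ and edge groups $\pi_1(T_k)\cong\Z^2$; Bass--Serre theory then gives $\pi_1(N_j)\hookrightarrow\pi_1(N)$.

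The remaining three injections reduce to two incompressibility statements. First, $\S\cap N_j$ is incompressible in $N_j$: any compressing disk $D\subset N_j$ would have $\partial D\subset \S$ essential in $\S$ but bounding a disk in $N$, contradicting incompressibility of $\S$. Since $M_i$ is a component of $N_j$ cut along the two-sided incompressible surface $\S\cap N_j$, this yields $\pi_1(M_i)\hookrightarrow\pi_1(N_j)$; and since $\S_i^\pm$ is a union of components of $\S\cap N_j$ sitting inside $\partial M_i$, it is incompressible there and gives $\pi_1(\S_i^\pm)\hookrightarrow\pi_1(M_i)$. Second, each annulus $A_k$ is incompressible in $M$: by the choice of $\S$ together with Lemma \ref{lem:nontrivialonsigma}, the core of $A_k$ is essential in $T_k$, hence nontrivial in $\pi_1(N)$ by incompressibility of $T_k$, and therefore nontrivial in $\pi_1(M)$ by the first paragraph. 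It follows that $M$ is the realization of a graph of groups with vertex groups the $\pi_1(M_l)$ and edge groups $\pi_1(A_k)\cong\Z$, and Bass--Serre theory delivers the last injection $\pi_1(M_i)\hookrightarrow\pi_1(M)$.

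I do not expect a genuine obstacle here: the statement is essentially a compendium of classical $3$--manifold incompressibility facts, packaged together. The only mild technical point is to ensure coherent choices of base points and connecting paths across all six fundamental groups so that the diagram commutes literally rather than only up to conjugation; this is handled by lifting a single base point and a spanning tree of connecting paths through the inclusions $\S_i^\pm \subset \S^\pm \subset M \supset M_i \subset N_j \subset N$.
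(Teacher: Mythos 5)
Your overall framework is right, but there is a genuine gap in the step where you prove $\S\cap N_j$ is incompressible in $N_j$. You assert that a compressing disk $D\subset N_j$ for $\S\cap N_j$ has $\partial D$ essential in $\S$; but by the definition of a compressing disk, $\partial D$ is only guaranteed to be essential in the subsurface $\S\cap N_j$, and a curve essential in a subsurface can very well bound a disk in the ambient surface. The passage from ``essential in $\S\cap N_j$'' to ``essential in $\S$'' is exactly the content the paper isolates in its Claim: one must first show that $\pi_1(\S_k^\pm)\to\pi_1(\S^\pm)$ is injective, which the paper deduces from the fact that every component of $\S\cap T_l$ is essential in $\pi_1(T_l)$ (the normalization of Lemma \ref{lem:minimalsigma}) and hence essential in $\pi_1(\S)$ by Lemma \ref{lem:nontrivialonsigma} --- so no component of $\S^\pm\sm\S_k^\pm$ is a disk. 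Without this intermediate step your contradiction with the incompressibility of $\S$ does not go through.

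Once that gap is filled your argument works, though you take a somewhat longer route than the paper. After establishing $\pi_1(\S_i^\pm)\hookrightarrow\pi_1(\S^\pm)$ and $\pi_1(M_i)\hookrightarrow\pi_1(N_j)$, the paper finishes by a pure diagram chase: the remaining injectivities ($\pi_1(M_i)\to\pi_1(M)$ and $\pi_1(\S_i^\pm)\to\pi_1(M_i)$) follow because in a commutative square the top-left map is injective once one composite across and down is injective. You instead invoke Bass--Serre theory a second time, expressing $\pi_1(M)$ as the fundamental group of a graph of groups with edge groups $\pi_1(A_k)\cong\Z$; this requires the additional verification that the annuli $A_k$ are incompressible in $M$, which you supply correctly but which the paper's diagram chase makes unnecessary. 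Both are correct; the paper's version is leaner.
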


\begin{proof}
First note that since $\S$ is incompressible we know that the two bottom maps are injective.
Furthermore recall that  $N_j$ is a JSJ component of $N$, i.e. a component of the result of cutting
$N$ along incompressible tori, hence $\pi_1(N_j)\to \pi_1(N)$ is injective.

\begin{claim}
For any $k\in \{1,\dots,n\}$ the maps $\pi_1(\S^{\pm}_k)\to \pi_1(\S)$ are injective.
\end{claim}

Let $c$ be a component of
$\S\cap T_l$ for some $l\in \{1,\dots,r\}$. Recall that by our choice of tori $T_1,\dots,T_r$
the curve $c$ represents a nontrivial element in $\pi_1(T_l)$. By Lemma \ref{lem:nontrivialonsigma}
the curve $c$ also  represents a nontrivial element in
$\pi_1(\S)$. In particular none of the components of $\S^\pm \sm \S_k^\pm$ are disks
and therefore the maps $\pi_1(\S_k^\pm)\to \pi_1(\S^\pm)$
are injective. This concludes the proof of the claim.

Now let
$ K= \{ k\in \{1,\dots,n\}\, |\, M_k\subset N_j\}$.
It follows from the claim and the above commutative diagram that for any $k\in K$ the inclusion induced map
$\pi_1(\S_k)\to \pi_1(N_j)$ is injective, i.e. for any $k\in K$ the surface $\S_k\subset N_j$ is incompressible.
Since $M_i$ is a component of cutting $N_j$ along the incompressible surfaces $\S_k^-\subset N_j, k\in K$
we have that $\pi_1(M_i)\to \pi_1(N_j)$ is injective.

By commutativity of the above diagram we now obtain that all other maps are  injective as well.
\end{proof}

\subsection{The conclusion of the proof of Theorem \ref{thm:miiso}}\label{section:hyps}

In this section we will finally  prove the third statement of Theorem \ref{thm:miiso}.
The main ingredient in the proof is the following result.

\begin{proposition}\label{prop:extendhom}
Let $\S$ be a closed surface and $\S'\subset \S$ a connected subsurface such that
 $\pi_1(\S')\to \pi_1(\S)$ is injective.
Let $\a:\pi_1(\S')\to S$ be a homomorphism to a finite solvable group.
Then there  exists a homomorphism  to a finite solvable group $\b:\pi_1(\S)\to T$
and a homomorphism $\pi:T':=\im\{\pi_1(\S')\to T\} \to S$ such that the following diagram commutes:
\[ \xymatrix{ \pi_1(\S')\ar[d]_{\a}\ar[dr]\ar[rr]&& \pi_1(\S)\ar[d]^{\b} \\
S&\ar[l]_{\pi}T'\ar@{->}[r]& T.}\]
\end{proposition}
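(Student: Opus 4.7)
Plan: Two trivial reductions are immediate: if $\Sigma' = \Sigma$ take $(\beta,T,\pi)=(\alpha, S, \mathrm{id})$, and if $S$ is trivial everything is trivial. Henceforth assume $\Sigma' \subsetneq \Sigma$, so that $\Sigma'$ has nonempty boundary and $\pi_1(\Sigma')$ is a finitely generated free group. The argument proceeds by induction on the derived length $n := \ell(S) \geq 1$.

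For the inductive step, let $q : S \twoheadrightarrow S/S^{(n-1)}$ be the canonical projection; the composite $\alpha' := q \circ \alpha$ has target of derived length $n-1$. By induction applied to $\alpha'$ we obtain $\beta' : \pi_1(\Sigma) \to T'$ finite solvable, together with a map $\pi' : \beta'(\pi_1(\Sigma')) \to S/S^{(n-1)}$ making the diagram commute. Let $\tilde{\Sigma} \to \Sigma$ be the regular finite cover corresponding to $\ker(\beta')$ (its deck group $T'$ is solvable), and let $\tilde{\Sigma}' \subset \tilde{\Sigma}$ be a connected component of the preimage of $\Sigma'$. Then $\pi_1(\tilde{\Sigma}') = \pi_1(\Sigma') \cap \ker(\beta') \subseteq \ker(\alpha')$, so $\tilde{\alpha} := \alpha|_{\pi_1(\tilde{\Sigma}')}$ takes values in the abelian group $S^{(n-1)}$. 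Granting the abelian base case discussed below, we obtain $\tilde{\beta} : \pi_1(\tilde{\Sigma}) \to \tilde{T}$ finite solvable satisfying the conclusion for $\tilde{\alpha}$, and we assemble $\beta$ by replacing $\ker(\tilde{\beta})$ with its normal core $N$ in $\pi_1(\Sigma)$ (which lies inside $\pi_1(\tilde{\Sigma})$ since that subgroup is normal in $\pi_1(\Sigma)$). The standard conjugation argument embeds $\pi_1(\tilde{\Sigma})/N$ into $\tilde{T}^{|T'|}$, so it is solvable, and the short exact sequence
\[
1 \to \pi_1(\tilde{\Sigma})/N \to \pi_1(\Sigma)/N \to T' \to 1
\]
then exhibits $T := \pi_1(\Sigma)/N$ as solvable. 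The chain $N \cap \pi_1(\Sigma') \subseteq \ker(\tilde{\beta}) \cap \pi_1(\tilde{\Sigma}') \subseteq \ker(\tilde{\alpha}) \subseteq \ker(\alpha)$ provides the desired factorization $\pi : \beta(\pi_1(\Sigma')) \to S$.

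The main obstacle is the abelian base case: given $\alpha : \pi_1(\Sigma') \to A$ with $A$ finite abelian and $\Sigma' \subsetneq \Sigma$, produce a finite solvable quotient $\beta : \pi_1(\Sigma) \to T$ with $\ker(\beta) \cap \pi_1(\Sigma') \subseteq \ker(\alpha)$. The key tool is Scott's theorem on LERF for closed surface groups, which provides a finite-index subgroup $G \leq \pi_1(\Sigma)$ containing $\pi_1(\Sigma')$ together with a retraction $r : G \to \pi_1(\Sigma')$; the composite $\alpha \circ r : G \to A$ is then a finite abelian quotient of $G$ whose kernel meets $\pi_1(\Sigma')$ precisely in $\ker(\alpha)$. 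The delicate step is to promote this quotient of the finite-index subgroup $G$ to a finite solvable quotient of the full group $\pi_1(\Sigma)$ while preserving the kernel condition on $\pi_1(\Sigma')$. For this we exploit the fact that closed surface groups are residually finite $p$-groups for every prime $p$, hence possess an abundant supply of finite solvable quotients separating any element from any prescribed finite-index subgroup of a finitely generated subgroup; a normal-core / wreath-product construction combining these with $\alpha \circ r$ yields the required $\beta$.
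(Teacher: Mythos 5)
Your inductive reduction on the derived length $\ell(S)$ is a genuinely different and appealing reorganization of the problem: the paper attacks arbitrary finite solvable $S$ directly, whereas you pass to the cover determined by the induction hypothesis and appeal to an abelian base case on the cover, then reassemble via normal cores. That part of the argument is sound (in particular, the conjugation/normal-core step and the check that $T=\pi_1(\Sigma)/N$ is solvable are correct, since $\pi_1(\tilde\Sigma)$ is normal in $\pi_1(\Sigma)$).

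However, the abelian base case is exactly the crux of the proposition, and the tools you invoke do not close it. The promotion step would need a finite solvable quotient $\beta$ of $\pi_1(\Sigma)$ whose kernel meets $\pi_1(\Sigma')$ inside $\ker(\alpha)$. Residual $p$-ness of $\pi_1(\Sigma)$ only lets you separate elements from the identity; it does not imply that the prosolvable topology on $\pi_1(\Sigma)$ refines the cosets of a given finite-index subgroup such as the $G$ from Scott's theorem. In fact the prosolvable topology on a closed surface group of genus $\geq 2$ is \emph{strictly coarser} than the profinite topology: such a group surjects onto $A_5$, and no finite solvable quotient can have kernel contained in the kernel of that surjection. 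So a normal-core or wreath-product construction applied to $\alpha\circ r:G\to A$ has no reason to land inside the prosolvable topology at all; the index-$|\pi_1(\Sigma):G|$ permutation action carries the same obstruction. The paper's remark right after the proposition (that $T$ cannot in general be taken abelian even when $S$ is) is precisely a warning that this base case is nontrivial and cannot be resolved by abelian or generic residual arguments.

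What the paper does instead is genuinely surface-specific and sidesteps the issue: it removes an annulus from each component of $\overline{\Sigma\setminus\Sigma'}$ to obtain a connected subsurface-with-boundary $\Sigma''$ in which $\pi_1(\Sigma')$ sits as a \emph{free factor} of the free group $\pi_1(\Sigma'')$, extends $\alpha$ through the free factor splitting, passes to the finite solvable characteristic quotient $\pi_1(\Sigma'')/\pi_1(\Sigma'')(S)$, and then writes $\pi_1(\Sigma)$ as an iterated HNN extension of $\pi_1(\Sigma'')$ conjugating boundary curves $a_i$ to $b_i$. The concrete automorphism $\varphi$ of $\pi_1(\Sigma'')$ swapping $a_i\leftrightarrow b_i$ (and $c_i\leftrightarrow d_{l+1-i}$, $d_i\leftrightarrow c_{l+1-i}$) descends to the characteristic quotient, has finite order $n$ there, and yields the required map $\pi_1(\Sigma)\to\Z/n\ltimes\pi_1(\Sigma'')/\pi_1(\Sigma'')(S)$. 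Without an argument of this flavor (extend to a free group with boundary and realize the gluing as a controlled semidirect product), your base case remains open.
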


Put differently, the prosolvable topology on $\pi_1(\S')$ agrees with the topology on $\pi_1(\S')$ induced from the prosolvable topology on $\pi_1(\S)$.

\begin{remark}
Note that  in general
$H_1(\S';\Z)\to H_1(\S;\Z)$ is not injective, even if $\pi_1(\S')\to \pi_1(\S)$ is an injection.
In particular in general
a homomorphism $\pi_1(\S')\to S$ to an abelian group will not extend to a homomorphism from $\pi_1(\S)$ to an abelian group. This shows that in general we can not take $T=S$ or $T$ of the same solvability length as $S$ in the above proposition.
\end{remark}

\begin{proof}
The statement of the proposition is trivial if $\S'=\S$, we will therefore henceforth only consider the
case that $\S'\ne \S$.
Let $\a:\pi_1(\S')\to S$ be a homomorphism to a finite solvable group.
It suffices to show that  there  exists a homomorphism $\b:\pi_1(\S)\to T$ to a finite solvable group
such that $\ker(\b) \cap \pi_1(\S')\subset \ker(\a)$.

Denote by $\S_1,\dots,\S_l$ the components of $\ol{\S\sm \S'}$.
Note that the condition that $\pi_1(\S')\to \pi_1(\S)$ is injective is equivalent to saying that
none of the subsurfaces $\S_1,\dots,\S_l$ is a disk.

It is straightforward to see that for each $j=1,\dots,l$ we can find an annulus $A_j\in \mbox{int}(\S_j)$
such that $(\S' \cup \S_j)\sm A_j$ is still connected.
\begin{figure}[h] \begin{center}
 \includegraphics[scale=0.3]{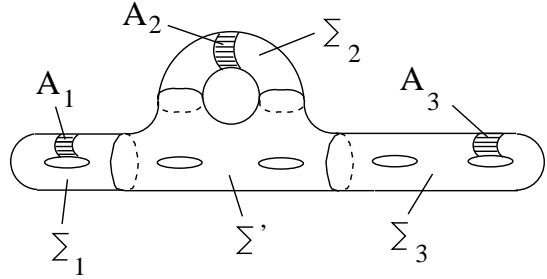} \caption{Surface $\S'\subset \S$ with annuli $A_i\subset \S_i, i=1,2,3$.}
\label{fig:subsurface}
\end{center}
 \end{figure}

Now let $\S''=\ol{\S\sm \cup_{j\in J}A_j}$. Clearly $\S''$ is a connected surface
 with boundary. By assumption $\pi_1(\S')\to \pi_1(\S)$ is injective.
 Since $\pi_1(\S')\to \pi_1(\S)$ factors through $\pi_1(\S'')$ we see that $\S'$ is a subsurface of $\S''$ such that  $\pi_1(\S')\to \pi_1(\S'')$ is injective.
Since $\S''$ is a surface with boundary (contrary to $\S$)  this implies
that $\pi_1(\S')$ is in fact a free factor of $\pi_1(\S'')$, i.e. we have an isomorphism
$\g:\pi_1(\S'')\xrightarrow{\cong} \pi_1(\S')* F$ where $F$ is a free group such that the map $\pi_1(\S'')\xrightarrow{\g}\pi_1(\S')* F\to \pi_1(\S')$
splits the inclusion induced map $\pi_1(\S')\to \pi_1(\S'')$.

We now write $\pi:=\pi_1(\S'')$ and we denote by $\a''$  the projection map
$\pi\to \pi/\pi(S)$ (We refer to  Section \ref{section:defch} for the definition  and the properties
of the characteristic subgroup $\pi(S)$ of $\pi$).
We can extend
$\a:\pi_1(\S')\to S$ to $\pi_1(\S'')\xrightarrow{\g}\pi_1(\S')* F\to \pi_1(\S')\xrightarrow{\a}S$. It follows immediately  that $\ker(\a'')\cap \pi_1(\S') \subset \ker(\a)$.

We will now extend $\a'':\pi_1(\S'')\to \pi/\pi(S)$ to a homomorphism
$\b:\pi_1(\S)\to \Z/n\ltimes \pi/\pi(S)$ where $1\in \Z/n$ acts in an appropriate way
on $\pi/\pi(S)$. In order to do this we will first study the relationship between
$\pi_1(\S'')$ and $\pi_1(\S)$.

Evidently $\S=\S''\, \cup \, \cup_{i=1}^k A_i$.
We pick an orientation for $\S$ and give $A_1,\dots,A_k$ the induced orientations.
We write $\partial A_i=-a_i\cup b_i, i=1,\dots,k$ (see Figure \ref{fig:subsurface2}).
\begin{figure}[h] \begin{center}
 \includegraphics[scale=0.3]{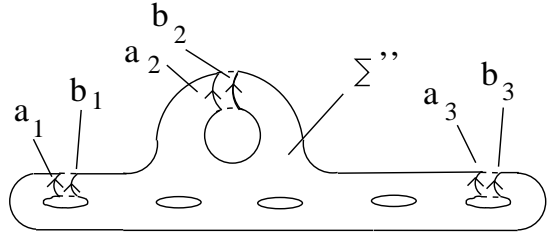} \caption{Surface $\S''\subset \S$ with oriented boundary curves $a_i,b_i$.}
\label{fig:subsurface2}
\end{center}
 \end{figure}
We now pick  a base point for $\S''$. We can find   based curves $c_1,\dots,c_l,d_1,\dots,d_l$
and paths from the base point to the curves $a_1,\dots,a_k, b_1,\dots,b_k$ (and from now on we do not distinguish in the notation
between curves and based curves) such that
\[ \pi = \ll  a_1,\dots,a_k,b_1,\dots,b_k,c_1,\dots,c_l,d_1,\dots,d_l \, |\,
a_1\dots a_k b_k^{-1}\dots b_1^{-1}=[c_l,d_l]\dots [c_1,d_1] \rr. \]
(See Figure \ref{fig:subsurface3} for an illustration.)
\begin{figure}[h] \begin{center}
 \includegraphics[scale=0.3]{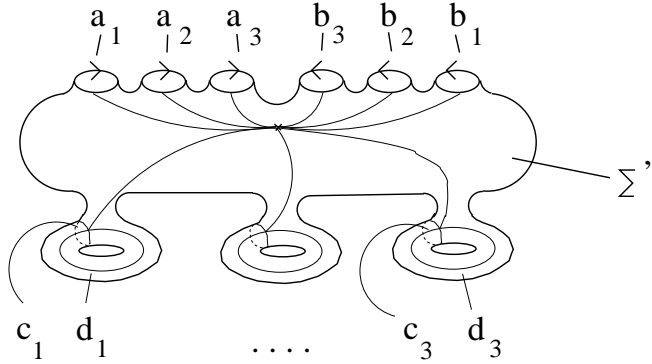} \caption{Surface $\S''\subset \S$ with oriented based curves $a_i,b_i,c_i,d_i$.}
\label{fig:subsurface3}
\end{center}
 \end{figure}
By the van Kampen theorem we then have
\[ \pi_1(\S)=\ll \pi_1(\S''), t_1,\dots,t_k \, |\, t_ia_it_i^{-1}=b_i, i=1,\dots,k \rr.\]

\begin{claim}
There exists an automorphism $\varphi:\pi\to \pi$ such that $\varphi(a_i)=b_i$ and $\varphi(b_i)=a_i$ for any
$i\in \{1,\dots,k\}$.
\end{claim}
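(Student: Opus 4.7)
The plan is to define $\varphi$ directly on the given presentation by specifying its values on the generators and verifying that the defining relator is preserved. Set
\[
\varphi(a_i)=b_i,\ \varphi(b_i)=a_i,\ \varphi(c_j)=d_{l+1-j},\ \varphi(d_j)=c_{l+1-j}.
\]
On generators this assignment is visibly an involution, so once we check that it descends to a homomorphism it automatically defines an automorphism of $\pi$ of order two. The content of the claim is therefore purely the verification that $\varphi$ sends the defining relator of $\pi$ to a trivial element.

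For the verification, let $R=a_1\cdots a_k\, b_k^{-1}\cdots b_1^{-1}\bigl([c_l,d_l]\cdots [c_1,d_1]\bigr)^{-1}$. Using $[d_j,c_j]=[c_j,d_j]^{-1}$ a direct computation gives
\[
\varphi\bigl([c_l,d_l]\cdots [c_1,d_1]\bigr)=[d_1,c_1]\cdots [d_l,c_l]=[c_1,d_1]^{-1}\cdots [c_l,d_l]^{-1}=\bigl([c_l,d_l]\cdots [c_1,d_1]\bigr)^{-1}.
\]
On the other hand, inverting the defining relation $a_1\cdots a_k\, b_k^{-1}\cdots b_1^{-1}=[c_l,d_l]\cdots [c_1,d_1]$ in $\pi$ yields $b_1\cdots b_k\, a_k^{-1}\cdots a_1^{-1}=\bigl([c_l,d_l]\cdots [c_1,d_1]\bigr)^{-1}$, hence
\[
\varphi(R)=b_1\cdots b_k\, a_k^{-1}\cdots a_1^{-1}\cdot [c_l,d_l]\cdots [c_1,d_1]=1
\]
in $\pi$, as required.

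The only subtle point is the choice of substitution rule on the $c_j, d_j$: the map $c_j\leftrightarrow d_{l+1-j}$ (with the order reversal $j\mapsto l+1-j$) inverts the product $[c_l,d_l]\cdots [c_1,d_1]$ exactly, whereas the naive swap $c_j\leftrightarrow d_j$ would only invert it up to cyclic conjugation; without this precise order reversal the computation above would fail. With this choice, however, the algebraic verification is immediate and avoids any need to construct or base-point-track an actual self-homeomorphism of $\S''$.
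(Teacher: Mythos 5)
Your proof is correct and follows essentially the same line as the paper's: define $\varphi$ on generators with the order-reversed swap $c_j\leftrightarrow d_{l+1-j}$, check that the relator maps into the normal closure of itself (the paper phrases this as $\varphi(r)=w^{-1}r^{-1}w$ in the free group, you phrase it as $\varphi(R)=1$ in $\pi$, which is the same verification), and use that $\varphi$ is an involution to conclude it is an automorphism of $\pi$. Your closing remark correctly identifies why the order reversal on the $c_j,d_j$ indices is essential, which the paper uses without comment.
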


Let $\G$ be the free group generated by $a_i,b_i, i=1,\dots,k$ and $c_i,d_i, i=1,\dots,l$ and
consider  the isomorphism $\varphi:\G\to \G$ defined by
$\varphi(a_i)=b_i, \varphi(b_i)=a_i, i=1,\dots,k$ and  $ \varphi(c_i)=d_{l+1-i}, \varphi(d_i)=c_{l+1-i}, i=1,\dots,l$.
In the following we write $w=[c_l,d_l]\dots [c_1,d_1]$
and we write $r=a_1\dots a_k b_k^{-1}\dots b_1^{-1}\cdot  [c_1,d_1]^{-1}\dots [c_l,d_l]^{-1}$ for the relator.
Note that we have a canonical isomorphism $\pi \cong \G/\ll \ll r\rr\rr$.
We calculate
\[ \ba{rclccc} \varphi(r)&=&\varphi\big(a_1\dots a_k b_k^{-1}\dots b_1^{-1}\cdot  [c_1,d_1]^{-1}\dots [c_l,d_l]^{-1}\big)\\[1mm]
&=&b_1\dots b_ka_k^{-1}\dots a_1^{-1}  \cdot [d_l,c_l]^{-1}\dots [d_1,c_1]^{-1}\\[1mm]
&=&b_1\dots b_ka_k^{-1}\dots a_1^{-1}\cdot [c_l,d_l]\dots [c_1,d_1]\\[1mm]
&=&w^{-1}[c_l,d_l]\dots [c_1,d_1] b_1\dots b_ka_k^{-1}\dots a_1^{-1}w\\[1mm]
&=&w^{-1}r^{-1}w.\ea \]
This shows that $\varphi$ restricts to an automorphism of the subgroup of $\G$ normally generated by the relator $r$. In particular $\varphi$ descends to an automorphism of $\pi$.
This concludes the proof of the claim.

Recall that $\pi(S)$ is a characteristic subgroup of $\pi$, hence $\varphi:\pi\to \pi$ descends
to an automorphism $\pi/\pi(S)\to \pi/\pi(S)$ which we again denote by $\varphi$.
Furthermore recall that $\pi/\pi(S)$ is a finite solvable group. Since $\pi/\pi(S)$ is finite  there exists $n>0$ such that
$\varphi^n:\pi/\pi(S)\to \pi/\pi(S)$ acts as the identity.
We can therefore consider the semidirect product $\Z/n \ltimes \pi/\pi(S)$ where $1\in \Z/n$
acts on $\pi/\pi(S)$ via $\varphi$.

It is now straightforward to check that the assignment
\[ \ba{rcl}
g&\mapsto &(0,\a''(g)), \, \,\, g\in \pi_1(\S''),\\
t_i&\mapsto & (1,0)\ea \]
defines a homomorphism
\[ \pi_1(\S)=
\ll \pi_1(\S''), t_1,\dots,t_k \, |\, t_ia_it_i^{-1}=b_i, i=1,\dots,k \rr
\to  \Z/n\ltimes \pi/\pi(S)\]
which we denote by $\b$.
Clearly  $\b:\pi_1(\S)\to \Z/n\ltimes \pi/\pi(S)$
restricts to $\pi_1(\S'')\to \pi/\pi(S)$ and hence
has the required properties.
\end{proof}

We can now  prove the third statement of Theorem \ref{thm:miiso}.

\begin{proof}[Proof of Theorem \ref{thm:miiso} (3)]
In light of Lemma \ref{lem:wi2} (together with  Corollary \ref{cor:group1})  and Lemma \ref{lem:wi1}
it suffices to show the following claim:

\begin{claim}
Let $M$ be a 3--manifold and  $\S\subset \partial M$ such that
$\pi_1(\S)\to \pi_1(M)$ induces an isomorphism of prosolvable completions.
Furthermore let $M'\subset M$ be a submanifold with the following properties:
\bn
\item[(A)] $\S':=\S\cap M'$ is a connected subsurface of $\S'$,
\item[(B)] $\pi_1(\S')\to \pi_1(\S)$ is injective, and
\item[(C)] for any homomorphism $\a:\pi_1(M)\to S$ to a finite solvable group
the inclusion map induces  isomorphisms
\[ H_j(\S';\Z[S])\to H_j(M';\Z[S])\]
for $j=0,1$ and we have
\[ \im\{\pi_1(\S')\to \pi_1(M)\xrightarrow{\a}S\}=
\im\{\pi_1(M')\to\pi_1(M)\xrightarrow{\a}S\}.\]
\en
Then $\pi_1(\S')\to \pi_1(M')$ induces an isomorphism of prosolvable completions.
\end{claim}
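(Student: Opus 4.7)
The plan is to establish, by induction on the derived length $n$ of a finite solvable group $G$, that the restriction map $\iota^*: \hom(B', G) \to \hom(A', G)$ is a bijection, where $A' = \pi_1(\Sigma')$, $B' = \pi_1(M')$, and $\iota$ is induced by the inclusion $\Sigma' \hookrightarrow M'$; by Lemma \ref{lem:pro} this is equivalent to the claim. The base case of abelian $G$ is immediate from hypothesis (C) applied with trivial $S$: $H_1(\Sigma';\Z)\to H_1(M';\Z)$ is an isomorphism, and bijectivity on $\hom$ to any abelian $G$ follows by abelianization.

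For the inductive step, fix $G$ of length $n+1$ with short exact sequence $1 \to I \to G \to S \to 1$, where $I$ is abelian and $\ell(S)=n$. Given $\alpha: A' \to G$, set $\alpha' := pr \circ \alpha: A' \to S$, and let $\beta': B' \to S$ be the unique extension of $\alpha'$ provided by the induction hypothesis. Mirroring Proposition \ref{prop:ind1}, both existence and uniqueness of a lift $\gamma: B' \to G$ follow once we establish that $\iota$ induces a bijection $A'/[\ker\alpha',\ker\alpha'] \to B'/[\ker\beta',\ker\beta']$. The principal obstacle is that $\beta'$ is defined only on $B'$ and need not extend to a homomorphism $\pi_1(M)\to S$, so hypothesis (C) cannot be invoked directly with $\beta'$.

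To circumvent this, apply Proposition \ref{prop:extendhom} to the injection $\pi_1(\Sigma')\hookrightarrow\pi_1(\Sigma)$ (provided by (B), and to which the proposition applies since $\Sigma$ is a closed surface in the intended application to Theorem \ref{thm:miiso}) together with $\alpha'$: one obtains a finite solvable group $T$, a homomorphism $\eta:\pi_1(\Sigma)\to T$, and $\pi: T':=\im(\eta|_{A'})\to S$ with $\alpha' = \pi\circ\eta|_{A'}$. The hypothesis that $\pi_1(\Sigma)\to\pi_1(M)$ induces an isomorphism of prosolvable completions, via Lemma \ref{lem:pro}, yields a unique $\tilde\eta:\pi_1(M)\to T$ extending $\eta$. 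The image clause of hypothesis (C) gives $\im(\tilde\eta|_{B'}) = \im(\tilde\eta|_{A'}) = T'$, so the composition $\pi\circ\tilde\eta|_{B'}: B'\to S$ is well defined and extends $\alpha'$; by the inductive uniqueness it agrees with $\beta'$. In particular, writing $\hat A' := \ker(\tilde\eta|_{A'})$ and $\hat B' := \ker(\tilde\eta|_{B'})$, we have $\hat A' = \iota^{-1}(\hat B')$ and $\hat B'\subset\ker\beta' =: \tilde B'$.

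Now the full force of hypothesis (C) applies to $\tilde\eta:\pi_1(M)\to T$: the maps $H_i(\Sigma';\Z[T])\to H_i(M';\Z[T])$ are isomorphisms for $i=0,1$, which by Corollary \ref{cor:metab} translates to bijections $A'/\hat A'\to B'/\hat B'$ and $A'/[\hat A',\hat A']\to B'/[\hat B',\hat B']$. Since $\hat B'$ is normal in $B'$ and $\hat B'\subset\tilde B'$, Lemma \ref{lem:alsoiso} propagates these to bijections $A'/\tilde A'\to B'/\tilde B'$ and $A'/[\tilde A',\tilde A']\to B'/[\tilde B',\tilde B']$ with $\tilde A' := \ker\alpha' = \iota^{-1}(\tilde B')$. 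The second bijection is precisely the ingredient needed to construct $\gamma$ exactly as in Proposition \ref{prop:ind1}: factor $\alpha = \psi\circ\rho$ through $A'/[\ker\alpha',\ker\alpha']$ and define $\gamma$ as the composition $B'\to B'/[\ker\beta',\ker\beta']\xrightarrow{\iota^{-1}}A'/[\ker\alpha',\ker\alpha']\xrightarrow{\psi}G$. Uniqueness of $\gamma$ is automatic: any extension of $\alpha$ must satisfy $pr\circ\gamma = \beta'$ by the inductive uniqueness, hence factor through $B'/[\ker\beta',\ker\beta']$, and the factorization is uniquely pinned down by the identification with $A'/[\ker\alpha',\ker\alpha']$.
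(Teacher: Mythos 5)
Your proof is correct, but it takes a genuinely different organizational route from the paper's. The paper proves the bijection of $\hom(\pi_1(M'),S)\to\hom(\pi_1(\S'),S)$ by directly establishing surjectivity and injectivity for each fixed finite solvable $S$: surjectivity needs only the image clause of (C) (after lifting $\a'$ through $\pi_1(\S)$ and then $\pi_1(M)$ via Proposition \ref{prop:extendhom}), while injectivity is handled by a separate ``take the maximal $n$ with agreement modulo $S^{(n)}$'' argument that invokes Corollary \ref{cor:metab} once. You instead run an induction on derived length, consciously mirroring the template of Propositions \ref{prop:ind1} and \ref{prop:ind2}, so that surjectivity and injectivity are established simultaneously at each stage. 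The price is that every inductive step deploys the full strength of (C) and needs Lemma \ref{lem:alsoiso} to transfer the metabelian bijection from $\ker(\ti\eta|_{B'})$ up to $\ker\b'$ --- machinery the paper avoids in its surjectivity half. In exchange, your approach makes the structural analogy with the earlier $\statefs(n)\Rightarrow\statefs(n+1)$ induction transparent. Both proofs rest on the same three pillars --- Proposition \ref{prop:extendhom} to extend a homomorphism from $\pi_1(\S')$ to one defined on all of $\pi_1(M)$, Corollary \ref{cor:metab} to convert the $H_0,H_1$ isomorphisms into statements about metabelian quotients, and Lemma \ref{lem:pro} to pass between completion isomorphisms and $\hom$-set bijections --- and your reduction of $\b'$ to $\pi\circ\ti\eta|_{B'}$ via the inductive uniqueness is exactly the right move to make the metabelian-quotient comparison accessible. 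One small point you handled correctly but which deserves emphasis: Proposition \ref{prop:extendhom} requires $\S$ closed, a hypothesis not literally in the claim's statement; you flagged this, and indeed in the intended application (Theorem \ref{thm:miiso}, where $N$ is closed) it is satisfied, just as the paper implicitly assumes.
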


By Lemma \ref{lem:pro} we have to show that for any finite solvable group $S$ the map
\[ \i^*:\hom(\pi_1(M'),S) \to \hom(\pi_1(\S'),S) \]
is a bijection.

So let $S$ be a finite solvable group. We first show that
$\i^*:\hom(\pi_1(M'),S) \to \hom(\pi_1(\S'),S)$
is surjective. The various groups and maps in the proof are summarized in the diagram below. Assume we are given
a homomorphism $\a':\pi_1(\S')\to S$.
By (B) and Proposition \ref{prop:extendhom} there exists a homomorphism $\b:\pi_1(\S)\to T$ to a finite solvable group
and a homomorphism $\pi:\im\{\pi_1(\S')\to T\} \to S$ such that $\pi\circ (\b\circ \i)=\a'$.
We write $T'=\im\{\pi_1(\S')\to T\}$ and $\b'=\b\circ \i:\pi_1(\S')\to T'$.

By our assumption that $\pi_1(\S)\to \pi_1(M)$ induce isomorphisms of prosolvable completions
and by Lemma \ref{lem:pro} there exists a homomorphism $\varphi:\pi_1(M)\to T$
such that $\b=\varphi\circ \i$. By (C) we have
\[ \im\{\pi_1(M')\to \pi_1(M)\xrightarrow{\varphi} T\}=\im\{\pi_1(\S')\xrightarrow{\i}\pi_1(M)
\xrightarrow{\varphi} T\}=\im\{\pi_1(\S')\xrightarrow{\b} T\}=T'.\]
Now denote the induced homomorphism $\pi_1(M')\to T'$ by $\varphi'$. Clearly $\varphi'\circ \i=\b'$.
Hence $\a'=\pi\circ \b'=(\pi\circ \varphi')\circ \i$.
This shows that $\i^*:\hom(\pi_1(M'),S) \to \hom(\pi_1(\S'),S)$ is surjective.
The following diagram  summarizes the homomorphisms in the proof of the previous claim:
\[ \xymatrix{ \pi_1(\S')\ar[dr]_{\a'}\ar[drr]^{\b'=\b\circ \i} \ar[dd]_\i\ar[rrrr]^\i &&&& \pi_1(\S) \ar[dl]_{\b}\ar[dd]^\i\\
& S &T' \ar[l]^{\pi\quad} \ar@{(->}[r]&T& \\
 \pi_1(M')\ar[urr]_{\varphi'}\ar[rrrr]^\i &&&& \pi_1(M)\ar[ul]^\varphi.}\]

We now show that $\i^*:\hom(\pi_1(M'),S) \to \hom(\pi_1(\S'),S)$ is injective.
Let $\a_1,\a_2:\pi_1(M')\to S$ be two different homomorphisms.
Let $n$ be the maximal integer such that the homomorphisms
$\pi_1(M')\to S\to S/S^{(n)}$ induced by $\a_1$ and $\a_2$ agree.
We will show that the restriction to $\pi_1(\S')$ of the maps $\pi_1(M')\to S\to S/S^{(n+1)}$
induced by $\a_1$ and $\a_2$  are different. Without loss of generality we can therefore assume that $S=S/S^{(n+1)}$.

We denote by $\psi'$ the homomorphism $\pi_1(M')\to S\to S/S^{(n)}=:G$, induced by $\a_1$ and $\a_2$.

\begin{claim} There  exists a homomorphism $\varphi:\pi_1(M)\to H$ to a finite solvable group
and a homomorphism $\pi:\im\{\pi_1(M')\to \pi_1(M)\to H\} \to G$ such that $\psi'=\pi \circ (\varphi\circ \i)$.
\end{claim}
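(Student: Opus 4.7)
The plan is to prove the claim as part of a larger induction on the derived length $m$ of the ambient finite solvable group $S$ appearing in the overall proof of injectivity of $\i^{\ast}:\hom(\pi_1(M'),S)\to\hom(\pi_1(\S'),S)$. In the current step the target $G=S/S^{(n)}$ of $\psi'$ has derived length at most $n<m$, so injectivity of the restriction map for target groups of derived length strictly less than $m$ is available as inductive hypothesis. With this in hand I prove the claim in two stages: a construction of $\varphi$ and $\pi$ from hypotheses~(A) and~(B), and a verification of the factorization using hypothesis~(C) together with this inductive hypothesis.

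For the construction, I apply Proposition~\ref{prop:extendhom} to the injection $\pi_1(\S')\hookrightarrow\pi_1(\S)$ (injective by hypothesis~(B)) and to the restriction $\psi'|_{\pi_1(\S')}:\pi_1(\S')\to G$; this yields a finite solvable group $T$, a homomorphism $\beta:\pi_1(\S)\to T$, and a homomorphism $\rho:T'\to G$ defined on the image $T':=\beta(\pi_1(\S'))\subseteq T$ such that $\rho\circ\beta|_{\pi_1(\S')}=\psi'|_{\pi_1(\S')}$. By hypothesis~(A) and Lemma~\ref{lem:pro}, $\beta$ extends to a homomorphism $\varphi:\pi_1(M)\to T$ with $\varphi|_{\pi_1(\S)}=\beta$. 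Setting $H:=T$ and $\pi:=\rho$, condition~(C) applied to $\alpha=\varphi$ together with Corollary~\ref{cor:group1} gives $\im\{\pi_1(M')\to T\}=\im\{\pi_1(\S')\to T\}=T'$, so $\pi$ is defined precisely on $\im\{\pi_1(M')\to\pi_1(M)\to H\}$.

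For the verification, set $\chi:=\pi\circ\varphi\circ\i:\pi_1(M')\to G$; by construction $\chi$ and $\psi'$ agree on the image of $\pi_1(\S')$ in $\pi_1(M')$, since both equal $\rho\circ\beta|_{\pi_1(\S')}$ there. Since $G$ has derived length at most $n<m$, the inductive hypothesis on injectivity of the restriction map $\hom(\pi_1(M'),G)\to\hom(\pi_1(\S'),G)$ forces $\chi=\psi'$ on all of $\pi_1(M')$, whence $\psi'=\pi\circ\varphi\circ\i$ and the claim follows. The main obstacle is precisely this verification: the claim is used elsewhere in the overall proof to drive a contradiction between $\alpha_1|_{\pi_1(\S')}=\alpha_2|_{\pi_1(\S')}$ and $\alpha_1\ne\alpha_2$ modulo $S^{(n+1)}$, so the entire argument must be organized as an induction on $m$ to ensure that injectivity at lower derived lengths is indeed available when the claim is invoked at the current level.
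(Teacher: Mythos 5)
Your construction of $\varphi$ and $\pi$ coincides with the paper's: apply Proposition~\ref{prop:extendhom} to $\psi'\circ\i:\pi_1(\S')\to G$, extend the resulting $\b:\pi_1(\S)\to T$ across $\pi_1(M)$ via Lemma~\ref{lem:pro}, and use hypothesis~(C) (with Corollary~\ref{cor:group1}) to identify $\varphi'(\pi_1(M'))$ with $\b(\pi_1(\S'))=T'$, so that $\pi$ is defined where required. The real difference is in the verification. The paper's proof of the claim ends with ``It is now clear that $\varphi$ and $\pi$ have the required properties,'' but the construction only gives that $\pi\circ\varphi'$ and $\psi'$ agree after precomposition with $\i:\pi_1(\S')\to\pi_1(M')$, not on all of $\pi_1(M')$; since $\i(\pi_1(\S'))$ is in general a proper subgroup, and the whole surrounding argument is exactly about when maps out of $\pi_1(M')$ are determined by their restrictions to $\pi_1(\S')$, this last step is not clear at all and is a genuine gap in the paper's write-up. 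Your fix is the natural one: reorganize the injectivity argument as an induction on the derived length $m$ of $S$, note that $G=S/S^{(n)}$ has derived length at most $n<m$, and invoke the inductive hypothesis that $\i^*:\hom(\pi_1(M'),G)\to\hom(\pi_1(\S'),G)$ is injective to promote the agreement on $\pi_1(\S')$ to $\psi'=\pi\circ\varphi'$ on $\pi_1(M')$. This is correct and, as far as I can see, necessary; what your induction buys is precisely the missing $\ker(\varphi')\subset\ker(\psi')$ used immediately afterward. Two small inaccuracies worth noting, neither affecting the argument: the extension of $\b$ to $\varphi:\pi_1(M)\to T$ relies on the standing hypothesis that $\pi_1(\S)\to\pi_1(M)$ induces an isomorphism of prosolvable completions (together with Lemma~\ref{lem:pro}), not on hypothesis~(A); and the ambient proof does not reach a contradiction from $\a_1\circ\i=\a_2\circ\i$ but rather shows directly that $\a_1\circ\i\ne\a_2\circ\i$ modulo $S^{(n+1)}$.
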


By (B) and Proposition \ref{prop:extendhom} there exists a homomorphism $\b:\pi_1(\S)\to H$
to a finite solvable group $H$
and a homomorphism $\pi:\im\{\pi_1(\S')\to H\} \to G$ such that $\pi' \circ (\b\circ \i)=\psi' \circ \i$.
By our assumption and by Lemma \ref{lem:pro} there exists a homomorphism $\varphi:\pi_1(M)\to H$
such that $\b=\varphi\circ \i$.
By (C) we have $\im\{\pi_1(\S')\to H\}=\im\{\pi_1(M')\to H\}=:H'$.
It is now clear that $\varphi$ and $\pi$ have the required properties.
This concludes the proof of the claim.

The following diagram summarizes the homomorphisms in the proof of the previous claim:
\[ \xymatrix{ \pi_1(\S')\ar[dr]_{\psi' \circ \i}\ar[drr] \ar[dd]_\i\ar[rrrr]^\i &&&& \pi_1(\S) \ar[dl]_{\b}\ar[dd]^\i\\
& G &H' \ar[l]^{ \pi\quad } \ar@{(->}[r]&H& \\
 \pi_1(M')\ar[ur]^{\psi'}\ar[urr]_{\varphi'=\varphi\circ \i}\ar[rrrr]^\i &&&& \pi_1(M)\ar[ul]^\varphi.}\]
 We now
apply (C)  and
Corollary \ref{cor:metab} to the case $A=\pi_1(\S'), B=\pi_1(M')$ and $\varphi':B\to H'$
to conclude that
the inclusion map induces an isomorphism
\[ \pi_1(\S')/[\ker(\varphi' \circ \i), \ker(\varphi' \circ \i)] \to
 \pi_1(M')/[\ker(\varphi' ), \ker(\varphi'].\]

We now consider the homomorphisms $\a_1,\a_2:\pi_1(M')\to S=S/S^{(n+1)}$.
First note that they factor through  $\pi_1(M')/[\ker(\psi'),\ker(\psi')]$.
Now note that $\ker(\varphi')\subset \ker(\psi')\subset \pi_1(M')$, in particular
we have a  surjection $\pi_1(M')/\ker(\varphi')\to \pi_1(M')/\ker(\psi')$ which
gives rise to a surjection
\[ \pi_1(M')/[\ker(\varphi'),\ker(\varphi')]
\to \pi_1(M')/[\ker(\psi'),\ker(\psi')]. \]
In particular $\a_1,\a_2$ factor through $ \pi_1(M')/[\ker(\varphi'),\ker(\varphi')]$.
We therefore obtain the following commutative diagram
\[
\xymatrix{ \pi_1(\S')\ar[d]\ar[rr]^\i && \pi_1(M')\ar[dl]\ar[ddd]^{\a_1}_{\a_2}\\
 \pi_1(\S')/[\ker(\varphi' \circ \i), \ker(\varphi' \circ \i)] \ar[r]^-{\cong}&
 \pi_1(M')/[\ker(\varphi' ), \ker(\varphi')]\ar[d]&\\
&  \pi_1(M')/[\ker(\psi' ), \ker(\psi')]\ar[dr]&\\
 & &S.}\]
It is now clear that $\a_1 \circ \i$ and $\a_2 \circ \i$ are different.
This concludes the proof that  $\i^*:\hom(\pi_1(M'),S) \to \hom(\pi_1(\S'),S)$ is injective.
As we pointed out before, it now follows from Lemma \ref{lem:pro} that
 $\i:\pi_1(\S')
\to \pi_1(M')$ induces an isomorphism of prosolvable completions.
\end{proof}

\section{The proof of Theorem \ref{mainthm}} \label{section:pgroups}\label{section:mainthm}

We start out with the following two results which allow us to reduce the proof  of Theorem \ref{mainthm}
to the case of closed prime 3--manifolds.

\begin{lemma}\label{lem:irr}\label{lem:prime}
Let $N$ be a 3--manifold with empty or toroidal boundary and let $\phi\in H^1(N;\Z)$ be nontrivial.
If $\Delta_{N,\phi}^\a$ is nonzero for any homomorphism  $\a:\pi_1(N)\to G$ to a finite group $G$,
then $N$ is prime.
\end{lemma}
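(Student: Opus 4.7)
Suppose for contradiction that $N$ is not prime, so $N = N_1 \# N_2$ with neither $N_i$ homeomorphic to $S^3$. By Perelman's proof of the Poincar\'e conjecture both $\pi_1(N_i)$ are non--trivial, and by Hempel's theorem (using geometrization) they are residually finite, so each admits a non--trivial finite quotient. Writing $\phi_i := \phi|_{\pi_1(N_i)}$, the free product structure $\pi_1(N) = \pi_1(N_1) * \pi_1(N_2)$ forces $\phi$ to be determined by the pair $(\phi_1,\phi_2)$, and not both restrictions can vanish since $\phi \ne 0$. The plan is to produce a homomorphism $\alpha\colon\pi_1(N)\to G$ to a finite group such that in the associated finite cover $p\colon \tilde N\to N$ the manifold $\tilde N$ admits a connected sum decomposition $\tilde N = Y_1 \# Y_2$ on which $\tilde\phi := p^{*}\phi$ restricts non--trivially to each $\pi_1(Y_j)$. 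A Mayer--Vietoris computation along the separating 2--sphere will then force $\Delta_{\tilde N,\tilde\phi} = 0$, which equals $\Delta_{N,\phi}^{\alpha}$ by the identification recalled after the statement of Theorem \ref{mainthm} (cf. \cite{FV08a}), contradicting the hypothesis.

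The Mayer--Vietoris step is the core of the argument. Writing $Y_j' := Y_j \setminus \intt(B^{3})$ and exploiting $\pi_1(S^2) = 1$ so that the twisted homology of the sphere reduces to the untwisted case, the $\zt$--coefficient sequence for $\tilde N = Y_1' \cup_{S^2} Y_2'$ (in the style of the sequence used in the proof of Proposition \ref{prop:sameh0h1}) produces an exact piece
\[
H_1(\tilde N;\zt)\longrightarrow H_0(S^2;\zt) = \zt \xrightarrow{(1,-1)} \zt/(t^{d_1}-1) \oplus \zt/(t^{d_2}-1),
\]
where $d_j := \div(\tilde\phi|_{\pi_1(Y_j)}) > 0$. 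The image of the connecting map is then the principal ideal $\bigl(\mathrm{lcm}(t^{d_1}-1,\,t^{d_2}-1)\bigr)$, which is a non--zero ideal of $\zt$ and hence free of infinite rank as a $\Z$--module. Thus $H_1(\tilde N;\zt)$ has infinite $\Z$--rank, and Lemma \ref{lem:twiprop2} forces $\Delta_{\tilde N,\tilde\phi} = 0$.

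It remains to construct $\alpha$. If both $\phi_1,\phi_2 \ne 0$ we take $\alpha$ trivial and set $\tilde N = N$, $Y_j = N_j$. Otherwise, after relabeling we may assume $\phi_1 = 0$ and $\phi_2 \ne 0$; choose any non--trivial finite quotient $\beta\colon \pi_1(N_1)\to H$ (which exists by residual finiteness and $\pi_1(N_1)\ne 1$) and define $\alpha\colon \pi_1(N_1)\ast\pi_1(N_2)\to H$ by $\alpha|_{\pi_1(N_1)} = \beta$ and $\alpha|_{\pi_1(N_2)} = 1$ via the universal property of the free product. Since $\alpha$ is trivial on $\pi_1(N_2)$, the preimage of $N_2\setminus \intt(B^3)$ in the cover $\tilde N$ consists of $|H|\ge 2$ disjoint copies of $N_2\setminus \intt(B^3)$, each glued to the single $H$--cover of $N_1\setminus \intt(B^3)$ along a lift of the separating sphere. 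Cutting $\tilde N$ along one such lift that peels off a single copy of $N_2$ yields a decomposition $\tilde N = N_2 \# Y$; on the $N_2$--factor $\tilde\phi = \phi_2 \ne 0$, while on $Y$ the restriction of $\tilde\phi$ is still non--trivial because $Y$ contains the $|H|-1 \ge 1$ remaining copies of $N_2$.

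The main subtlety is the bookkeeping identifying the preimages of the connect--sum sphere and of the prime pieces inside $\tilde N$; it rests only on $\pi_1(S^2) = 1$ (so that spheres lift to spheres) together with the non--triviality and residual finiteness of $\pi_1(N_1)$, which is where Perelman's theorem and geometrization enter.
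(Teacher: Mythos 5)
Your argument is correct and follows essentially the same route as the paper's: a Mayer--Vietoris computation along the connect--sum sphere, exploiting that $H_0(S^2)$ with $\zt$--coefficients is free of rank one while the $H_0$ of each punctured summand is torsion once $\phi$ restricts nontrivially, together with residual finiteness (via geometrization) to pass to a finite cover when $\phi$ vanishes on one summand. The only differences are organizational — you work with the untwisted polynomial of the cover rather than with $\Q[G]\tpm$--coefficients on $N$, and you split cases by the vanishing of $\phi_i$ rather than of $b_1(N_i)$, which lets you absorb the paper's intermediate sub--case into the covering argument.
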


Note that the main idea for the proof of this lemma can already be found in \cite{McC01}.

\begin{proof}
Let $N$ be a 3--manifold with empty or toroidal boundary which is not prime, i.e. $N=N_1\# N_2$ with $N_1,N_2\ne S^3$.
We have to show that there exists a
 homomorphism $\a:\pi_1(N)\to G$ to a finite group such that $\Delta_{N,\phi}^\a=0$.
 Recall that   by
 Lemma \ref{lem:twiprop2} we have   $\Delta_{N,\phi}^\a=0$ if and only if  $H_1(N;\Q[G]\tpm)$ is not $\qt$--torsion.
 Note that we can write $N=(N_1\sm \intt D^3)\cup_{S^2} (N_2\sm \intt D^3)$
 and that $H_j(N_i\sm \intt D^3;\qt)=H_j(N_i;\qt)$ for $j=0,1$ and $i=1,2$.
 The Mayer--Vietoris sequence corresponding to $N=(N_1\sm \intt D^3)\cup_{S^2} (N_2\sm \intt D^3)$ now gives rise to the following long exact sequence:
 \[ \ba{cccccccccccccccccc} H_1(S^2;\qt)&\to& H_1(N_1;\qt)&\hspace{-0.2cm} \oplus\hspace{-0.2cm}&H_1(N_2;\qt)&\to&H_1(N;\qt)&\to \\
 H_0(S^2;\qt)&\to& H_0(N_1;\qt)&\hspace{-0.2cm} \oplus\hspace{-0.2cm}&H_0(N_2;\qt)&\to&H_0(N;\qt)&\to &0.\ea \]
 A straightforward computation shows that $H_0(S^2;\qt)=\qt$ and $H_1(S^2;\qt)=0$.

First assume that  $b_1(N_i)>0$ for $i=1,2$. Denote by $\phi_i$ the restriction of $\phi:H_1(N;\Q)\to \Q$ to   $H_1(N_i;\Q)$. If $\phi_i$ is
nontrivial for $i=1$ and $i=2$, then
it follows from Lemma \ref{lem:group1} and Lemma \ref{lem:twiprop2} that $H_0(N_i;\qt)$ is $\qt$--torsion
for $i=1,2$. On the other hand we have  $H_0(S^2;\qt)=\qt$. It
follows from the above Mayer--Vietoris  sequence  that $H_1(N;\qt)$ can not be
$\qt$--torsion. On the other hand, if $\phi_i$ is trivial for some $i\in \{1,2\}$, then
$H_1(N_i;\qt)$ is isomorphic to $H_1(N_i;\Q)\otimes \qt$, in particular $H_1(N_i;\qt)$ is not $\qt$--torsion, and using that
$H_1(S^2;\qt)=0$ it follows again from the above Mayer--Vietoris sequence that $H_1(N;\qt)$ is not
$\qt$--torsion.

Now assume that  either $b_1(N_1)=0$ or $b_1(N_2)=0$. Without loss of generality we can assume that  $b_1(N_2)=0$. Since $b_1(N)=b_1(N_1)+b_1(N_2)$ we have  $b_1(N_1)>0$. By the Geometrization
Conjecture $\pi_1(N_2)$ is nontrivial and  residually finite (cf.
\cite{Th82} and \cite{He87}), in particular  there exists an epimorphism $\a:\pi_1(N_2)\to G$ onto a
nontrivial  finite group $G$. Denote the homomorphism
$\pi_1(N)=\pi_1(N_1)*\pi_1(N_2)\to \pi_1(N_2)\to G$ by $\a$ as well. Then by Lemma \ref{lem:twiprop}  we have
\[ \Delta_{N,\phi}^\a=\Delta_{N_G,\phi_G}\]
where $p:N_G\to N$ is the cover of $N$ corresponding to $\a$ and $\phi_G=p^*(\phi)$. But the prime
decomposition of $N_G$ has $|G|$ copies of $N_1$. By the  argument above we now have
that $\Delta_{N_G,\phi_G}=0$, which  implies that  $\Delta_{N,\phi}^\a=\Delta_{N_G,\phi_G}=0$.
\end{proof}

\begin{lemma}\label{lem:double}\label{lem:closed}
Let $N$ be an irreducible $3$--manifold with non--empty toroidal boundary  and let $\phi \in H^{1}(N;\Z)$ be nontrivial.
Let $W=N\cup_{\partial N} N$ be the double of $N$ along the boundary of $N$.
Let $\Phi=p^*(\phi) \in H^{1}(W;\Z)$ where $p:W\to N$ denotes the folding map.
Then the following hold:
\bn
\item $(W,\Phi)$ fibers over $S^1$ if and only if $(N,\phi)$ fibers over $S^1$,
\item if $(N,\phi)$ satisfies Condition  $(*)$, then $(W,\Phi)$ satisfies Condition  $(*)$.
\en
\end{lemma}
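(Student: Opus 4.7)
The plan splits naturally into parts (1) and (2), treated by different techniques: Stallings' criterion plus the retraction $p_*$ for (1), and a Mayer--Vietoris calculation for (2).

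\emph{Part (1).} For the forward direction, suppose $(N,\phi)$ fibers with connected fiber $\Sigma$, so that $N\sm\nu\Sigma\cong\Sigma\times I$. Cutting $W$ along the doubled surface $D\Sigma:=\Sigma_1\cup_{\partial\Sigma}\Sigma_2$ gives
\[ W\sm\nu D\Sigma \;\cong\; (\Sigma\times I)\cup_{\partial\Sigma\times I}(\Sigma\times I) \;\cong\; D\Sigma\times I, \]
so by Stallings' theorem $(W,\Phi)$ fibers with fiber $D\Sigma$. Conversely, since $p\circ\iota_i=\id_N$ for either inclusion $\iota_i:N\to W$, the retraction $p_*:\pi_1(W)\twoheadrightarrow\pi_1(N)$ satisfies $\Phi=\phi\circ p_*$, whence $\ker\phi=p_*(\ker\Phi)$. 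If $(W,\Phi)$ fibers, Stallings' theorem gives that $\ker\Phi$ is finitely generated, hence so is its image $p_*(\ker\Phi)=\ker\phi$, and $(N,\phi)$ fibers.

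\emph{Part (2).} Fix a homomorphism $\alpha:\pi_1(W)\to G$ to a finite group and set $\alpha_i:=\alpha\circ(\iota_i)_*:\pi_1(N)\to G$ for $i=1,2$. Condition $(*)$ for $(N,\phi)$ together with $b_3(N)=0$ gives that each $\Delta_{N,\phi}^{\alpha_i}$ is monic of degree $|G|\|\phi\|_T+\div\phi_{\alpha_i}$. Since $b_3(W)=1$ and $\|\Phi\|_T=2\|\phi\|_T$ (the inequality $\le$ by doubling a norm-minimizing surface dual to $\phi$, and $\ge$ by $\tau$-symmetrizing a norm-minimizing surface dual to $\Phi$ and intersecting with $N$), Condition $(*)$ for $(W,\Phi)$ reduces to showing that $\Delta_{W,\Phi}^{\alpha}$ is monic of degree $2|G|\|\phi\|_T+2\div\Phi_\alpha$.

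The key tool is the Mayer--Vietoris sequence for $W=N_1\cup_{\partial N}N_2$ with coefficients $V=\Z[G][t^{\pm 1}]$, in the style of \cite[Proposition~3.2]{FK06}. For each torus $T\subset\partial N$ on which $\phi|_T\ne 0$, Lemma \ref{lem:twiprop} gives that each order $\Delta_{T,\phi|_T,i}^{\alpha|_T}$ is a monic polynomial of explicit degree in terms of $\div(\phi|_T)_{\alpha|_T}$. Multiplicativity of orders along the long exact sequence then yields an identity (up to a unit of $\Z[t^{\pm 1}]$)
\[ \Delta_{W,\Phi}^{\alpha} \;\doteq\; \Delta_{N,\phi}^{\alpha_1}\cdot\Delta_{N,\phi}^{\alpha_2}\cdot\prod_{T\subset\partial N} \frac{\Delta_{T,\phi|_T,1}^{\alpha|_T}}{\Delta_{T,\phi|_T,0}^{\alpha|_T}\cdot\Delta_{T,\phi|_T,2}^{\alpha|_T}}, \]
from which monicness of $\Delta_{W,\Phi}^{\alpha}$ follows since each factor is monic. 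The degree identity then reduces to a direct calculation using Lemma \ref{lem:twiprop}\,(1) to evaluate the torus terms.

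\emph{Main obstacle.} The most delicate step is the divisibility bookkeeping in part (2). One must verify that, although $\div\Phi_\alpha$ is a priori only a common divisor of $\div\phi_{\alpha_1}$ and $\div\phi_{\alpha_2}$, the boundary torus contributions precisely account for the discrepancy, so that summing degrees yields $2\div\Phi_\alpha$ rather than $\div\phi_{\alpha_1}+\div\phi_{\alpha_2}$. The corner case in which $\phi|_T=0$ for some boundary torus $T$ must be treated separately, since then $H_*(T;V)$ fails to be $\Z[t^{\pm 1}]$-torsion and the Mayer--Vietoris contribution must be reinterpreted.
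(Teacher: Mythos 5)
Your part (1) is a valid alternative to the paper's route: the paper cites the Eisenbud--Neumann splitting lemma (Lemma \ref{lem:split}) directly, whereas you cut along the doubled surface for the forward direction and use the retraction $p_*$ together with Stallings' finite-generation criterion for the converse. Both work; yours is more self-contained but should also note that $W$ is irreducible (which follows because $N$ is irreducible with incompressible boundary) before invoking Stallings.

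Part (2) has the right skeleton (Mayer--Vietoris for $W=N_1\cup_{\partial N}N_2$ with $\Z[G]\tpm$-coefficients plus multiplicativity of orders), but the displayed identity is wrong, and the error is precisely at the step you flag as the ``main obstacle.'' What multiplicativity of orders actually gives, after cancelling the boundary contributions via $\Delta^{\alpha}_{T,\Phi,0}=\Delta^{\alpha}_{T,\Phi,1}$ and $\Delta^{\alpha}_{T,\Phi,2}=1$ (the paper's Fact 3), is a relation among the \emph{torsions}
\[
\frac{\Delta^{\alpha}_{W,\Phi,1}}{\Delta^{\alpha}_{W,\Phi,0}\,\Delta^{\alpha}_{W,\Phi,2}}
\;\doteq\;
\frac{\Delta^{\alpha_1}_{N_1,\phi,1}}{\Delta^{\alpha_1}_{N_1,\phi,0}\,\Delta^{\alpha_1}_{N_1,\phi,2}}\cdot
\frac{\Delta^{\alpha_2}_{N_2,\phi,1}}{\Delta^{\alpha_2}_{N_2,\phi,0}\,\Delta^{\alpha_2}_{N_2,\phi,2}},
\]
not your formula
$\Delta^{\alpha}_{W,\Phi}\doteq\Delta^{\alpha_1}_{N,\phi}\Delta^{\alpha_2}_{N,\phi}\cdot\prod_T\Delta^{\alpha}_{T,1}/(\Delta^{\alpha}_{T,0}\Delta^{\alpha}_{T,2})$.
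By Fact 3 the torus product in your formula is $1$, so your identity reduces to $\Delta^{\alpha}_{W,\Phi}\doteq\Delta^{\alpha_1}_{N,\phi}\Delta^{\alpha_2}_{N,\phi}$, which is false: the missing factor is
$\Delta^{\alpha}_{W,\Phi,0}\Delta^{\alpha}_{W,\Phi,2}/(\Delta^{\alpha_1}_{N,\phi,0}\Delta^{\alpha_2}_{N,\phi,0})
=(1-t^{\div\Phi_\alpha})^2/\bigl((1-t^{\div\phi_{\alpha_1}})(1-t^{\div\phi_{\alpha_2}})\bigr)$,
and it is exactly this factor of degree $2\div\Phi_\alpha-\div\phi_{\alpha_1}-\div\phi_{\alpha_2}$ that accounts for the ``discrepancy'' you mention. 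So the divisibility bookkeeping is carried by the $0$th and $2$nd polynomials of $W$ and $N_i$ (via $b_3(W)=1$, $b_3(N_i)=0$), not by the boundary tori, whose contribution is trivial. Two further points: the corner case $\phi|_T=0$ does not require separate treatment because Condition $(*)$ on $(N,\phi)$ already forces $\phi|_T\ne 0$ for every boundary torus (the paper's Fact 2, via Theorem \ref{thm:fv08torus}); and your sketch of $\|\Phi\|_T\ge 2\|\phi\|_T$ (``$\tau$-symmetrize and intersect'') is not a proof --- the additivity of the Thurston norm under splitting along incompressible tori is genuinely Lemma \ref{lem:split}~(2), cf.\ \cite{EN85}, and should be cited rather than re-derived in a line.
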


In the proof of Lemma \ref{lem:closed} we will make use of the following well--known lemma.
We refer to \cite[Theorem~4.2]{EN85} and \cite{Ro74}  for the first statement, and to \cite[p.~33]{EN85} for the second statement.

\begin{lemma} \label{lem:split}
Let $Y$ be a closed 3--manifold. Let $T\subset Y$ be a union
 of incompressible tori
such that $T$ separates $Y$ into two connected components $Y_1$ and $Y_2$.
Let $\psi\in H^1(Y;\Z)$. Then the following hold:
\bn
\item If $||\phi||_{T,Y}>0$, then $(Y,\psi)$ fibers over $S^1$ if and only if $(Y_1,\psi|_{Y_1})$ and $(Y_2,\psi|_{Y_2})$ fiber over $S^1$,
\item $||\psi||_{T,Y}=||\psi|_{Y_1}||_{T,Y_1}+||\psi|_{Y_2}||_{T,Y_2}$.
\en
\end{lemma}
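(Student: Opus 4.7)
The plan is to prove (2) first and then to deduce (1), since both directions of the fibration correspondence rely on the ability to put a surface or a map in general position with respect to $T$. For the inequality $\|\psi\|_{T,Y}\ge \|\psi|_{Y_1}\|_{T,Y_1}+\|\psi|_{Y_2}\|_{T,Y_2}$, I would take a Thurston norm minimizing surface $S\subset Y$ dual to $\psi$; since $Y$ is irreducible and $T$ is incompressible, standard innermost-disk and compression arguments (using that the norm minimizer $S$ is itself incompressible) allow me to isotope $S$ so that every component of $S\cap T$ is essential on $T$. The restrictions $S_i:=S\cap Y_i$ are then properly embedded surfaces in $Y_i$ dual to $\psi|_{Y_i}$ satisfying $\chi_-(S)=\chi_-(S_1)+\chi_-(S_2)$. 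For the reverse inequality, I would pick Thurston norm minimizing $S_i\subset Y_i$ dual to $\psi|_{Y_i}$; the classes $[\partial S_i]\in H_1(T;\Z)$ both equal the Poincar\'e dual of $\psi|_T$, so after adding parallel annuli in $T$ to make the boundary curves coincide exactly, the union $S_1\cup_T S_2$ is a closed surface dual to $\psi$ realizing the sum of the pieces' Thurston norms.

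For the $\Rightarrow$ direction of (1), suppose $(Y,\psi)$ fibers with fiber $F$. Since $\|\psi\|_{T,Y}>0$ the fiber $F$ is not a torus, so no component $T_j$ of $T$ can itself be a fiber: if $f|_{T_j}:T_j\to S^1$ were nullhomotopic, then $T_j$ would lift into a single fiber of $f$ and, being a closed surface embedded in the connected closed surface $F$, would equal $F$, forcing $F\cong T^2$. Hence $\psi|_{T_j}\ne 0$ for every $j$, and I can isotope $F$ via standard normal-form arguments so that $F\pitchfork T$ and every circle of $F\cap T$ is essential on $T$. Then the restriction $f|_{Y_i}:Y_i\to S^1$ is a submersion transverse to $\partial Y_i$, hence a fibration representing $\psi|_{Y_i}$, with fiber $F\cap Y_i$.

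For the $\Leftarrow$ direction, suppose $f_i:Y_i\to S^1$ are fibrations representing $\psi|_{Y_i}$. Both restrictions $f_i|_T:T\to S^1$ induce the same homomorphism $\psi|_T:\pi_1(T)\to \Z$, hence are homotopic as maps $T\to S^1$. The main obstacle is to upgrade this homotopy to a statement about collar neighborhoods: after an isotopy of $f_1$ (not changing its class in $[Y_1,S^1]$ and preserving the fibration property), I want $f_1$ and $f_2$ to agree on $T$ together with their transverse behavior in a collar. Once this is achieved by identifying collars $T\times[0,\epsilon]\subset Y_i$ with the product fibrations determined by $f_i|_T$, the gluing of $f_1$ and $f_2$ produces a map $f:Y\to S^1$ which is a submersion everywhere and hence a fibration representing $\psi$. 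This collar matching is the subtle step; it is essentially the statement that two fibrations $T\times[0,\epsilon]\to S^1$ inducing the same boundary map on $T$ are isotopic rel $T$, which reduces to the fact that $\mathrm{Diff}(S^1)$ retracts onto the rotations.
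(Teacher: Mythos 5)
The paper does not actually supply a proof of this lemma; it simply refers to Roussarie \cite{Ro74} and to Eisenbud--Neumann \cite{EN85} (Theorem~4.2 and p.~33), so there is no in-paper argument to compare against, and your sketch is essentially the one underlying those references. Most of it is sound, but the forward direction of (1) has a genuine gap. Isotoping the fiber $F$ so that $F\pitchfork T$ only arranges transversality of $T$ with the \emph{single} level surface $f^{-1}(\mathrm{pt})$; it does not make $f|_T$ a submersion, since $T$ can still be tangent to other level surfaces $f^{-1}(q)$, and then $f|_{Y_i}$ fails to be a fibration of the manifold with boundary $Y_i$. What is needed is the stronger statement --- Roussarie's theorem, precisely what the paper cites \cite{Ro74} for --- that an incompressible torus in a surface bundle over $S^1$ whose fiber is not a torus can be isotoped to be transverse to the \emph{entire} foliation by fibers, not just to one fiber. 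Once $T$ is in that position, $f|_T$ is a submersion and hence a fibration of $T$ over $S^1$, and $f|_{Y_i}$ is a fibration representing $\psi|_{Y_i}$ with fiber $F\cap Y_i$.

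The other steps are correct modulo standard bookkeeping that you have compressed. In your argument that no $T_j$ is a fiber, the cleaner route is to observe that if $\psi|_{T_j}=0$ then $T_j$ lifts to the infinite cyclic cover $\widetilde Y\cong F\times\mathbb{R}$, where the only closed incompressible surface up to isotopy is $F\times\{\mathrm{pt}\}$, forcing $F$ to be a torus; ``lifts into a single fiber'' is not literally what happens. For the $\le$ inequality in (2), before matching $\partial S_1$ with $\partial S_2$ you should first put each $\partial S_i$ into coherently oriented normal form on each component of $T$ (tubing away cancelling pairs of boundary curves along annuli in $T$); only then do the boundary collections on each torus consist of the same number of parallel curves in the same slope and can be made to coincide. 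Finally, the collar matching in the $\Leftarrow$ direction of (1) is correct in spirit, but is more cleanly justified by the connectivity of the space of fibrations of $T$ over $S^1$ representing a fixed cohomology class than by a retraction of $\mathrm{Diff}(S^1)$.
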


\begin{proof}[Proof of Lemma \ref{lem:closed}]
First note that an irreducible 3--manifold with boundary a union of tori  has compressible boundary if and only if it is the solid torus.
Since the lemma holds trivially in the case that $N=S^1\times D^2$ we will from now on assume that $N$
has incompressible boundary. This implies in particular that $||\phi||_T>0$.
The first statement is now an immediate consequence of Lemma \ref{lem:split}
and the observation that $\Phi|_N=\phi$.

Now assume that $(N,\phi)$ satisfies Condition  ($*$). In the following we write $N_i=N, i=1,2$
and we think of $W$ as $W=N_1\cup_{\partial N_1=\partial N_2}N_2$.
Let  $\a:\pi_1(N)\to G$ be a homomorphism to a finite group $G$. We write $n=|G|$,  $V=\Z[G]$ and we slightly abuse notation by denoting by $\a$ the representation $\pi_1(W)\to \aut(V)$ given by left multiplication.
We have to show that  $\Delta_{W,\Phi}^{\a} \in \zt$ is monic
and that \[ \deg(\Delta_{W,\Phi}^{\a})-\deg(\Delta_{W,\Phi,0}^{\a})-\deg(\Delta_{W,\Phi,2}^{\a})= n \, \|\Phi\|_{T}\]
(here we used Lemma  \ref{lem:twiprop} to rephrase the last condition).
For any submanifold $X\subset W$ we denote the restriction of $\Phi$ and $\a$ to $\pi_1(X)$ by $\Phi$ and $\a$ as well. Evidently the restriction of $\Phi$ to $N=N_i, i=1,2$ just agrees with $\phi$.

In order to prove the claims on $\Delta_{W,\Phi}^{\a}$ we will in the following express   $\Delta_{W,\Phi}^{\a}$  in terms of
$\Delta_{N_i,\phi_i}^\a$, $i=1,2$.
The following statement combines  the assumption that $(N,\phi)$ satisfies Condition  $(*)$ with Lemmas
\ref{lem:twiprop} and \ref{lem:twiprop2}.

\begin{facta}
For $i=1,2$ we have
\[ \deg(\Delta_{N_i,\Phi}^{\a})-\deg(\Delta_{N_i,\Phi,0}^{\a})-\deg(\Delta_{N_i,\Phi,2}^{\a})= n \, \|\Phi\|_{T,N_i}.\]
Furthermore  for all $j$ we have that $\Delta_{N_i,\Phi,j}^{\a}$ is monic.
\end{facta}

We now turn to the twisted Alexander polynomials of the boundary tori of $\partial N$.
The following is an immediate consequence of Theorem \ref{thm:fv08torus}.

\begin{factb}
If $\Delta_{N,\phi}\ne 0$ (in particular if $(N,\phi)$ satisfies Condition  $(*)$), then for any boundary component $T\subset \partial N$
the restriction of $\phi$ (and hence of $\Phi$) to
$\pi_1(T)$  is nontrivial.
\end{factb}

This fact and a straightforward computation now gives us the following fact
(cf. e.g. \cite{KL99}).

\begin{factc}
Let $T\subset \partial N$ be any boundary component.
Then
\bn
\item $\Delta_{T,\Phi,i}^\a$ is monic for any $i$,
\item $H_i(T;V\tpm)=0$ for $i\geq 2$, in particular $\Delta_{T,\Phi,i}^\a=1$ for $i\geq 2$,
\item $\Delta_{T,\Phi,0}^\a= \Delta_{T,\Phi,1}^\a$.
\en
\end{factc}

We now consider the following Mayer--Vietoris sequence:
\[
\ba{ccccccccccccccccc}
0&\to& H_2(N_1;V\tpm)\oplus H_2(N_2;V\tpm)&\to & H_2(W;V\tpm)&\to &\\
H_1(\partial N;V\tpm)&\to& H_1(N_1;V\tpm)\oplus H_1(N_2;V\tpm)&\to & H_1(W;V\tpm)&\to &\\
H_0(\partial N;V\tpm)&\to& H_0(N_1;V\tpm)\oplus H_0(N_2;V\tpm)&\to & H_0(W;V\tpm)&\to &0.\ea \]
Recall that we assume that $(N,\phi)$ (and hence $(N_i,\phi), i=1,2$) satisfy Condition ($*$).
By  Lemmas \ref{lem:twiprop} and \ref{lem:twiprop2} and Facts 1 and 3 it follows that
all homology modules in the above long exact sequence but possibly $H_1(W;V\tpm)$ and $H_2(W;V\tpm)$  are $\zt$--torsion.
But then evidently $H_1(W;V\tpm)$ and $H_2(W;V\tpm)$ also have to be  $\zt$--torsion.
Furthermore it follows from Fact 3, \cite[Theorem~3.4]{Tu01} and \cite[Theorem~4.7]{Tu01}
that
\be \label{equ:deltas} \frac{ \Delta_{W,\Phi,1}^\a}{ \Delta_{W,\Phi,0}^\a \Delta_{W,\Phi,2}^\a}=
\frac{ \Delta_{N_1,\Phi,1}^\a}{ \Delta_{N_1,\Phi,0}^\a\Delta_{N_1,\Phi,2}^\a}\cdot \frac{ \Delta_{N_2,\Phi,1}^\a}{ \Delta_{N_2,\Phi,0}^\a\Delta_{N_2,\Phi,0}^\a}.
\ee
Note that $ \Delta_{W,\Phi,0}^\a$ and $\Delta_{W,\Phi,2}^\a$ are monic by Lemma \ref{lem:twiprop},
it now follows from Fact 1 and Equality (\ref{equ:deltas}) that $ \Delta_{W,\Phi,1}^\a$ is monic as desired.

Finally we can appeal to Lemma \ref{lem:split} to conclude that $||\Phi||_{T,W}=||\Phi||_{T,N_1}+||\Phi||_{T,N_2}$.
It therefore follows from Fact 1 and Equation (\ref{equ:deltas}) that
\[ \deg(\Delta_{W,\Phi}^{\a})-\deg(\Delta_{W,\Phi,0}^{\a})-\deg(\Delta_{W,\Phi,2}^{\a})= n \, \|\Phi\|_{T,W}\]
as required.
\end{proof}

Let $N$ be a $3$--manifold with empty or toroidal boundary. We write $\pi=\pi_1(N)$.
Let $\tipi\subset \pi$ be a finite index subgroup  and $\phi \in H^{1}(N;\Z)$ nontrivial. We now say that the pair $(\tipi,\phi)$ has Property (M) if
the twisted Alexander polynomial $\Delta_{N,\phi}^{\pi/\tipi}\in \zt$ is monic
and if \[ \deg(\Delta_{N,\phi}^{\pi/\tipi})= [\pi:\tipi] \, \|\phi\|_{T} + (1+b_3(N)) \div  \phi_{\tipi}\] holds.

The first statement of the following lemma is well--known, the second one can be easily verified and the third is an immediate consequence of the second statement.

\begin{lemma}\label{lem:kphi}
Let $N$ be a $3$--manifold with empty or toroidal boundary and let $\phi \in H^{1}(N;\Z)$ be nontrivial.
Let $k\ne 0\in \Z$. Then the following hold:
\bn
\item $(N,\phi)$ fibers over $S^1$ if and only if $(N,k\phi)$ fibers over $S^1$,
\item  Let $\tipi\subset \pi$ be a finite index subgroup. Then $(\tipi,\phi)$ has Property (M) if and only if
$(\tipi,k\phi)$ has Property (M),
\item $(N,\phi)$ satisfies Condition  $(*)$ if and only if $(N,k\phi)$ satisfies Condition  $(*)$.
\en
\end{lemma}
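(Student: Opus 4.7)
The plan is to dispatch the three parts in order, with the bulk of the work in (2). For part (1), a fibration $f:N\to S^1$ with $f_*=\phi$ composes with the degree-$k$ self-covering $p_k:S^1\to S^1$ to yield a fibration $p_k\circ f$ realizing $k\phi$. Conversely, if $g:N\to S^1$ is a fibration with $g_*=k\phi$, then $g_*(\pi_1(N))\subset k\Z$, so $g$ lifts along $p_k$ to a map $\tilde g:N\to S^1$ with $\tilde g_*=\phi$; since $g=p_k\circ\tilde g$ is a fiber bundle and $p_k$ is a covering, $\tilde g$ is again a fiber bundle (each component of a fiber of $g$ is a fiber of $\tilde g$). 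This gives (1).

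For part (2), I would use three elementary facts. First, $\|k\phi\|_T=|k|\cdot\|\phi\|_T$ by homogeneity of the Thurston semi-norm. Second, $\div((k\phi)_{\tipi})=|k|\cdot\div(\phi_{\tipi})$ directly from the definition of divisibility. Third, and most important, the key base-change identity
\[ \Delta^{\pi/\tipi}_{N,k\phi}(t)\doteq \Delta^{\pi/\tipi}_{N,\phi}(t^k), \]
where $\doteq$ denotes equality up to a unit in $\zt$. The reason is that the representation $\pi_1(N)\to \aut_{\zt}(\Z[\pi/\tipi]\otimes\zt)$ used to define $\Delta^{\pi/\tipi}_{N,k\phi}$ is precisely the composition of the corresponding representation for $\phi$ with the ring endomorphism $\iota_k:\zt\to\zt$, $t\mapsto t^k$. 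Since $\iota_k$ turns $\zt$ into a free $\zt$-module of rank $|k|$ with basis $1,t,\dots,t^{|k|-1}$, the chain complex computing $\Delta^{\pi/\tipi}_{N,k\phi}$ is obtained from the one for $\Delta^{\pi/\tipi}_{N,\phi}$ by this base extension, which yields the asserted formula. The substitution $t\mapsto t^k$ preserves the top coefficient (so preserves monicity) and multiplies the degree by $|k|$; combining with the first two facts, both sides of the Property (M) degree equation rescale by $|k|$, and so Property (M) for $(\tipi,\phi)$ is equivalent to Property (M) for $(\tipi,k\phi)$.

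Part (3) is an immediate consequence: Condition ($*$) is simply the conjunction of Property (M) over all normal finite-index subgroups $\tipi\subset\pi_1(N)$, and since (2) establishes this equivalence subgroup by subgroup, the two conditions are equivalent for $\phi$ and $k\phi$. The only real content is the base-change identity in (2), which is a standard calculation in twisted Alexander polynomial theory and presents no genuine obstacle.
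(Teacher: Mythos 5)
Your strategy is sound, and since the paper itself gives no proof (it simply remarks that the three statements are ``well-known'', ``easily verified'', and ``immediate''), your elaboration via the base-change identity is exactly the kind of argument the authors presumably had in mind. Parts (1) and (3) are fine as written, modulo a small imprecision in the parenthetical remark in (1): a fiber of $\tilde g$ is a \emph{union of components} of a fiber of $g$, not necessarily a single component, though the local-triviality argument still goes through.

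The part that deserves more care is the assertion that the base-change identity $\Delta_{N,k\phi}^{\a}(t)\doteq\Delta_{N,\phi}^{\a}(t^k)$ ``yields the asserted formula'' from the chain-level base extension. The chain-level statement (together with the flatness of $\iota_k$) does give $H_i(N;\Z[\pi/\tipi]\tpm)_{k\phi}\cong H_i(N;\Z[\pi/\tipi]\tpm)_{\phi}\otimes_{\zt,\iota_k}\zt$, and hence that the Fitting ideal of the latter is the extension of the Fitting ideal of the former. But the order is the \emph{gcd} of generators of the Fitting ideal, and gcds do not in general commute with flat base change over a non-PID. What you actually need is the claim that if $f_1,\dots,f_m\in\zt$ have $\gcd(f_i)\doteq 1$ then $\gcd(f_i(t^k))\doteq 1$; this follows from a short argument (an integer prime dividing all $f_i(t^k)$ divides all $f_i$ since the coefficients are unchanged; and a nonconstant irreducible common factor $h(t)$ would have a root $\alpha$, so $\alpha^k$ is a common root of the $f_i$, and by Gauss's lemma the primitive minimal polynomial of $\alpha^k$ is then a nontrivial common factor of the $f_i$ in $\zt$). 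This step is genuinely part of the proof and should be stated rather than dismissed as ``standard''. Secondly, for $k<0$ the substitution $t\mapsto t^k$ sends the top coefficient of $\Delta_{N,\phi}^{\a}$ to the bottom coefficient, so the claim that it ``preserves the top coefficient (so preserves monicity)'' is literally false; it is rescued either by the symmetry $\Delta_{N,\phi}^{\a}(t^{-1})\doteq\Delta_{N,\phi}^{\a}(t)$ of twisted Alexander polynomials of $3$-manifolds, or by first noting that replacing $\phi$ by $-\phi$ leaves all three conditions invariant and thereby reducing to $k>0$. Neither of these is hard, but both are real steps.
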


We will also need the following lemma.

\begin{lemma}\label{lem:nonnormal}
Let $N$ be a $3$--manifold with empty or toroidal boundary and let $\phi \in H^{1}(N;\Z)$ be non--trivial.
Suppose that all finite index \emph{normal} subgroups of $\pi_1(N)$ have Property (M), then in fact all finite index subgroups of $\pi_1(N)$ have Property (M).
\end{lemma}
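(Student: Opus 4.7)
The plan is to reduce to primitive $\phi$ and then pass to the normal core $\hat\pi$ of $\tipi$, using Proposition \ref{prop:sameh0h1} together with the bijection criteria of Section \ref{section:basics} to transport Property (M) from the normal core down to $\tipi$. First I would invoke Lemma \ref{lem:kphi}(2) to reduce to the case where $\phi$ is primitive: writing $\phi = k\phi_0$ with $\phi_0$ primitive, neither the hypothesis nor the conclusion is affected by replacing $\phi$ with $\phi_0$. The cases $N = S^1\times D^2$ and $N = S^1\times S^2$ are trivial since there $\pi_1(N)\cong \Z$ is abelian and every subgroup is normal, so henceforth assume $N$ is neither. Specializing the hypothesis to $\tipi=\pi$ gives $\Delta_{N,\phi}\ne 0$, and Proposition \ref{prop:mcm} produces a connected Thurston norm minimizing surface $\S\subset N$ dual to $\phi$. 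Set $A = \pi_1(\S)$, $B = \pi_1(M)$ with $M = N\sm\nu\S$, and let $\i = \i_+$ or $\i = \i_-$, both of which are injective since $\S$ is incompressible.

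Given any finite index $\tipi\subset \pi$, let $\hat\pi := \bigcap_{g\in\pi} g\tipi g^{-1}$ be its normal core, a finite index normal subgroup of $\pi$ contained in $\tipi$. By hypothesis $\hat\pi$ has Property (M), so Proposition \ref{prop:sameh0h1} asserts that $\i_*: H_j(A;\Z[\pi/\hat\pi]) \to H_j(B;\Z[\pi/\hat\pi])$ is an isomorphism for $j = 0,1$. Since $G:=\pi/\hat\pi$ is a genuine finite group, Corollary \ref{cor:metabany} applied to the homomorphism $\b : B \hookrightarrow \pi \to G$ reformulates this as the bijectivity of
\[ \i : A/\hat A \to B/\hat B \qquad\text{and}\qquad \i : A/[\hat A,\hat A] \to B/[\hat B,\hat B], \]
where $\hat B := B\cap\hat\pi = \ker(\b)$ and $\hat A := \i^{-1}(\hat B) = \ker(\b\circ\i)$. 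Because $\hat\pi$ is normal in $\pi$, the subgroup $\hat B$ is normal in $B$.

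To conclude Property (M) for $\tipi$, I would pick double coset representatives $g_1,\dots,g_k \in \pi$ for $B\backslash\pi/\tipi$ and set $\ti B_i := B \cap g_i\tipi g_i^{-1}$ and $\ti A_i := \i^{-1}(\ti B_i)$. The key observation is that $\hat\pi \subset g_i\tipi g_i^{-1}$ for every $i$, because $\hat\pi$ is normal in $\pi$ and contained in $\tipi$; hence $\hat B \subset \ti B_i$ and $\hat A \subset \ti A_i$ for each $i$. Lemma \ref{lem:alsoiso}, whose normality hypothesis on $\hat B$ was just verified, then upgrades the above bijections to
\[ \i : A/\ti A_i \to B/\ti B_i \qquad\text{and}\qquad \i : A/[\ti A_i,\ti A_i] \to B/[\ti B_i,\ti B_i] \]
for every $i$. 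By Lemma \ref{lem:metabany} these bijections translate back into $\i_*: H_j(A;\Z[\pi/\tipi]) \to H_j(B;\Z[\pi/\tipi])$ being an isomorphism for $j = 0,1$, and a final application of Proposition \ref{prop:sameh0h1} yields Property (M) for $\tipi$. The argument is essentially bookkeeping with the tools of Sections \ref{section:basics} and \ref{section:monic}; the one substantive point is to recognize that the normal core furnishes a common normal ancestor $\hat B$ inside $B$ sitting beneath every conjugate intersection $\ti B_i$, which is precisely the input Lemma \ref{lem:alsoiso} needs in order to propagate bijectivity from $\hat\pi$ to $\tipi$.
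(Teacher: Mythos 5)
Your proof is correct and follows essentially the same route as the paper's: reduce to primitive $\phi$, pass to the normal core $\hat\pi$ of $\tipi$, use Proposition \ref{prop:sameh0h1} with Corollary \ref{cor:metabany} to extract bijections at the level of $A/\hat A$, $A/[\hat A,\hat A]$, feed these into Lemma \ref{lem:alsoiso} (whose normality hypothesis $\hat B\triangleleft B$ is exactly what the normal core buys you), and then translate back via Lemma \ref{lem:metabany} and Proposition \ref{prop:sameh0h1}. The one cosmetic difference is that you apply Corollary \ref{cor:metabany} directly to $\b:B\to\pi/\hat\pi$, whereas the paper first decomposes the $\Z[B]$-module $\Z[\pi/\hat\pi]$ via Lemma \ref{lem:group2} into copies of $\Z[B/\hat B]$ and then applies the corollary with $G=B/\hat B$; both routes yield the same bijections, and your shortcut is legitimate because $\hat\pi$ is normal so the $B$-action on $\Z[\pi/\hat\pi]$ factors through $\b$.
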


\begin{proof}
We write $\pi:=\pi_1(N)$.
Let $\phi \in H^{1}(N;\Z)$ be non--trivial. By Lemma \ref{lem:kphi} (2) we can without loss of generality assume that $\phi$ is primitive.
Let $\tipi\subset \pi$ be a finite index subgroup.
We denote by $\hatpi\subset \pi$ the core of $\tipi$, i.e. $\hatpi=\cap_{g\in \pi} g\tipi g^{-1}$.
Note that $\hatpi$ is normal in $\pi$ and contained in $\tipi$.

By Proposition \ref{prop:mcm} the class $\phi$ is dual to a connected Thurston norm minimizing surface $\S$.
We write $A=\pi_1(\S)$ and $B=\pi_1(N\sm \nu \S)$ as before.

We write $\hat{B}:=B\cap \hatpi$ and $\hat{A}^\pm:=(\i_\pm)^{-1}(\hat{B})$.
We now pick representatives $g_1,\dots,g_m$ for the equivalence classes of $B\backslash \pi/\ti{\pi}$.
For $i=1,\dots,m$ we write $\ti{B}_i:=B\cap g_i\tipi g_i^{-1}$ and $\ti{A}^\pm_i:=(\i_\pm)^{-1}(\ti{B}_i)$.

Since $\hatpi\subset \pi$ is normal and since we assume that normal finite index subgroups have Property (M)
we can now apply Proposition  \ref{prop:sameh0h1} and Lemma \ref{lem:group2} to conclude that
\[ \i_\pm: H_j(A;\Z[B/\hat{B}])\to H_j(B;\Z[B/\hat{B}]) \]
are isomorphisms for $j=0,1$.
It now follows from
 Corollary \ref{cor:metabany}
 that the maps
\[ \i_\pm:A/  \hat{A}^\pm \to B/\hat{B}  \mbox{ and } \i_\pm:A/ [ \hat{A}^\pm,\hat{A}^\pm  ]\to B/[\hat{B},\hat{B}]\]
are isomorphisms.
Recall that $\hatpi$ is normal in $\pi$, it follows that  $\hat{B}\subset B$ is normal and for any $i$ we have $\hat{B}=B\cap g_i\hatpi g_i^{-1}\subset B\cap g_i\tipi g_i^{-1}=\ti{B}_i$. We now deduce from
 Lemma \ref{lem:alsoiso} that
\[  \i_\pm:A/  \ti{A}^\pm_i \to B/\ti{B}_i  \mbox{ and }  \i_\pm:A/ [ \ti{A}^\pm_i,\ti{A}^\pm_i  ]\to B/[\ti{B}_i,\ti{B}_i]\]
are  bijections for $i=1,\dots,m$.
It now follows from Lemma \ref{lem:metabany}
that the maps
\[ \i_\pm: H_j(A;\zpp)\to  H_j(B;\zpp) \]
are isomorphisms. It now follows from  Proposition  \ref{prop:sameh0h1} that
$\tipi$ also  has Property (M).
\end{proof}



We will now use the previous lemma to prove the following lemma.

\begin{lemma} \label{lem:pullback}
Let $N$ be a $3$--manifold with empty or toroidal boundary and let $\phi \in H^{1}(N;\Z)$ be non--trivial.
Let $p:N'\to N$ be a finite cover. We write $\phi'=p^*(\phi) \in H^1(N';\Z)$.
Then the following hold:
\bn
\item $\phi'$ is nontrivial,
\item $(N,\phi)$ fibers over $S^1$ if and only if $(N',\phi')$ fibers over $S^1$,
\item if $(N,\phi)$ satisfies Condition  $(*)$, then $(N',\phi')$ satisfies Condition  $(*)$.
\en
\end{lemma}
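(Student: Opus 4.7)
The proof splits naturally along the three claims. For (1), since $p_{*}(\pi_1(N'))$ has finite index in $\pi:=\pi_1(N)$, pick $\g\in\pi$ with $\phi(\g)\neq 0$ and choose $k\geq 1$ with $\g^k=p_{*}(\g')$ for some $\g'\in\pi_1(N')$; then $\phi'(\g')=\phi(p_{*}\g')=k\phi(\g)\neq 0$.

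For (2), the forward direction is immediate: the composition $f\circ p:N'\to S^1$ of a fibration $f:N\to S^1$ representing $\phi$ with the covering map $p$ is a fibration representing $\phi'=p^*\phi$. For the converse I would invoke Stallings' fibration theorem. If $(N',\phi')$ fibers then $\ker(\phi':\pi_1(N')\to\Z)$ is finitely generated. Since $\ker(\phi')=p_{*}^{-1}(\ker\phi)$, the image $p_{*}(\ker\phi')=\ker\phi\cap p_{*}(\pi_1(N'))$ is finitely generated and of finite index in $\ker\phi$, which by a standard Schreier-type argument forces $\ker\phi$ itself to be finitely generated. A second application of Stallings' theorem yields a fibration representing $\phi$.

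For (3), which carries the substance, I take a homomorphism $\a':\pi'\to G'$ from $\pi':=\pi_1(N')$ to a finite group and reduce to the case that $\a'$ is surjective. (By Lemma \ref{lem:group2} with $H$ trivial, $\Z[G']$ splits as a direct sum of copies of $\Z[\pi'/\ker\a']$ as a $\pi'$--module, so monicness and the degree formula for a general $\a'$ follow from those for the surjection onto $\a'(\pi')$.) Writing $\G:=\ker(\a')$, a finite index subgroup of both $\pi'$ and $\pi$, Lemma \ref{lem:twiprop} yields
\[
\Delta_{N',\phi'}^{\pi'/\G} \;=\; \Delta_{N,\phi}^{\pi/\G},
\]
and by Condition $(*)$ for $(N,\phi)$ the right hand side is monic of degree $[\pi:\G]\cdot\|\phi\|_T+(1+b_3(N))\cdot\div\phi_{\G}$. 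To convert this into the degree formula predicted by Condition $(*)$ for $(N',\phi',\a')$ I would use: $[\pi:\G]=\deg(p)\cdot[\pi':\G]$; Gabai's multiplicativity of the Thurston norm under finite covers, $\|\phi'\|_{T,N'}=\deg(p)\cdot\|\phi\|_T$; the identity $b_3(N')=b_3(N)$, since $N'$ is closed (resp.\ has toroidal boundary) precisely when $N$ does; and $\div\phi'_{\G}=\div\phi_{\G}$, because $\phi'$ and $\phi$ restrict to the same homomorphism $\G\to\Z$. Combining these substitutions gives exactly $[\pi':\G]\cdot\|\phi'\|_T+(1+b_3(N'))\cdot\div\phi'_{\G}$, which is what Condition $(*)$ demands for $(N',\phi',\a')$.

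There is no single hard step: the argument is careful bookkeeping assembled from Lemma \ref{lem:twiprop} together with previously cited results. The only substantive external inputs are Stallings' fibration theorem for (2) and the multiplicativity of the Thurston norm under finite covers for (3).
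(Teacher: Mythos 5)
Your handling of parts (1) and (2) is fine, and your overall strategy for part (3) — reduce to surjective $\a':\pi'\to G'$, set $\G=\ker(\a')$, use Lemma \ref{lem:twiprop} to identify $\Delta_{N',\phi'}^{\pi'/\G}$ with $\Delta_{N,\phi}^{\pi/\G}$, and then do the bookkeeping with Gabai's multiplicativity, $b_3(N')=b_3(N)$, and $\div\phi'_\G=\div\phi_\G$ — matches the paper's route. However, there is a genuine gap in the middle: you assert that Condition $(*)$ for $(N,\phi)$ makes $\Delta_{N,\phi}^{\pi/\G}$ monic with the expected degree. But $\G=\ker(\a')$ is normal in $\pi'$, \emph{not} in general normal in $\pi$, while Condition $(*)$ is a statement about homomorphisms $\pi\to G$, i.e. about \emph{normal} finite index subgroups of $\pi$. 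The coefficient module $\Z[\pi/\G]$ for non-normal $\G$ is a permutation module on cosets, not the left regular representation of a quotient group, so Condition $(*)$ does not directly apply. The paper closes this gap by invoking Lemma \ref{lem:nonnormal}, which upgrades Property (M) from normal finite index subgroups of $\pi$ to all finite index subgroups; that step is itself nontrivial (it runs through Proposition \ref{prop:sameh0h1}, the core of $\G$ in $\pi$, Corollary \ref{cor:metabany}, and Lemma \ref{lem:alsoiso}). Without this, your claim that the right-hand side is monic with the stated degree does not follow. Incidentally, your reduction to surjective $\a'$ via the decomposition $\Z[G']\cong\Z[\a'(\pi')]^{[G':\a'(\pi')]}$ is also worth double-checking against the precise form of the degree formula in Condition $(*)$: the term $(1+b_3)\div\phi_{\ker\a'}$ gets multiplied by $[G':\a'(\pi')]$ under that decomposition, which is consistent only if one reads Condition $(*)$ as a statement about surjective $\a$ (equivalently, about normal finite index subgroups), which is how the paper interprets it.
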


\begin{proof}
The first statement is well--known. The second statement  is a consequence of \cite[Theorem~10.5]{He76}.
We now turn to the third statement. Assume that $(N,\phi)$ satisfies Condition  ($*$).

Let $\tipi$ be a normal finite index subgroup of $ \pi'=\pi_1(N')$. We have to show that $(\tipi,\phi')$ has Property (M).
Note that $\tipi$ viewed as a subgroup of $\pi=\pi_1(N)$ is not necessarily normal.
It nonetheless follows from the assumption that $(N,\phi)$ satisfies Condition  ($*$)
and from Lemma \ref{lem:nonnormal} that
the twisted Alexander polynomial $\Delta_{N,\phi}^{\pi/\tipi}\in \zt$ is monic
and that \[ \deg(\Delta_{N,\phi}^{\pi/\tipi})= [\pi:\tipi] \, \|\phi\|_{T} + (1+b_3(N)) \div  \phi_{\tipi}\] holds.
It now
follows easily from Lemma \ref{lem:twiprop},
 $b_3(N)=b_3(N')$, and
 the multiplicative property of the Thurston norm under finite covers (cf. \cite[Corollary~6.13]{Ga83})
 that the twisted Alexander polynomial $\Delta_{N',\phi'}^{\pi'/\tipi}\in \zt$ is monic
and that the following equality holds:
\[ \deg(\Delta_{N',\phi'}^{\pi'/\tipi})= [\pi':\tipi] \, \|\phi'\|_{T} + (1+b_3(N')) \div  \phi'_{\tipi}.\]
In particular $(\tipi,\phi')$ has Property (M).
\end{proof}

We are now finally  in a position to prove Theorem \ref{mainthm}.

\begin{proof}[Proof of Theorem \ref{mainthm}]
First note that the combination of Theorem   \ref{thm:jsjresp}
and Lemmas \ref{lem:prime}, \ref{lem:closed}, \ref{lem:pullback} and \ref{lem:kphi}
 shows that it suffices to show the following claim:

 \begin{claim}
Assume we are given  a pair $(N,\phi)$
 where
 \bn
 \item   $N$ is a closed irreducible 3--manifold such that the fundamental group
 of each JSJ component is residually $p$, and
 \item  $\phi$  is  primitive.
 \en
If $(N,\phi)$ satisfies Condition  $(*)$, then $(N,\phi)$ fibers over $S^1$.
\end{claim}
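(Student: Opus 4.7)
The plan is to combine the results assembled in the previous sections. The case $\|\phi\|_T = 0$ is handled by \cite{FV08b}, so I may assume $\|\phi\|_T > 0$. By Proposition \ref{prop:mcm} and the discussion preceding Lemma \ref{lem:minimalsigma}, I fix a connected Thurston norm minimizing surface $\Sigma$ dual to $\phi$ satisfying the conditions of Lemma \ref{lem:minimalsigma} and minimizing $\sum_{i=1}^{r} b_0(\Sigma \cap T_i)$, where $T_1, \ldots, T_r$ are the JSJ tori of $N$. Write $M = N \setminus \nu \Sigma$, and let $A_1, \ldots, A_m$ and $M_1, \ldots, M_n$ be the associated cut pieces introduced in Section \ref{section:deti}. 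By Corollary \ref{cor:finitesolvable}, Condition $(*)$ implies that $\iota_\pm : \pi_1(\Sigma) \to \pi_1(M)$ induce isomorphisms of prosolvable completions, while Lemma \ref{lem:aiconnect} ensures that every $A_j$ is an annulus connecting $\Sigma^-$ to $\Sigma^+$. Hence the hypotheses of Theorem \ref{thm:miiso} are in force, and I obtain for each $i$ that $\Sigma_i^\pm$ are connected, that $\pi_1(M_i) \hookrightarrow \pi_1(N_j)$ whenever $M_i \subset N_j$, and that $\iota_\pm : \pi_1(\Sigma_i^\pm) \to \pi_1(M_i)$ induce isomorphisms of prosolvable completions.

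Next I upgrade each $M_i$ to a product. The standing hypothesis that $\pi_1(N_j)$ is residually $p$ passes to the subgroup $\pi_1(M_i)$, so $\pi_1(M_i)$ is residually $p$ and in particular residually finite solvable. I endow $M_i$ with the sutured structure $(M_i, \gamma_i)$ inherited from cutting $N$ along $\Sigma \cup T_1 \cup \cdots \cup T_r$, with $R_\pm(\gamma_i) = \Sigma_i^\pm$ and annular sutures given by the components $A_j \cap M_i$. Tautness of $(M_i, \gamma_i)$ will follow from the irreducibility of $N$ together with the Thurston-norm minimality of $\Sigma$ in $N$: any norm-reducing replacement of a $\Sigma_i^\pm$ in $M_i$ could be reglued along the annular sutures to produce a competitor for $\Sigma$ of strictly smaller complexity, contradicting the minimality of our chosen $\Sigma$. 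All hypotheses of Theorem \ref{thm:no} are thus satisfied by $(M_i, \gamma_i)$, and I conclude that $M_i \cong \Sigma_i \times I$ as a product sutured manifold, with each annular suture $A_j \cap M_i$ realized as a product annulus in the $I$-direction.

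Finally I reassemble: the gluings that reconstruct $M$ from the $M_i$ along the $A_j$ are modeled on the gluings that reconstruct $\Sigma^-$ from the $\Sigma_i^-$ along the curves $A_j \cap \Sigma^-$, precisely because each $A_j$ is a product annulus spanning from $\Sigma^-$ to $\Sigma^+$. A van Kampen argument consequently promotes the isomorphisms $\pi_1(\Sigma_i^-) \xrightarrow{\cong} \pi_1(M_i)$ to an isomorphism $\iota_- : \pi_1(\Sigma) \xrightarrow{\cong} \pi_1(M)$, and symmetrically for $\iota_+$. Stallings' theorem \cite{St62} then exhibits $\Sigma$ as the fiber of a fibration $N \to S^1$ representing $\phi$. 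The main technical point in the outline is the tautness verification in the second paragraph; this is the bridge between the global norm-minimality of $\Sigma$ in $N$ and the local product conclusion supplied by Theorem \ref{thm:no}, and everything else essentially assembles the machinery of Sections \ref{section:monic}--\ref{section:jsj}.
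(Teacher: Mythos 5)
Your proof is correct and follows essentially the same route as the paper's: handle $\|\phi\|_T=0$ via \cite{FV08b}, then for $\|\phi\|_T>0$ combine Proposition \ref{thm:finitesolvable} (via Corollary \ref{cor:finitesolvable}), Lemma \ref{lem:aiconnect}, Theorem \ref{thm:miiso}, the residual--$p$ hypothesis, and Theorem \ref{thm:no} applied to the sutured pieces $(M_i,\gamma_i)$, then reassemble. You give slightly more detail than the paper at two points the authors leave implicit — the verification that $(M_i,\gamma_i)$ is taut and the van Kampen/graph-of-groups argument promoting the piecewise isomorphisms $\pi_1(\S_i^-)\to\pi_1(M_i)$ to an isomorphism $\pi_1(\S)\to\pi_1(M)$ (which then feeds into Stallings) — but the underlying logic is the same.
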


Let $(N,\phi)$ be a pair as in the claim which satisfies Condition  $(*)$.
If $||\phi||_T=0$, then it follows from \cite[Proposition~4.6]{FV08b} that $(N,\phi)$ fibers over
$S^1$.

We can and will therefore henceforth assume that $||\phi||_T>0$.
We denote the tori of the JSJ decomposition of $N$ by $T_1,\dots,T_r$.
We pick a connected Thurston norm minimizing surface $\S$ dual to $\phi$ and
a tubular neighborhood $\nu\S=\S\times [-1,1]\subset N$
as in Section \ref{section:deti}.
In particular  we  can and will throughout assume that $\S\times t$ and the tori $T_1,\dots,T_r$ are in general position for any $t\in [-1,1]$
and that for any $i\in \{1,\dots,r\}$ any component of $\S\cap T_i$ represents a nontrivial element in $\pi_1(T_i)$. Furthermore as in Section \ref{section:jsj}
we assume that our choice of $\S$ minimizes the number $\sum_{i=1}^r b_0 (\S\cap T_i)$.

Let $A_1,\dots,A_m$ be the components of the intersection of the tori $T_1,\dots,T_r$ with $M:=N\sm \S\times (-1,1)$.
Furthermore let $M_1,\dots,M_n$ be the components of $M$ cut along $A_1\cup \dots \cup A_m$.
Recall that any $M_i$ is a submanifold of a JSJ component of $N$.

For $i=1,\dots,m$ write $C_i=A_i\cap \S^-$.
It follows from Lemma \ref{lem:aiconnect} that for $i=1,\dots,m$ the surface $A_i$ is an annulus which is a product on $C_i$, i.e. $C_i$ consists of one component and
$\pi_1(C_i)\to  \pi_1(A_i)$ is an isomorphism.

In order to show that $M$ is a product on $\S^-$ it suffices to show
that $\pi_1(\S_i^-)\to \pi_1(M_i)$ is an isomorphism  for any  $i\in \{1,\dots,n\}$.
So let $i\in \{1,\dots,n\}$.
Since $(N,\phi)$ satisfies Condition $(*)$ it follows from Proposition \ref{thm:finitesolvable}  that the maps
$\pi_1(\S^\pm)\to \pi_1(M)$ induce an isomorphism of prosolvable completions.
By Theorem \ref{thm:miiso} (1)  the surfaces $\S_i^\pm$ are connected,
and by Theorem \ref{thm:miiso} (3) the inclusion induced maps $\pi_1(\S_i^\pm)\to \pi_1(M_i), i=1,\dots,n$ also induce  isomorphisms
of prosolvable completions.
By Theorem \ref{thm:miiso} (2) we have that the group $\pi_1(M_i)$ is a subgroup of the fundamental group of a JSJ component of
$N$. By our assumption this implies that $\pi_1(M_i)$ is residually $p$, in particular residually finite solvable.

In the following we view $M_i$ as a sutured manifold with
sutures given by
$\gamma_i=\partial N\cap M_i$.
We  can pick orientations such that  $R_-(\gamma_i)=\S_i^-$ and
$R_+(\gamma_i)=\S_i^+$. Since $\S\subset N$ is Thurston norm minimizing it follows that $(M_i,\gamma_i)$ is a taut sutured
manifold.
We can therefore now apply Theorem \ref{thm:no} to conclude that $(M_i,\gamma_i)$ is a product sutured manifold,
i.e. $\pi_1(\S_i^-)\to \pi_1(M_i)$ is an isomorphism.
\end{proof}


\begin{thebibliography}{10}
\bibitem[Ag08]{Ag08}
I. Agol, {\em Criteria for virtual fibering}, Journal of Topology 1: 269-284 (2008)
\bibitem[AHKS07]{AHKS07}
G. Arzhantseva, P. de la Harpe, D. Kahrobaei, Z. Sunic, {\em The true
prosoluble completion of a group: examples and open problems},
 Geom. Dedicata  124  (2007), 5--26.
\bibitem[AF10]{AF10}
 M. Aschenbrenner, S. Friedl, {\em 3--manifold groups are virtually residually $p$}, preprint (2010)

\bibitem[BG04]{BG04}
M. Bridson, F. Grunewald, {\em Grothendieck's problems concerning profinite completions and
representations of groups}, Ann. of Math. (2) 160 (2004), no. 1, 359--373.
 \bibitem[Br94]{Br94} K. Brown, {\em Cohomology of groups}, Graduate Texts in Mathematics 87, Springer-Verlag (1994)


\bibitem[CC03]{CC03}
A. Candel, L. Conlon, {\em Foliations. II}, Graduate Studies in Mathematics, 60. American Mathematical Society, Providence, RI (2003)

\bibitem[Ch03]{Ch03} J. Cha, {\em Fibred knots and twisted Alexander invariants},
Transactions of the AMS 355: 4187--4200 (2003)

\bibitem[CM00]{CM00}
W. Chen, R. Matveyev, {\em Symplectic Lefschetz fibrations on $S\sp 1\times M\sp 3$}, Geom. Topol. 4: 517--535 (2000)
\bibitem[Do96]{Do96}
S. Donaldson, {\em Symplectic submanifolds and almost-complex geometry}, J. Diff.
Geom., 44 (1996), 666-705
\bibitem[EN85]{EN85}
D.  Eisenbud, W. Neumann, {\em  Three-dimensional link theory and invariants of plane curve
singularities}, Annals of Mathematics Studies, 110. Princeton University Press, Princeton, NJ,
1985.
\bibitem[Et01]{Et01}
T. Etgu, {\em Lefschetz fibrations, complex structures and Seifert fibrations on $S^1\times M$},
Algebraic and Geometric Topology, Volume 1 (2001), 469--489
\bibitem[FK06]{FK06}
 S. Friedl, T. Kim, \emph{Thurston norm, fibered manifolds and twisted Alexander
polynomials}, Topology, Vol. 45: 929-953 (2006)
\bibitem[FV07]{FV07}
S. Friedl, S. Vidussi, {\em Symplectic 4-manifolds with a free circle action}, Preprint (2007)
\bibitem[FV08a]{FV08a}
S. Friedl, S. Vidussi, {\em Twisted Alexander polynomials and
symplectic structures}, Amer. J. Math. 130, no 2: 455-- 484 (2008)
\bibitem[FV08b]{FV08b}
S. Friedl, S. Vidussi, {\em Symplectic $S^{1} \times N^3$,
surface subgroup separability, and vanishing Thurston
norm},
J. Amer. Math. Soc. 21 (2008), 597-610.
\bibitem[FV10]{FV10}
S. Friedl, S. Vidussi, {\em
Twisted Alexander polynomials and fibered 3-manifolds},
to be published by the Proceedings of the Georgia International Topology Conference (2010)
\bibitem[Ga83]{Ga83} D. Gabai, {\em Foliations and the
topology of 3--manifolds}, J. Differential Geometry 18, no. 3: 445--503 (1983)
\bibitem[Gh08]{Gh08} P. Ghiggini, {\em Knot Floer homology detects genus-one fibred knots}, Amer. J. Math.  130, Number 5: 1151-1169 (2008)
\bibitem[GKM05]{GKM05} H. Goda, T. Kitano, T. Morifuji, {\em Reidemeister Torsion, Twisted Alexander Polynomial and Fibred Knots}, Comment. Math. Helv.  80,  no. 1: 51--61 (2005)
\bibitem[Gr70]{Gr70}
A. Grothendieck, {\em Repr\'esentations lin\'eaires et compactification profinie des groupes
discrets}, Manuscripta Math. 2 (1970) 375--396.
 \bibitem[Ha59]{Ha59} P. Hall, {\em On the finiteness of certain solvable groups},  Proc. London Math. Soc. (3)  9: 595--622 (1959).
\bibitem[Ha01]{Ha01} E. Hamilton, {\em Abelian Subgroup Separability of Haken $3$--manifolds and closed Hyperbolic $n$--orbifolds}, Proc. London Math. Soc. 83 no. 3: 626--646 (2001).
\bibitem[He76]{He76} J. Hempel, {\em $3$-Manifolds},
Ann. of Math. Studies, No. 86. Princeton University Press, Princeton, N. J.  (1976)
\bibitem[He87]{He87} J. Hempel, {\em Residual
finiteness for $3$-manifolds}, Combinatorial group theory and topology (Alta, Utah, 1984),
379--396, Ann. of Math. Stud., 111, Princeton Univ. Press, Princeton, NJ (1987)
\bibitem[HS97]{HS97}
P. J. Hilton and U. Stammbach, {\em A Course in Homological Algebra}, Springer Graduate Texts in Mathematics (1997)

\bibitem[Ki]{Ki} R. Kirby, {\em  Problems in low-dimensional topology}, Edited by Rob Kirby. AMS/IP Stud. Adv. Math., 2.2,  Geometric topology (Athens, GA, 1993),  35--473, Amer. Math. Soc., Providence, RI, 1997. 57-02
\bibitem[KL99]{KL99} P. Kirk, C. Livingston, {\em Twisted Alexander invariants, Reidemeister
torsion and Casson--Gordon invariants}, Topology 38, no. 3: 635--661 (1999)
\bibitem[Ki07]{Ki07}
T. Kitayama, {\em Normalization of twisted Alexander invariants}, Preprint (2007)
\bibitem[Kr98]{Kr98} P. Kronheimer, {\em Embedded surfaces and gauge theory in three
and four dimensions},  Surveys in differential geometry, Vol. III (Cambridge, MA, 1996),
243--298, Int. Press, Boston, MA (1998)
\bibitem[Kr99]{Kr99} P. Kronheimer, {\em Minimal genus in $S\sp 1\times
M\sp 3$},  Invent. Math.  135,  no. 1: 45--61 (1999)
\bibitem[KM08]{KM08}
P. Kronheimer, T. Mrowka, {\em
Knots, sutures and excision}, Preprint (2008)
\bibitem[KT09]{KT09}
\c C. Kutluhan, C. Taubes, {\em Seiberg-Witten Floer homology and symplectic forms on $S^1 \times M^3$},
Geometry \& Topology 13: 493--525 (2009)

\bibitem[Li01]{L01} X. S. Lin, {\em Representations of knot groups and twisted
Alexander polynomials}, Acta Math. Sin. (Engl. Ser.)  17,  no. 3: 361--380 (2001)
\bibitem[LN91]{LN91}
D. Long, G. Niblo, {\em Subgroup separability and $3$-manifold groups}, Math. Z. 207 (1991), no. 2,
209--215.
\bibitem[LS03]{LS03} A. Lubotzky, D. Segal, {\em Subgroup growth}, Progress in Mathematics, 212. Birkh\"auser
Verlag, Basel, 2003.
\bibitem[McC01]{McC01} J. McCarthy, {\em On the asphericity of a symplectic $M^{3} \times S^{1}$}, Proc. Amer. Math. Soc. 129: 257--264 (2001)
\bibitem[McM02]{McM02} C. T. McMullen, {\em The Alexander polynomial of a 3--manifold and the Thurston
norm on cohomology}, Ann. Sci. Ecole Norm. Sup. (4) 35, no. 2: 153--171 (2002)
\bibitem[MT07]{MT07}
J. Morgan, G.  Tian, {\em Ricci flow and the Poincar\'e conjecture},
Clay Mathematics Monographs, 3. American Mathematical Society, Providence, RI; Clay Mathematics Institute, Cambridge, MA, 2007.
\bibitem[Ni08]{Ni08} Y. Ni, {\em Addendum to:``Knots, sutures and excision"}, Preprint (2008)
\bibitem[Ni09]{Ni09} Y. Ni, {\em Heegaard Floer homology and fibred 3--manifolds},  Amer. J. Math. 131 (2009), no. 4, 1047--1063
\bibitem[RZ00]{RZ00} L. Ribes, P. Zalesskii, {\em Profinite Groups}, Ergebnisse der Mathematik und ihrer Grenzgebiete, 3. Folge, Vol. 40 (2000).
\bibitem[Ro74]{Ro74}
R.  Roussarie, {\em Plongements dans les vari\'et\'es feuillet\'ees et classification de feuilletages sans
holonomie}, Inst. Hautes \'Etudes Sci. Publ. Math. No. 43 (1974), 101--141.

\bibitem[Sc85]{Sc85}
M. Scharlemann, {\em $3$-manifolds with $H\sb 2(A,\partial A)=0$ and a conjecture of Stallings}, Knot theory and manifolds (Vancouver, B.C., 1983), 138--145,
Lecture Notes in Math., 1144, Springer, Berlin, 1985.

\bibitem[Sc89]{Sc89}
M. Scharlemann, {\em Sutured manifolds and generalized Thurston norms}, J. Differential Geom. 29 (1989), no. 3, 557--614.

\bibitem[St62]{St62}
J. Stallings, {\em On fibering certain 3--manifolds}, 1962 Topology of 3--manifolds and related
topics (Proc. The Univ. of Georgia Institute, 1961) pp. 95--100 Prentice-Hall, Englewood Cliffs,
N.J. (1962)

\bibitem[St83]{St83}
J. Stallings, {\em Surfaces in three-manifolds and nonsingular equations in groups},  Math. Z.  184  (1983),  no. 1, 1--17.
\bibitem[Ta94]{Ta94} C. H. Taubes, {\em The Seiberg-Witten invariants and symplectic forms}, Math. Res. Lett. 1: 809--822 (1994)
\bibitem[Ta95]{Ta95} C. H. Taubes, {\em More constraints on symplectic forms from Seiberg-Witten invariants}, Math. Res. Lett. 2: 9--13 (1995)
\bibitem[Th76]{Th76} W. P. Thurston,
{\em Some simple examples of symplectic manifolds}, Proc. Amer.
Math. Soc. 55 (1976), no. 2, 467--468.
\bibitem[Th82]{Th82} W. P. Thurston,
{\em Three dimensional manifolds, Kleinian groups and hyperbolic geometry}, Bull. Amer. Math.
Soc. 6 (1982)
\bibitem[Th86]{Th86} W. P. Thurston, {\em A norm for the homology of 3--manifolds}, Mem.
Amer. Math. Soc. 339: 99--130 (1986)
\bibitem[Tu01]{Tu01} V. Turaev, {\em Introduction to combinatorial torsions}, Birkh\"auser, Basel, (2001)

\bibitem[Vi99]{Vi99} S. Vidussi, {\em The Alexander norm is smaller than the Thurston norm; a
Seiberg--Witten proof}, Prepublication Ecole Polytechnique 6 (1999)
 \bibitem[Vi03]{Vi03} S. Vidussi,
{\em Norms on the cohomology of a 3-manifold and SW theory},
  Pacific J. Math.  208,  no. 1: 169--186 (2003)
\bibitem[Wa94]{Wa94}
M. Wada, {\em Twisted Alexander polynomial for finitely presentable groups}, Topology 33, no. 2:
241--256 (1994)
\bibitem[We73]{We73}
B. A. F. Wehrfritz, {\em Infinite linear groups},  Springer (1973)
\bibitem[Wi98]{Wi98}
J. Wilson, {\em Profinite groups}, London Mathematical Society Monographs.
New Series, 19. The Clarendon Press, Oxford University Press, New York, 1998
\end{thebibliography}
\end{document}